\numberwithin{equation}{section}
\newtheorem{thm}{Theorem}[section]
\newtheorem{df}[thm]{Definition}
\newtheorem{prop}[thm]{Proposition}
\newtheorem{lem}[thm]{Lemma}
\newtheorem{rem}[thm]{Remark}
\newtheorem{cor}[thm]{Corollary}
\let\oldproofname=\proofname
\renewcommand{\proofname}{\rm\bf{\oldproofname}}
\newcommand{\N}{\mathbb{N}}
\newcommand{\R}{\mathbb{R}}
\newcommand{\cD}{\mathcal{D}}
\newcommand{\cF}{\mathcal{F}}
\newcommand{\cI}{\mathcal{I}}
\newcommand{\cL}{\mathcal{L}}
\newcommand{\cO}{\mathcal{O}}
\newcommand{\cP}{\mathcal{P}}
\newcommand{\cR}{\mathcal{R}}
\newcommand{\cS}{\mathcal{S}}
\newcommand{\cX}{\mathcal{X}}
\newcommand{\cY}{\mathcal{Y}}
\newcommand{\cZ}{\mathcal{Z}}
\newcommand{\dd}{\,{\rm d}}
\newcommand{\D}{{\rm d}}
\renewcommand{\div}{\mathop{\mathrm{div}}\nolimits}
\newcommand{\curl}{\mathop{\mathrm{curl}}}
\newcommand{\supp}{\mathop{\mathrm{supp}}}
\newcommand{\1}{\mathbf{1}}
\newcommand{\weakto}{\rightharpoonup}
\newcommand{\app}{\mathrm{app}}
\newcommand{\corr}{\mathrm{cor}}
\newcommand{\TS}{\textstyle}
\newcommand{\BS}{\mathrm{BS}}
\newcommand{\QED}{\mbox{}\hfill$\Box$}
\renewcommand{\:}{\thinspace :}
\newcommand{\vt}{\vartheta}
\newcommand{\vf}{\varphi}
\newcommand{\Ker}{\mathop{\mathrm{Ker}}}
\newcommand{\Ran}{\mathop{\mathrm{Ran}}}
\newcommand{\Rem}{\mathrm{Rem}}
\newcommand{\Rey}{\mathrm{Re}}
\newcommand{\adv}{\mathrm{adv}}
\newcommand{\dif}{\mathrm{dif}}
\newcommand{\Ein}{\mathrm{Ein}}
\newcommand{\kin}{\mathrm{kin}}
\newcommand{\lin}{\mathrm{lin}}
\newcommand{\Rest}{\mathfrak{R}}
\newcommand{\ttC}{\mathsf C}
\begin{document}

\title{Vanishing viscosity limit for axisymmetric vortex rings}

\author{Thierry Gallay and Vladim\'ir \v{S}ver\'ak}


\maketitle

\begin{abstract}
For the incompressible Navier-Stokes equations in $\R^3$ with low viscosity
$\nu>0$, we consider the Cauchy problem with initial vorticity $\omega_0$ that
represents an infinitely thin vortex filament of arbitrary given strength
$\Gamma$ supported on a circle. The vorticity field $\omega(x,t)$ of the
solution is smooth at any positive time and corresponds to a vortex ring of
thickness $\sqrt{\nu t}$ that is translated along its symmetry axis due to
self-induction, an effect anticipated by Helmholtz in 1858 and quantified by
Kelvin in 1867. For small viscosities, we show that $\omega(x,t)$ is
well-approximated on a large time interval by $\omega_\lin(x-a(t),t)$, where
$\omega_\lin(\cdot,t)=\exp(\nu t\Delta)\omega_0$ is the solution of the heat
equation with initial data $\omega_0$, and $\dot a(t)$ is the instantaneous
velocity given by Kelvin's formula. This gives a rigorous justification of the
binormal motion for circular vortex filaments in weakly viscous fluids. The
proof relies on the construction of a precise approximate solution, using a
perturbative expansion in self-similar variables. To verify the stability of
this approximation, one needs to rule out potential instabilities coming from
very large advection terms in the linearized operator.  This is done by adapting
V.~I.~Arnold's geometric stability methods developed in the inviscid case
$\nu=0$ to the slightly viscous situation. It turns out that although the
geometric structures behind Arnold's approach are no longer preserved by the
equation for $\nu > 0$, the relevant quadratic forms behave well on larger
subspaces than those originally used in Arnold's theory and interact favorably
with the viscous terms.
\end{abstract}

\section{Introduction and main result}\label{sec1}

We consider the Cauchy problem for the 3d incompressible Navier-Stokes equations
in the vorticity form
\begin{align}
  \partial_t \omega + u\cdot\nabla \omega - \omega\cdot\nabla u  \,&=\, \nu\Delta\omega
  \quad \mbox{in}~\, \R^3\times(0,\infty)\,,\label{CP1}\\
  \omega|_{t=0}\,&=\,\omega_0\quad\quad \mbox{in} ~\,\R^3\,,\label{CP2}
\end{align}
where we use the familiar notation $\omega(x,t)$ for the vorticity of the fluid,
and the velocity $u(x,t)$ is given by the Biot-Savart law
$u(x,t) = (4\pi)^{-1}\int_{\R^3}\omega(y,t)\wedge(x-y)\, |x-y|^{-3}\dd y\,.$ Our
focus is on the special case where the initial vorticity $\omega_0 = \Gamma \delta_{\ttC}$
is an idealized vortex filament given by a current\footnote{Here the term
  {\it current} can be understood in its heuristic meaning but also in the
  technical meaning of the geometric measure theory, such as in~\cite{Federer}.}
of strength $\Gamma$ concentrated on an oriented circle $\ttC\subset\R^3$. More
precisely, $\omega_0$ is the vector-valued measure on $\R^3$ defined by the identity
\begin{equation}\label{i2}
  \langle \omega_0\,,\,\vf\rangle \,=\, \Gamma \sum_{i=1}^3\int_{\ttC}\vf_i\dd x_i\,,
\end{equation}
which is assumed to hold for any continuous test function
$\vf=(\vf_1,\vf_2,\vf_3)$. In the well-known analogy between fluid mechanics and
electromagnetism, $\omega_0$ can be thought of as an electric current of
intensity $\Gamma$ flowing through an infinitely thin wire represented by the
circle $\ttC$; the direction of the current is then given by the orientation of the
circle and the sign of $\Gamma$.  Vortex filaments were already considered in
the classical 1858 paper of Helmholtz~\cite{Helmholtz} which deals with the
inviscid case $\nu=0$ corresponding to the Euler equation. Helmholtz argued that
a circular vortex filament of zero thickness would move with infinite speed
along its symmetry axis. In 1867 Kelvin~\cite{Kel} established the following
formula for the speed of a vortex ring of small but finite thickness $d>0$ and
radius $r_0 \gg d$\:
\begin{equation}\label{i3}
  V \,\approx\, \frac{\Gamma}{4\pi r_0}\Bigl(\log\frac{8r_0}{d} - C\Bigr)\,,
\end{equation}
where $C \in \R$ is a dimensionless constant that depends on the distribution of vorticity
inside a cross section of the ring. If the distribution is uniform, which is probably
the assumption made by Kelvin, the relevant value is $C = \frac14$, see
\cite[\S 163]{Lam}. 

\begin{figure}[ht]
  \begin{center}
  \begin{picture}(400,140)
  \put(50,-10){\includegraphics[width=1.00\textwidth]{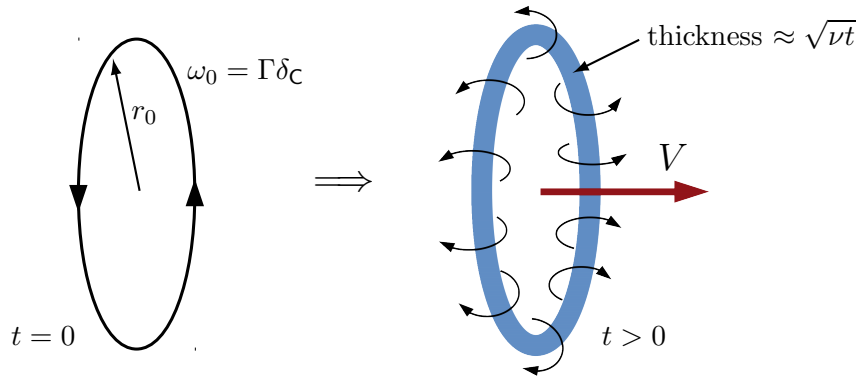}}
  \put(284,76){\Large $V$}
  \put(87,95){$r_0$}
  \put(280,123){thickness $\approx \sqrt{\nu t}$}
  \put(42,10){$t = 0$}
  \put(263,10){$t > 0$}
  \put(108,110){$\omega_0 = \Gamma \delta_{\ttC}$}
  \put(155,68){\Large $\Longrightarrow$}
  \thicklines
  \put(90,68){\vector(-0.2,1){10.0}}  
  \put(277,125){\vector(-1,-0.5){25.0}}
  \end{picture}
  \caption{{\small
  An illustration our main result. Starting from a vortex filament
  supported on an oriented circle $\ttC$, the solution of the Navier-Stokes equation
  evolves into a viscous vortex ring of thickness proportional to $\sqrt{\nu t}$ which
  moves along the symmetry axis at a speed $V$ given by the Kelvin-Saffman formula
  \eqref{i4}. In the right picture, the vortex lines are circles that fill the solid
  torus depicted in blue, whereas the black arrows denote the trajectories of the 
  fluid particles. }}
  \label{fig1}
  \end{center}
\end{figure}

In the viscous case $\nu > 0$, the solution originating from the singular
filament $\omega_0 = \Gamma\delta_{\ttC}$ becomes smooth for any positive time
$t > 0$ and is expected to represent a viscous vortex ring of thickness
proportional to $\sqrt{\nu t}$, as long as that quantity is small compared to
the radius $r_0$ of the ring.  Based on Kelvin's formula one anticipates that
the vortex ring will move at the (time-dependent) speed~\eqref{i3} with
$d=\sqrt{\nu t}$ and $C$ corresponding to a Gaussian distribution of vorticity
inside the core. To the best of our knowledge, the relevant value of the constant
$C$ was first determined by Saffman in~\cite{Sa}. The computation gives $C =
\frac32 \log(2) +\frac12(1-\gamma_E)$, where $\gamma_E\approx 0.5772...$ is
Euler's constant.\footnote{ Fraenkel's paper~\cite{Fr1} contains formulae that
  can be used to obtain the same result. Tung and Ting in~\cite{TT} also give a
  formula for $C$ of a similar nature, which however needs a small correction.}
We will refer to the formula
\begin{equation}\label{i4}
  V(t) \,=\, \frac{\Gamma}{4\pi r_0}\left(\log\frac{8r_0}{\sqrt{\nu t}}
  -\frac32\log 2-\frac12(1-\gamma_E)\right)
\end{equation}
as the {\em Kelvin-Saffman formula} for the speed of a viscous vortex ring. 

When the initial circle $\ttC$ is parametrized by $(r_0\cos \theta,r_0\sin\theta,0)$
for $\theta\in[0,2\pi]$, with the orientation in the direction of increasing
$\theta$, the translational motion will be in the positive direction along the
$x_3$-axis if $\Gamma > 0$.

It is proved in~\cite{GS2} that the Cauchy problem~\eqref{CP1},~\eqref{CP2} with
$\omega_0=\Gamma\delta_{\ttC}$ has a unique solution in natural classes of
axisymmetric fields. The main result of the present paper, Theorem~\ref{main1} below,
describes the precise behavior of that solution in the low viscosity regime
where the {\em circulation Reynolds number} $\Rey := \Gamma/\nu$ is large. Our
description is valid on a time interval whose length is intermediate between the
{\em advection time} and the {\em diffusion time}, defined respectively as
\begin{equation}\label{timescales}
  T_\adv \,=\, \frac{r_0^2}{\Gamma}\,, \qquad  T_\dif \,=\, \frac{r_0^2}{\nu}\,.
\end{equation}
Note that $T_\adv \ll T_\dif$ when $\Rey \gg 1$. The leading term in our
approximation is exactly the one suggested by the Kelvin-Saffman formula
together with the simplest diffusion heuristics: The ring diffuses according to
the linear heat equation, and translates with speed \eqref{i4} along its
symmetry axis. Denoting by $\omega_\lin(x,t)$ the solution of the heat equation
$\partial_t\omega=\nu\Delta \omega$ with initial condition $\omega|_{t=0}=\omega_0=
\Gamma\delta_{\ttC}$, and defining $\|\eta\| = \|\eta/r\|_{L^1(\R^3)}$, where
$r=r(x)$ is the distance from $x$ to the symmetry axis, we can state our main
result as follows.

\begin{thm}\label{main1}
There exist dimensionless constants $K > 0$, $R_0 > 0$, and $\sigma \in (0,\frac13)$ 
such that, for all $\Gamma > 0$, all $r_0 > 0$, and all $\nu > 0$ satisfying 
$\Rey := \Gamma/\nu \ge R_0$, the following holds. If $\omega_0=\Gamma\,\delta_{\ttC}$ 
where $\ttC$ is an oriented circle of radius $r_0$, the unique axisymmetric solution 
$\omega$ of the Cauchy problem~\eqref{CP1},~\eqref{CP2} established in \cite{GS2} can 
be expressed for $t \in (0,T_\adv\,\Rey^\sigma)$ as 
\begin{equation}\label{nseapp}
  \omega(x,t) \,=\, \omega_\lin(x-a(t),t) \,+\, \omega_\corr(x,t)\,,
  \quad \hbox{with}\quad  \|\omega_\corr(\cdot\,,t)\| \,\le\, K\,\Gamma\,
  \biggl(\frac{\sqrt{\nu t}}{r_0}\biggr)^{\!1-3\sigma}\,,
\end{equation}
where $a(t)$ describes the translation of the ring along its symmetry axis
according to the Kelvin-Saffman formula \eqref{i4}. Specifically, if
$\ttC = \{(r_0\cos\theta,r_0\sin\theta,0)\,;\, \theta \in [0,2\pi]\}$ is
oriented positively, one has $a(t) = (0,0,a_3(t))$ where
$a_3(t) = \int_0^t V(s)\dd s$ and $V$ is given by \eqref{i4}.
\end{thm}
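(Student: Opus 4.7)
The first step is to work in axisymmetric coordinates, writing the vorticity as $\omega = \omega_\vt(r,z,t)\hat e_\vt$, and to zoom on the cross-section of the ring by the viscous scale. Setting $\xi = (r-r_0)/\sqrt{\nu t}$ and $\eta = (z-a_3(t))/\sqrt{\nu t}$, the heat solution $\omega_\lin(x-a(t),t)$ becomes a stationary Gaussian (the two-dimensional Lamb--Oseen vortex with strength $\Gamma$), while the full vorticity equation in $(\xi,\eta,\tau=\log t)$ takes the form of a 2D vorticity equation perturbed by curvature corrections of size $\epsilon(t) = \sqrt{\nu t}/r_0$. Since $\epsilon(t)^2 = (t/T_\adv)\,\Rey^{-1}$, this parameter stays below $\Rey^{(\sigma-1)/2}$ on the interval $(0,T_\adv \Rey^\sigma)$ and is genuinely small for $\Rey \ge R_0$.

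\textbf{Construction of an approximate solution.} Next I would build a formal expansion
\[
\omega_\app \,=\, g_0 + \epsilon\, g_1 + \epsilon^2 \log(1/\epsilon)\, g_{2,L} + \epsilon^2 g_2 + \cdots\,, \qquad
\dot a_3(t) \,=\, V_0(t) + V_1(t)\log(1/\epsilon) + \cdots\,,
\]
substitute into the vorticity equation, and solve order by order. At each step one faces an elliptic problem involving the linearization of the planar vorticity equation about the Lamb--Oseen profile $g_0$; its Fredholm solvability against the translation eigenmode fixes the coefficient $V_k(t)$. The leading logarithmic divergence comes from the far-field expansion of the 3D Biot--Savart kernel along a circular wire, and matching it against the Gaussian inner profile reproduces exactly the Kelvin--Saffman formula \eqref{i4} with $C = \frac{3}{2}\log 2 + \frac12(1-\gamma_E)$. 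Pushing the expansion to order $N$ produces a residual $\Rem_N$ smaller than any fixed power of $\epsilon$ we need.

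\textbf{Stability (the main obstacle).} Writing $\omega = \omega_\app + \omega_\corr$, the correction obeys
\[
\partial_t \omega_\corr + u_\app\cdot\nabla\omega_\corr - \omega_\app\cdot\nabla u_\corr - \omega_\corr\cdot\nabla u_\app \,=\, \nu\Delta \omega_\corr + \Rem_N + N(\omega_\corr)\,,
\]
and the trouble is that the dominant advection coefficient $u_\app$ has magnitude $\Gamma/\sqrt{\nu t}$ near the vortex core --- enormously larger than the viscous dissipation $\nu\Delta$ --- so it cannot be treated as a perturbation. The saving observation, and where Arnold's inviscid ideas enter, is that this large transport is essentially rotation along the closed streamlines of $g_0$ and therefore annihilates a suitable quadratic functional $Q(\omega_\corr)$ built from the second variation of the energy-Casimir for $g_0$. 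The plan is to extend $Q$ beyond Arnold's original domain to a functional space compatible with the weighted $L^1$-norm $\|\cdot\|$, and to show that the viscous term $\nu\Delta$ contributes to $(d/dt)Q$ with a favorable sign, up to commutators controlled by the dissipation itself. This yields a coercive differential inequality for $Q(\omega_\corr(t))$ despite the huge advection.

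\textbf{Closing the bootstrap.} Equipped with such a Lyapunov estimate, I would run a bootstrap on $(0,T_\adv \Rey^\sigma)$: assuming $\|\omega_\corr(t)\| \le 2K \Gamma \,\epsilon(t)^{1-3\sigma}$, the axisymmetric Biot--Savart law gives matching control on $u_\corr$, the quadratic term $N(\omega_\corr)$ is absorbed using the assumed smallness, and $\Rem_N$ is rendered negligible by choosing $N$ large enough. Integrating the Lyapunov inequality for $Q$ in time then improves the constant from $2K$ back to $K$, closing the bootstrap. The loss hidden in the exponent $1-3\sigma$ comes from the $\Rey^\sigma$ length of the time interval together with the algebraic growth of higher-order corrections in the expansion --- which is exactly why $\sigma$ must be taken small, in particular less than $1/3$.
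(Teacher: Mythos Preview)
Your outline captures the architecture correctly --- self-similar variables, a high-order expansion yielding Kelvin--Saffman from a solvability condition, an Arnold-type functional for stability, and a bootstrap --- but there is a genuine gap in the stability step. You propose to build $Q$ from the second variation of the energy-Casimir \emph{for $g_0$}, i.e.\ around the Gaussian Lamb--Oseen profile. That is not enough here. The large advection term in the perturbation equation is driven by the stream function $\phi_*$ of the \emph{full} approximate solution $\eta_*$, in a frame moving at the ring speed; the cancellation you need is that the weight $W_\epsilon$ satisfy $\{W_\epsilon(1+\epsilon R),\,\phi_* - \tfrac{\bar r\dot{\bar z}_*}{2\Gamma}(1+\epsilon R)^2\}\approx 0$. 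This holds only if $W_\epsilon = (1+\epsilon R)^{-1}\Phi_\epsilon'(\zeta_*)$ where $\Phi_\epsilon$ is the function realizing the approximate steady-Euler relation $\phi_* - \tfrac{\bar r\dot{\bar z}_*}{2\Gamma}(1+\epsilon R)^2 = \Phi_\epsilon(\zeta_*) + \cO(\epsilon\delta+\epsilon^{\gamma_3})$ for $\gamma_3<3$; the residual advection term is then $\cO(\epsilon+\epsilon^{\gamma_3}/\delta)$ after dividing by $\delta$. A weight based on $g_0$ alone (essentially $A(\rho)=\Phi_0'(\eta_0)$) would leave an $\cO(\epsilon/\delta)$ error, and $\epsilon/\delta\sim\delta^{-(1+\sigma)/2}$ is unbounded on your time interval. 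This is also where the constraint $\sigma<\tfrac13$ actually originates: it is the condition $\epsilon^{\gamma_3}\ll\delta$ with $\gamma_3\to 3$, not growth of higher corrections in the expansion.

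A second gap is coercivity. The Arnold functional $E_\epsilon[\tilde\eta]=\tfrac12\int W_\epsilon\tilde\eta^2 - \tfrac12\int\tilde\phi\,\tilde\eta$ is \emph{not} positive definite on all of $\cX_\epsilon$; it controls $\|\tilde\eta\|_{\cX_\epsilon}$ only modulo the zeroth and first moments $\mu_0,\mu_1,\mu_2$. In the paper, $\mu_0$ and $\mu_1$ are handled via the (near-)conservation of circulation and vertical impulse, while $\mu_2$ is forced to vanish by introducing a modulation correction $\delta\dot{\tilde z}(t)$ to the vertical speed and choosing it so that $\int Z\tilde\eta\,\D X\equiv 0$. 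Without this modulation device your Lyapunov inequality for $Q$ cannot close, because the kinetic-energy subtraction in $E_\epsilon$ has a nontrivial kernel containing the translation modes.
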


An extended version of our result, including a more precise approximate solution
and a much stronger control of the correction term, is formulated as
Theorem~\ref{main2} below, after the necessary notation has been introduced in
Section~\ref{sec2}. In particular, the exponent $1-3\sigma$ in~\eqref{nseapp}
can be improved to $1$ if we take into account higher-order corrections to the
Kelvin-Saffman formula.

In Theorem~\ref{main1}, the constants $K$ and $R_0$ are large, whereas the exponent
$\sigma > 0$ is taken small. We conjecture that an approximation result of the form
\eqref{nseapp} remains valid on longer time scales of order $T_{\rm adv} \Rey^{\sigma'}$ 
with $\sigma'$ close to $1$, but we have no proof so far. In view of
\eqref{i3}, the advection time $T_\adv$ can be interpreted as the time needed for a vortex
ring of circulation $\Gamma$ and small (but not infinitesimal) aspect ratio $d/r_0$ to
travel over a distance comparable to its radius $r_0$. In contrast, the diffusion time
$T_\dif = T_\adv\Rey$ is the time at which the diffusion length $\sqrt{\nu t}$ becomes
equal to the radius $r_0$, so that the vortex ring structure is essentially lost. The
assumption that $\Rey \gg 1$ means that the vortex ring can travel along its symmetry 
axis over a very long distance, compared to its radius $r_0$, before being destroyed by 
diffusion. In particular, on the time scale $T = T_\adv\,\Rey^\sigma$ where Theorem~\ref{main1}
provides a rigorous control we find, using \eqref{i4} and \eqref{timescales}, 
\[
  |a(T)| \,=\, \int_0^{T} V(t)\dd t \,=\, \frac{r_0}{4\pi}
  \,\Rey^\sigma\Bigl(\log\bigl(\Rey^{\frac{1-\sigma}{2}}\bigr) + C'\Bigr)\,,
\]
for some constant $C'$. Obviously the quantity in the right-hand side grows boundlessly 
as $\Rey \to +\infty$, even in the limiting case where $\sigma = 0$ and $T = T_\adv$. 

It is instructive to compare the situation for vortex rings with the case of a
rectilinear filament, where the vorticity is initially concentrated on a
straight line $\ell$ in $\R^3$. Let us denote this initial vorticity field by
$\omega_0 = \Gamma\delta_\ell$.  In that case the solution of the full vorticity
equation is given by $\omega(\,\cdot\,,t) = \Gamma e^{\nu t\Delta}\delta_\ell$,
because the nonlinear terms vanish identically due to symmetries when evaluated
on the solution of the heat equation $\partial_t\omega=\nu\Delta\omega$.
Although the evolution of the velocity and the vorticity fields does not look
very dramatic, the fluid particles in the vicinity of $\ell$ do move at very
large speeds when $\nu t$ is small, and the inertial forces in the fluid are
therefore significant.  However, these forces are exactly balanced by the
pressure gradient.

When the rectilinear filament is bent into a vortex ring (as already considered
in Helmholtz's 1858 paper), the inertial forces are no longer in balance and the
ring has to move. To achieve a relatively smooth motion, the bent vortex has to
be ``well-prepared" so that the inertial forces generated by the fast-moving
fluid particles are still mostly canceled and do not generate fast
oscillations. The initial condition $\omega_0= \Gamma\delta_{\ttC}$ has the
advantage of letting the equation to adjust the vorticity field into a
well-prepared state without trying to achieve this ``by hand". Quite remarkably,
this adjustment is made in exactly such a way that the oscillations are
avoided.\footnote{In the related situation of interacting vortices in $\R^2$,
  this was already observed in~\cite{Ga}.} The largest inertial forces still
cancel and the situation remains somewhat close to the rectilinear case with
only two significant differences: (a) some motion of the ring along its axis of
rotational symmetry is needed to balance the forces, but the speed of this
motion is much lower than the speed of the fast particles in the fluid; (b) once
the thickness of the ring becomes comparable to its radius, new effects (not
discussed in this work) appear.

Theorem~\ref{main2} can be compared with a result by Brunelli and Marchioro
\cite{BrunMar}, where the authors consider general axisymmetric vorticities that
are initially supported in a torus of major radius $r_0 > 0$ and minor radius
$0 < \rho_0 \ll r_0$. Under certain technical assumptions, they show that the
solution of the Navier-Stokes equations remains essentially concentrated in a
thin torus which moves along the symmetry axis according to Kelvin's law. If the
vortex strength $\Gamma$ is kept fixed, the solution is under control on a time
interval of length $T\log(r_0/\rho_0)^{-1}$, which therefore shrinks to zero as
$\rho_0 \to 0$. Also, the authors assume that the viscosity satisfies
$\nu T \le \rho_0^2$ (up to a logarithmic correction), so that the viscous
effects can be treated perturbatively. In the same spirit, the case of several
vortex rings with a common symmetry axis was recently considered in \cite
{ButCavMar1}, see also \cite{BenCagMar,ButCavMar2} for similar results in the
inviscid case. Our Theorem~\ref{main2} is restricted to specific initial data,
which correspond to $\rho_0 = 0$, but it provides a more precise control of the
solution on a much longer time scale, and the diffusive effects are not treated
perturbatively.

\subsection{Main ideas of the proof of Theorem~\ref{main1}}

Our analysis starts with the construction of a precise approximation of the
solution $\omega(x,t)$. This is achieved by writing the solution in suitable
self-similar coordinates that capture well the singular behavior of the solution
at $t=0$ through explicit rescalings of a smooth ``profile" $\eta$ that can be
thought of as a perturbation of a suitable Gaussian $\eta_0$. The perturbed
profile $\eta$ is expressed as an asymptotic series in the time-dependent
parameter $\epsilon=\sqrt{\nu t}/{\bar r}$, with $\bar r=\bar r(t)$ denoting the
instantaneous radius of the ring. To achieve a precision that is sufficient for
our purposes, we need an expansion up to the fourth order:
$\eta=\eta_0+\epsilon\eta_1+\epsilon^2\eta_2+\epsilon^3\eta_3+\epsilon^4\eta_4
+\eta_\corr$. The profiles $\eta_j$ with $j\ge 1$ are obtained by inverting
operators containing the small parameter $\delta = 1/\Rey =\nu/\Gamma$, and in
that sense we really deal with a two-parameter expansion. As far as we know,
this is somewhat different from other expansions in the literature, such as
\cite{CT,Sa,FM}.  A one-parameter formal expansion in $\epsilon$ would treat
$\delta$ as $\sim\epsilon^2$, in view of the relation
$\bar r^2 \epsilon^2=\delta\,\Gamma\,t$. Keeping both parameters makes it easier
to cover the regimes when $\epsilon^2$ and $\delta$ are not really comparable,
as is the case for very small and very large times. For the sake of
completeness, we mention that the vorticity profiles $\eta_j$ for $j\ge 1$ can
also depend on $\log\epsilon$. That phenomenon is well known, and the leading
term in the speed of the ring is precisely related to choosing a moving
coordinate system in which the terms with $\log\epsilon$ in $\eta_1$ are
eliminated.

The main difficulty in the proof of Theorem~\ref{main1}, however, is not in the
computation of an approximate solution, but in showing that the true solution
remains close to this approximation on a large time interval. This requires
fairly strong stability properties for the linearization of the vorticity
equation at the approximate solution, which is very singular in the low
viscosity regime. When the initial condition corresponds to a finite number of
parallel rectilinear vortices, a stability analysis was carried on in~\cite{Ga}
by using suitable weighted $L^2$ spaces adapted to the specific features of the
rectilinear vortices with Gaussian profiles.  In the vortex ring case the
nonlinearity of the equations starts affecting the formal expansions earlier and
it is unclear whether the setup in \cite{Ga} can be used to show that the vortex
ring will not disintegrate on time-scales approaching zero as $\nu\to 0$. A
recent important work~\cite{Bedrossian-et-al} extends some of the 2d methods for
proving stability to a relevant 3d situation, but the length of the time
interval over which the solution is under control may approach $0$ as $\nu$
tends to $0$.
 
In physical flows and numerical experiments one observes a remarkable degree of
stability of vortex rings as well as signs of instabilities with respect to
non-axisymmetric perturbations, see for example~\cite{Widnall-Sullivan,
  Maxworthy}. At a rigorous mathematical level the stability issues have not
been well understood.  In fact, when $\Gamma/\nu$ is not small, not only the
stability, but even the uniqueness of the solutions of the Cauchy problem above
with $\omega_0=\Gamma\,\delta_{\ttC}$ (and also with $\omega_0=\Gamma\,\delta_\ell$) is
open in classes of solutions that do not share the symmetry of the initial data.
 
In the 1960s, V.~I.~Arnold suggested a variational method for proving stability
of steady solutions to Euler's equation based on a geometric insight that can
be summarized as follows, using the Hamiltonian setup of~\cite{Marsden-Weinstein}: 
 
\smallskip\noindent
(a) The incompressible Euler equation can be viewed as a Poisson system with a
Hamiltonian function given by the usual kinetic energy. 
 
\smallskip\noindent
(b) The steady states are critical points of the energy on the symplectic
leaves. The latter coincide with the {\it coadjoint orbits}, called just {\it orbits} 
in what follows, of the group of the volume-preserving diffeomorphisms 
of the fluid domain acting by push-forward on the vorticity fields. 

\smallskip\noindent
(c) When the critical point is a local maximum or a local minimum on an orbit,
the corresponding steady state should be stable.

\smallskip
These ideas fit into a broader family of methods used for proving stability
of solutions of Hamiltonian systems by invoking extremality properties of
a conserved quantity under constraints given by other conserved quantities. For
example, a circular planetary orbit in the three-dimensional Kepler problem is
stable because it minimizes energy for a given angular momentum.\footnote{It is 
well-known that this is no longer the case in dimension four and higher \cite{Gol}.} 
In the applications to vortex rings, it is natural to restrict the analysis to 
{\em axisymmetric flows with no swirl}, which means that the velocity field is 
invariant under rotations about a symmetry axis and under reflection across any 
plane containing that axis. 

Arnold's method has found many applications to Euler flows in 2d (see, for
example, \cite{Khesin}), and has also been invoked in the work of
Benjamin~\cite{Ben} on inviscid vortex rings that is directly relevant for our
purposes here.  Although some arguments in~\cite{Ben} may not be fully rigorous,
they provide important suggestions for investigating stability of inviscid vortex
rings in the class of axisymmetric solutions. In a different direction, the conservation 
of energy, impulse, and vortex strength has been used to control the evolution 
of a general class of concentrated solutions of the Euler equations describing 
vortex rings, see for example~\cite{BenCagMar,ButCavMar2}.
 
There is voluminous literature on the stationary vortex ring solutions of the
Euler equation, starting with the explicit solution of Hill \cite{Hill},
see e.g. \cite{Ambrosetti-Struwe,AT,BB,Bu, Cao1, Cao2, Cao3, Cao4, Fr1,Fr2,Fraenkel-Berger,FT,Ni,Nor,VS}.
Many of these works rely in one way or another on variational aspects of the 
underlying PDEs that have connections to the work of Arnold and Benjamin, albeit
in an indirect way. Roughly speaking, if we compare Arnold's setup 
to the maximization of a function $f(x)$ under constraints $g_j(x)=c_j$, one can 
compare some of the variational approaches in the references above to searching 
for critical points of $f(x)-\lambda_1 g_1(x)-\dots-\lambda_mg_m(x)$ when the 
Lagrange multipliers $\lambda_1, \dots, \lambda_m$ are given. Readers interested 
in related links can find more details in~\cite{GS3}.

In our asymptotic expansions of the solutions of~\eqref{CP1},~\eqref{CP2}
inviscid vortex ring solutions can also be discerned. For each fixed time $t>0$
the third-order expansion in our parameter $\epsilon = \sqrt{\nu t}/{\bar r}$ is
a good approximation of an inviscid vortex ring, at least in the limiting case
where our second parameter $\delta=\nu/\Gamma$ is taken equal to zero. A part of
our stability analysis can be thus understood in terms of the stability
properties of this ring, see Remark~\ref{remEuler} and Section~\ref{ssec38} for
more details.
  
If one wishes to apply Arnold's ideas to the solutions of~\eqref{CP1},~\eqref{CP2},
there appears to be a non-trivial obstacle: The viscous flows do not preserve
the geometric structures that are at the basis of Arnold's considerations and
the influence of the viscosity is too large to treat the viscous terms
perturbatively. At first this may seem to be a serious problem: If the
preservation of the orbits and the Hamiltonian nature of the equations are
violated beyond the reach of the perturbative approach (such
as~\cite{BrunMar, ButCavMar1}), can the geometric structure relying on
maximization of the energy on symplectic leaves be helpful?  In our previous
work~\cite{GS3} we showed, in a much simpler situation, that the answer to this
question can be positive. It turns out that the quadratic forms coming up in
Arnold's stability analysis, although originally envisaged as quadratic forms on
the tangent spaces to the orbits, are often well-behaved on much larger
subspaces. This point can still be conceptually explained by the geometry of the
Euler equation.  What we find more surprising is that Arnold's forms also have
favorable behavior with respect to the dissipative term generated by the
viscosity. We can show this by direct calculation, but we do not have a good
conceptual explanation of this fortuitous circumstance.  In the paper~\cite{GS3}
we showed that the above ideas can be used to prove the stability of the
rectilinear vortex solution (in self-similar variables) with respect to
perturbations for which the vorticity field stays parallel to the original
vortex line.  This result has been established previously by a different method
\cite{GW2}. The new proof in~\cite{GS3} can be thought of as a proof of concept
that the ideas of Arnold can be applied even in the presence of viscosity. The
application to vortex rings presented here is more complicated, but we are not
aware of any simpler approach in that case.

To conclude this section, we mention a recent important work by D\'avila, Del
Pino, Musso, and Wei~\cite{DDMW2}, where the authors rigorously establish 
``leapfrogging'' of inviscid vortex rings. The construction uses ``gluing methods'' 
that were previously developed in \cite{DDMW1} to study the interaction of vortices 
in the plane. The approach shares similarities with ours, as it relies on the 
construction of accurate approximate solutions and their stability analysis.
The stability part also uses an Arnold-type energy functional, although the connection 
to Arnold's geometric viewpoint is not explicit.  In the inviscid case, the expansion 
parameter $\epsilon > 0$ does not need to change during the motion, and the solution is 
controlled on a time interval of size $T/|\log\epsilon|$. This is shorter than in 
Theorem~\ref{main1}, but our result is restricted to a single vortex ring, and uses 
viscous effects. One expects the viscosity to have a stabilizing role, but its interaction 
with the geometric structures of Arnold requires a careful analysis. One needs to show 
that the solutions will stay ``coherent" for a sufficiently long time and the viscous 
effects will not be enhanced too much by the high velocities inside and near the ring.

Another description of the leapfrogging motion of vortex rings, in a different parameter 
regime, can be found in \cite{ButCavMar3}.

\subsection{Comments on the local induction approximation for general filaments}

The problem studied in this paper can be considered as a special case of
the viscous version of the {\it local induction approximation conjecture}. In the
setup considered here the conjecture could be formulated as follows:
if we replace the circle $\ttC$ be a general smooth closed curve and
consider the Cauchy problem~\eqref{CP1}, \eqref{CP2} with $\omega_0=\Gamma\delta_\ttC$,
the motion of the filament $\ttC$ should still be determined essentially 
by two effects: the diffusion, which transforms the filament into a
vortex tube of thickness $d(t)\approx\sqrt{\nu t}$ at time $t$, and the
advection by the self-induced velocity field. The latter is described by
a geometric equation that represents an extension of Kelvin's formula to general
smooth curves, and was derived by Da Rios~\cite{DaRios} in 1906:
\begin{equation}\label{LIA}
  {\mathbf V} \,\approx\, \biggl(\frac{\Gamma}{4\pi r}
  \log\frac{8r}{d}\biggr)\,{\mathbf b}\,.
\end{equation}
Here ${\mathbf V}$ is the vector representing the local velocity of the filament, 
${\mathbf b}$ denotes the local binormal vector, $r$ is the local radius of
the curvature, and $d$ denotes the local thickness of the filament. (All these
quantities may be time- and position-dependent.) In the limit $\nu\to 0$
the approximation should be valid until the geometric evolution of the
curve by the binormal flow leads to a self-intersection. For
general initial curves $\ttC$ the time of the first self-intersection may be
approaching zero as $\nu$ approaches zero.  The first significant step towards
this general case, a local-in time well-posedness result for a fixed $\nu>0$,
was obtained in~\cite{Bedrossian-et-al}. Some formal computations related to the
conjecture are presented in~\cite{CT} and we also refer the reader to the
important conditional result in~\cite{Jerrard-Seis}.
Our result can be viewed as a proof of the viscous formulation of the conjecture 
in the special case where the curve $\ttC$ is a circle. 

For a general smooth curve $\ttC$ and a sufficiently small Reynolds number
$\Gamma/\nu$, the Cauchy problem~\eqref{CP1}, \eqref{CP2} is globally well-posed
as first shown in~\cite{GM} by a perturbation analysis, see also~\cite{KT} for a
more general result in the same spirit. Accurate calculations in the recent
noteworthy preprint~\cite{FontVega} suggest that even in these perturbative
regimes the motion by the local induction approximation can still be discerned,
although its effect is small and the distance traveled by the ring due to the
velocity field~\eqref{LIA} seems to be quite shorter than its thickness.

The general case of the local induction approximation conjecture for the setup
considered in this paper seems to be difficult. In fact, it is unclear whether
the strongest version of the conjecture is valid even for small perturbations
of the circle, as the perturbed filaments may perhaps become unstable to
general 3d perturbations before possible self-intersections. For example, the
instabilities studied in~\cite{Widnall-Sullivan, Maxworthy} may be relevant.

\section{Preliminaries and sketch of the proof}\label{sec2}

In this section we introduce the notation that is necessary to formulate our
result in its stronger form, and we give a pretty detailed sketch of the overall
proof. The construction of the approximate solution will be performed in
Section~\ref{sec3}, and the stability analysis in Section~\ref{sec4}.  Technical
calculations are postponed to Appendix~\ref{secA} and \ref{secB}.

\subsection{Formulation of the problem in  cylindrical coordinates}\label{ssec21}

In a suitable Cartesian coordinate system, the circle of radius $r_0 > 0$ which
represents the support of the initial vorticity \eqref{i2} is given by $\ttC =
\{(r_0\cos\theta,r_0\sin\theta,0)\,;\,\theta\in[0,2\pi]\}$. Due to the symmetries
of the problem, it is natural to introduce the standard cylindrical coordinates
$(r,\theta,z)$ defined by $x_1 = r\cos\theta$, $x_2 = r\sin\theta$, $x_3 = z$ and
to restrict our attention to velocity and vorticity fields of the form
\begin{equation}\label{uaxi}
  u(x,t) \,=\, u_r(r,z,t) e_r + u_z(r,z,t) e_z\,, \qquad
  \omega(x,t) \,=\, \omega_\theta(r,z,t)e_\theta\,,
\end{equation}
where $e_r, e_\theta, e_z$ denote unit vectors in the radial, azimuthal, and
vertical directions, respectively. In the usual terminology, we thus consider
{\em axisymmetric flows with no swirl}, see \cite{MB}. Due to the incompressibility
condition $\div u := r^{-1}\partial_r (ru_r) + \partial_z(u_z) = 0$, the velocity
components $u_r,u_z$ can be expressed in terms of
the Stokes stream function $\psi$\:
\begin{equation}\label{psidef}
  u_r \,=\, -\frac{1}{r}\, \partial_z \psi\,, \qquad 
  u_z \,=\, \frac{1}{r}\, \partial_r \psi\,. 
\end{equation}
With this notation the vorticity formulation of the Navier-Stokes equation~\eqref{CP1}
becomes
\begin{equation}\label{omeq}
  \partial_t \omega_\theta + \Bigl\{\psi,\frac{\omega_\theta}{r}\Bigr\} \,=\,
  \nu\Bigl[\bigl(\partial_r^2 + \partial_z^2)\omega_\theta +
  \partial_r \frac{\omega_\theta}{r}\Bigr]\,,
\end{equation}
where $\{\cdot,\cdot\}$ is the Poisson bracket defined by $\{\psi,\phi\}  = \partial_r \psi
\,\partial_z \phi - \partial_z \psi \,\partial_r \phi$. Eq.~\eqref{omeq} is to be solved
in the domain $\Omega = \{(r,z) \in \R^2\,|\, r > 0\}$. The smoothness of the fields in
the original variables imposes the ``boundary conditions" $\omega_\theta(r,z,t) =
r\zeta(r,z,t)$ and $\psi(r,z,t) = r^2\Psi(r,z,t)$ near $r = 0$, where $\zeta$ and $\Psi$
can be extended to smooth functions on $\R^2 \times \R_+$ that are even functions
of $r$. 

The Stokes stream function  can be represented in terms of the vorticity $\omega_\theta 
= \partial_z u_r - \partial_r u_z$ by the Biot-Savart law
\begin{equation}\label{BSpsi}
  \psi(r,z) \,=\, \frac{1}{2\pi}\int_\Omega \sqrt{r \bar r}\,
  F\left(\frac{(r-\bar r)^2 + (z-\bar z)^2}{r\bar r}\right)
  \omega_\theta(\bar r,\bar z)\dd\bar r \dd\bar z\,,
\end{equation}
where $F : (0,\infty) \to \R$ is
defined by 
\begin{equation}\label{Fdef}
  F(s) \,=\, \int_0^{\pi/2} \frac{1-2\sin^2\psi}{
  \sqrt{\sin^2\psi + s/4}}\dd\psi\,, \qquad s > 0\,.
\end{equation}
Formula \eqref{BSpsi} provides a solution to the equation
\begin{equation}\label{em-pom}
  \curl\curl\left(\frac{\psi}{r}\,e_{\theta}\right) \,=\, \omega_{\theta}\,e_{\theta}
  \quad \hbox{or, equivalently,}\quad -\partial_r\Bigl(\frac{\partial_r \psi}{r}\Bigr)
  - \frac{\partial_z^2\psi}{r} \,=\, \omega_\theta\,,
\end{equation}
which is familiar in magnetostatics, see for example \cite[\S 701]{Max}. The same
expression can also be found in the classical book\cite[\S 161]{Lam}.
It is well-known (and not hard to check) that
\begin{equation}\label{Fexpand}
  F(s)\,=\,
  \begin{cases} \log\frac{8}{\sqrt{s}} - 2 + \cO(s \log s)
  & \hbox{as }s \to 0\,,\\ \frac{\pi}{2s^{3/2}} + \cO(s^{-5/2})
  & \hbox{as }s \to \infty\,. \end{cases}
\end{equation}

Since we wish to solve the Cauchy problem \eqref{CP1}, \eqref{CP2} with initial
data $\omega_0 = \Gamma\delta_{\ttC}$, we assume that the vorticity $\omega_\theta$
in \eqref{uaxi} satisfies the initial condition
\begin{equation}\label{omin}
  \omega_{\theta}\Big|_{t=0} \,=\, \Gamma \delta_{(r_0,0)}\,,
\end{equation}
where $\delta_{(r_0,z_0)}$ denotes the Dirac mass at the location $(r_0,z_0) \in \Omega$. 
Our starting point is the following global well-posedness result for the vorticity
equation \eqref{omeq} with such initial data. 

\begin{thm}\label{uniquethm} {\bf \cite{GS2}}
For any $\Gamma > 0$, any $\nu > 0$, and any $(r_0,z_0) \in \Omega$,
the axisymmetric vorticity equation~\eqref{omeq} has a unique global mild
solution $\omega_\theta \in C^0((0,\infty),L^1(\Omega) \cap L^\infty(\Omega))$
such that
\begin{equation}\label{omcond}
  \sup_{t > 0} \|\omega_\theta(t)\|_{L^1(\Omega)} \,<\, \infty\,,
  \qquad \hbox{and}\qquad \omega_\theta(t)\dd r\dd z \weakto
  \Gamma \,\delta_{(r_0,z_0)} \quad \hbox{as }t \to 0\,.
\end{equation}
Moreover there exists a constant $C > 0$, depending only on the ratio
$\Gamma/\nu$, such that
\begin{equation}\label{short-time}
  \int_\Omega \,\Bigl|\omega_\theta(r,z,t) - \frac{\Gamma}{4\pi\nu t}
  \,e^{-\frac{(r-r_0)^2+(z-z_0)^2}{4\nu t}}\Bigr|\dd r \dd z \,\le\, C\,\Gamma
  \,\frac{\sqrt{\nu t}}{r_0}\,\log\Bigl(\frac{r_0}{\sqrt{\nu t}} + 1\Bigr)\,,
\end{equation}
whenever $t \in (0,T_\dif)$, where $T_\dif = r_0^2/\nu$. 
\end{thm}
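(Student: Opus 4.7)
The plan is to prove the theorem in four stages: reformulate the equation in mild form using the explicit linear semigroup, establish short-time existence near the Dirac datum, propagate to global existence via a priori bounds, and finally derive the quantitative comparison \eqref{short-time} with the Gaussian.

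For the mild formulation, I would introduce the linear operator $L = \partial_r^2 + \partial_z^2 + \partial_r(\cdot/r)$ governing the dissipative part, and observe that for an axisymmetric vector $\omega_\theta e_\theta$ the 3d Laplacian acts precisely as $L\omega_\theta$ in the $e_\theta$ component. The semigroup $S(t) = e^{\nu t L}$ then admits an explicit kernel, most conveniently described by passing to $\zeta = \omega_\theta/r$: then $\zeta$ solves a scalar equation whose dissipative part is the five-dimensional heat equation restricted to radially symmetric data in four of the coordinates. In particular $S(t)\delta_{(r_0,z_0)}$ is computable, is concentrated at scale $\sqrt{\nu t}$ around $(r_0,z_0)$, and differs from the plain 2d Gaussian $(4\pi\nu t)^{-1}e^{-|(r-r_0,z-z_0)|^2/(4\nu t)}$ only by ``curvature'' terms that, as one checks by direct estimation, are of relative size $\sqrt{\nu t}/r_0$ up to logarithms. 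The equation is then written as $\omega_\theta(t) = S(t)\omega_0 - \int_0^t S(t-s)\,\partial_r(u_r\omega_\theta) + \partial_z(u_z\omega_\theta)\dd s$, after using the divergence form of $\{\psi,\omega_\theta/r\}$ coming from $\div u = 0$.

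For short-time existence I would set up a contraction in a space such as $\{\omega_\theta : \sup_{0<t<T}(\|\omega_\theta(t)\|_{L^1} + (\nu t)^{1/2}\|\omega_\theta(t)\|_{L^{4/3}}) \le M\Gamma\}$. The key tool is a Biot–Savart estimate controlling $\|u\|_{L^\infty}$ and $\|u\|_{L^4}$ by appropriate norms of $\omega_\theta$; the kernel $F$ in \eqref{Fdef} has only a logarithmic singularity at $s=0$, and the behavior \eqref{Fexpand} shows that the contribution from the far field is integrable. Since the natural scaling of the equation preserves the circulation $\Gamma$, contraction holds on a time interval whose length depends only on $\Gamma/\nu = \Rey$. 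For uniqueness at the singular initial time (the delicate point), I would follow the Ben-Artzi / Brezis--Vega strategy used for 2d Navier--Stokes with measure initial data: show that any solution in the class \eqref{omcond} satisfies $\|\omega_\theta(t)\|_{L^{4/3}} = o((\nu t)^{-1/4})$ as $t\to 0$, which suffices for a Gronwall comparison of two candidate solutions.

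Global existence then follows from the key structural observation that $\zeta = \omega_\theta/r$ is transported by the (divergence-free, in the 2d sense on $\Omega$) velocity $u_r e_r + u_z e_z$ and is dissipated by a positive second-order operator, giving $\|\zeta(t)\|_{L^p} \le \|\zeta(t_0)\|_{L^p}$ for all $p\in[1,\infty]$ and $t\ge t_0 > 0$ via the maximum principle; combined with the local theory this rules out finite-time blowup. To obtain \eqref{short-time}, I would write $\omega_\theta(t) = G(t) + \Rem(t)$, where $G$ is the Gaussian on the right-hand side of \eqref{short-time}, and use the mild formulation to express $\Rem = (S(t)\omega_0 - G(t)) + \int_0^t S(t-s)(\text{nonlinear term})\dd s$. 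The linear term is bounded directly by expanding $S(t)$ as 2d heat flow plus curvature correction; the nonlinear term is estimated by combining the Biot--Savart bound, the a priori control $\|\omega_\theta(s)\|_{L^1}\le\Gamma$, and the fact that $\omega_\theta$ is essentially supported in a ball of radius $O(\sqrt{\nu s})$ around $(r_0,0)$, which produces the factor $\sqrt{\nu t}/r_0$. The expected main obstacle is the uniqueness step together with extracting the correct $\log(r_0/\sqrt{\nu t}+1)$ prefactor from the logarithmic singularity of $F$, which requires carefully tracking the interaction between the slowly decaying tail of the Biot--Savart kernel and the rapidly decaying vorticity on the scale $\sqrt{\nu t}$.
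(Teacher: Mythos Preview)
The paper does not prove this theorem: it is quoted verbatim from the authors' earlier work \cite{GS2}, as the boldface citation in the theorem header indicates, and the paragraph following the statement explicitly says ``The uniqueness of the vortex ring solution under the minimal assumptions \eqref{omcond} is discussed in some detail in \cite{GS2}''. There is therefore no proof in the present paper to compare your proposal against.

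For what it is worth, your outline is broadly aligned with the strategy of \cite{GS2}, but you should be aware that the uniqueness step is substantially harder than the Ben-Artzi type argument you sketch. The difficulty is that $\Gamma/\nu$ is allowed to be arbitrarily large, so there is no smallness parameter available to close a Gronwall loop directly from the mild formulation; the actual argument in \cite{GS2} passes through self-similar variables and exploits the specific structure of the axisymmetric problem (in particular the good equation satisfied by $\omega_\theta/r$) in a more essential way than a generic $o((\nu t)^{-1/4})$ bound. Similarly, the short-time estimate \eqref{short-time} with its logarithmic factor is obtained in \cite{GS2} via energy estimates in self-similar coordinates rather than by a direct Duhamel bound, and the constant's dependence on $\Gamma/\nu$ is not benign, which is precisely the limitation the present paper sets out to overcome.
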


Here and in what follows, it is understood that $L^1(\Omega) = L^1(\Omega,\D r\dd z)$,
and similarly for the other Lebesgue spaces. Theorem~\ref{uniquethm} establishes the
existence of a four-dimensional family of vortex ring solutions to \eqref{omeq}
parametrized by the circulation $\Gamma > 0$, the viscosity $\nu > 0$, the
initial radius $r_0 > 0$, and the initial vertical position $z_0 \in \R$. Due to
translation invariance in the vertical direction, we may assume without loss of
generality that $z_0 = 0$, and we can also take $r_0 = 1$ by rescaling the space
variables. Then a rescaling of time allows us to change the values of both $\nu$
and $\Gamma$, while keeping the ratio $\Gamma/\nu$ fixed. Therefore, up to symmetries,
the viscous vortex ring solutions we consider here form a {\em one-parameter family}
indexed by the circulation Reynolds number $\Rey := \Gamma/\nu$.

The uniqueness of the vortex ring solution under the minimal assumptions
\eqref{omcond} is discussed in some detail in \cite{GS2}, so we concentrate here
on the short-time estimate \eqref{short-time}, which is of limited use despite
appearances.  For a fixed value of the Reynolds number $\Rey = \Gamma/\nu$, the
right-hand side of \eqref{short-time} is small whenever $t \ll T_\dif$, which
means that the solution of \eqref{omeq} with initial data \eqref{omin} is well
approximated by a Gaussian vortex ring of thickness proportional to
$\sqrt{\nu t}$, located a the {\em initial position} $(r_0, z_0) \in
\Omega$. However, since the constant $C$ depends on the Reynolds number in
a very bad way, estimate \eqref{short-time} gives no information on the solution
at a fixed time $t > 0$ in the low viscosity regime $\nu \to 0$.  This limitation is
not surprising: due to the translational motion along the vertical axis
predicted by the Kelvin-Saffman formula \eqref{i4}, the vortex ring at time $t > 0$ is
actually located at a new position which is rather far from the initial one if
$\nu$ is small.

Our goal in this paper is to replace \eqref{short-time} by an improved estimate
of the form
\begin{equation}\label{fixed-time}
  \int_\Omega \,\Bigl|\omega_\theta(r,z,t) - \frac{\Gamma}{4\pi\nu t}
  \,e^{-\frac{(r-\bar r(t))^2+(z-\bar z(t))^2}{4\nu t}}\Bigr|\dd r \dd z
  \,\le\, K\,\Gamma\,\frac{\sqrt{\nu t}}{r_0}\,, \qquad t \in (0,T_\adv\,\Rey^\sigma)\,,
\end{equation}
where the constant $K$ is now independent of the Reynolds number, if
$\Rey \gg 1$.  Comparing with \eqref{short-time}, we observe that
\eqref{fixed-time} is valid up to the intermediate time
$T_\adv\,\Rey^\sigma$, for some $\sigma \in (0,\frac13)$, which is shorter than
$T_\dif \equiv T_\adv\,\Rey$.  But the main difference is that
\eqref{fixed-time} compares the solution $\omega_\theta(r,z,t)$ to a vortex ring
located at a {\em time-dependent position} $(\bar r(t), \bar z(t))$, which has
to be determined. As we shall see, we can take $\bar r(t), \bar z(t)$ to be  
continuous functions of time which are smooth for $t > 0$ and satisfy 
$\bar r(0) = r_0$, $\bar z(0) = z_0$. Moreover
\begin{equation}\label{vmotion}
  \dot{\bar r}(t) \,=\, \cO\Bigl(\frac{\nu}{r_0}\Bigr)\,, \qquad
  \dot{\bar z}(t) \,=\, \frac{\Gamma}{4\pi r_0}\Bigl(\log \frac{1}{\epsilon(t)}
  + \hat v\Bigr)\Bigl(1 + \cO\bigl(\epsilon(t)^2 + \delta^2\bigr)\Bigr)\,,
\end{equation}
where $\epsilon(t) = \sqrt{\nu t}/{\bar r(t)}$, $\hat v = \frac32\log(2) +
\frac12(\gamma_E - 1)$, and $\delta = \nu/\Gamma$. The first relation in
\eqref{vmotion} implies that $\bar r(t) = r_0\bigl(1 + \cO(\epsilon(t)^2)\bigr)$,
which means that the change in the radius of the vortex ring over the time
interval under consideration is much smaller than the diffusion length
$\sqrt{\nu t}$. The second equality coincides with the Kelvin-Saffman
formula~\eqref{i4}, up to higher order corrections.

\subsection{Self-similar variables}\label{ssec22}

From now on, we fix the circulation $\Gamma > 0$ and the position $(r_0,0) \in \Omega$
of the initial filament, and we consider the vortex ring solution given by
Theorem~\ref{uniquethm}, in the regime where the viscosity $\nu > 0$ is
small. In view of the approximation formula \eqref{fixed-time}, which is
our objective, it is natural to make the following self-similar Ansatz
for the axisymmetric vorticity and the associated Stokes stream function\:
\begin{equation}\label{etadef}
\begin{split}
  \omega_\theta(r,z,t) \,&=\, \frac{\Gamma}{\nu t}\,\eta
  \Bigl(\frac{r-\bar r(t)}{\sqrt{\nu t}}\,,\,\frac{z-\bar z(t)}{\sqrt{\nu t}}
  \,,\,t\Bigr)\,, \\ 
  \psi(r,z,t) \,&=\, \Gamma\, \bar r(t)\,\phi
  \Bigl(\frac{r-\bar r(t)}{\sqrt{\nu t}}\,,\,\frac{z-\bar z(t)}{\sqrt{\nu t}}
  \,,\,t\Bigr)\,,
\end{split}
\end{equation}
where the time-dependent position $(\bar r(t),\bar z(t)) \in \Omega$ has to be
determined. We introduce the important notation
\begin{equation}\label{RZeps}
  \delta \,=\, \frac{\nu}{\Gamma}\,, \qquad
  \epsilon \,=\, \frac{\sqrt{\nu t}}{\bar r(t)}\,, \qquad
  R \,=\, \frac{r-\bar r(t)}{\sqrt{\nu t}}\,, \qquad
  Z \,=\, \frac{z-\bar z(t)}{\sqrt{\nu t}}\,.
\end{equation}
The evolution equation for the rescaled vorticity $\eta(R,Z,t)$ is found to be
\begin{equation}\label{etaeq}
  t\partial_t \eta + \frac{\Gamma}{\nu}\,\Bigl\{\phi\,,\frac{\eta}{1+\epsilon R}\Bigr\}
  - \sqrt{\frac{t}{\nu}}\Bigl(\dot{\bar r}\,\partial_R \eta + \dot{\bar z}
  \,\partial_Z \eta\Bigr) \,=\, \cL \eta + \partial_R \Bigl(\frac{\epsilon
  \eta }{1+\epsilon R}\Bigr)\,,
\end{equation}
where $\bigl\{\phi,\chi\bigr\} = \partial_R \phi \,\partial_Z \chi - 
\partial_Z \phi \,\partial_R \chi$ is the Poisson bracket in the new variables
$(R,Z)$, and $\cL$ is the Fokker-Planck operator
\begin{equation}\label{cLdef}
  \cL \,=\, \partial_R^2 + \partial_Z^2 + \frac12\bigl(R\partial_R + Z\partial_Z
  \bigr) + 1\,. 
\end {equation}
Eq.~\eqref{etaeq} is to be solved in the time-dependent domain
\begin{equation}\label{Omeps}
  \Omega_\epsilon \,=\, \bigl\{(R,Z) \in \R^2\,\big|\, 
  1 + \epsilon R > 0\bigr\}\,,
\end{equation}
with the Dirichlet boundary condition $\eta(-1/\epsilon,Z,t) = 0$ for all $(Z,t) \in \R
\times \R_+$.

As in \cite{GS2}, it is useful to introduce the velocity field $U = (U_R,U_Z)$ defined by
\begin{equation}\label{Udef}
  U_R \,=\, -\frac{\partial_Z\phi}{1+\epsilon R}\,, \qquad 
  U_Z \,=\, \frac{\partial_R\phi}{1+\epsilon R}\,,
\end{equation}
in terms of which the nonlinearity in \eqref{etaeq} reads $\bigl\{\phi\,,\frac{\eta}{1+\epsilon R}
\bigr\} \,=\, \partial_R \bigl(U_R\,\eta) + \partial_Z (U_Z \,\eta)$. The stream 
function $\phi$ in \eqref{etaeq} satisfies the elliptic equation
\begin{equation}\label{phidef}
  \eta \,=\, \partial_Z U_R - \partial_R U_Z \,\equiv\, - \partial_R \Bigl(
  \frac{\partial_R\phi}{1+\epsilon R}\Bigr) -\frac{\partial_Z^2\phi}{1+
  \epsilon R}\,, \qquad (R,Z) \in \Omega_\epsilon\,,
\end{equation}
with boundary conditions $\phi(-1/\epsilon,Z,t) = \partial_R \phi(-1/\epsilon,Z,t) = 0$ for
all $(Z,t) \in \R \times \R_+$. Using \eqref{BSpsi}, we easily obtain the representation
formula \cite{GS2}
\begin{equation}\label{BSeps}
  \phi(R,Z) \,=\, \frac{1}{2\pi}\int_{\Omega_\epsilon}\sqrt{(1{+}\epsilon R)
  (1{+}\epsilon R')} \,F\biggl(\epsilon^2\frac{(R{-}R')^2 + (Z{-}Z')^2}{
  (1{+}\epsilon R)(1{+}\epsilon R')}\biggr)\eta(R',Z')\dd R' \dd Z'\,,
\end{equation}
where $F$ is as in \eqref{Fdef}. In what follows we write $\phi = \BS^\epsilon[\eta]$
when \eqref{BSeps} holds. 

The quantities introduced in \eqref{RZeps} are all dimensionless. The first one
is the inverse Reynolds number $\delta > 0$, a fixed parameter that is assumed
to be small.  The second one is the time-dependent aspect ratio $\epsilon > 0$,
which appears in the evolution equation \eqref{etaeq}, in the definition of the
domain \eqref{Omeps}, and in the Biot-Savart formula \eqref{BSeps}. Finally, the
variables $R$, $Z$ are self-similar coordinates centered at the time-dependent
location $(\bar r(t),\bar z(t))$ and normalized according to the size $\sqrt{\nu t}$
of the vortex core. Note that the rescaled functions $\eta, \phi$ defined in
\eqref{etadef} are also dimensionless. 

\begin{rem}\label{crossrem}
Recalling that $\delta = \nu/\Gamma$ and $T_\adv = r_0^2/\Gamma$, we observe that
\begin{equation}\label{crosseq}
  \epsilon^2 \,=\, \frac{\nu t}{r_0^2}\,\frac{r_0^2}{\bar r(t)^2} \,=\,
  \frac{\delta t}{T_\adv}\,\frac{r_0^2}{\bar r(t)^2} \,\approx\,
  \frac{\delta t}{T_\adv}\,,
\end{equation}
as long as the ratio $r_0/{\bar r}(t)$ remains close to unity, which will
always be the case thanks to \eqref{vmotion}. It follows in particular that
$\epsilon^2$ is comparable to $\delta$ whenever $t$ is comparable to
$T_\adv$. Our goal is to control the solution of \eqref{omeq} when
$t \le T_\adv \delta^{-\sigma}$ for some $\sigma \in (0,\frac13)$, and on that
interval it follows from \eqref{crosseq} that
$\epsilon^2 \lesssim \delta^{1-\sigma}$.
\end{rem}

\subsection{Approximate solution}\label{ssec23}

The first important step in our analysis is the construction of an approximate
solution of \eqref{etaeq} with initial data
\begin{equation}\label{etazero}
  \eta_0(R,Z) \,=\, \frac{1}{4\pi}\,e^{-(R^2 + Z^2)/4}\,, \qquad
  (R,Z) \in \Omega_0 = \R^2\,. 
\end{equation}
The associated stream function satisfies $-\Delta_0\phi_0 = \eta_0$, where
$\Delta_0 = \partial_R^2 + \partial_Z^2$. As $\eta_0,\phi_0$ are both radially
symmetric, it is clear that $\{\phi_0,\eta_0\} = 0$, and the Gaussian profile
\eqref{etazero} has the property that $\cL \eta_0 = 0$. Since $\epsilon = 0$
when $t = 0$ in view of \eqref{RZeps}, we conclude that equation \eqref{etaeq}
is satisfied at initial time if $\eta_0$ is given by \eqref{etazero}. 

For $t > 0$, we construct our approximate solution as a power series in 
the time-dependent parameter $\epsilon = \sqrt{\nu t}/\bar r$, the 
coefficients of which depend on the small parameter $\delta$. To this end, 
we multiply both sides of \eqref{etaeq} by $\delta$ and rewrite the equation 
in the equivalent form
\begin{equation}\label{etaeq2}
  \delta \,t\partial_t \eta + \Bigl\{\phi\,,\frac{\eta}{1+\epsilon R}\Bigr\} 
  -\frac{\epsilon \bar r}{\Gamma}\Bigl(\dot{\bar r}\,\partial_R \eta + 
  \dot{\bar z} \,\partial_Z \eta\Bigr) \,=\, \delta \Bigl[\cL \eta + \partial_R 
  \Bigl(\frac{\epsilon\eta }{1+\epsilon R}\Bigr)\Bigr]\,.
\end{equation}
This equation is defined on the time-dependent domain $\Omega_\epsilon$, but
expanding the factors $(1+\epsilon R)^{-1}$ in powers of $\epsilon$ we get at
each order a relation that can be solved in the whole plane $\Omega_0 =
\R^2$. The corresponding approximation for the stream function $\phi$ is
obtained in a self-consistent way by expanding the integrand in \eqref{BSeps} in
powers of $\epsilon$, and integrating order by order over the whole plane
$\R^2$. As is shown in Section~\ref{sec3}, this results in an asymptotic
expansion of the form
\begin{equation}\label{etaapp}
  \eta_\app(R,Z,t) \,=\, \sum_{m=0}^M \epsilon^m\,\eta_m(R,Z,\beta_\epsilon)\,, \qquad
  \phi_\app(R,Z,t) \,=\, \sum_{m=0}^M \epsilon^m\,\phi_m(R,Z,\beta_\epsilon)\,,
\end{equation}
where the dependence of the profiles $\eta_m$ and $\phi_m$ on
$\beta_\epsilon := \log(1/\epsilon)$ is polynomial.  The profiles also depend on
the small parameter $\delta$, but to make the notation lighter this dependence
is not indicated explicitly.  The velocity of the vortex center is not known a
priori, but can be approximated in a similar way as a power series in
$\epsilon$\:
\begin{equation}\label{rzapp}
  \dot{\bar r}(t) \,=\, \sum_{m=0}^{M-1} \epsilon^m\,\dot{\bar r}_m(\beta_\epsilon)\,, \qquad 
  \dot{\bar z}_*(t) \,=\, \sum_{m=0}^{M-1} \epsilon^m\,\dot{\bar z}_m(\beta_\epsilon)\,,
\end{equation}
where the quantities $\dot{\bar r}_m(\beta_\epsilon)$, $\dot{\bar z}_m(\beta_\epsilon)$
depend on $\delta$ and are polynomials in $\beta_\epsilon$. As will be explained below,
the quantity $\dot{\bar z}_*(t)$ in \eqref{rzapp} is only an initial
approximation of the vertical speed of the vortex ring; the final approximation
$\dot{\bar z}(t)$ will be obtained from it by a small adjustment. It is perhaps
worth emphasizing that, throughout the paper, the point $(\bar r(t),\bar z(t))$
is not necessarily the exact center of our vortex. Rather, it is its suitably
chosen approximation.

The outcome of the analysis carried out in Section~\ref{sec3} below is that,
if we want our expansions \eqref{etaapp}, \eqref{rzapp} to hold uniformly
with respect to the parameter $\delta$ in the limit where $\delta \to 0$, 
there is a {\em unique choice} of the profiles $\eta_m,\phi_m$ and of the velocities
$\dot{\bar r}_m$, $\dot{\bar z}_m$ such that\:

\smallskip\noindent
a) Both members of \eqref{etaeq2} agree up to order $\cO(\epsilon^{M+1})$, modulo
powers of $\beta_\epsilon$; 

\smallskip\noindent
b) The point $({\bar r}(t),{\bar z}_*(t)) \in \Omega$ is the center of the
vorticity distribution \eqref{etadef} when $\eta = \eta_\app$. 

\smallskip\noindent The integer $M$ in \eqref{etaapp}, \eqref{rzapp} determines the
accuracy of our approximate solution. It turns out that $M = 4$ will be
sufficient for our purposes. The velocities $\dot{\bar r}(t), \dot{\bar z}_*(t)$
given by \eqref{rzapp} are found to satisfy estimate \eqref{vmotion} with
$\delta = 0$. 

\begin{rem}\label{remEuler}
If we set $\delta = \dot{\bar r} = 0$, equation \eqref{etaeq2} reduces to
\begin{equation}\label{etaeqEuler} 
  \Bigl\{\phi\,,\frac{\eta}{1+\epsilon R}\Bigr\} 
  -\frac{\epsilon \bar r}{\Gamma}\,\dot{\bar z} \,\partial_Z \eta 
  \,\equiv\, \Bigl\{\phi - \frac{\bar r \dot{\bar z}}{2\Gamma}\,(1+\epsilon R)^2
  \,,\frac{\eta}{1+\epsilon R}\Bigr\} \,=\, 0\,,
\end{equation}
which is the relation satisfied by the vorticity $\eta$ and the stream
function $\phi$ of a stationary solution of the Euler equations in a frame
moving with speed $\dot{\bar z}\,e_z$. These are precisely the vortex rings
constructed, for instance, in \cite{Fr1,Fraenkel-Berger,FT,Ambrosetti-Struwe,Bu}.
In that situation the aspect ratio $\epsilon > 0$ is fixed and, as in
\eqref{RZeps}, the dimensionless variables $(R,Z)$ are defined so that
$(r,z) = (\bar r,\bar z) + \epsilon{\bar r}\, (R,Z)$. An approximate solution of
\eqref{etaeqEuler} can be constructed in the form of a power series in
$\epsilon$, as in \eqref{etaapp}, where all profiles $\eta_m, \phi_m$ are even
functions of the variable $Z \in \R$, since this is the case for the
coefficients in \eqref{etaeqEuler} and for the initial approximation
\eqref{etazero}. Returning to the approximate solution \eqref{etaapp}, we deduce
by uniqueness that $\eta_\app$, $\phi_\app$ are even functions of $Z$ in the
limit $\delta \to 0$, and that
$\dot{\bar r} = \frac{\Gamma\,}{r_0}\,\cO(\delta)$ as $\delta \to 0$.
\end{rem}

\begin{rem}\label{remepsdef}
In view of \eqref{RZeps} and \eqref{rzapp}, the function $\epsilon(t)$ is
implicitly defined by the relation
\begin{equation}\label{epsrel}
  \frac{\sqrt{\nu t}}{\epsilon(t)} \,=\, \bar r(t) \,=\, r_0 + 
  \sum_{m=0}^{M-1} \int_0^t \epsilon(s)^m\,\dot{\bar r}_m\bigl(\beta_{\epsilon(s)}
  \bigr)\dd s\,,
\end{equation}
which should hold when $0 < t \ll T_\dif$. As was mentioned in the previous remark,
the radial velocities $\dot{\bar r}_m$ are small when $\delta \ll 1$, so that
Eq.~\eqref{epsrel} will be easy to solve, see Section~\ref{ssec36}. 
\end{rem}

The asymptotic approximation $\eta_\app(R,Z,t)$ is defined on the whole 
plane and does not vanish on the boundary $\partial \Omega_\epsilon$. To obtain
a valid approximate solution of \eqref{etaeq}, we fix $\sigma_0 \in (0,1)$ and
we truncate $\eta_\app$ outside a large ball of radius $\epsilon^{-\sigma_0}$ by
setting
\begin{equation}\label{etaapp1}
  \eta_*(R,Z,t) \,=\, \chi_0\bigl(\epsilon^{\sigma_0}(R^2{+}Z^2)^{1/2}\bigr)\,
  \eta_\app(R,Z,t)\,, \qquad \phi_*(\cdot,t) \,=\, \BS^\epsilon[\eta_*(\cdot,t)]\,,
\end{equation}
where $\chi_0 : \R_+ \to [0,1]$ is a smooth function such that $\chi_0(r) = 1$ for $r \le 1$
and $\chi_0(r) = 0$ for $r \ge 2$. The remainder of that approximation is defined as 
\begin{equation}\label{Remdef}
  \Rem(R,Z,t) \,=\, \cL \eta_* + \partial_R \Bigl(\frac{\epsilon \eta_* }{1{+}
  \epsilon R}\Bigr) - t\partial_t \eta_* - \frac{1}{\delta}\,\Bigl\{\phi_*\,,
  \frac{\eta_*}{1{+}\epsilon R}\Bigr\} + \frac{\epsilon \bar r}{\delta \Gamma}
  \Bigl(\dot{\bar r}\,\partial_R \eta_* + \dot{\bar z}_* \,\partial_Z \eta_*\Bigr)\,.
\end{equation}
By construction this quantity depends on time only through the parameter
$\epsilon = \sqrt{\nu t}/{\bar r(t)}$. 

The accuracy of our approximate solution is quantified by the following
result, which is established in Section~\ref{ssec37} below\:

\begin{prop}\label{Remprop}
Given any $\gamma_0 < 1$ and any $\gamma_5 < 5$, there exist a constant $C > 0$
such that the remainder \eqref{Remdef} satisfies
\begin{equation}\label{Remest}
  \sup_{(R,Z) \in \Omega_\epsilon} e^{\gamma_0(R^2+Z^2)/4}\,|\Rem(R,Z,t)|
  \,\le\, C\bigl(\epsilon \delta + \epsilon^{\gamma_5}\delta^{-1}\bigr)\,,
\end{equation}
whenever the parameters $\epsilon, \delta$ are small enough.
\end{prop}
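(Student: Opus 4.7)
The plan is to substitute the construction of $(\eta_\app,\phi_\app)$ and the approximate velocities $(\dot{\bar r},\dot{\bar z}_*)$ from \eqref{etaapp}--\eqref{rzapp} into the definition \eqref{Remdef}, and to organize the resulting expression into three qualitatively different contributions: (i)~the formal residual of the untruncated expansion on all of $\R^2$; (ii)~the error caused by the cutoff $\chi_0(\epsilon^{\sigma_0}|\cdot|)$ and by the time derivative $t\partial_t\eta_*$; and (iii)~the error caused by replacing the formal series $\phi_\app$ by the exact Biot--Savart image $\phi_* = \BS^\epsilon[\eta_*]$.

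For (i), I would invoke the central property of the construction carried out in Section~\ref{sec3}: after substitution of $(\eta_\app,\phi_\app,\dot{\bar r},\dot{\bar z}_*)$, both members of \eqref{etaeq2} coincide up to $\cO(\epsilon^{M+1})$, with implicit constants depending polynomially on $\beta_\epsilon = \log(1/\epsilon)$. Since \eqref{etaeq2} is $\delta$ times \eqref{etaeq} and \eqref{Remdef} is the residual of the latter, the formal residual on the whole plane is of size $\epsilon^{M+1}\beta_\epsilon^k\,\delta^{-1}$ in the Gaussian-weighted norm; with $M=4$, any choice $\gamma_5 < 5$ absorbs the polylogarithmic factor and yields the second term in \eqref{Remest}. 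The weight $e^{\gamma_0(R^2+Z^2)/4}$ with $\gamma_0 < 1$ is handled by the fact, guaranteed by the construction of Section~\ref{sec3}, that each profile $\eta_m, \phi_m$ and all of its derivatives decay in $(R,Z)$ like $e^{-(R^2+Z^2)/4}$ times a polynomial.

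For (ii), the cutoff $\chi_0(\epsilon^{\sigma_0}|\cdot|)$ acts only on $|(R,Z)| \gtrsim \epsilon^{-\sigma_0}$, where $\eta_\app$ and its derivatives are bounded by $\exp(-c\,\epsilon^{-2\sigma_0})$, so every contribution to $\Rem$ involving derivatives of $\chi_0$ or the difference $\eta_\app - \eta_*$ is super-algebraically small in $\epsilon$. The time derivative $t\partial_t\eta_*$ requires more care: $\eta_*$ depends on $t$ only through $\epsilon(t)$ and $\beta_{\epsilon(t)}$, and the deviation of $t\dot\epsilon$ from its formal value $\epsilon/2$ is $-\epsilon\,t\dot{\bar r}/\bar r$, driven by the radial drift $\dot{\bar r} = \cO(\delta\Gamma/r_0)$ of Remark~\ref{remEuler}. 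Together with the analogous sub-leading terms produced by the drift contributions $\dot{\bar r}\partial_R\eta_*$ and $\dot{\bar z}_*\partial_Z\eta_*$ in \eqref{Remdef} that are not absorbed into the formal matching, these generate the first term $\cO(\epsilon\delta)$ in \eqref{Remest}.

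For (iii), one expands the integrand in \eqref{BSeps} around $\epsilon = 0$. Because $\eta_*$ inherits Gaussian decay from $\eta_\app$, the integral is effectively localized on $(R',Z')$ near the origin; for $(R,Z)$ in the same Gaussian core the argument of $F$ in \eqref{BSeps} is of order $\epsilon^2$, so the small-$s$ expansion \eqref{Fexpand} applies and the logarithmic singularity at $s=0$ produces precisely the $\beta_\epsilon$-factors that were built into $\phi_\app$. Taylor expansion of the integrand in $\epsilon$ with integral remainder of order $\epsilon^{M+1}$ then bounds $\phi_*-\phi_\app$ in the Gaussian-weighted norm, and since this difference enters \eqref{Remdef} through $\delta^{-1}\{\phi_*,\eta_*/(1+\epsilon R)\}$ its contribution is again of size $\epsilon^{M+1}\beta_\epsilon^k/\delta$, absorbable into $\epsilon^{\gamma_5}/\delta$. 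I expect step (iii) to be the main technical obstacle: controlling the Biot--Savart remainder uniformly in both $\epsilon$ and $\delta$ against the Gaussian weight requires careful tracking of the non-local convolution-like structure of \eqref{BSeps} and of the logarithmic factors arising from the small-$s$ asymptotics of $F$, which is precisely what forces the $\beta_\epsilon$-polynomial structure built into \eqref{etaapp}--\eqref{rzapp}.
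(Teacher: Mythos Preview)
Your three-part decomposition matches the paper's own structure fairly closely, and parts (ii) and (iii) concerning the cutoff and the Biot--Savart remainder are essentially correct in spirit. However, there is a genuine gap in your identification of the $\epsilon\delta$ term in \eqref{Remest}.

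In part (i) you claim that the formal residual of the untruncated expansion is $\cO(\epsilon^{M+1}\beta_\epsilon^k\delta^{-1})$, contributing only to the second term of \eqref{Remest}. This is not correct: the construction in Section~\ref{sec3} does \emph{not} solve the matching relations exactly at each order in $\epsilon$. At order $m$, the profile $\eta_m$ is obtained by approximately inverting the operator $\Lambda + \delta(\frac{m}{2}-\cL)$ via Proposition~\ref{Laminv2}, truncating the $\delta$-expansion \eqref{deltacomp} at some finite $N_m$. Concretely, the relation \eqref{approx1} is satisfied only up to $\cO(\delta^2)$ (see the remark after \eqref{phi1exp}); similarly \eqref{approx2b}, \eqref{approx3b}, \eqref{approx4b} leave residuals of size $\cO(\beta_\epsilon\delta^2)$, $\cO(\beta_\epsilon\delta)$, $\cO(\beta_\epsilon^2\delta)$ respectively. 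These are residuals in \eqref{etaeq2}, so after dividing by $\delta$ to get the residual of \eqref{etaeq}, the first-order contribution is exactly $\epsilon\cdot\delta^2/\delta = \epsilon\delta$. This is spelled out in the paper as the estimate \eqref{hatRemest}, whose five terms correspond to orders $m=1,\ldots,4$ plus the genuinely neglected $\epsilon^5$ terms; Young's inequality then collapses the intermediate terms into $\epsilon\delta + \epsilon^{\gamma_5}\delta^{-1}$.

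Your attempt in part (ii) to recover $\epsilon\delta$ from the deviation $t\dot\epsilon - \epsilon/2 = -\epsilon t\dot{\bar r}/\bar r$ does not work: since $t\dot{\bar r}/\bar r = \cO(\epsilon^2)$ (combine $\dot{\bar r} = \cO(\delta\Gamma/r_0)$ with $\delta t/T_\adv \approx \epsilon^2$ from \eqref{crosseq}), this deviation is $\cO(\epsilon^3)$, not $\cO(\epsilon\delta)$. Likewise, the drift terms $\dot{\bar r}\partial_R\eta_*$ and $\dot{\bar z}_*\partial_Z\eta_*$ are fully absorbed into the formal matching at every order (they appear explicitly in \eqref{approx1}, \eqref{approx2a}, \eqref{approx3a}) and do not produce a leftover of size $\epsilon\delta$. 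The $\epsilon\delta$ term is intrinsic to the two-parameter nature of the expansion: it reflects that solving $\Lambda\eta_1 = -\cR_1$ alone (the $\delta=0$ limit) leaves the $\delta(\frac12-\cL)\eta_1$ term uncompensated, and the next correction in \eqref{eta1exp} only pushes this residual to $\cO(\delta^2)$.
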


\subsection{Stability estimates}\label{ssec24}

In our previous work \cite{GS2}, the evolution equation \eqref{etaeq} was
carefully studied in the particular case where $\bar r(t) = r_0$ and
$\bar z(t) = z_0$. This does not make any substantial difference for
the initial value problem at fixed viscosity, and we can thus infer from the
results of \cite{GS2} that Eq.~\eqref{etaeq} has a unique solution $\eta(R,Z,t)$
with initial data $\eta_0$ given by \eqref{etazero}. Our purpose is to show
that, if the inverse Reynolds number $\delta = \nu/\Gamma$ is sufficiently
small, the solution $\eta(R,Z,t)$ remains close to the approximation
\eqref{etaapp1} on a long time interval of the form $(0,T_\adv\delta^{-\sigma})$,
for some small $\sigma > 0$. We use the decomposition\:
\begin{equation}\label{etapert}
  \eta(R,Z,t) \,=\, \eta_*(R,Z,t) + \delta\,\tilde \eta(R,Z,t)\,, \qquad  
  \phi(R,Z,t) \,=\, \phi_*(R,Z,t) + \delta\,\tilde \phi(R,Z,t)\,,
\end{equation}
where $\tilde \phi = \BS^\epsilon[\tilde\eta]$ in the sense of \eqref{BSeps}.
Similarly we assume that the vertical speed of the vortex ring takes the form
\begin{equation}\label{barzpert}
  \dot{\bar z}(t) \,=\, \dot{\bar z}_*(t) + \delta\,\dot{\tilde z}(t)\,,
\end{equation}
where $\dot{\bar z}_*(t)$ is given by \eqref{rzapp} and $\dot{\tilde z}(t)$
is a small correction which is chosen so that the perturbation $\tilde \eta$
has vanishing first order moment in the vertical direction, see
Section~\ref{ssec41}. The equation satisfied by $\tilde\eta$ then reads
\begin{equation}\label{tildeq}
\begin{split}
  t\partial_t \tilde\eta + \frac{1}{\delta}\Bigl\{\phi_*\,,\frac{\tilde\eta}{1
  +\epsilon R}\Bigr\} + \frac{1}{\delta}\Bigl\{&\tilde\phi\,,\frac{\eta_*}{1
  +\epsilon R}\Bigr\} + \Bigl\{\tilde\phi\,,\frac{\tilde\eta}{1
  +\epsilon R}\Bigr\} 
  -\frac{\epsilon \bar r}{\delta\Gamma}\Bigl(\dot{\bar r}\,\partial_R \tilde\eta + 
  \dot{\bar z}_* \,\partial_Z \tilde\eta\Bigr)\\ \,&=\, \cL \tilde\eta + \partial_R 
  \Bigl(\frac{\epsilon\tilde\eta }{1+\epsilon R}\Bigr) + \frac{1}{\delta}
  \,\Rem(R,Z,t) \,+\, \frac{\epsilon \bar r}{\delta\Gamma}\,\dot{\tilde z}
  \,\partial_Z \eta\,.
\end{split}
\end{equation}

Since $\eta_*(R,Z,0) = \eta_0(R,Z)$, the nonlinear evolution equation
\eqref{tildeq} is to be solved with zero initial data. The solution is therefore
driven by the source term $\delta^{-1}\Rem(R,Z,t)$, which is small in view of
Proposition~\ref{Remprop} and Remark~\ref{crossrem} if the parameter $\sigma$ is
small enough. As long as $\tilde\eta$ stays small, the nonlinear term
$\{\tilde \phi,(1{+}\epsilon R)^{-1}\tilde\eta\}$ is of course harmless. The
most serious difficulty in controlling $\tilde\eta$ using~\eqref{tildeq} comes
from the linear terms with a large prefactor $\delta^{-1} = \Gamma/\nu$. These
terms could conceivably trigger violent instabilities that might lead to strong
amplification of $\tilde\eta$ in a short time. Our goal is to show that this
scenario does not occur, due to the special structure of the advection terms in
\eqref{tildeq}. A similar strategy was applied in the previous work \cite{Ga}
devoted to the vanishing viscosity limit of interacting vortices in the plane,
but the specific estimates used there do not seem to be easily adaptable to the
present situation.

To control the time evolution of the solution of \eqref{tildeq}, we use the
energy functional
\begin{equation}\label{Edef}
  E_\epsilon(t) \,=\, \frac12 \int_{\Omega_\epsilon} W_\epsilon\,\tilde \eta^2 \dd R\dd Z
  \,-\, \frac12 \int_{\Omega_\epsilon} \tilde \phi\,\tilde \eta\dd R\dd Z\,,
\end{equation}
where $W_\epsilon : \Omega_\epsilon \to (0,+\infty)$ is a weight function
that will be described below. The first term in the right-hand side of
\eqref{Edef} is a weighted $L^2$ integral of the vorticity $\tilde\eta$,
similar to weighted enstrophies that were used for the same purposes in
\cite{GW2,Ga,GS2}, for instance. The second term is just the kinetic
energy associated with the vorticity perturbation $\tilde\eta$, as can
be seen by invoking \eqref{Udef}, \eqref{phidef} and integrating by parts\:
\[
  \frac12 \int_{\Omega_\epsilon} \tilde \phi\,\tilde \eta\dd R\dd Z \,=\,
  \frac12 \int_{\Omega_\epsilon} \frac{|\partial_R \tilde\phi|^2 + |\partial_Z
  \tilde \phi|^2}{1+\epsilon R}\dd R\dd Z \,=\, \frac12 \int_{\Omega_\epsilon}
  \bigl(|\tilde U_R|^2 + |\tilde U_Z|^2\bigr)(1+\epsilon R)\dd R\dd Z\,.
\]

To construct the weight $W_\epsilon$ in \eqref{Edef}, we consider three
different regions\: 

\medskip\noindent{\bf 1)} The {\em inner region} where $\rho := (R^2{+}Z^2)^{1/2} 
\lesssim \epsilon^{-\sigma_1}$, for some small $\sigma_1 > 0$. Here we choose
\begin{equation}\label{Wepsdef}
  W_\epsilon \,=\, \frac{1}{1+\epsilon R}\, \Phi_\epsilon'\Bigl(
  \frac{\eta_*}{1+\epsilon R}\Bigr)\,,
\end{equation}
where $\eta_*$ is the approximate solution \eqref{etaapp1} and $\Phi_\epsilon
: (0,+\infty) \to \R$ is a smooth function with the property that,
in the region under consideration, 
\begin{equation}\label{Phieps}
  \phi_* - \frac{\bar r \dot{\bar z}_*}{2\Gamma}\,(1+\epsilon R)^2 \,=\,
  \Phi_\epsilon\Bigl(\frac{\eta_*}{1+\epsilon R}\Bigr) \,+\,
  \cO(\epsilon\delta + \epsilon^{\gamma_3})\,,
\end{equation}
for some $\gamma_3 < 3$ that can be arbitrarily close to $3$. 
It is not difficult to understand intuitively why such a function should exist.
Indeed, in the dimensionless variables \eqref{RZeps}, the left-hand side of
\eqref{Phieps} is nothing but the stream function of the approximate solution
$\eta_*$ in a frame moving with constant speed $\dot{\bar z}_*$ in the vertical
direction, see Remark~\ref{remEuler}. If we drop the remainder term
$\cO(\epsilon\delta + \epsilon^{\gamma_3})$ and consider $\epsilon > 0$ as a fixed
parameter, Eq.~\eqref{Phieps} expresses a functional relation between the
potential vorticity $\zeta_* := (1+\epsilon R)^{-1} \eta_*$ and the stream
function, which implies that $\eta_*$ is a stationary solution of the Euler
equation in the moving frame. This is not exactly true, of course, but the
estimate on the remainder $\Rem(R,Z,t)$ in Proposition~\ref{Remprop} ensures
that the approximate solution $\eta_*$ (for a fixed value of $\epsilon > 0$) is
not far from a stationary solution of Euler, and in Section~\ref{ssec38} we
verify that this implies the existence of a function $\Phi_\epsilon$ satisfying
\eqref{Phieps}. Moreover, an easy calculation shows that
\[
  \frac{1}{1+\epsilon R}\,\Phi_\epsilon'\Bigl(\frac{\eta_*}{1+\epsilon R}\Bigr)
  \,=\, \frac{4}{\rho^2}\bigl(e^{\rho^2/4}-1\bigr) + \cO(\epsilon)\,, \qquad
  \rho \,:=\, \sqrt{R^2+Z^2} \,\le\, \epsilon^{-\sigma_1}\,.
\]

\smallskip\noindent{\bf 2)} The {\em intermediate region} where $
\epsilon^{-\sigma_1} \lesssim \rho \le \epsilon^{-\sigma_2}$, for some $\sigma_2 > 1$.
In this area we assume that the weight is approximately constant in space, with value
$W_\epsilon \approx \exp(\epsilon^{-2\sigma_1}/4)$. 

\smallskip\noindent{\bf 3)} The {\em far field region} where $\rho \ge
\epsilon^{-\sigma_2}$. Here we take $W_\epsilon \approx \exp(\rho^{2\gamma}/4)$,
where $\gamma = \sigma_1/\sigma_2$. 

\begin{figure}[ht]
  \begin{center}
  \begin{picture}(300,180)
  \put(30,0){\includegraphics[width=1.00\textwidth]{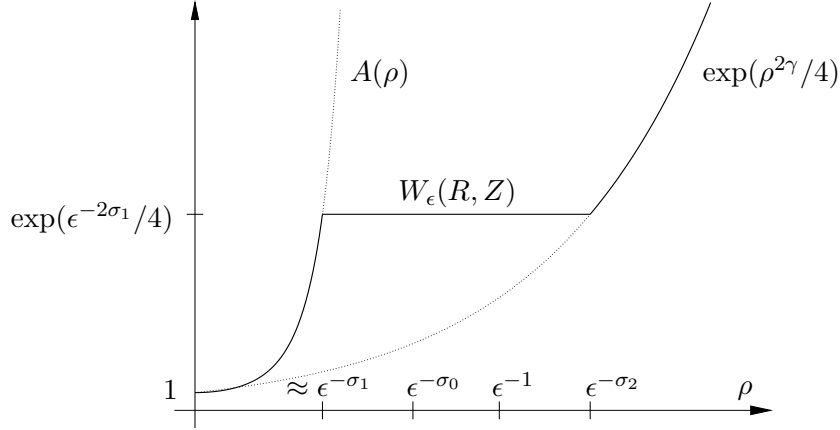}}
  \put(81,21){$\approx \epsilon^{-\sigma_1}$}
  \put(126,21){$\epsilon^{-\sigma_0}$}
  \put(158,21){$\epsilon^{-1}$}
  \put(193,21){$\epsilon^{-\sigma_2}$}
  \put(250,22){$\rho$}
  \put(35,20){$1$}
  \put(-22,85){$\exp(\epsilon^{-2\sigma_1}/4)$}  
  \put(123,95){$W_\epsilon(R,Z)$}  
  \put(105,140){$A(\rho)$}
  \put(237,140){$\exp(\rho^{2\gamma}/4)$}
  \end{picture}
  \caption{{\small When $\epsilon > 0$ is small, the weight $W_\epsilon(R,Z)$
  entering the energy functional \eqref{Edef} is close to a piecewise smooth
  radially symmetric function, which satisfies $W_\epsilon \approx A(\rho) := (4/\rho^2)
  \bigl(e^{\rho^2/4}-1\bigr)$ in the inner region where $\rho := (R^2{+}Z^2)^{1/2}
  \lesssim \epsilon^{-\sigma_1}$. When $W_\epsilon$ reaches the threshold value
  $\exp(\epsilon^{-2\sigma_1}/4)$, the weight is taken approximately constant until
  $\rho = \epsilon^{-\sigma_2}$, and outside that region we set $W_\epsilon \approx
  \exp(\rho^{2\gamma}/4)$ with $\gamma = \sigma_1/\sigma_2$. The dashed lines reflect
  the fact that $\exp(\rho^{2\gamma}/4) \lesssim W_\epsilon \lesssim
  A(\rho)$ where the implicit constants do not depend on the parameter $\epsilon$.
  The intermediate scales $\epsilon^{-\sigma_0}$, where the truncation \eqref{etaapp1}
  occurs, and $\epsilon^{-1}$, which is the distance from the origin to the boundary
  $\partial\Omega_\epsilon$, are indicated for completeness.}}\label{fig2}
  \end{center}
\end{figure}

\medskip 
The actual construction of the weight is more complicated, and ensures 
that $W_\epsilon$ is Lipschitz continuous at the boundaries of the three
regions under consideration, see Section~\ref{sec4} below for details. For the
moment, we just mention that our choice of the energy functional in the inner
region is related to Arnold's variational characterization of the steady states
of the Euler equation, as discussed in our previous work \cite{GS3}. In fact, if
we suppose that $\eta_*$ is a stationary solution of the axisymmetric Euler equation 
in a moving frame (which not exactly true), then the functional \eqref{Edef} with 
the weight \eqref{Wepsdef} corresponds, up to a constant factor, to the second 
variation of the kinetic energy on the isovortical surface, which is the set 
of (potential) vorticities $\zeta := (1+\epsilon R)^{-1}\eta$ that are
measure-preserving rearrangements of $\zeta_*$ \cite{Arn,GS3}. Since the kinetic
energy is conserved under the inviscid dynamics, the advection terms involving
$\delta^{-1}$ in \eqref{tildeq}, which originate from the linearization of
Euler's equation at the ``steady state'' $\zeta_*$, do not contribute to the
time evolution of the functional $E_\epsilon$. In reality $\zeta_*$ is only an
approximate steady state of Euler, and the cancellations alluded to above only
occur up to correction terms of order $\cO(\epsilon\delta + \epsilon^{\gamma_3})$,
but this is sufficient to cancel the dangerous factors $\delta^{-1}$ in
\eqref{tildeq}. On the other hand, away from the inner region, the last term in
\eqref{Edef} is extremely small, so that our functional $E_\epsilon$ reduces to
a weighted enstrophy. We assume that the weight $W_\epsilon$ is approximately constant
in the intermediate region, so that the advection terms in \eqref{tildeq} do not
contribute to the evolution of $E_\epsilon$, and in the far field region the dynamics 
is dominated by the diffusion operator $\cL$ in \eqref{tildeq} so that we can just 
take any radially symmetric weight with appropriate growth at infinity.

A technical difficulty inherent to our approach is the fact that the functional
$E_\epsilon$ is not coercive, unless the perturbed vorticity $\tilde\eta$
satisfies some moment conditions. The problem comes from the inner region, where
the last term in \eqref{Edef} plays an important role. The results established
in \cite[Section~2]{GS3} indicate that $E_\epsilon$ is positive definite
provided $\tilde\eta$ has zero mean and vanishing first order moments with
respect to the space variables $R,Z$. In practice this means that, in addition
to the information provided by the energy $E_\epsilon$, we must control the
integral and the first order moments of the perturbed vorticity $\tilde\eta$. It
turns out that $\int\tilde \eta\dd R\dd Z$ is always extremely small, of the
order of $\cO(\exp(-c/\epsilon^2))$ for some $c > 0$. The radial moment
$\int\!R\,\tilde\eta\dd R\dd Z$ may take larger values, but can be controlled
using the conservation of the total impulse of the vortex ring. Finally, we
choose the correction $\dot{\tilde z}(t)$ of the vertical speed \eqref{barzpert}
in such a way that $\int\!Z\,\tilde\eta\dd R\dd Z = 0$, see Section~\ref{ssec41}
below for further details. This correction thus plays the role of a ``modulation
parameter'', see \cite{Wei,MM} for a similar idea in the context of the
stability analysis of solitary waves.

Disregarding these technical questions for the moment, we briefly indicate 
how the argument is concluded. If we differentiate $E_\epsilon$ with respect to
time, and use the evolution equation \eqref{tildeq} together with the estimate
\eqref{Remest} on the source term, we obtain after lengthy calculations a
differential inequality of the form
\begin{equation}\label{Ediffeq}
  t E_\epsilon'(t) \,\le\, -c_1 E_\epsilon(t) + c_2\Bigl(\epsilon^2 +
  \frac{\epsilon^{2\gamma_3}}{\delta^2}\Bigr)\,, \qquad t \in (0,T_\adv\delta^{-\sigma})\,,
\end{equation}
for some positive constants $c_1, c_2$. Here we assume that
$\epsilon^{2\gamma_3} \ll \delta^2$ so that the source term in \eqref{Ediffeq}
is small. Since $\epsilon^2 \lesssim \delta^{1-\sigma}$ by
Remark~\ref{crossrem}, this is the case if $\sigma < 1 - 2/\gamma_3$, which is
possible if $\sigma < 1/3$ and $\gamma_3$ is close enough to $3$.  Integrating
\eqref{Ediffeq} with initial condition $E_\epsilon(0) = 0$, we find
\begin{equation}\label{Einteq}
  E_\epsilon(t) \,\le\, c_3\Bigl(\epsilon^2 + \frac{\epsilon^{2\gamma_3}}{\delta^2}\Bigr)
  \,, \qquad t \in (0,T_\adv\delta^{-\sigma})\,,
\end{equation}
and using in addition the bounds on the moments of $\tilde\eta$ that are
obtained by a different argument we arrive at an estimate of the form
$\delta\|\tilde \eta(t)\|_{\cX_\epsilon}\le c(\epsilon\delta + \epsilon^{\gamma_3})$,
where $\cX_\epsilon$ is the weighted $L^2$ space equipped with the norm
\begin{equation}\label{Xepsdef}
  \|\tilde \eta\|_{\cX_\epsilon} \,=\, \biggl(\int_{\Omega_\epsilon} 
  W_\epsilon(R,Z)\,|\tilde\eta(R,Z)|^2\dd R\dd Z\biggr)^{1/2}\,.
\end{equation}
This space depends on time through the parameter $\epsilon > 0$, but we recall
that the weight function satisfies a uniform lower bound of the form $W_\epsilon(R,Z)
\gtrsim \exp(\rho^{2\gamma}/4)$, see Figure~\ref{fig2}.

The main result of this paper can now be formulated as follows\:

\begin{thm}\label{main2}
For any $\gamma_3 \in (2,3)$, there exist constants $K > 0$, $\delta_0 > 0$,
and $\sigma \in (0,\frac13)$ such that, for all $\Gamma > 0$, all $r_0 > 0$, and
all $\nu > 0$ satisfying $\delta := \nu/\Gamma \le \delta_0$,
the following holds. There exist continuous functions $\bar r(t), 
\bar z(t)$ which are smooth for positive times and satisfy \eqref{vmotion} with 
$\bar r(0) = r_0, \bar z(0) = 0$ such that the unique solution $\eta$ of \eqref{etaeq} 
with initial data \eqref{etazero} satisfies 
\begin{equation}\label{main2est}
  \|\eta(t) - \eta_*(t)\|_{\cX_\epsilon} \,\le\, K \bigl(\epsilon\delta + \epsilon^{\gamma_3}
  \bigr)\,, \qquad t \in (0,T_\adv\delta^{-\sigma})\,,
\end{equation}
where $\epsilon = \sqrt{\nu t}/{\bar r}(t)$ and $\eta_*$ is the approximate solution
defined by \eqref{etaapp}, \eqref{etaapp1}.
\end{thm}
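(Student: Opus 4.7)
The plan is to write the solution as $\eta = \eta_* + \delta\tilde\eta$ with vertical speed $\dot{\bar z}(t) = \dot{\bar z}_*(t) + \delta\dot{\tilde z}(t)$, derive the evolution \eqref{tildeq} for $\tilde\eta$ with zero initial data, and close a nonlinear estimate on the energy $E_\epsilon(t)$ of \eqref{Edef} driven by the source $\delta^{-1}\Rem$ controlled by Proposition~\ref{Remprop}. The modulation $\dot{\tilde z}(t)$ is to be prescribed dynamically so that $\int Z\,\tilde\eta\dd R\dd Z \equiv 0$; solvability of the resulting ODE for $\dot{\tilde z}$ follows from a Gaussian nondegeneracy computation, and the bound on $\dot{\tilde z}$ must be shown to be compatible with the Kelvin-Saffman asymptotics in \eqref{vmotion}.

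The central step is to differentiate $E_\epsilon(t)$ along \eqref{tildeq} and exhibit the cancellation of the dangerous $\delta^{-1}$ prefactors in the transport terms. In the inner region $\rho\lesssim\epsilon^{-\sigma_1}$, the weight \eqref{Wepsdef} is chosen precisely so that the functional relation \eqref{Phieps} holds up to an error of size $\cO(\epsilon\delta + \epsilon^{\gamma_3})$; this makes $\eta_*$ an approximate stationary Euler solution in the moving frame, and $E_\epsilon$ coincides, up to constants, with Arnold's second variation of kinetic energy on the isovortical surface through $\zeta_* = (1+\epsilon R)^{-1}\eta_*$. The linearized Euler dynamics about a true steady state preserves this quadratic form exactly, so here the $\delta^{-1}$ transport contributions collapse to a residue of order $\cO(1)$. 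In the intermediate region the weight is essentially constant, so the divergence-free transport integrates to zero by itself. In the far field the Fokker-Planck operator $\cL$ dominates and supplies dissipation compatible with the growth $\exp(\rho^{2\gamma}/4)$ of $W_\epsilon$. Careful patching at the two interfaces, combined with the spectral-gap properties of $\cL$ on the subspace orthogonal to the null vectors $\eta_0,\partial_R\eta_0,\partial_Z\eta_0$, should produce the differential inequality \eqref{Ediffeq}.

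To convert \eqref{Ediffeq} into a pointwise bound on $\|\tilde\eta\|_{\cX_\epsilon}$, one must confront the noncoercivity of $E_\epsilon$, which is positive definite only on the codimension-three subspace where $\tilde\eta$ has vanishing mean and first-order moments. The zeroth moment is $\cO(e^{-c/\epsilon^2})$ because the truncation \eqref{etaapp1} of the Gaussian tail of $\eta_\app$ (whose higher-order corrections $\eta_m$ are constructed to satisfy $\int\eta_m\dd R\dd Z = 0$) produces only superalgebraic errors against the exact identity $\int\eta\dd R\dd Z = 1$. The radial moment $\int R\,\tilde\eta\dd R\dd Z$ is controlled via the exact conservation of the hydrodynamic impulse $\int r^2\omega_\theta\dd r\dd z$, which in self-similar variables yields an algebraic identity bounding it in terms of the other moments and error terms of size $\cO(\epsilon)$ or better. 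The vertical moment vanishes by the choice of modulation. Substituting these into the coercivity estimate $E_\epsilon \gtrsim \|\tilde\eta\|_{\cX_\epsilon}^2 - (\text{moments})^2$, integrating \eqref{Ediffeq} from $\tilde\eta(0) = 0$, and choosing $\sigma \in (0,1-2/\gamma_3)\subset(0,\tfrac13)$ so that $\epsilon^{2\gamma_3}/\delta^2$ stays small on $(0,T_\adv\delta^{-\sigma})$ via Remark~\ref{crossrem}, yields \eqref{Einteq} and hence \eqref{main2est} after multiplication by $\delta$.

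The hard part, by far, will be the algebra of the energy estimate itself: verifying in detail that the sign-indefinite kinetic-energy term $-\tfrac12\int\tilde\phi\,\tilde\eta\dd R\dd Z$ in $E_\epsilon$ interacts favorably with the $\delta^{-1}$ linearized-Euler terms globally and matches correctly across the three regions of Figure~\ref{fig2}, where $W_\epsilon$ is only Lipschitz. One must also absorb the nonlinear term $\{\tilde\phi,(1+\epsilon R)^{-1}\tilde\eta\}$ via a cubic bound using the Biot-Savart estimate for $\tilde\phi$ in $\cX_\epsilon$, and ensure that the modulation-induced term $\delta^{-1}\epsilon\bar r\dot{\tilde z}\,\Gamma^{-1}\partial_Z\eta$ on the right of \eqref{tildeq} does not destroy the energy inequality; this in turn requires bootstrapping $\dot{\tilde z}$ from its defining identity. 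Closing the continuity argument on the full interval $(0, T_\adv\delta^{-\sigma})$ then demands a delicate joint calibration of the exponents $\sigma_0, \sigma_1, \sigma_2, \gamma=\sigma_1/\sigma_2, \gamma_3$, and $\sigma$, consistent with the geometry of $W_\epsilon$ and with the bootstrap smallness needed to handle the nonlinearity.
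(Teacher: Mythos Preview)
Your plan matches the paper's strategy almost point for point: the decomposition $\eta=\eta_*+\delta\tilde\eta$, the three-region weight, the Arnold-type cancellation of the $\delta^{-1}$ advection in the inner zone via \eqref{Phieps}, the moment control (circulation for $\mu_0$, impulse for $\mu_1$, modulation for $\mu_2$), the coercivity of $E_\epsilon$ modulo moments, and the final bootstrap with $\sigma<1-2/\gamma_3$ are exactly Sections~\ref{ssec41}--\ref{ssec49}.

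One mechanism you underplay deserves a flag, because if taken literally your description would not close. In the intermediate region the weight is \emph{not} constant enough for the transport to integrate to zero: the geometric factor $\chi_1(\epsilon R)$ in \eqref{Wdef} leaves a residual of size $C\epsilon^2\delta^{-1}\int_{\Omega_\epsilon''}W_\epsilon\tilde\eta^2\,\D X$ in the local advection term $I_1$ (Lemma~\ref{I1lem}), and $\epsilon^2/\delta\lesssim\delta^{-\sigma}$ is not small. What rescues the estimate is that the threshold value $\exp(\epsilon^{-2\sigma_1}/4)$ of the weight \emph{decreases} in time, so that $t\partial_t W_\epsilon\le -(\sigma_1/5)W_\epsilon\rho_\gamma^2$ on $\Omega_\epsilon''$ (Lemma~\ref{timederlem}); routed into the diffusive block $\hat I_4$ (Proposition~\ref{I4prop}), this strongly negative term absorbs the advection residual after one more use of $\rho_\gamma\ge\epsilon^{-\sigma_1}$ there. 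This time-dependence of the plateau is not incidental bookkeeping but the actual absorption mechanism for the intermediate zone. With that caveat, your outline is a faithful summary of the paper's proof.
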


We recall that estimate \eqref{vmotion} for the radial velocity $\dot{\bar r}$
implies that $\bar r(t) = r_0\bigl(1 + \cO(\epsilon^2)\bigr)$, meaning that
the major radius of the vortex ring remains essentially constant on the
time interval $(0,T_\adv\delta^{-\sigma})$. As for the vertical velocity, it is 
given by \eqref{barzpert} where the approximate speed $\dot{\bar z}_*$ defined 
in \eqref{rzapp} and the correction $\dot{\tilde z}$ satisfy
\begin{equation}\label{tzbdd}
  \dot{\bar z}_* \,=\, \frac{\Gamma}{4\pi r_0}\Bigl(\log \frac{1}{\epsilon}
  + \hat v\Bigr)\Bigl(1 + \cO\bigl(\epsilon^2)\Bigr)\,, \qquad
  \dot{\tilde z}  \,=\, \frac{\Gamma}{r_0}\,\cO\Bigl(\bigl(\epsilon +
  \frac{\epsilon^{\gamma_3}}{\delta}\bigr)\log\frac{1}{\epsilon} + \delta\Bigr)\,.
\end{equation}
This gives the announced formula \eqref{vmotion} for the full velocity 
$\dot{\bar z} = \dot{\bar z}_* + \delta\dot{\tilde z}$.

It is not difficult to verify that Theorem~\ref{main2} implies Theorem~\ref{main1},
see Section~\ref{ssec49} for details. Here we just show how to derive estimate
\eqref{fixed-time}, which is essentially a reformulation of \eqref{nseapp}. 
By construction, our approximate solution satisfies $\|\eta_*(t) - \eta_0\|_{\cX_\epsilon}
= \cO(\epsilon)$, where $\eta_0$ is the Gaussian function \eqref{etazero}. Moreover,
the lower bound $W_\epsilon(R,Z) \gtrsim \exp(\rho^{2\gamma}/4)$ implies that
$\cX_\epsilon\hookrightarrow L^1(\Omega_\epsilon)$ uniformly in $\epsilon$. It thus follows
from \eqref{main2est} that
\[
  \|\eta(t) - \eta_0\|_{L^1(\Omega_\epsilon)} \,\le\, C_1 \Bigl(\|\eta(t) - \eta_*(t)\|_{\cX_\epsilon}
  + \|\eta_*(t) - \eta_0\|_{\cX_\epsilon}\Bigr) \,\le\, C_2\epsilon\,,
\]
for any $t \in (0,T_\adv\delta^{-\sigma})$, and returning to the original variables we
arrive at estimate \eqref{fixed-time}.

\begin{rem}\label{expandrem}
It follows from \eqref{etaapp} and \eqref{main2est} that the solution of
\eqref{etaeq} satisfies
\begin{equation}\label{expand}
  \eta(R,Z,t) \,=\, \eta_0(R,Z) + \epsilon \eta_1(R,Z) + \epsilon^2 \eta_2(R,Z,\beta_\epsilon)
  + \cO\bigl(\delta\epsilon  + \epsilon^{\gamma_3}\bigr)\,,
\end{equation}
where the remainder term is estimated in the topology of $\cX_\epsilon$ as $\epsilon \to 0$.
Here $\eta_0$ is the Gaussian function \eqref{etazero}, and the vorticity profiles
$\eta_1, \eta_2$ are explicitly constructed in Section~\ref{sec3}. Since $\delta
\lesssim \epsilon^2$ except for very small times, see Remark~\ref{crossrem}, we
see that \eqref{expand} determines the shape of the vortex core up to third order
in $\epsilon$. 
\end{rem}

\section{Construction of the approximate solution}\label{sec3}

In this section we construct perturbatively an approximate solution of
\eqref{etaeq2} such that the corresponding remainder satisfies
\eqref{Remest}. Approximations of vortex rings with varying degrees
of accuracy were obtained by many authors, and typically rely on
matched asymptotics expansions where the inner core of the vortex and
the outer region are considered separately, see \cite{Kel,Hi,Dy,Fr1,Fr2}
in the inviscid case and \cite{TT,CT,FM} in the viscous case. 
Here we rather follow the direct approach introduced in \cite{Ga} for
interacting vortices in the plane, which does not rely on matched
asymptotics techniques. 

\subsection{Expansion of the Biot-Savart formula}\label{ssec31}

Our first task is to compute an accurate asymptotic expansion of 
the function $F(s)$ defined by \eqref{Fdef} in the limit where 
$s \to 0$. This can be done by expressing $F$ in terms of elliptic 
integrals, a procedure initiated in the early references 
\cite{Helmholtz,Max}.

\begin{lem}\label{Flem}
For $0 < s < 4$ we have the power series representation
\begin{equation}\label{Fexp}
  F(s) \,=\, \log\Bigl(\frac{8}{\sqrt{s}}\Bigr) \sum_{m=0}^\infty A_m s^m \,+\, 
  \sum_{m=0}^\infty B_m s^m\,, 
\end{equation}
where $A_m$, $B_m$ are real numbers. Moreover
\begin{equation}\label{Fcoeff}
  A_0 = 1\,, \quad A_1 = \frac{3}{16}\,, \quad A_2 = -\frac{15}{1024}\,, \quad
  B_0 = -2\,, \quad B_1 = -\frac{1}{16}\,, \quad B_2 = \frac{31}{2048}\,.
\end{equation}
\end{lem}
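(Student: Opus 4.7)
The plan is to reduce $F(s)$ to a combination of the complete elliptic integrals of the first and second kind and then invoke their classical expansions near $k=1$. First I would apply the substitution $\phi = \pi/2 - \psi$, which rewrites
\[
  F(s) \,=\, \int_0^{\pi/2}\frac{2\sin^2\phi - 1}{\sqrt{\cos^2\phi + s/4}}\dd\phi\,.
\]
Writing $\cos^2\phi + s/4 = (1+s/4)(1 - k^2\sin^2\phi)$ with $k^2 = 4/(4+s)$, and splitting the numerator via the identity $2\sin^2\phi = (2/k^2)\bigl(1 - (1 - k^2\sin^2\phi)\bigr)$, a direct computation yields the closed form
\begin{equation}\label{FKE0}
  F(s) \,=\, \frac{(2 - k^2)\,K(k) - 2\,E(k)}{k}\,, \qquad k^2 \,=\, \frac{4}{4+s}\,,
\end{equation}
which is the classical Maxwell--Legendre representation of this magnetostatic kernel.

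Next I would invoke the known expansions of $K$ and $E$ near $k = 1$ in terms of the complementary modulus $k' := \sqrt{1-k^2}$,
\[
  K(k) \,=\, \sum_{m=0}^\infty p_m k'^{2m}\,L \,+\, \sum_{m=0}^\infty q_m k'^{2m}\,, \qquad
  E(k) \,=\, \sum_{m=0}^\infty r_m k'^{2m}\,L \,+\, \sum_{m=0}^\infty t_m k'^{2m}\,,
\]
where $L := \log(4/k')$, the coefficients $p_m, q_m, r_m, t_m$ are explicit rationals derived from the hypergeometric representations of $K, E$, and the series converge for $0 < k'^2 < 1$. In our setting $k'^2 = s/(4+s)$ is analytic at $s = 0$ with radius of convergence $4$, and
\[
  L \,=\, \log\!\frac{8}{\sqrt s} \,+\, \tfrac12 \log\!\bigl(1 + s/4\bigr)
\]
isolates the singular factor $\log(8/\sqrt s)$ from a function analytic at the origin. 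Since $1/k = \sqrt{1 + s/4}$ is analytic in $s$ as well, substituting into \eqref{FKE0} and rearranging shows that $F(s) = \log(8/\sqrt s)\,\cA(s) + \cB(s)$ with $\cA, \cB$ analytic on $|s| < 4$, which is precisely the structure asserted in \eqref{Fexp}.

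Finally I would compute the coefficients of $s^0, s^1, s^2$ explicitly. Using
\[
  \frac{2 - k^2}{k} \,=\, 1 + \tfrac{3}{8}s - \tfrac{5}{128}s^2 + \cO(s^3)\,, \qquad
  \frac{2}{k} \,=\, 2 + \tfrac{1}{4}s - \tfrac{1}{64}s^2 + \cO(s^3)\,,
\]
together with the two-term expansions $K(k) = L + (L-1)k'^2/4 + (L-7/6)(9 k'^4/64) + \cO(k'^6 L)$ and $E(k) = 1 + (L-1/2)k'^2/2 + (L-13/12)(3 k'^4/16) + \cO(k'^6 L)$, one obtains
\[
  F(s) \,=\, (L - 2) \,+\, \bigl(\tfrac{3}{16}L - \tfrac{3}{16}\bigr)s \,+\,
  \bigl(-\tfrac{15}{1024}L + \tfrac{15}{2048}\bigr)s^2 \,+\, \cO(s^3 L)\,,
\]
and absorbing $\tfrac12\log(1+s/4) = s/8 - s^2/64 + \cO(s^3)$ from $L$ into the non-logarithmic part yields the claimed values \eqref{Fcoeff}.

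The only real obstacle is the bookkeeping in this last step: one must keep the coefficient of $\log(8/\sqrt s)$ strictly separate from the purely analytic part through the cascade of substitutions, and several combinations of fractions must conspire correctly to produce the small integers $-15$ and $31$ in the $s^2$ coefficients. Conceptually, however, the statement is nothing more than a repackaging of classical facts about complete elliptic integrals, and once \eqref{FKE0} is established the lemma follows by a routine, if somewhat tedious, computation.
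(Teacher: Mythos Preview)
Your proposal is correct and follows essentially the same route as the paper: reduce $F(s)$ to the combination $\frac{2-k^2}{k}K(k)-\frac{2}{k}E(k)$ with $k^2=4/(4+s)$, invoke the classical expansions of $K$ and $E$ near $k=1$ in powers of the complementary modulus, and then convert back to $s$. The only difference is presentational: the paper records the intermediate coefficients in the $\kappa^2$-expansion before passing to $s$, whereas you carry the substitution through directly; your explicit verification of $A_2$ and $B_2$ is in fact more detailed than what the paper writes out.
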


\begin{proof}
If $s > 0$ and $k = 2/\sqrt{s+4} \in (0,1)$, it is straightforward to 
verify that 
\begin{equation}\label{Fexp1}
  F(s) \,=\, \int_0^{\pi/2} \frac{1-2\sin^2\psi}{
  \sqrt{\sin^2\psi + s/4}}\dd\psi \,=\, \frac{2-k^2}{k}\,
  K(k) - \frac{2}{k}\,E(k)\,,
\end{equation}
where $K(k)$, $E(k)$ are the complete elliptic integrals with modulus
$k$\:
\[
  K(k) \,=\, \int_0^{\pi/2} \frac{1}{\sqrt{1-k^2 \sin^2\theta}}\dd \theta\,, 
  \qquad  E(k) \,=\, \int_0^{\pi/2} \sqrt{1-k^2 \sin^2\theta}\dd \theta\,.
\]
We are interested in the limit where $s \to 0$, namely $k \to 1$. Introducing
the complementary modulus $\kappa = \sqrt{1-k^2}$, we have 
the power series expansions (see \cite{Car})
\begin{equation}\label{KEexp}
  \begin{split}
  K(k) \,&=\, \sum_{m=0}^\infty a_m^2\,\kappa^{2m} \Bigl(\log\frac{1}{\kappa} + 
  2 b_m\Bigr)\,, \\
  E(k) \,&=\, 1 + \sum_{m=0}^\infty \frac{2m+1}{2m+2}\,a_m^2\,\kappa^{2m+2} 
  \Bigl(\log\frac{1}{\kappa} + b_m + b_{m+1}\Bigr)\,, 
  \end{split}
\end{equation}
where $a_0 = 1$, $b_0 = \log(2)$, and 
\[
  a_m \,=\, \frac{1}{2} \cdot \frac{3}{4}\cdot \,\ldots\, \cdot 
  \frac{2m{-}1}{2m}\,, \qquad b_m \,=\, \log(2) + \sum_{\ell=1}^{2m} 
  \frac{(-1)^\ell}{\ell}\,, \qquad m \in \N^*\,.
\]
Combining \eqref{Fexp1}, \eqref{KEexp}, we obtain a representation of the form
\begin{equation}\label{Fexp2}
  F(s) \,=\, \frac{1+\kappa^2}{\sqrt{1-\kappa^2}}\,K(k) - 
  \frac{2}{\sqrt{1-\kappa^2}}\,E(k) \,=\, \log\Bigl(\frac{4}{\kappa}\Bigr) 
  \sum_{m=0}^\infty C_m \kappa^{2m} \,+\, \sum_{m=0}^\infty D_m \kappa^{2m}\,,
\end{equation}
which converges for $0 < \kappa < 1$. Moreover, a direct calculation 
shows that
\begin{equation}\label{Fcoeff2}
  C_0 = 1\,, \quad C_1 = \frac{3}{4}\,, \quad C_2 = \frac{33}{64}\,, \quad
  D_0 = -2\,, \quad D_1 = -\frac{3}{4}\,, \quad D_2 = -\frac{81}{128}\,.
\end{equation}
As $\kappa^2 = s/(s+4)$, the right-hand side of \eqref{Fexp2} can 
be written in the form \eqref{Fexp}, and using \eqref{Fcoeff2} we see 
that the first coefficients satisfy \eqref{Fcoeff}. 
\end{proof}

\begin{rem}\label{phiexpand}
Various asymptotic expansions of the stream function given by the Biot-Savart
law \eqref{BSpsi} can be found in the literature \cite{Hi,Dy,Lam,TT,FM}, and are 
easily recovered using Lemma~\ref{Flem}.
\end{rem}

We next consider the rescaled Biot-Savart formula \eqref{BSeps}, which can
be written in the equivalent form
\begin{equation}\label{BSeps2}
  \phi(R,Z) \,=\, \frac{1}{2\pi}\int_{\Omega_\epsilon} K_\epsilon(R,Z;R',Z')
  \,\eta(R',Z')\dd R' \dd Z'\,,
\end{equation}
where
\begin{equation}\label{Kepsdef}
  K_\epsilon \,=\, \sqrt{(1{+}\epsilon R) (1{+}\epsilon R')} 
  ~F\biggl(\frac{\epsilon^2 D^2}{(1{+}\epsilon R)(1{+}\epsilon R')}\biggr)\,,
  \qquad D^2 \,=\, (R{-}R')^2 + (Z{-}Z')^2\,.
\end{equation}
To simplify the notations below, we define
\begin{equation}\label{betaLdef}
  \beta_\epsilon \,=\, \log\frac{1}{\epsilon}\,, \qquad L(R,Z;R',Z') \,=\, 
  \log\Bigl(\frac{8}{D}\Bigr)\,.
\end{equation}

\begin{lem}\label{Kexpansion}
For any $(R,Z)$, $(R',Z') \in \R^2$ with $(R,Z) \neq (R',Z')$ and any sufficiently 
small $\epsilon > 0$, we have the expansion
\begin{equation}\label{Kexp}
  K_\epsilon \,=\, (\beta_\epsilon + L) \sum_{m=0}^\infty \epsilon^m P_m \,+\, 
  \sum_{m=0}^\infty \epsilon^m Q_m\,,
\end{equation}
where $P_m(R,Z;R',Z')$, $Q_m(R,Z;R',Z')$ are homogeneous polynomials of 
degree $m$ in the three variables $R$, $R'$, and $Z-Z'$. Moreover
\begin{equation}\label{PQexp}
  \begin{array}{rcl} 
  P_0 \!&=&\! 1 \\[1mm]
  P_1 \!&=&\! \frac{1}{2}(R+R') \\[1mm]
  P_2 \!&=&\! \frac{1}{16}(R-R')^2 + \frac{3}{16}(Z-Z')^2
  \end{array} \qquad
  \begin{array}{rcl}
  Q_0 \!&=&\! -2 \\[1mm]
  Q_1 \!&=&\! -\frac{1}{2}(R+R') \\[1mm]
  Q_2 \!&=&\! \frac{1}{4}(R^2 + R'^2) - \frac{1}{16}\,D^2\,.
  \end{array}
\end{equation}
\end{lem}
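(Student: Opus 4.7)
The plan is to substitute the convergent expansion of $F(s)$ from Lemma~\ref{Flem} into the defining formula \eqref{Kepsdef} for $K_\epsilon$, with $s = \epsilon^2 D^2/[(1{+}\epsilon R)(1{+}\epsilon R')]$, and reorganize the result as a single power series in $\epsilon$. The crucial first step is to isolate the $\beta_\epsilon$-dependence via
\[
\log\Bigl(\frac{8}{\sqrt{s}}\Bigr) \,=\, L + \beta_\epsilon + \tfrac{1}{2}\log\bigl((1{+}\epsilon R)(1{+}\epsilon R')\bigr).
\]
Plugging this together with \eqref{Fexp} into \eqref{Kepsdef} yields $K_\epsilon = (\beta_\epsilon + L)\,G(\epsilon) + H(\epsilon)$, where
\[
G(\epsilon) \,=\, \sqrt{(1{+}\epsilon R)(1{+}\epsilon R')}\sum_{m\ge 0} A_m s^m,
\]
\[
H(\epsilon) \,=\, \sqrt{(1{+}\epsilon R)(1{+}\epsilon R')}\Bigl[\tfrac{1}{2}\log\bigl((1{+}\epsilon R)(1{+}\epsilon R')\bigr)\sum_{m\ge 0} A_m s^m \,+\, \sum_{m\ge 0} B_m s^m\Bigr].
\]
For fixed $(R,Z) \neq (R',Z')$ and small $\epsilon$, the binomial and logarithmic factors expand convergently (since $|\epsilon R|, |\epsilon R'| < 1$), while $\sum A_m s^m$ and $\sum B_m s^m$ converge for $s < 4$ by Lemma~\ref{Flem}, which holds once $\epsilon^2 D^2$ is small enough. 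Thus $G$ and $H$ are analytic in $\epsilon$ at the origin, and I would identify $P_m$ and $Q_m$ with their respective Taylor coefficients at $\epsilon = 0$.

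To establish the structural properties, I would use scaling and translation symmetry. First, since all factors depend on $(Z, Z')$ only through $D^2 = (R{-}R')^2 + (Z{-}Z')^2$, so do $G$, $H$, and therefore $P_m$, $Q_m$. Second, under the joint rescaling $(R, R', Z{-}Z') \mapsto (\lambda R, \lambda R', \lambda(Z{-}Z'))$ together with $\epsilon \mapsto \epsilon/\lambda$, each of the quantities $\epsilon R$, $\epsilon R'$, $\epsilon D$ is invariant, so $s$, $\sqrt{(1{+}\epsilon R)(1{+}\epsilon R')}$, and $\log((1{+}\epsilon R)(1{+}\epsilon R'))$ are invariant; meanwhile $\beta_\epsilon \mapsto \beta_\epsilon + \log\lambda$ and $L \mapsto L - \log\lambda$, so $\beta_\epsilon + L$ is invariant as well. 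Hence $K_\epsilon$ itself is invariant, and matching powers of $\epsilon$ in \eqref{Kexp} forces $P_m \mapsto \lambda^m P_m$ and $Q_m \mapsto \lambda^m Q_m$. Combined with the fact that the Taylor coefficients of $G, H$ at $\epsilon = 0$ are polynomial expressions in $R, R', D^2$ (they arise from products of the binomial, logarithmic, and $s^m$ series), this shows that $P_m$ and $Q_m$ are homogeneous polynomials of degree $m$ in $R$, $R'$, $Z{-}Z'$.

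Finally, to pin down the explicit values in \eqref{PQexp}, I would expand to order $\epsilon^2$ using
\[
\sqrt{(1{+}\epsilon R)(1{+}\epsilon R')} \,=\, 1 + \tfrac{\epsilon}{2}(R{+}R') - \tfrac{\epsilon^2}{8}(R{-}R')^2 + O(\epsilon^3),
\]
\[
\tfrac{1}{2}\log\bigl((1{+}\epsilon R)(1{+}\epsilon R')\bigr) \,=\, \tfrac{\epsilon}{2}(R{+}R') - \tfrac{\epsilon^2}{4}(R^2{+}R'^2) + O(\epsilon^3),
\]
together with $s = \epsilon^2 D^2 + O(\epsilon^3)$ and the values $A_0 = 1$, $A_1 = 3/16$, $B_0 = -2$, $B_1 = -1/16$ from \eqref{Fcoeff}. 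The orders $\epsilon^0$ and $\epsilon^1$ are immediate; at order $\epsilon^2$ one combines the $\epsilon^2$-contribution of the square root, the square of the $\epsilon$-order logarithm factor, and the leading $A_1 \epsilon^2 D^2$ (respectively $B_1 \epsilon^2 D^2$) term, simplifying via identities such as $(R{+}R')^2 - (R{-}R')^2 = 4RR'$. The main obstacle is simply careful bookkeeping --- tracking which contributions land in $P_m$ (the coefficient of both $\beta_\epsilon \epsilon^m$ and $L\epsilon^m$) versus $Q_m$, and assembling the $\epsilon^2$ algebra without sign errors.
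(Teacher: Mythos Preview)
Your proof is correct and follows essentially the same approach as the paper: substitute the expansion \eqref{Fexp} into \eqref{Kepsdef}, split off $\beta_\epsilon + L$ via the identity $\log(8/\sqrt{s}) = \beta_\epsilon + L + \tfrac12\log((1{+}\epsilon R)(1{+}\epsilon R'))$, and expand each remaining factor as a power series. The only cosmetic difference is that the paper obtains homogeneity directly by noting that every factor is a power series in $\epsilon R$, $\epsilon R'$, $\epsilon(Z{-}Z')$, whereas your scaling-invariance argument encodes the same observation.
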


\begin{proof}
If $(R,Z)$, $(R',Z')$ are as in the statement, we take $\epsilon > 0$ small enough 
so that 
\begin{equation}\label{sdef1}
   \max\bigl(|R|,|R'|\bigr) \,<\, \frac{1}{\epsilon}\,, \qquad \hbox{and} \qquad
   s \,:=\, \frac{\epsilon^2 D^2}{(1{+}\epsilon R)(1{+}\epsilon R')} \,<\, 4\,.
\end{equation}
As $D \neq 0$ by assumption, we have $0 < s < 4$, so that we can apply
expansion \eqref{Fexp} to the quantity $F(s)$ in \eqref{Kepsdef}. In 
view of definitions \eqref{betaLdef} we have
\begin{equation}\label{auxlog}
  \log\Bigl(\frac{8}{\sqrt{s}}\Bigr) \,=\, \beta_\epsilon + L + 
  \frac{1}{2}\log(1+\epsilon R) +  \frac{1}{2}\log(1+\epsilon R')\,. 
\end{equation}
We observe that the last two terms in \eqref{auxlog}, as well as the prefactor
$\sqrt{(1{+}\epsilon R) (1{+}\epsilon R')}$ in \eqref{Kepsdef} and each monomial
$s^m$ in the series \eqref{Fexp}, can be expanded into a power series in the
three variables $\epsilon R$, $\epsilon R'$, and $\epsilon (Z-Z')$. Thus,
combining \eqref{Fexp} and \eqref{Kepsdef}, we obtain a representation of the
form \eqref{Kexp}, where the first homogeneous polynomials $P_m$, $Q_m$ are
easily computed using the explicit values \eqref{Fcoeff}.
\end{proof}

\begin{rem}\label{kernelrem}
In what follows, with a slight abuse of notation, we denote by $L$ the
integral operator on $\R^2$ given by the kernel \eqref{betaLdef}. For any 
continuous and rapidly decreasing function $\eta : \R^2 \to \R$, 
we thus have
\begin{equation}\label{Lopdef}
  \bigl(L \eta\bigr)(R,Z) \,=\, \int_{\R^2} \log\biggl(\frac{8}{\sqrt{(R{-}R')^2 + 
  (Z{-}Z')^2}}\biggr)\,\eta(R',Z')\dd R' \dd Z'\,.
\end{equation}
Similarly, we associate integral operators to the homogeneous polynomials 
$P_m$, $Q_m$ in \eqref{Kexp}, and to the functions $L P_m$ for all $m \in \N^*$. 
\end{rem}

\begin{df} Using the notation introduced in Remark~\ref{kernelrem}, we define
the linear operators
\begin{equation}\label{BSmdef}
  \BS_0 \,=\, \frac{1}{2\pi}\,L\,, \quad \hbox{and} \quad 
  \BS_m \,=\, \frac{1}{2\pi}\Bigl(\beta_\epsilon P_m  + L P_m + Q_m\Bigr)\,,
  \quad \hbox{for all } m \in \N^*\,.
\end{equation}
\end{df}

Note that, for $m \ge 1$, the linear operator $\BS_m$ depends on the parameter
$\epsilon$ through the constant factor $\beta_\epsilon = \log(1/\epsilon)$, but
for simplicity this mild dependence is not indicated explicitly. For
convenience, we do not include the constant term
$\beta_\epsilon P_0 + Q_0 \equiv \beta_\epsilon - 2$ in the definition of
$\BS_0$, because the stream function is only defined up to an additive
constant. It is important to observe that, in \eqref{Lopdef} and in the
corresponding definition of the integral operators $P_m$, $Q_m$, and $L P_m$,
the integration is performed on the whole plane $\R^2$, rather than on the
half-plane $\Omega_\epsilon$.  This is justified because these operators will
always be applied to functions that decay rapidly at infinity, so that the
integration on $\R^2 \setminus \Omega_\epsilon$ gives a contribution of
order $\cO(\epsilon^\infty)$ as $\epsilon \to 0$, which can be neglected in our
perturbative expansion. If $\eta : \R^2 \to \R$ is compactly supported, then
according to Lemma~\ref{Kexpansion} the following equality holds in any bounded
region of $\R^2$:
\begin{equation}\label{BSexp}
  \BS^\epsilon[\eta] \,=\, \frac{\beta_\epsilon - 2}{2\pi} \int_{\R^2} \eta(R',Z') 
  \dd R' \dd Z' \,+\, \sum_{m = 0}^\infty \epsilon^m\,\BS_m[\eta]\,,
\end{equation}
provided $\epsilon > 0$ is sufficiently small. As was already mentioned, the first 
term in the right-hand side of \eqref{BSexp} is a constant that can be omitted. 

\subsection{Function spaces and linear operators}\label{ssec32}

We next introduce the function spaces in which we shall construct our approximate
solution of \eqref{etaeq2}. These spaces consist of functions that are defined
on the whole space $\R^2$, and not just on the half-plane $\Omega_\epsilon$. 
Indeed, at each step of the approximation, the vorticity profile
$\eta_m(R,Z,\beta_\epsilon)$ and the stream function $\phi_m(R,Z,\beta_\epsilon)$ in
\eqref{etaapp} are defined for all $(R,Z) \in \R^2$. To simplify the writing we often
denote $X = (R,Z)$, and we use polar coordinates $(\rho,\vt)$ in $\R^2$ defined by
the relations $R = \rho\cos\vt$, $Z = \rho\sin\vt$. 

Following \cite{GW1,GW2} we introduce the weighted $L^2$ space
\begin{equation}\label{Ydef}
  \cY \,=\, \Bigl\{\eta \in L^2(\R^2) \,\Big|\, \int_{\R^2}
  |\eta(X)|^2 \,e^{|X|^2/4}\dd X < \infty\Bigr\}\,,
\end{equation}
equipped with the scalar product $(\eta_1,\eta_2)_\cY = \int_{\R^2} \eta_1(X) 
\eta_2(X) \,e^{|X|^2/4}\dd X$ and the associated norm. We also introduce the
differential operator $\cL : D(\cL) \to \cY$ corresponding to \eqref{cLdef}, 
namely
\begin{equation}\label{Ldef}
  \cL \eta \,=\, \Delta \eta + \frac{1}{2}\,X \cdot \nabla\eta 
  + \eta\,, \qquad \eta \in D(\cL) \,=\, \Bigl\{\eta \in \cY\,\Big|\, 
  \Delta \eta \in \cY\,, ~X\cdot\nabla \eta \in \cY\Bigr\}\,,
\end{equation}
as well as the integro-differential operator $\Lambda : D(\Lambda) \to \cY$
defined by 
\begin{equation}\label{Lamdef}
  \Lambda \eta \,=\, \frac{1}{2\pi}\Bigl(\bigl\{L \eta_0\,,\eta\bigr\} + 
  \bigl\{L\eta\,,\eta_0\bigr\}\Bigr)\,, \qquad \eta \in D(\Lambda) \,=\, 
  \Bigl\{\eta \in \cY\,\Big|\, \bigl\{L\eta_0\,,\eta\bigr\} \in 
  \cY\Bigr\}\,,
\end{equation}
where $\eta_0$ is the Gaussian function \eqref{etazero} and $L$ denotes the
integral operator \eqref{Lopdef}. Here and in what follows the Poisson bracket
is understood with respect to the rescaled variables $(R,Z)$, so that
$\{\phi,\eta\} = \partial_R \phi \,\partial_Z \eta - \partial_Z \phi
\,\partial_R \eta$. We recall the following well-known properties\:

\begin{prop}\label{LLamprop} {\bf\cite{GW1,GW2,Ma1}}\\[0.5mm]
1) The linear operator $\cL$ is {\em self-adjoint} in $\cY$, with purely 
discrete spectrum
\[
  \sigma(\cL) \,=\, \Bigl\{-\frac{n}{2}\,\Big|\, n = 0,1,2, \dots\Bigr\}\,.
\]
The kernel of $\cL$ is one-dimensional and spanned by the Gaussian function $\eta_0$. 
More generally, for any $n \in \N$, the eigenspace corresponding to the 
eigenvalue $\lambda_n = -n/2$ is spanned by the $n+1$ Hermite functions
$\partial^\alpha \eta_0$ where $\alpha = (\alpha_1,\alpha_2) \in \N^2$ and 
$\alpha_1 + \alpha_2 = n$. \\[1mm]
2) The linear operator $\Lambda$ is {\em skew-adjoint} in $\cY$, so that 
$\Lambda^* = -\Lambda$. Moreover,
\begin{equation}\label{KerLam}
  \Ker(\Lambda) \,=\, \cY_0 \oplus \bigl\{\beta_1\partial_R \eta_0 + \beta_2
  \partial_Z \eta_0 \,\big|\, \beta_1, \beta_2 \in \R\bigr\}\,,
\end{equation}
where $\cY_0 \subset \cY$ is the subspace of all radially symmetric 
elements of $\cY$. 
\end{prop}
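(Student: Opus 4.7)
The plan is to treat the two operators separately, since they have very different structures. For part~(1), I would conjugate $\cL$ to a standard harmonic oscillator via the isometry $U : \cY \to L^2(\R^2)$ defined by $U\eta = e^{|X|^2/8}\eta$. A direct computation yields
\[
U\cL U^{-1} \,=\, \Delta - \frac{|X|^2}{16} + \frac{1}{2}\,,
\]
which is (up to rescaling) the two-dimensional quantum harmonic oscillator. Its spectrum is classically known to be $\{-n/2 : n \in \N\}$, with eigenspaces of dimension $n+1$ spanned by tensor products of one-dimensional Hermite functions. Transporting back, $\eta_0$ corresponds to the ground state, and the commutator identity $[\cL,\partial^\alpha] = -\frac{|\alpha|}{2}\,\partial^\alpha$ (verified by a short calculation) gives $\cL(\partial^\alpha\eta_0) = -\frac{|\alpha|}{2}\,\partial^\alpha\eta_0$ for every multi-index $\alpha$. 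Dimension counting then identifies the eigenspaces and completes part~(1).

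For part~(2), the plan is to separate the skew-adjointness from the kernel computation. Writing $\phi_0 := \frac{1}{2\pi}L\eta_0$ and $\psi_\eta := \frac{1}{2\pi}L\eta$, one has $\Lambda\eta = \{\phi_0,\eta\} + \{\psi_\eta,\eta_0\}$. For the first summand, the divergence-free field $u_0 := (-\partial_Z\phi_0,\partial_R\phi_0)$ is purely angular (because $\phi_0$ is radial) and the Gaussian weight $e^{|X|^2/4}$ is also radial, so $u_0\cdot\nabla(e^{|X|^2/4})=0$; a single integration by parts gives skew-adjointness of $\{\phi_0,\cdot\}$ in $\cY$. For the second summand, using $\partial_R\eta_0 = -\frac{R}{2}\eta_0$ and $\partial_Z\eta_0 = -\frac{Z}{2}\eta_0$, the weighted pairing $(\{L\eta_1,\eta_0\},\eta_2)_\cY$ collapses to a constant multiple of $\int \partial_\vt(L\eta_1)\,\eta_2\dd X$, and its skew-symmetry under $\eta_1\leftrightarrow\eta_2$ follows from the rotation invariance and the pointwise symmetry of the kernel $L(X,X')=\log(8/|X-X'|)$, after one more integration by parts in $\vt$.

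To identify $\Ker(\Lambda)$, I would expand $\eta\in\cY$ in angular Fourier modes $\eta(\rho,\vt) = \sum_{n\in\Z} a_n(\rho)e^{in\vt}$. Since $\eta_0$ and $\phi_0$ are radial, $\Lambda$ leaves each mode invariant, and on the $n$-th mode the equation $\Lambda\eta=0$ reduces, after cancelling the overall factor $in$, to the coupled system
\[
\Omega(\rho)\,a_n(\rho) \,+\, \frac{\eta_0'(\rho)}{\rho}\,\psi_n(\rho) \,=\, 0\,,\qquad
-\Bigl(\partial_\rho^2 + \tfrac{1}{\rho}\partial_\rho - \tfrac{n^2}{\rho^2}\Bigr)\psi_n \,=\, a_n\,,
\]
where $\Omega(\rho):=\phi_0'(\rho)/\rho$ is the angular velocity of the Oseen vortex. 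For $n=0$ the first equation is vacuous and we recover the entire radial subspace $\cY_0$. For $n=\pm 1$, the translations $\partial_R\eta_0 \mp i\,\partial_Z\eta_0$ generate a one-dimensional kernel and are annihilated by $\Lambda$ thanks to the translation invariance of the underlying Euler flow, which gives the second summand in \eqref{KerLam}. The main technical obstacle is showing that the system admits only the trivial $\cY$-solution when $|n|\ge 2$; this requires a careful ODE analysis (asymptotics at $\rho\to 0$ and $\rho\to\infty$ combined with a Sturm--Liouville-type monotonicity argument) that is carried out in detail in \cite{GW1,GW2,Ma1}, to which I would defer for this final step.
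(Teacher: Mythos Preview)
The paper does not actually prove Proposition~\ref{LLamprop}; it is quoted as a known fact from \cite{GW1,GW2,Ma1}, so there is no ``paper's proof'' to compare against. Your sketch is a faithful outline of the arguments found in those references: the conjugation $U\cL U^{-1}=\Delta-|X|^2/16+1/2$ is correct and gives part~(1) immediately, and the angular Fourier decomposition is exactly how the kernel of $\Lambda$ is analyzed in \cite{Ma1}.

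One small gap: for $n=\pm 1$ you only verify that $\partial_R\eta_0$, $\partial_Z\eta_0$ lie in $\Ker(\Lambda)$, but you assert without argument that they \emph{span} the kernel in that mode. This is not automatic --- it requires the same kind of ODE analysis you defer to the references for $|n|\ge 2$ (indeed, the paper's Appendix~\ref{ssecA1} revisits precisely this $n=1$ case, identifying the explicit homogeneous solution $\psi(\rho)=\rho\vf(\rho)$ and using it to show that no other $\cY$-regular solution exists). So your deferral to \cite{GW1,GW2,Ma1} should cover $|n|\ge 1$, not just $|n|\ge 2$.
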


A crucial observation is that both operators $\cL$, $\Lambda$ are 
invariant under rotations about the origin in $\R^2$. It is therefore
advantageous to decompose the space $\cY$ into a direct sum
\begin{equation}\label{Yndef}
  \cY \,=\, \mathop{\oplus}\limits_{n=0}^\infty \cY_n\,, 
\end{equation}
where $\cY_0 \subset \cY$ is as in Proposition~\ref{LLamprop} and, for 
all $n \ge 1$, the subspace $\cY_n \subset \cY$ consists of all functions 
$\eta \in \cY$ such that $\eta(\rho\cos\vt,\rho\sin\vt) = a_1(\rho)\cos(n\vt) + 
a_2(\rho)\sin(n\vt)$ for some $a_1,a_2 : \R_+ \to \R$. It is clear that 
$\cY_n \perp \cY_{n'}$ if $n \neq n'$. In particular, in view of 
\eqref{KerLam}, we have $\cY_n \in \Ker(\Lambda)^\perp$ for all $n \ge 2$. 
When $n = 1$, the functions $\partial_R \eta_0$, $\partial_Z \eta_0$ belong to 
$\cY_1 \cap \Ker(\Lambda)$, and we define
\begin{equation}\label{Y1def}
  \cY_1' \,=\, \cY_1 \cap \Ker(\Lambda)^\perp \,=\, \biggl\{\eta \in \cY_1\,\bigg|\, 
  \int_{\R^2} \eta(R,Z) R \dd R\dd Z = \int_{\R^2} \eta(R,Z) Z\dd R\dd Z = 0
  \biggr\}\,.
\end{equation}

Since $\Lambda$ is skew-adjoint, we know that $\Ker(\Lambda)^\perp = \overline{\Ran
(\Lambda)}$, but the image of $\Lambda$ is not dense in $\cY$ and, therefore,
we cannot solve the equation $\Lambda \eta = f$ for any
$f \in \Ker(\Lambda)^\perp$.  As is shown in \cite{GW3,Ga}, the problem
disappears if one assumes in addition that $f$ belongs to a smaller space such
as
\begin{equation}\label{Zdef}
  \cZ \,=\, \Bigl\{\eta : \R^2 \to \R\,\Big|\, e^{|X|^2/4} \eta \in 
  \cS_*(\R^2)\Bigr\} \,\subset\, \cY\,,
\end{equation}
where $\cS_*(\R^2)$ denotes the space of all smooth functions which are slowly
growing at infinity. More precisely, a smooth function $w : \R^2 \to \R$ belongs
to $\cS_*(\R^2)$ if, for any $\alpha = (\alpha_1,\alpha_2) \in \N^2$,
there exist $C > 0$ and $N \in \N$ such that $|\partial^\alpha w(X)| \le
C(1+|X|)^N$ for all $X \in \R^2$.

\begin{rem}\label{rem:topo}
We do not need to specify the topology of the space $\cZ$, but the following
notation will be useful. If $f \in \cZ$ depends on a small parameter
$\epsilon > 0$, we say that $f = \cO(\epsilon)$ in $\cZ$ if, for any $\alpha =
(\alpha_1,\alpha_2) \in \N^2$, there exist $C > 0$ and $N \in \N$ such that
$|\partial^\alpha f(X)| \le C\epsilon (1+|X|)^N e^{-|X|^2/4}$ for all $X \in \R^2$.
\end{rem}

To formulate the main technical result of this section, we introduce the notation
\begin{equation}\label{phihdef}
  \vf(\rho) \,=\, \frac{1}{2\pi \rho^2}\bigl(1 - e^{-\rho^2/4}\bigr)\,, \qquad 
  h(\rho) \,=\, \frac{\rho^2/4}{e^{\rho^2/4}-1}\,, \qquad \rho > 0~.
\end{equation}
The following statement is a slight extension of \cite[Lemma~4]{Ga}. For the
reader's convenience, we give a short proof of it in Section~\ref{ssecA1}, 
emphasizing the case $n = 1$ which was not treated in \cite{Ga}.

\begin{prop}\label{Laminv}
If $n \ge 2$ and $f \in \cY_n \cap \cZ$, or if $n = 1$ and  $f \in \cY'_1 \cap \cZ$, 
there exists a unique $\eta \in \cY_n \cap \cZ$ (respectively, a unique $\eta \in \cY_1' 
\cap \cZ$) such that $\Lambda \eta = f$.  In particular, if $f = a(\rho)\sin(n\vt)$, 
then $\eta = \omega(\rho) \cos(n\vt)$, where
\begin{equation}\label{omexp}
  \omega(\rho) \,=\, h(\rho)\Omega(\rho) + \frac{a(\rho)}{n\vf(\rho)}\,, 
  \qquad \rho > 0~,
\end{equation}
and where $\Omega : (0,\infty) \to \R$ is the unique solution of the differential
equation
\begin{equation}\label{Omexp}
  -\Omega''(\rho) -\frac{1}{\rho}\,\Omega'(\rho) + \Bigl(\frac{n^2}{\rho^2} - h(\rho)\Bigr)
  \Omega(\rho) \,=\, \frac{a(\rho)}{n\vf(\rho)}\,, \qquad \rho > 0\,,  
\end{equation}
such that $\Omega(\rho) = \cO(\rho^n)$ as $\rho \to 0$ and $\Omega(\rho) = 
\cO(\rho^{-n})$ as $\rho \to \infty$. 
\end{prop}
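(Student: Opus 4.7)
The plan is to diagonalize $\Lambda$ via angular Fourier modes, reduce the operator equation $\Lambda\eta = f$ to an ODE in the radial variable $\rho$, and analyze that ODE by Sturm--Liouville theory.

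Since $\Lambda$ commutes with rotations about the origin, it respects the decomposition \eqref{Yndef}, so I would work on each $\cY_n$ separately. Fix $n \ge 1$; by linearity and the reflection symmetry $\vt \to -\vt$ it suffices to treat $f = a(\rho)\sin(n\vt)$ and to seek $\eta = \omega(\rho)\cos(n\vt)$. Writing $L\eta_0 = 2\pi\psi_0(\rho)$, where $-\Delta\psi_0 = \eta_0$ forces $\psi_0'(\rho) = -\rho\vf(\rho)$, and writing $L\eta = 2\pi\Omega(\rho)\cos(n\vt)$ where the radial stream function $\Omega$ satisfies
\begin{equation}\label{plan-poiss}
  -\Omega''(\rho) - \frac{1}{\rho}\,\Omega'(\rho) + \frac{n^2}{\rho^2}\,\Omega(\rho) \,=\, \omega(\rho)\,,
\end{equation}
I would compute the two Poisson brackets in polar coordinates and exploit the elementary identity $\vf(\rho)h(\rho) = e^{-\rho^2/4}/(8\pi)$ to obtain
\[
  \Lambda\eta \,=\, n\,\vf(\rho)\bigl(\omega(\rho) - h(\rho)\Omega(\rho)\bigr)\sin(n\vt)\,.
\]
Equating to $f = a(\rho)\sin(n\vt)$ immediately yields \eqref{omexp}, and substituting that expression for $\omega$ into \eqref{plan-poiss} produces the ODE \eqref{Omexp}.

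The next step is to solve \eqref{Omexp} uniquely in the prescribed class. The relevant operator $\cA_n \Omega := -\Omega'' - \rho^{-1}\Omega' + \bigl(n^2/\rho^2 - h(\rho)\bigr)\Omega$ is self-adjoint on $L^2((0,\infty),\rho\,d\rho)$; a Frobenius analysis at the origin and the super-exponential decay of $h$ at infinity give two independent local solutions behaving like $\rho^{\pm n}$ near $0$ and like $\rho^{\pm n}$ near $\infty$, leading to a Fredholm structure of index zero. For $n \ge 2$, the pointwise comparison $\cA_n \ge \cA_1 + (n^2{-}1)/\rho^2$ combined with $\cA_1 \ge 0$ (which reflects the existence of a non-vanishing zero-energy ground state, namely the radial profile of the translation modes $\partial_R \eta_0,\partial_Z\eta_0 \in \Ker(\Lambda)$) forces $\cA_n$ to be strictly positive, hence invertible, and gives a unique $\Omega$ for every admissible right-hand side. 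For $n = 1$, the operator $\cA_1$ has a one-dimensional kernel spanned by the explicit radial translation mode $\Omega_{\rm tr}$, and the Fredholm alternative requires the source $a/\vf$ to be $L^2(\rho\,d\rho)$-orthogonal to $\Omega_{\rm tr}$.

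To conclude, the smoothness and Gaussian decay needed to place $\eta$ in $\cZ$ propagate from $f$: if $f\in\cZ$, then $a/(n\vf)$ is smooth on $[0,\infty)$ and Schwartz-like, the ODE analysis yields $\Omega$ smooth on $[0,\infty)$ with $\cO(\rho^{-n})$ decay, and consequently $\omega = h\Omega + a/(n\vf)$ inherits Gaussian decay from the super-exponential decay of $h$ in the first term and from $a$ in the second. The main obstacle is the $n = 1$ case: one must show that the Fredholm solvability condition is exactly equivalent to the moment conditions $\int_{\R^2} Z\,\eta\,dR\,dZ = \int_{\R^2} R\,\eta\,dR\,dZ = 0$ defining $\cY_1'$. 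This requires an explicit identification of $\Omega_{\rm tr}$ with the radial part of the Cartesian translation modes via the Biot--Savart representation, after which an integration by parts converts the abstract $L^2(\rho\,d\rho)$-orthogonality into the Cartesian moment conditions; the rest of the argument is then routine.
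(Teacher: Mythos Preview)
Your reduction of $\Lambda\eta=f$ to the radial ODE \eqref{Omexp} via the angular Fourier decomposition is exactly the paper's route, and your identification of the $n=1$ solvability condition with the moment constraint defining $\cY_1'$ is correct (with $\Omega_{\rm tr}=\rho\vf(\rho)$, the $L^2(\rho\,d\rho)$-orthogonality $\int (a/\vf)(\rho\vf)\,\rho\,d\rho=0$ is precisely $\int a(\rho)\rho^2\,d\rho=0$). The paper differs only in how it handles the ODE: for $n\ge 2$ it observes directly that the potential $n^2/\rho^2-h(\rho)$ is strictly positive, invokes the Maximum Principle to conclude that the recessive solutions $\psi_\pm$ at $0$ and at $\infty$ are linearly independent, and writes an explicit Green's-function formula for $\Omega$; for $n=1$ it exhibits the kernel element $\psi=\rho\vf$ explicitly and solves by reduction of order $\Omega=b\psi$, fixing the free constant so that $\eta\in\cY_1'$.

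One caution on your spectral language: since $n^2/\rho^2-h(\rho)\to 0$ at infinity, the point $0$ lies in the essential spectrum of $\cA_n$ on $L^2((0,\infty),\rho\,d\rho)$, so ``strictly positive, hence invertible'' is not literally true there. What your positivity argument legitimately yields (via the ground-state transform against the positive solution $\rho\vf$ of $\cA_1\psi=0$) is that the homogeneous problem has no nontrivial solution in the class $\{\Omega=\cO(\rho^n)\text{ at }0,\ \cO(\rho^{-n})\text{ at }\infty\}$; existence then follows from the resulting nonzero Wronskian and variation of parameters, which is ultimately the same mechanism the paper uses.
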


\begin{rem}\label{sincos}
As was observed in \cite{Ga}, if $f = a(\rho)\cos(n\vt)$, then $\eta = -\omega(\rho) 
\sin(n\vt)$, where $\omega$ is still given by \eqref{omexp}, \eqref{Omexp}. The 
general case where $f = a_1(\rho)\cos(n\vt) + a_2(\rho)\sin(n\vt)$ follows 
by linearity. 
\end{rem}

In the construction of an approximate solution of \eqref{etaeq2}, we shall
encounter linear equations of the form
\begin{equation}\label{deltaLam}
  \delta \bigl(\kappa - \cL\bigr)\eta^{\delta} + \Lambda \eta^{\delta} \,=\, f\,,
\end{equation}
where $\kappa > 0$ is fixed and the parameter $\delta > 0$ can be arbitrarily small.
Proposition~\ref{LLamprop} implies that the linear operator $\delta(\kappa - \cL) +
\Lambda$ is invertible in $\cY$, so that \eqref{deltaLam} has a unique solution 
$\eta^{\delta}$ for any $f \in \cY$. In general, the best estimate we
can hope for is
\begin{equation}\label{resolbd}
  \|\eta^\delta\|_\cY \,=\, \bigl\|\bigl(\delta(\kappa - \cL) + \Lambda\bigr)^{-1}
  f\bigr\|_{\cY} \,\le\, \frac{1}{\kappa \delta}\,\|f\|_{\cY}\,.
\end{equation}
However, if $f$ satisfies the assumptions of Proposition~\ref{Laminv}, the solution
$\eta^\delta$ admits a regular expansion in powers of the small parameter $\delta$.
More precisely\:

\begin{prop}\label{Laminv2}
Assume that $n \ge 2$ and $f \in \cY_n \cap \cZ$, or that $n = 1$ and $f \in
\cY'_1 \cap \cZ$. For any fixed $\kappa > 0$ and any $\delta > 0$, equation
\eqref{deltaLam} has a unique solution $\eta^{\delta} \in \cY_n$ (respectively,
$\eta^{\delta} \in \cY_1'$). Moreover, for each nonzero $N \in \N$, there exists
a constant $C > 0$, depending only on $f$ and $N$, such that
\begin{equation}\label{deltacomp}
  \Bigl\|\eta^{\delta} - \sum_{m=0}^{N-1} \delta^m \hat\eta_m\Bigr\|_\cY \,\le\, C\delta^N\,,
\end{equation}
where the profiles $\hat\eta_m \in \cY_n \cap \cZ$ (respectively, $\hat\eta_m \in
\cY_1' \cap \cZ$) are determined by the relations $\Lambda \hat\eta_0 = f$ and
$\Lambda \hat\eta_m = (\cL - \kappa)\hat\eta_{m-1}$ for $m \ge 1$.
\end{prop}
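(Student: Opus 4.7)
The approach combines a coercivity bound for the full operator $\delta(\kappa-\cL)+\Lambda$ with the invertibility of $\Lambda$ from Proposition~\ref{Laminv}, and builds the asymptotic expansion recursively.

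First I address existence, uniqueness, and the basic resolvent bound \eqref{resolbd}. Since $\cL$ is self-adjoint on the real Hilbert space $\cY$ with spectrum in $(-\infty,0]$, and $\Lambda$ is skew-adjoint (so that $(\Lambda\eta,\eta)_\cY=0$), a direct computation gives
\[
  \bigl((\delta(\kappa-\cL)+\Lambda)\eta,\eta\bigr)_\cY \,=\, \delta\kappa\|\eta\|_\cY^2 - \delta(\cL\eta,\eta)_\cY \,\ge\, \kappa\delta\|\eta\|_\cY^2.
\]
Hence $\delta(\kappa-\cL)+\Lambda$ is coercive, therefore invertible on $\cY$ with operator norm at most $(\kappa\delta)^{-1}$. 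Because both $\cL$ and $\Lambda$ preserve the subspaces $\cY_n$ and $\cY_1'$, the same argument gives existence and uniqueness in these subspaces.

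Next I construct $\hat\eta_m$ by induction. Plugging the ansatz $\eta^\delta\sim\sum_m\delta^m\hat\eta_m$ into \eqref{deltaLam} and matching powers of $\delta$ yields $\Lambda\hat\eta_0=f$ and $\Lambda\hat\eta_m=(\cL-\kappa)\hat\eta_{m-1}$ for $m\ge 1$. To apply Proposition~\ref{Laminv} at every step, three invariance properties are needed. The subspaces $\cY_n$ are preserved by $\cL$ by rotation invariance. The class $\cZ$ is preserved by $\cL$ because a direct computation shows
\[
  \cL\bigl(w\,e^{-|X|^2/4}\bigr) \,=\, \Bigl(\Delta w - \tfrac{1}{2}X\cdot\nabla w\Bigr)\,e^{-|X|^2/4},
\]
and $\cS_*(\R^2)$ is stable under $\Delta$ and multiplication by polynomials. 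Finally, for $n=1$, the subspace $\cY_1'=\cY_1\cap\Ker(\Lambda)^\perp$ is also $\cL$-invariant, since by Proposition~\ref{LLamprop} the space $\Ker(\Lambda)\cap\cY_1=\mathrm{span}\{\partial_R\eta_0,\partial_Z\eta_0\}$ consists of eigenfunctions of the self-adjoint operator $\cL$, and the orthogonal complement of an $\cL$-invariant subspace is itself $\cL$-invariant. Hence $(\cL-\kappa)\hat\eta_{m-1}$ always lies in $\cY_n\cap\cZ$ (respectively $\cY_1'\cap\cZ$), and Proposition~\ref{Laminv} produces $\hat\eta_m$ in the same class.

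For the error bound, I build one extra term. Setting $r^\delta_{N+1}:=\eta^\delta-\sum_{m=0}^{N}\delta^m\hat\eta_m$, a direct substitution together with the recursion telescopes everything except the highest-order contribution, leaving
\[
  \bigl[\delta(\kappa-\cL)+\Lambda\bigr]r^\delta_{N+1}\,=\,-\delta^{N+1}(\kappa-\cL)\hat\eta_{N}.
\]
Applying the resolvent bound from Step~1 gives $\|r^\delta_{N+1}\|_\cY\le\kappa^{-1}\delta^{N}\|(\kappa-\cL)\hat\eta_{N}\|_\cY=\cO(\delta^N)$, and the triangle inequality together with $\|\delta^N\hat\eta_N\|_\cY=\cO(\delta^N)$ yields \eqref{deltacomp}. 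The main conceptual point, and the only one that is not a routine consequence of Proposition~\ref{LLamprop}, is the $\cL$-invariance of $\cY_1'$ noted above: without it the recursion would generate a secular component along $\Ker(\Lambda)\cap\cY_1$ at order $\delta$ and block the next iteration.
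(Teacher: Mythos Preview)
Your proof is correct and follows essentially the same route as the paper: build $\hat\eta_0,\dots,\hat\eta_N$ recursively via Proposition~\ref{Laminv}, observe that the residual $\tilde\eta=\eta^\delta-\sum_{m=0}^N\delta^m\hat\eta_m$ satisfies $[\delta(\kappa-\cL)+\Lambda]\tilde\eta=\delta^{N+1}(\cL-\kappa)\hat\eta_N$, and conclude via the resolvent bound~\eqref{resolbd}. Your write-up supplies more detail than the paper does on the coercivity behind~\eqref{resolbd}, on why $\cZ$ is $\cL$-stable, and on the $\cL$-invariance of $\cY_1'$ (via the fact that $\partial_R\eta_0,\partial_Z\eta_0$ are $\cL$-eigenfunctions), all of which are correct and useful.
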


\begin{proof}
Assume first that $n \ge 2$. Since the space $\cY_n$ is invariant under the action
of both operators $\cL$ and $\Lambda$, it is clear that $\eta^\delta \in \cY_n$ if
$f \in \cY_n$. If we suppose in addition that $f \in \cZ$, Proposition~\ref{Laminv} shows
that there is a unique $\hat\eta_0 \in \cY_n \cap \cZ$ such that $\Lambda\hat\eta_0 = f$.
A direct calculation then shows that $(\cL-\kappa)\hat\eta_0 \in \cY_n \cap \cZ$, so
that we can define $\hat\eta_1 \in \cY_n \cap \cZ$ as the unique solution of $\Lambda
\hat\eta_1 = (\cL - \kappa)\hat\eta_0$. Repeating this procedure, we construct the 
profiles $\hat\eta_m$ for $m = 0,\dots,N$, and we define 
$\tilde \eta = \eta^{\delta}
- \bigl(\hat\eta_0 + \delta \hat\eta_1 + \dots + \delta^N \hat\eta_N\bigr)$, so that
\begin{equation}\label{deltadiff}
  \Bigl(\delta (\kappa - \cL) + \Lambda\Bigr)\tilde \eta \,=\, f -
  \Bigl(\delta \bigl(\kappa - \cL\bigr) + \Lambda\Bigr) \sum_{m=0}^N \delta^m
  \hat\eta_m \,=\, \delta^{N+1} \bigl(\cL-\kappa\bigr)\hat\eta_N\,.
\end{equation}
Estimate \eqref{resolbd} then gives the crude bound $\|\tilde\eta\|_\cY \le C\delta^N$, 
which nevertheless implies \eqref{deltacomp}. The proof is identical if $n = 1$ and 
$f \in \cY_1' \cap \cZ$. 
\end{proof}

\subsection{First order approximation}\label{ssec33}

We now begin the construction of an approximate solution of \eqref{etaeq2} in
the form \eqref{etaapp}, \eqref{rzapp}. We recall that, for an exact solution,
the stream function is determined by the relation \eqref{BSeps}, which we
write in the compact form $\phi = \BS^\epsilon[\eta]$. For our approximate
solution, we expand the Biot-Savart operator as in \eqref{BSexp}, omitting
the constant term in the right-hand side. We thus obtain the formal relation
\[
  \sum_{m=0}^\infty \epsilon^m\,\BS_m\biggl[\,\sum_{m=0}^M 
  \epsilon^m \eta_m\biggr] \,=\, \sum_{m=0}^M \epsilon^m \phi_m + 
  \cO\bigl(\epsilon^{M+1}\bigr)\,,
\]
which we assume to be satisfied order by order in $\epsilon$, up to order $M$. 
This leads to the relations $\phi_0 = \BS_0[\eta_0]$, $\phi_1 = 
\BS_0[\eta_1] + \BS_1[\eta_0]$, and more generally
\begin{equation}\label{BScompat}
  \phi_m \,=\, \BS_0[\eta_m] + \BS_1[\eta_{m-1}] + \dots + 
  \BS_{m-1}[\eta_1] + \BS_m[\eta_0]\,.
\end{equation}
In particular, in view of \eqref{etazero} and \eqref{BSmdef}, the leading order
of our approximation is
\begin{equation}\label{phi0def}
  \eta_0(R,Z) \,=\, \frac{1}{4\pi}\,e^{-(R^2 + Z^2)/4}\,, \qquad
  \phi_0(R,Z) \,=\, \frac{1}{2\pi}\bigl(L \eta_0\bigr)(R,Z)\,,
\end{equation}
where $L$ is the integral operator \eqref{Lopdef}. The stream function
$\phi_0$ has the expression
\begin{equation}\label{phi0exp}
  \phi_0(R,Z) \,=\, \phi_0(0) - \frac{1}{4\pi}\Ein\biggl(\frac{R^2{+}Z^2}{4}
  \biggr)\,, \qquad \hbox{where}\quad \Ein(x) \,=\, \int_0^x
  \frac{1-e^{-t}}{t}\dd t\,,
\end{equation}
so that $\phi_0$ is radially symmetric and $\phi_0(R,Z) \sim -(2\pi)^{-1}
\log\rho$ as $\rho := (R^2+Z^2)^{1/2} \to +\infty$. The value at the
origin does not play a big role in our analysis, but can be
computed too, see Section~\ref{ssecA2}\:
\[
  \phi_0(0) \,=\, \frac{\log(2)}{\pi} + \frac{\gamma_E}{4\pi}\,,
  \qquad \hbox{where } \gamma_E\,\hbox{ is Euler's constant.}
\]

Before proceeding further, we estimate the time derivative of the
quantity $\epsilon = \sqrt{\nu t}/\bar r(t)$ introduced in \eqref{RZeps}.
In view of \eqref{rzapp}, we have
\begin{equation}\label{dereps}
  t\dot \epsilon \,=\, \frac{\epsilon}{2} - \frac{\epsilon t \dot{\bar r}}{\bar r}
  \,=\, \frac{\epsilon}{2} - \frac{\epsilon t}{\bar r} \sum_{m=0}^{M-1} \epsilon^m
  \,\dot{\bar r}_m\,.
\end{equation}
At this stage the radial velocity profiles $\dot{\bar r}_m$ are not
determined yet, but in view of Remark~\ref{remEuler} we can anticipate
the fact that $|\dot{\bar r}| = (\Gamma/r_0)\cdot\cO(\delta)$ as $\delta \to 0$.
Since $\delta t = (r_0^2/\Gamma)\cdot\cO(\epsilon^2)$ by Remark~\ref{crossrem}, 
it follows that $\bar r(t) = r_0\bigl(1 + \cO(\epsilon^2)\bigr)$ and that $t\dot
\epsilon = \epsilon/2 + \cO(\epsilon^3)$ as $\epsilon \to 0$. 

With that observation in mind, we substitute the expansions \eqref{etaapp},
\eqref{rzapp} into the evolution equation \eqref{etaeq2}, keeping only the
terms that are exactly of order $\epsilon$ or $\epsilon \beta_\epsilon$.
This gives the relation
\begin{equation}\label{approx1}
  \bigl\{\phi_1\,,\eta_0\bigr\} + \bigl\{\phi_0\,,\eta_1\bigr\} + \eta_0
  \partial_Z \phi_0 -\frac{{r_0}}{\Gamma}\Bigl(\dot{\bar r}_0\,\partial_R 
  \eta_0 + \dot{\bar z}_0 \,\partial_Z \eta_0\Bigr) \,=\, \delta \Bigl[
  \partial_R \eta_0 + \bigl(\cL - {\TS\frac12}\bigr) \eta_1 -t\partial_t
  \eta_1\Bigr]\,.
\end{equation}
To solve \eqref{approx1} we first impose the relation  
\begin{equation}\label{dotr0}
  \dot{\bar r}_0 \,=\, -\frac{\Gamma \delta}{r_0}\,,
\end{equation}
which ensures that the terms involving $\partial_R \eta_0$ cancel exactly.
We also assume that $\eta_1$ does not depend on $\beta_\epsilon$, so
that $\partial_t \eta_1 = 0$ (this property will be verified later). On the
other hand, from \eqref{BScompat} with $m = 1$ we deduce that $\{\phi_1\,,\eta_0\}
= \{\BS_0[\eta_1]\,,\eta_0\} + \{\BS_1[\eta_0]\,,\eta_0\}$, where $\BS_0$, $\BS_1$
are defined in \eqref{BSmdef}. Using \eqref{phi0def} and the definition \eqref{Lamdef} 
of the linear operator $\Lambda$, we thus find
\begin{align*}
  \bigl\{\phi_1\,,\eta_0\bigr\} + \bigl\{\phi_0\,,\eta_1\bigr\} \,&=\, 
  \frac{1}{2\pi}\Bigl( \bigl\{L \eta_1\,,\eta_0\bigr\} + 
  \bigl\{L \eta_0\,,\eta_1\bigr\}\Bigr) + \bigl\{\BS_1[\eta_0]\,,\eta_0\bigr\} \\
  \,&=\, \Lambda \eta_1 + \frac{\beta_\epsilon -1}{2\pi}\,\bigl\{P_1 \eta_0\,,
  \eta_0\bigr\} + \frac{1}{2\pi}\bigl\{L P_1 \eta_0\,,\eta_0\bigr\}\,,
\end{align*}
where in the second line we used the definition \eqref{BSmdef} of $\BS_1$
and the fact that $Q_1 = -P_1$ in view of \eqref{PQexp}. Now, elementary 
calculations that are reproduced in Section~\ref{ssecA2} show that
\begin{equation}\label{LP1}
  \bigl\{P_1 \eta_0\,,\eta_0\bigr\} \,=\, \frac12\, \partial_Z \eta_0\,,
  \quad \hbox{and}\quad 
  \frac{1}{2\pi}\bigl\{L P_1 \eta_0\,,\eta_0\bigr\} \,=\, \frac{1}{2}
  \,\partial_Z \bigl(\phi_0\eta_0\bigr)\,. 
\end{equation}
It follows that we can write \eqref{approx1} in the equivalent form
\begin{equation}\label{eta1eq}
  \Lambda \eta_1 + \delta \bigl({\TS\frac12} - \cL\bigr) \eta_1 \,=\, 
  \Bigl(\frac{{r_0}}{\Gamma}\,\dot{\bar z}_0 - \frac{\beta_\epsilon 
  - 1}{4\pi}\Bigr)\partial_Z \eta_0 - \frac{3}{2}\,(\partial_Z \phi_0)\eta_0
  - \frac{1}{2}\,\phi_0\partial_Z \eta_0\,.
\end{equation}

Using the explicit expressions \eqref{phi0def}, \eqref{phi0exp} of the
profiles $\eta_0, \phi_0$, it is straightforward to verify that the right-hand
side of \eqref{eta1eq}, which we denote by $-\cR_1$, belongs to $\cY_1 \cap \cZ$,
where $\cY_1$, $\cZ$ are the function spaces defined in \eqref{Yndef},
\eqref{Zdef}. Therefore, according to Proposition~\ref{Laminv2}, the linear
equation \eqref{eta1eq} has a unique solution $\eta_1 \in \cY_1$ for any
$\delta > 0$, and that solution has a well-defined limit as $\delta \to 0$ if and
only if $\cR_1 \in (\ker \Lambda)^\perp$, namely if $\cR_1 \in \cY_1'$. In view of
\eqref{Y1def}, this gives the solvability condition $\int_{\R^2} \cR_1 Z\dd R
\dd Z = 0$, which determines uniquely the value of the constant $\dot{\bar z}_0$
in \eqref{eta1eq}. The calculations are reproduced in Section~\ref{ssecA2}, and
yield the following expression of the vertical velocity to leading order\:
\begin{equation}\label{dotz0}
  \dot{\bar z}_0 \,=\, \frac{\Gamma}{4\pi r_0}\Bigl(\beta_\epsilon 
  -1 + 2v\Bigr)\,, \qquad \hbox{where}\quad
  v \,=\, \frac{3}{4}\,\log(2) + \frac{1}{4}\,\gamma_E + \frac{1}{4}\,.
\end{equation}
Here again $\gamma_E = 0,5772\dots$ denotes Euler's constant.  

\begin{rem}\label{speedrem}
The formula \eqref{dotz0}, including the leading term $\beta_\epsilon 
= \log(1/\epsilon)$ and the correct value of the constant $2v -1$, was 
established by Saffman \cite{Sa}, see also Fukumoto \& Moffatt \cite{FM}. 
\end{rem}

We assume henceforth that $\dot{\bar z}_0$ is given by \eqref{dotz0}, so
that \eqref{eta1eq} reduces to
\begin{equation}\label{eta1eq2}
  \Lambda \eta_1 + \delta \bigl({\TS\frac12} - \cL\bigr) \eta_1 \,=\, 
  \frac{v}{2\pi}\,\partial_Z \eta_0 - \frac{3}{2}\,(\partial_Z \phi_0)\eta_0
  - \frac{1}{2}\,\phi_0\partial_Z \eta_0\,,
\end{equation}
where the right-hand side $-\cR_1$ now belongs to $\cY_1' \cap \cZ$ and is
independent of $\epsilon$. Equation \eqref{eta1eq2} is of the form \eqref{deltaLam},
and can be solved using Proposition~\ref{Laminv2}. For our purposes, it is
sufficient to consider the {\em approximate} solution corresponding to the
choice $N = 2$ in \eqref{deltacomp}, which reads
\begin{equation}\label{eta1exp}
  \eta_1(R,Z) \,=\, R\,\eta_{10}(\rho) + \delta Z\,\eta_{11}(\rho)\,,
  \qquad  \rho = \sqrt{R^2 + Z^2}\,,
\end{equation}
where $\Lambda (R\,\eta_{10}) = -\cR_1$ and $\Lambda(Z\,\eta_{11}) = (\cL - \frac12)
(R\,\eta_{10})$. Note that $\eta_1 \in \cY' \cap \cZ$, which implies
in particular that the functions $\eta_{10}$, $\eta_{11}$ are smooth
and have a Gaussian decay at infinity. The corresponding stream
function $\phi_1 = \BS_0[\eta_1] + \BS_1[\eta_0]$ is computed in
Section~\ref{ssecA2} and takes the form
\begin{equation}\label{phi1exp}
  \phi_1(R,Z,\beta_\epsilon) \,=\, \frac{\beta_\epsilon - 1}{4\pi}\,R + 
  \frac{R}{2}\,\phi_0 - \partial_R \phi_0 + R\,\phi_{10}(\rho) + 
  \delta Z\,\phi_{11}(\rho)\,,  
\end{equation}
where $R\,\phi_{10} = \BS_0[R\,\eta_{10}]$ and $Z\,\phi_{11} = \BS_0[Z\,\eta_{11}]$.
One can check that the functions $\phi_{10}, \phi_{11}$ are smooth and decay at least like
$1/\rho^2$ as $\rho \to +\infty$. Note that $\phi_1$ involves the time-dependent
term $\beta_\epsilon = \log(1/\epsilon)$, so that $\partial_t \phi_1 \neq
0$. With the choices \eqref{dotr0}, \eqref{dotz0}, \eqref{eta1exp}, and
\eqref{phi1exp}, the relation \eqref{approx1} is not satisfied exactly, but the
difference of both members is $\cO(\delta^2)$ in the topology of $\cZ$, which is
all we need.

\subsection{Second order approximation}\label{ssec34}

We next compute the second order terms in the asymptotic expansion 
\eqref{etaapp}. As we shall see, it is consistent at this stage to 
take 
\begin{equation}\label{dotr1z1}
  \dot{\bar r}_1 \,=\, \dot{\bar z}_1 \,=\, 0\,,
\end{equation}
so we make that assumption from now on. As before, we deduce from \eqref{dereps}, 
\eqref{dotr0}, \eqref{dotr1z1} that $\bar r(t) = r_0\bigl(1 + \cO(\epsilon^2)\bigr)$
and $t\dot \epsilon = \epsilon/2 + \cO(\epsilon^3)$ as $\epsilon \to 0$. 
Substituting \eqref{etaapp}, \eqref{rzapp} into \eqref{etaeq2} and keeping 
only the terms involving $\epsilon^2$ or $\epsilon^2 \beta_\epsilon$, we
obtain the relation
\begin{equation}\label{approx2a}
\begin{split}
  \bigl\{\phi_2\,,\eta_0\bigr\} + \bigl\{\phi_1\,,\eta_1 - R\eta_0\bigr\} 
  + \bigl\{\phi_0\,,\eta_2 &- R\eta_1 + R^2 \eta_0\bigr\} 
  - \frac{r_0}{\Gamma}\Bigl(\dot{\bar r}_0\,\partial_R \eta_1 + 
   \dot{\bar z}_0\,\partial_Z \eta_1\Bigr) \\
  \,&=\, \delta \Bigl[\bigl(\cL - 1\bigr) \eta_2 + \partial_R (\eta_1 - R\eta_0)
  - t\partial_t \eta_2\Bigr]\,.
\end{split}
\end{equation}
In view of \eqref{dotr0}, the terms involving $\partial_R \eta_1$ cancel 
exactly. Moreover, we know from \eqref{BSmdef}, \eqref{BScompat} that
\begin{equation}\label{phi2def}
  \phi_2 \,=\, \frac{1}{2\pi} \Bigl(L \eta_2 + \bigl(\beta_\epsilon P_1 + LP_1 
  + Q_1\bigr)\eta_1 + \bigl(\beta_\epsilon P_2 + LP_2 + Q_2\bigr)\eta_0\Bigr)\,,
\end{equation}
where the notations are introduced in Lemma~\ref{Kexpansion}. Recalling the
definition \eqref{Lamdef} of the operator $\Lambda$, we can thus write 
\eqref{approx2a} in the equivalent form
\begin{equation}\label{approx2b}
  \Lambda \eta_2 + \delta \Bigl(t\partial_t \eta_2 + \bigl(1-\cL\bigr) \eta_2 \Bigr)
  + \cR_2 \,=\, 0\,,
\end{equation}
where
\begin{equation}\label{R2def}
\begin{split}
  \cR_2 \,=\, &\frac{1}{2\pi}\bigl\{(\beta_\epsilon - 1)P_1 \eta_1 + LP_1 \eta_1
  \,, \eta_0\bigr\} + \frac{1}{2\pi}\bigl\{\beta_\epsilon P_2\eta_0 + LP_2 \eta_0 
  + Q_2 \eta_0\,, \eta_0\bigr\} \\
  &+ \bigl\{\phi_1\,,\eta_1\bigr\} + (\partial_Z \phi_1)\eta_0 + (\partial_Z \phi_0)
  \eta_1 - R \Bigl(\bigl\{\phi_1\,,\eta_0\bigr\} + \bigl\{\phi_0\,,\eta_1\bigr\} + 
  2 (\partial_Z \phi_0)\eta_0 \Bigr) \\
  &+ \delta \partial_R(R \eta_0) - \frac{r_0 \dot{\bar z}_0}{\Gamma}
  \,\partial_Z \eta_1\,.
\end{split}
\end{equation}

We have the following result, whose proof is postponed to Section~\ref{ssecA3}\:

\begin{lem}\label{R2lem}
The function $\cR_2$ defined in \eqref{R2def} belongs to $(\delta\cY_0 + \cY_2) \cap 
\cZ$ and satisfies
\begin{equation}\label{R2exp}
  \cR_2 \,=\, \frac{3\beta_\epsilon}{16\pi}\,RZ \eta_0 + RZ\chi_{20} + 
  \delta \Bigl(\chi_{21} + (R^2 - Z^2)\chi_{22}\Bigr) + \delta^2RZ\chi_{23}\,,
\end {equation}
for some (time-independent) radially symmetric functions $\chi_{20}, \chi_{21},
\chi_{22}, \chi_{23} \in \cY_0 \cap \cZ$. 
\end{lem}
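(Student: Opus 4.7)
The proof is a bookkeeping exercise organized by angular harmonics and by powers of $\delta$. Introduce the splitting $\cY_n = \cY_n^{\cos}\oplus \cY_n^{\sin}$ for $n\ge 1$ according to whether the angular dependence is $\cos(n\vt)$ or $\sin(n\vt)$. Then $\eta_1 = \eta_1^{(0)} + \delta\,\eta_1^{(1)}$ with $\eta_1^{(0)}:=R\,\eta_{10}(\rho)\in\cY_1^{\cos}$ and $\eta_1^{(1)}:=Z\,\eta_{11}(\rho)\in\cY_1^{\sin}$, and similarly $\phi_1=\phi_1^{(0)}+\delta\,\phi_1^{(1)}$ from \eqref{phi1exp}; the remaining factors ($\eta_0,\phi_0,P_m,Q_m$) are radial or explicit polynomials. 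A direct computation in polar coordinates yields the selection rules
\[
\{\cY_1^{\cos},\cY_0\}\subset\cY_1^{\sin},\quad \{\cY_1^{\cos},\cY_1^{\cos}\}\cup\{\cY_1^{\sin},\cY_1^{\sin}\}\subset\cY_2^{\sin},\quad \{\cY_1^{\cos},\cY_1^{\sin}\}\subset\cY_0\oplus\cY_2^{\cos},
\]
together with analogous rules for $\partial_Z$ and for multiplication by $R$ or $Z$. Since $L,P_m,Q_m,LP_m$ are rotationally covariant, elementary calculations give $P_2\eta_0=\tfrac{R^2}{16}+\tfrac{3Z^2}{16}+\tfrac12\in\cY_0\oplus\cY_2^{\cos}$, and (using $L\cY_n\subset\cY_n$) that $LP_1\eta_1,LP_2\eta_0,Q_2\eta_0\in\cY_0\oplus\cY_2$.

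Applying these rules term-by-term to \eqref{R2def} and grouping by powers of $\delta$ I find: at order $\delta^0$ every surviving contribution is of $\cos\times\cos$ type and lies in $\cY_2^{\sin}$, i.e. of the form $RZ\cdot(\text{radial})$ --- this yields $RZ\chi_{20}$ together with the $\beta_\epsilon$-piece treated below; at order $\delta^1$ the $\cos\times\sin$ cross-terms plus the isolated $\delta\,\partial_R(R\eta_0)=\delta\eta_0(1-\tfrac{R^2}{2})$ land in $\cY_0\oplus\cY_2^{\cos}$, matching $\chi_{21}+(R^2-Z^2)\chi_{22}$; at order $\delta^2$ only $\{\phi_1^{(1)},\eta_1^{(1)}\}$ survives and lands in $\cY_2^{\sin}$, matching $\delta^2 RZ\chi_{23}$. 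Two cancellations are crucial. First, the would-be $\cY_1$-contributions from $\{\phi_1,\eta_0\},\{\phi_0,\eta_1\},(\partial_Z\phi_0)\eta_0$ occur only inside the prefactor $-R(\,\cdot\,)$, and $-R\cdot\cY_1\subset\cY_0\oplus\cY_2$. Second, $\beta_\epsilon P_1\eta_1=0$: indeed $P_1\eta_1 = \tfrac{R}{2}\int\eta_1\,dR'dZ' + \tfrac{1}{2}\int R'\eta_1\,dR'dZ' = 0$ by the moment conditions $\int\eta_1=\int R'\eta_1=0$ satisfied by $\eta_1\in\cY_1'$.

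The $\beta_\epsilon$-coefficient comes from four sources at order $\delta^0$: (i) $\{\beta_\epsilon P_2\eta_0,\eta_0\}/(2\pi)=\tfrac{\beta_\epsilon}{16\pi}RZ\eta_0$, using $\{R^2,\eta_0\}=-RZ\eta_0$ and $\{Z^2,\eta_0\}=RZ\eta_0$; (ii) the $\tfrac{\beta_\epsilon-1}{4\pi}R$ piece of $\phi_1$ in $\{\phi_1,\eta_1\}$, giving $\tfrac{\beta_\epsilon}{4\pi}\partial_Z\eta_1^{(0)}$; (iii) the same piece in $-R\{\phi_1,\eta_0\}$, giving $\tfrac{\beta_\epsilon}{8\pi}RZ\eta_0$; (iv) the $\beta_\epsilon$-part of $\dot{\bar z}_0$ from \eqref{dotz0} acting through $-\tfrac{r_0\dot{\bar z}_0}{\Gamma}\partial_Z\eta_1$, giving $-\tfrac{\beta_\epsilon}{4\pi}\partial_Z\eta_1^{(0)}$. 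Terms (ii) and (iv) cancel exactly, leaving $\tfrac{\beta_\epsilon}{16\pi}RZ\eta_0 + \tfrac{\beta_\epsilon}{8\pi}RZ\eta_0 = \tfrac{3\beta_\epsilon}{16\pi}RZ\eta_0$. The analogous check at orders $\delta$ and $\delta^2$ shows that no further $\beta_\epsilon$-term survives, so $\chi_{21},\chi_{22},\chi_{23}$ are $\beta_\epsilon$-independent. Finally each $\chi_{2j}$ is built from $\eta_0,\phi_0,\eta_{10},\eta_{11},\phi_{10},\phi_{11}$ by finitely many algebraic operations and rotationally-invariant integral kernels acting on Gaussian-decaying inputs, so $\chi_{2j}\in\cY_0\cap\cZ$ in the sense of Remark~\ref{rem:topo}.

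The main obstacle is the $\beta_\epsilon$-bookkeeping: the cancellation between (ii) and (iv) is exactly what forces the choice $\dot{\bar z}_0=\tfrac{\Gamma}{4\pi r_0}(\beta_\epsilon-1+2v)$ at first order, and one must trace the same $\beta_\epsilon$ through both $\phi_1$ (via the Biot--Savart expansion \eqref{BSexp},\eqref{BScompat}) and the translation velocity without double-counting. Everything else reduces to careful angular-harmonic accounting in \eqref{R2def}.
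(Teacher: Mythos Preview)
Your proof is correct and follows essentially the same approach as the paper's proof in Section~\ref{ssecA3}: a term-by-term inspection of \eqref{R2def} organized by angular Fourier sector and by power of $\delta$, together with explicit tracking of the $\beta_\epsilon$-contributions. Your selection rules are the $n=m=1$ cases of the paper's Lemma~\ref{Poissonlem}, and your identification of the four $\beta_\epsilon$-sources with the cancellation (ii)$+$(iv)$=0$ reproduces exactly the paper's combination in \eqref{2nd3} of $\{\phi_1,\eta_1\}$ with $-\tfrac{r_0\dot{\bar z}_0}{\Gamma}\partial_Z\eta_1$ into $\bigl\{\phi_1-\tfrac{\beta_\epsilon-1+2v}{4\pi}R,\eta_1\bigr\}$, while (i) and (iii) correspond to \eqref{2nd2} and \eqref{2nd4} respectively.
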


In view of \eqref{R2exp}, we look for a solution of \eqref{approx2b}
in the form $\eta_2 = \beta_\epsilon \hat \eta_{20} + \hat\eta_{21} + 
\hat\eta_{22}$, where $\hat\eta_{20}, \hat\eta_{21} \in \cY_2$ and 
$\hat\eta_{22} \in \cY_0$ do not depend on $\beta_\epsilon$. Inserting
this ansatz into \eqref{approx2b} and using the fact that $t\partial_t
\beta_\epsilon = -1/2 + \cO(\epsilon^2)$, we obtain the system
\begin{equation}\label{eta2sys}
\begin{split}
  &\Lambda \hat\eta_{20} + \delta\bigl(1-\cL\bigr) \hat\eta_{20} + 
   \frac{3}{16\pi}\,RZ \eta_0 \,=\, 0\,, \\
  &\Lambda \hat\eta_{21} + \delta\bigl(1-\cL\bigr) \hat\eta_{21} - 
  \frac{\delta}{2}\,\hat\eta_{20} + \cP_2 \Bigl(\cR_2 - \frac{3\beta_\epsilon}{16\pi}
  \,RZ \eta_0\Bigr)\,=\, 0\,, \\[1mm] 
  &\delta\bigl(1-\cL\bigr) \hat\eta_{22} + \cP_0 \cR_2 \,=\, 0\,,
\end{split}
\end{equation}
where $\cP_n$ denotes the orthogonal projection in $\cY$ onto the subspace
$\cY_n$. The first two equations in \eqref{eta2sys} have a unique solution by
Proposition~\ref{Laminv2}, and as in Section~\ref{ssec33} we are satisfied
with the approximate solution corresponding to \eqref{deltacomp} with $N = 2$. 
Since $\cP_0 \cR_2 = \delta \chi_{21}$ by \eqref{R2exp}, the third equation reduces
to $(1-\cL)\hat\eta_{22} + \chi_{21} = 0$, which also has a unique solution due
to Proposition~\ref{LLamprop}. We conclude that we can choose $\eta_2$ in the form
\begin{equation}\label{eta2exp}
  \eta_2(R,Z,\beta_\epsilon) \,=\, \beta_\epsilon \Bigl((R^2{-}Z^2)\eta_{20} + \delta 
  RZ \eta_{21}\Bigr) + (R^2{-}Z^2)\eta_{22} + \delta 
  RZ \eta_{23} + \eta_{24}\,,
\end{equation}
where all functions $\eta_{2j}$ belong to $\cY_0 \cap \cZ$. Using \eqref{phi2def}
and the calculations at the beginning of Section~\ref{ssecA3}, we obtain 
a similar expression for the corresponding stream function
\begin{equation}\label{phi2exp}
\begin{split}
  \phi_2(R,Z,\beta_\epsilon) \,=\, \beta_\epsilon \Bigl((R^2{-}Z^2)\phi_{20} + \delta 
  RZ \phi_{21}\Bigr) + (R^2{-}Z^2)\phi_{22} + \delta RZ \phi_{23}
  + \beta_\epsilon \phi_{24} + \phi_{25}\,,
\end{split}
\end{equation}
where the functions $\phi_{2j}$ are radially symmetric and belong to $\cS_*(\R^2)$.
With these choices, the left-hand side of \eqref{approx2b} is of size
$\cO(\beta_\epsilon\delta^2 + \epsilon^2\delta)$ in the topology of $\cZ$. 

\subsection{Third order approximation}\label{ssec35}

The third order in the asymptotic expansion \eqref{etaapp} can be computed
in a similar way. According to \eqref{dotr0}, \eqref{dotr1z1} and Remark~\ref{remEuler},
we have $\bar r(t) = r_0\bigl(1-\epsilon^2 + \cO(\epsilon^{4-})\bigr)$ as 
$\epsilon \to 0$, and using \eqref{dereps} we deduce that $t\dot \epsilon 
= \epsilon/2 + \epsilon^3 + \cO(\epsilon^{5-})$. So, if we substitute 
\eqref{etaapp}, \eqref{rzapp} into \eqref{etaeq2} and keep only the 
terms involving $\epsilon^3$ or $\epsilon^3 \beta_\epsilon$, we find
\begin{equation}\label{approx3a}
\begin{split}
  \bigl\{\phi_3\,,\eta_0\bigr\} &+ \bigl\{\phi_2\,,\eta_1 - R\eta_0\bigr\} 
  + \bigl\{\phi_1\,,\eta_2 - R\eta_1 + R^2 \eta_0\bigr\} + \bigl\{\phi_0\,,\eta_3
  - R \eta_2 + R^2 \eta_1 - R^3 \eta_0\bigr\} \\
  &-\frac{r_0}{\Gamma}\Bigl(\dot{\bar r}_0\,\partial_R \eta_2 +
  \bigl(\dot{\bar r}_2{-}\dot{\bar r}_0\bigr)\,\partial_R \eta_0 
  + \dot{\bar z}_0\,\partial_Z \eta_2 + \bigl(\dot{\bar z}_2{-}\dot{\bar z}_0\bigr)
  \,\partial_Z \eta_0\Bigr) \\ 
  \,&=\, \delta \Bigl[\bigl(\cL - {\TS\frac{3}{2}}\bigr)\eta_3 + \partial_R (\eta_2 
  - R\eta_1 + R^2 \eta_0) - t\partial_t \eta_3 - \eta_1\Bigr]\,.
\end{split}
\end{equation}
On the other hand, using \eqref{BScompat} with $m = 3$ and \eqref{BSmdef},
we obtain
\begin{equation}\label{phi3def}
  \phi_3 \,=\, \sum_{m=0}^3 \BS_m[\eta_{3-m}] \,=\, \frac{1}{2\pi}\,L \eta_3
  + \frac{1}{2\pi} \sum_{m=1}^3 \Bigl((\beta_\epsilon + L)P_m + Q_m\Bigr)\eta_{3-m}\,,
\end{equation}
where the polynomials $P_m, Q_m$ are defined in \eqref{PQexp} for $m \le 2$
and in \eqref{P3Q3} for $m = 3$. We can thus write \eqref{approx3a} in the
form
\begin{equation}\label{approx3b}
  \Lambda \eta_3 + \delta \Bigl(t\partial_t \eta_3 + \bigl({\TS\frac{3}{2}}-\cL\bigr) 
  \eta_3 \Bigr) + \cR_3 \,=\, 0\,,
\end{equation}
where
\begin{equation}\label{R3def}
\begin{split}
  \cR_3 \,&=\, \frac{1}{2\pi}\Bigl\{\sum_{m=1}^3 \bigl((\beta_\epsilon + L)P_m
  + Q_m\bigr)\eta_{3-m}\,,\,\eta_0\Bigr\} + \bigl\{\phi_2\,,\eta_1- R\eta_0\bigr\} \\
  &+ \bigl\{\phi_1\,,\eta_2 - R\eta_1 + R^2 \eta_0\bigr\} - \bigl\{\phi_0\,,
  R \eta_2 - R^2 \eta_1 + R^3 \eta_0\bigr\} \\[1mm]
  &- \frac{r_0}{\Gamma}\Bigr(\bigl(\dot{\bar r}_2 - \dot{\bar r}_0\bigr)
  \partial_R \eta_0 + \bigl(\dot{\bar z}_2 - \dot{\bar z}_0\bigr)
  \partial_Z \eta_0 + \dot{\bar z}_0 \partial_Z \eta_2\Bigr) 
  + \delta \partial_R\bigl(R \eta_1 - R^2 \eta_0\bigr) + \delta \eta_1\,.
\end{split}
\end{equation}

\begin{lem}\label{R3lem}
The function $\cR_3$ defined in \eqref{R3def} belongs to $(\cY_1 + \cY_3) \cap 
\cZ$ and satisfies
\begin{equation}\label{R3exp}
  \cR_3 \,=\, \beta_\epsilon\Bigl(R^2Z \chi_{30} + Z \chi_{31}\Bigr)
  + R^2Z \chi_{32} + Z \chi_{33} + \cO(\delta)\,,
\end {equation}
for some (time-independent) radially symmetric functions $\chi_{30}, \chi_{31},
\chi_{32}, \chi_{33} \in \cY_0 \cap \cZ$.  
\end{lem}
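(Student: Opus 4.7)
The plan is to establish Lemma~\ref{R3lem} by expanding each term in \eqref{R3def} using the explicit formulas \eqref{eta1exp}, \eqref{phi1exp}, \eqref{eta2exp}, \eqref{phi2exp} for the lower-order profiles, and reading off the angular content in the decomposition \eqref{Yndef}. The argument is a direct continuation of the proof of Lemma~\ref{R2lem} carried out in Section~\ref{ssecA3}, and its structure is entirely parallel; the only genuine difference is the larger number of contributions to keep track of.

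The first step is a parity observation. By Remark~\ref{remEuler}, when $\delta = 0$ the profiles $\eta_0, \eta_1, \eta_2$ and the stream functions $\phi_0, \phi_1, \phi_2$ are even functions of $Z$, as they match the expansion of a steady axisymmetric Euler ring in a comoving frame. Since the Poisson bracket of two even-in-$Z$ functions is odd in $Z$, and since $\partial_Z$ flips this parity, each term of \eqref{R3def} is odd in $Z$ to leading order in $\delta$. The subspace of $\cY_1 \oplus \cY_3$ consisting of functions odd in $Z$ is spanned by the angular modes $\sin\vt$ and $\sin 3\vt$, and the identities $Z = \rho\sin\vt$ and $R^2 Z = \tfrac{\rho^3}{4}(\sin\vt + \sin 3\vt)$ show that the ansatz $Z\chi_{31} + R^2Z\chi_{30}$ with two radially symmetric profiles can reproduce any such function.

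The second step is to verify the angular support. Multiplication by $R = \rho\cos\vt$ maps $\cY_n$ into $\cY_{n-1}\oplus \cY_{n+1}$, and the Poisson bracket satisfies $\{\cY_m,\cY_n\}\subset \cY_{|m-n|} \oplus \cY_{m+n}$. Combining this with $\eta_0,\phi_0 \in \cY_0$, with $\eta_1,\phi_1 \in \cY_1$ modulo $\cO(\delta)$, and with $\eta_2,\phi_2 \in \cY_0\oplus \cY_2$ modulo $\cO(\delta)$, one checks that each of the Poisson brackets $\{\phi_0,\,R\eta_2 - R^2\eta_1 + R^3\eta_0\}$, $\{\phi_1,\,\eta_2 - R\eta_1 + R^2\eta_0\}$, and $\{\phi_2,\,\eta_1 - R\eta_0\}$ lies in $\cY_1\oplus \cY_3$. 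For the Biot-Savart corrections built from $\BS_1,\BS_2,\BS_3$, Lemma~\ref{Kexpansion} gives the polynomials $P_m, Q_m$ of degree $m$ in $(R,R',Z-Z')$, so after applying them to $\eta_{3-m}$ and bracketing with $\eta_0$ the total angular frequency is bounded by $3$; the parity argument then eliminates the frequency-$0$ and frequency-$2$ components at leading order. The viscous contributions $\delta\partial_R(R\eta_1 - R^2\eta_0) + \delta\eta_1$, the radial velocity terms involving $\dot{\bar r}_0 = -\Gamma\delta/r_0$ and $\dot{\bar r}_2 = \cO(\delta)$ (by Remark~\ref{remEuler}), and all $\delta$-dependent pieces of the lower-order profiles come with an explicit $\delta$-factor and are assigned to the remainder; the still-undetermined vertical correction $\dot{\bar z}_2$ enters through $(\dot{\bar z}_2 - \dot{\bar z}_0)\partial_Z\eta_0 \in \cY_1$ and is absorbed into $\chi_{33}$, to be fixed later by imposing solvability of \eqref{approx3b}.

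The third step is to collect the leading coefficients. The $\beta_\epsilon$-enhanced contributions come from the explicit $\beta_\epsilon P_m$ pieces in \eqref{BSmdef}, from the $\beta_\epsilon$-dependent parts of $\phi_1, \phi_2, \eta_2$ in \eqref{phi1exp}, \eqref{phi2exp}, \eqref{eta2exp}, and from the $\beta_\epsilon$-part of $\dot{\bar z}_0$ in \eqref{dotz0}. Projecting their sum onto $\sin 3\vt$ determines $\chi_{30}$ via the prefactor $R^2Z$; the residual $\sin\vt$-part, after subtracting what $R^2Z\chi_{30}$ already contributes, defines $\chi_{31}$. The analogous $\beta_\epsilon$-free computation yields $\chi_{32}, \chi_{33}$, all Schwartz-class radial functions in $\cY_0\cap\cZ$ by the polynomial-times-Gaussian structure of the inputs. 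The main obstacle is not conceptual but combinatorial: one must keep roughly a dozen distinct contributions organized and confirm that no $\beta_\epsilon^2$ term survives at order $\epsilon^3\delta^0$. That cancellation relies on the same mechanism as at the previous order, namely that $\dot{\bar z}_0$ was chosen in \eqref{dotz0} precisely to remove the $\beta_\epsilon$-obstruction in the analogous computation of $\cR_2$, so no genuinely new structural input is required beyond what the proof of Lemma~\ref{R2lem} already supplied.
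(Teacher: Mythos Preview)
Your approach is essentially the paper's: the parity argument via Remark~\ref{remEuler}, the angular mode bookkeeping via the Poisson bracket rule $\{\cY_m,\cY_n\}\subset \cY_{|m-n|}\oplus\cY_{m+n}$, and the explicit check that no $\beta_\epsilon^2$ survives. The one place where your argument is looser than the paper's, and slightly misleading, is the $\beta_\epsilon^2$ cancellation.

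You attribute the absence of $\beta_\epsilon^2$ entirely to the mechanism by which the $\frac{\beta_\epsilon-1}{4\pi}R$ piece of $\phi_1$ is absorbed by $\dot{\bar z}_0$, i.e.\ the identity
\[
  \bigl\{\phi_1,\eta_2\bigr\} - \frac{r_0\dot{\bar z}_0}{\Gamma}\,\partial_Z\eta_2
  \,=\, \Bigl\{\phi_1 - \frac{\beta_\epsilon-1+2v}{4\pi}\,R\,,\,\eta_2\Bigr\}\,,
\]
whose right-hand side is at most linear in $\beta_\epsilon$. That is indeed one of the two checks the paper makes. But there is a second, independent source of a potential $\beta_\epsilon^2$, not covered by this identity: the Biot--Savart correction $\frac{\beta_\epsilon}{2\pi}\{P_1\eta_2,\eta_0\}$, since $\eta_2$ itself already carries a factor $\beta_\epsilon$ via \eqref{eta2exp}. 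This term is handled by a different observation: the $\beta_\epsilon$-part of $\eta_2$ lies in $\cY_2$ (modulo $\cO(\delta)$) and therefore has vanishing integral and vanishing first moments, so that $P_1\eta_2 = \frac{R}{2}\int\eta_{24}\,\D X'$ picks up no $\beta_\epsilon$ at all. Your phrase ``the same mechanism as at the previous order'' glosses over this second check, which has no analogue in the $\cR_2$ computation (there $\eta_1$ carried no $\beta_\epsilon$, so the issue never arose). With that small addition your argument is complete and matches the paper's.
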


The proof of Lemma~\ref{R3lem} is a direct calculation that is briefly outlined
in Section~\ref{ssecA4}. In particular we verify there that the quantity
$\cR_3$ does not contain any factor $\beta_\epsilon^2$, which is perhaps surprising
since $\phi_1$, $\phi_2$, and $\eta_2$ all contain at least one term multiplied 
by $\beta_\epsilon$. We do not need the expression of the $\cO(\delta)$ terms
in \eqref{R3exp}, but they can be computed too and are found to be of the form
$\delta \beta_\epsilon\bigl(R^3 \tilde\chi_{30} + R \tilde\chi_{31}\bigr) +
\delta\bigl(R^3 \tilde\chi_{32} + R\tilde\chi_{33}\bigr)$, where $\tilde\chi_{3j}$
are radially symmetric functions. Finally we mention that $\cR_3$ also
includes terms of the form \eqref{R3exp} that are multiplied by $\delta^2$.

As can be seen from the last line of \eqref{R3def}, there is a unique 
way to choose the quantities $\dot{\bar r}_2$ and $\dot{\bar z}_2$ so 
that $\cR_3 \in \cY_1' + \cY_3$, where $\cY_1'$ is the subspace defined
in \eqref{Y1def}. In view of \eqref{R3exp}, \eqref{dotr0}, \eqref{dotz0}, 
the velocities obtained in this way satisfy 
\begin{equation}\label{dotr2z2}
  \frac{r_0}{\Gamma}\,\dot{\bar r}_2 \,=\, \bigl(c_1 \beta_\epsilon
  + c_2\bigr)\delta\,, \qquad \frac{r_0}{\Gamma}\,\dot{\bar z}_2 \,=\,
  c_3 \beta_\epsilon + c_4 + \cO(\delta^2)\,,
\end{equation}
for some constants $c_1, c_2, c_3, c_4$. Now, decomposing $\cR_3 = \beta_\epsilon \cR_{31} +
\cR_{32}$ where $\cR_{31}, \cR_{32}$ are independent of $\beta_\epsilon$, we look for a
solution of \eqref{approx3b} in the form $\eta_3 = \beta_\epsilon \hat\eta_{31} + \hat\eta_{32}$
where
\begin{equation}\label{eta3aux}
 \Lambda \hat\eta_{31} + \delta\bigl({\TS\frac{3}{2}}-\cL\bigr) \hat\eta_{31}
 + \cR_{31} \,=\, 0\,, \qquad
 \Lambda \hat\eta_{32} + \delta\bigl({\TS\frac{3}{2}}-\cL\bigr) \hat\eta_{32} 
 - \frac{\delta}{2}\,\hat\eta_{31} + \cR_{32} \,=\, 0\,.
\end{equation}
Since $\cR_{31}, \cR_{32} \in \cY_1' + \cY_3$, both equations in \eqref{eta3aux}
can be solved using Proposition~\ref{Laminv2}. However, at this stage, it is
sufficient to use the crude approximation corresponding to $N = 1$ in \eqref{deltacomp}.
This means that we can determine our profiles by solving the equations
$\Lambda \hat\eta_{3j} + \cR_{3j} = 0$ for $j = 1,2$ using Proposition~\ref{Laminv}.
We thus obtain an approximate solution of \eqref{approx3b} of the form
\begin{equation}\label{eta3exp}
  \eta_3(R,Z,\beta_\epsilon) \,=\, \beta_\epsilon \Bigl(R^3\eta_{30} + R \eta_{31}\Bigr)
  + R^3\eta_{32} + R \eta_{33}\,,
\end{equation}
where all functions $\eta_{3j}$ belong to $\cY_0 \cap \cZ$. Using \eqref{phi3def} we
deduce the corresponding expression of the stream function
\begin{equation}\label{phi3exp}
  \phi_3(R,Z,\beta_\epsilon) \,=\, \beta_\epsilon \Bigl(R^3\phi_{30} + R \phi_{31}\Bigr)
  + R^3\phi_{32} + R \phi_{33}\,,
\end{equation}
where the functions $\phi_{3j}$ are radially symmetric and belong to $\cS_*(\R^2)$.
Note that \eqref{phi3exp} does not contain any factor $\beta_\epsilon^2$. With
the choices \eqref{eta3exp}, \eqref{phi3exp}, the left-hand side of \eqref{approx3b} is
of size $\cO(\beta_\epsilon\delta)$ in the topology of $\cZ$. 

\subsection{Fourth order approximation}\label{ssec36}

Finally we compute the fourth order approximation, which is the final step in
our construction. No modification of the vortex speed is needed at this stage, 
so we can take
\begin{equation}\label{dotr3z3}
  \dot{\bar r}_3 \,=\, \dot{\bar z}_3 \,=\, 0\,.
\end{equation}
The full expansion of the vortex speed is therefore
\begin{equation}\label{rzapp2}
  \dot{\bar r}(t) \,=\, \dot{\bar r}_0 + \epsilon^2 \dot{\bar r}_2
  (\beta_\epsilon)\,, \qquad \dot{\bar z}_*(t) \,=\, \dot{\bar z}_0(\beta_\epsilon) +
  \epsilon^2 \dot{\bar z}_2(\beta_\epsilon)\,,
\end{equation}
where $\dot{\bar r}_0, \dot{\bar z}_0$ are defined in \eqref{dotr0}, 
\eqref{dotz0} and $\dot{\bar r}_2, \dot{\bar z}_2$ satisfy \eqref{dotr2z2}.
As is easily verified, this implies that $\bar r(t) = r_0\bigl(1-\epsilon^2 +
\cO(\epsilon^4\beta_\epsilon)\bigr)$ and $t\dot \epsilon = \epsilon/2 + \epsilon^3 
+ \cO(\epsilon^5\beta_\epsilon)$ as $\epsilon \to 0$. 

We look for an approximate solution of \eqref{etaeq2} of the form 
\begin{equation}\label{etaapp2}
  \eta_\app(R,Z,t) \,=\, \sum_{m=0}^4 \epsilon^m \eta_m(R,Z,\beta_\epsilon)\,, \qquad
  \phi_\app(R,Z,t) \,=\, \sum_{m=0}^4 \epsilon^m \phi_m(R,Z,\beta_\epsilon)\,,
\end{equation}
where the profiles $\eta_m, \phi_m$ for $m \le 3$ have been constructed 
in the previous steps, and $\eta_0, \eta_1, \phi_0$ are actually independent
of $\beta_\epsilon$. In analogy with \eqref{phi3def}, we have
\begin{equation}\label{phi4def}
  \phi_4 \,=\, \frac{1}{2\pi}\,L\eta_4 + \frac{1}{2\pi}\sum_{m=1}^4  
  \Bigl(\bigl(\beta_\epsilon + L\bigr)P_m + Q_m\Bigr)\eta_{4-m}\,,
\end{equation}
where the polynomials $P_m$, $Q_m$ are defined in \eqref{PQexp} for 
$m \le 2$, in \eqref{P3Q3} for $m = 3$, and in \eqref{P4Q4} for $m = 4$. 
Replacing \eqref{rzapp2}, \eqref{etaapp2}, \eqref{phi4def} into \eqref{etaeq2}
and proceeding as in the previous sections, we obtain the following
equation for the profile $\eta_4$\:
\begin{equation}\label{approx4b}
  \Lambda \eta_4 + \delta \Bigl(t\partial_t \eta_4 + \bigl(2-\cL\bigr) 
  \eta_4 \Bigr) + \cR_4 \,=\, 0\,,
\end{equation}
where
\begin{align}\nonumber
  \cR_4 \,&=\, \frac{1}{2\pi}\Bigl\{\sum_{m=1}^4 \bigl((\beta_\epsilon + L)P_m
  + Q_m\bigr)\eta_{4-m}\,,\,\eta_0\Bigr\} + \bigl\{\phi_3\,,\eta_1- R\eta_0\bigr\}
  + \bigl\{\phi_2\,,\eta_2 - R\eta_1 + R^2 \eta_0\bigr\}\\ \label{R4def}
  & + \bigl\{\phi_1\,,\eta_3 - R\eta_2 + R^2 \eta_1 - R^3 \eta_0\bigr\}
  - \bigl\{\phi_0\,, R \eta_3 - R^2 \eta_2 + R^3 \eta_1 - R^4\eta_0\bigr\} \\[1mm]
  \nonumber
  &- \frac{r_0}{\Gamma}\Bigr(\bigl(\dot{\bar r}_2 - \dot{\bar r}_0\bigr)
  \partial_R \eta_1 + \bigl(\dot{\bar z}_2 - \dot{\bar z}_0\bigr)
  \partial_Z \eta_1 + \dot{\bar z}_0 \partial_Z \eta_3\Bigr) 
  + \delta \partial_R\bigl(R \eta_2 - R^2 \eta_1 + R^3\eta_0\bigr)
  + 2\delta \eta_2\,.
\end{align}

\begin{lem}\label{R4lem}
The function $\cR_4$ defined in \eqref{R4def} belongs to $(\delta \cY_0 + \cY_2
+ \cY_4) \cap \cZ$ and satisfies
\begin{equation}\label{R4exp}
  \cR_4 \,=\, \sum_{k=0}^2 \beta_\epsilon^k \Bigl(R^3Z \chi_{4k} + RZ \chi_{5k}\Bigr)
  + \cO(\delta)\,,
\end {equation}
for some (time-independent) radially symmetric functions $\chi_{4k}, \chi_{5k}
\in \cY_0 \cap \cZ$.  
\end{lem}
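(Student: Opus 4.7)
The strategy closely follows the treatment of $\cR_2$ and $\cR_3$ in Sections~\ref{ssecA3}--\ref{ssecA4}: we unpack each summand in the definition \eqref{R4def} of $\cR_4$, substitute the decompositions of $\eta_0,\eta_1,\eta_2,\eta_3$ from \eqref{phi0def}, \eqref{eta1exp}, \eqref{eta2exp}, \eqref{eta3exp} together with the analogous expressions \eqref{phi1exp}, \eqref{phi2exp}, \eqref{phi3exp} for the stream functions, and we track three pieces of information for each term: its angular harmonic content, its polynomial dependence on $\beta_\epsilon$, and its power of $\delta$. Because every profile involved is smooth with Gaussian decay (indeed lies in $\cZ$), every such term automatically belongs to $\cZ$, so the only nontrivial issue is the angular mode count and the algebraic structure.

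For the angular mode bookkeeping I would use the elementary rule that if $f\in\cY_n$ and $g\in\cY_m$, then both $fg$ and $\{f,g\}$ lie in $\cY_{|n-m|}+\cY_{n+m}$, and that multiplication by $R^k$ shifts a harmonic of index $n$ into $\cY_{n-k}+\cY_{n-k+2}+\dots+\cY_{n+k}$ (only even parity reachable). Given that $\eta_0,\phi_0\in\cY_0$, $\eta_1,\phi_1\in\cY_1$, $\eta_2,\phi_2\in\cY_0+\cY_2$, and $\eta_3,\phi_3\in\cY_1+\cY_3$, one checks that each of the Poisson brackets in \eqref{R4def} and each of the multiplication terms such as $\{\phi_0,R^4\eta_0\}$ and $\partial_R(R^3\eta_0)$ produces a contribution in $\cY_0+\cY_2+\cY_4$. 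The Biot--Savart polynomial contributions $(\beta_\epsilon+L)P_m+Q_m$ applied to $\eta_{4-m}$ are handled in the same way using the explicit degrees of the homogeneous polynomials $P_m,Q_m$ from Lemma~\ref{Kexpansion} (and its analogue for $m=3,4$). The translation terms $\dot{\bar r}_j\partial_R\eta_k$ and $\dot{\bar z}_j\partial_Z\eta_k$ preserve the angular class of $\eta_k$.

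The stated form \eqref{R4exp} then follows by a parity argument. By Remark~\ref{remEuler}, in the inviscid limit $\delta\to0$ the profiles $\eta_m,\phi_m$ are even functions of $Z$, and the coefficients $\dot{\bar r}_j$ carry a factor of $\delta$ while $\dot{\bar z}_j$ are $Z$-independent. Therefore every Poisson bracket $\{\phi_j,\eta_k\}$ and every $\partial_Z$-derivative in \eqref{R4def} is odd in $Z$ at $\delta=0$, which forces all $\cY_0$-contributions of $\cR_4$ to carry at least one extra factor of $\delta$; these produce the $\cO(\delta)$ remainder. The surviving non-$\delta$ contributions in $\cY_2+\cY_4$ must be odd in $Z$, and the only polynomials of degree $\le 4$ in $(R,Z)$ with this parity that span the relevant angular modes are $RZ$ and $R^3Z$ (modulo radial prefactors), which yields the structure $R^3Z\,\chi_{4k}+RZ\,\chi_{5k}$. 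The polynomial dependence on $\beta_\epsilon$ is at most quadratic because $\eta_2,\phi_2,\eta_3,\phi_3$ are linear in $\beta_\epsilon$, and the worst terms in \eqref{R4def} (such as $\{\phi_2,\eta_2\}$ or $(\beta_\epsilon+L)P_2\,\eta_2$) multiply at most two of these factors; hence the sum over $k=0,1,2$ in \eqref{R4exp}.

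The main obstacle is not conceptual but the sheer computational length. Two issues demand care. First, one must verify, by direct inspection of the radial integrals defining $\chi_{4k},\chi_{5k}$, that no cubic term $\beta_\epsilon^{3}$ is produced, much as the cancellation of the $\beta_\epsilon^{2}$ term in Lemma~\ref{R3lem} relied on the specific choice of $\dot{\bar z}_0$; the analogous cancellation here should rest on the values of $\dot{\bar z}_0$ and $\dot{\bar z}_2$ fixed in \eqref{dotz0} and \eqref{dotr2z2}. Second, one must keep track of numerous $\delta$-dependent corrections coming from the $\delta Z\eta_{11}$ piece of $\eta_1$, the $\delta RZ\eta_{23}$ piece of $\eta_2$, the explicit $\delta$-factors in the last line of \eqref{R4def}, and the $\delta$-expansion of $\dot{\bar r}_2$; these feed into $\delta\cY_0$ and into the $\cO(\delta)$ remainder but, thanks to the parity and angular structure above, do not affect the leading terms displayed in \eqref{R4exp}.
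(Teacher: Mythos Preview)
Your approach is correct and matches the paper's own treatment, which in fact omits the proof entirely by declaring it identical to that of Lemma~\ref{R3lem} and singling out the one new ingredient: the projection $\cP_0\cR_4$ is $\cO(\delta)$ because, by Remark~\ref{remEuler}, all profiles $\eta_m,\phi_m$ are even in $Z$ when $\delta=0$, forcing $\cR_4$ to be odd in $Z$ in that limit. You identify exactly this parity mechanism and carry out the angular bookkeeping via Lemma~\ref{Poissonlem} in the same spirit.

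One small over-caution: your final paragraph worries about a $\beta_\epsilon^3$ cancellation analogous to the $\beta_\epsilon^2$ cancellation in $\cR_3$, but this is unnecessary and slightly contradicts your own (correct) earlier degree count. Since $\eta_0,\eta_1$ carry no $\beta_\epsilon$ and $\phi_1,\eta_2,\phi_2,\eta_3,\phi_3,\dot{\bar z}_0,\dot{\bar z}_2$ are each at most linear in $\beta_\epsilon$, every product appearing in \eqref{R4def} involves at most two such factors, so $\beta_\epsilon^2$ is the maximal power by pure counting; no cancellation tied to the values of $\dot{\bar z}_0,\dot{\bar z}_2$ is needed at this order.
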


The proof of Lemma~\ref{R4lem} is the same as that of Lemma~\ref{R3lem},
and can therefore be omitted. The only important observation is that
the projection of $\cR_4$ onto the subspace $\cY_0$ is of size
$\cO(\delta)$. This can be seen as a consequence of Remark~\ref{remEuler}\:
when $\delta = \dot{\bar r} = 0$, all profiles $\eta_m$, $\phi_m$ are
even functions of $Z$, so that the quantities $\cR_m$ are odd in $Z$. 

We now project Eq.~\eqref{approx4b} on the subspace $\cY_m$ for $m = 0,2,4$,
and compute an (approximate) solution $\cP_m \eta_4$ invoking either
Proposition~\ref{Laminv} (for $m = 2,4$) or Proposition~\ref{LLamprop}
(for $m = 0$). In the latter case, we use the observation that $\cP_0 \cR_4
= \cO(\delta)$ to show that $\cP_0 \eta_4$ is regular in the limit
$\delta \to 0$. Altogether, we obtain an approximate solution 
of \eqref{approx4b} in the form
\begin{equation}\label{eta4exp}
  \eta_4(R,Z,\beta_\epsilon) \,=\, \sum_{k=0}^2 \beta_\epsilon^k \Bigl(R^2 Z^2\eta_{4k} + 
  \bigl(R^2 - Z^2\bigr) \eta_{5k} + \eta_{6k}\Bigr)\,,
\end{equation}
where the functions $\eta_{jk} \in \cY_0 \cap \cZ$ are radially symmetric and 
time-independent. Using \eqref{phi4def} we deduce a similar expression for the
stream function
\begin{equation}\label{phi4exp}
  \phi_4(R,Z,\beta_\epsilon) \,=\, \sum_{k=0}^2 \beta_\epsilon^k \Bigl(R^2 Z^2\phi_{4k} + 
  \bigl(R^2 - Z^2\bigr)\phi_{5k} + \phi_{6k}\Bigr)\,,
\end{equation}
and with these choices the left-hand side of \eqref{approx4b} is of size 
$\cO(\beta_\epsilon^2\delta)$ in the topology of $\cZ$.

\smallskip
Since we have now completed the construction of our approximate solution, we
explain precisely how to define the vortex radius ${\bar r}(t)$ and the
time-dependent aspect ratio $\epsilon(t) = \sqrt{\nu t}/{\bar r}(t)$.  In view
of \eqref{dotr0}, \eqref{dotr2z2}, and \eqref{rzapp2}, the function $\bar r(t)$
satisfies the differential equation
\begin{equation}\label{rODE}
  \dot{\bar r}(t) \,=\, -\frac{\Gamma\delta}{r_0}\Bigl(1 -\epsilon(t)^2
  \bigl(c_1\beta_{\epsilon(t)} + c_2\bigr)\Bigr) \,=\,
  -\frac{\Gamma\delta}{r_0}\biggl(1 - \frac{\nu t}{{\bar r}(t)^2}
  \Bigl(c_1\log\frac{\bar r(t)}{\sqrt{\nu t}} + c_2\Bigr)\biggr)\,, 
\end{equation}
with initial condition $\bar r(0) = r_0$. The right-hand side of \eqref{rODE} is
a smooth function of $\bar r > 0$, uniformly in $t \in (0,T_\dif)$, and also a
$C^{0,\alpha}$ function of time for any $\alpha < 1$. Applying the
Cauchy-Lipschitz theorem, we obtain a unique local solution of \eqref{rODE},
which can be extended as long as $\bar r(t) > 0$. Now, if we define
$\epsilon(t) = \sqrt{\nu t}/{\bar r}(t)$, it follows that $\bar r(t) = r_0
\bigl(1 - \epsilon(t)^2 + \cO(\epsilon^4\beta_\epsilon)\bigr)$, 
and it is easy to see that the solution of \eqref{rODE} is well-defined 
and has the required properties on the time intervals relevant for our
considerations.

\begin{rem}\label{Remapp}
It is useful to notice that the approximate solution $\eta_\app$ given
by \eqref{etaapp2} satisfies, for all $t > 0$, 
\begin{align}\label{appint}
  &\int_{\R^2} \eta_\app(R,Z,t)\dd R\dd Z \,=\, 1\,, \\ \label{appmom}
  &\int_{\R^2} R\,\eta_\app(R,Z,t)\dd R\dd Z \,=\,   \int_{\R^2}
  Z\,\eta_\app(R,Z,t)\dd R\dd Z  \,=\, 0\,.
\end{align}
Indeed, at each step $m \ge 1$, the vorticity profile $\eta_m$ is constructed by
solving equations of the form $\Lambda \eta_m +  \bigl(\frac{m}2{}-\cL\bigr)
\eta_m + \cR_m = 0$, where the source term $\cR_m$ has vanishing integral
(by definition) and zero first order moments (due to the choice of the
speeds $\dot{\bar r}_{m-1}$, $\dot{\bar z}_{m-1})$. These properties are
inherited by the profile $\eta_m$, due to Proposition~\ref{LLamprop}, 
and in view of \eqref{phi0def} this leads to \eqref{appint}, \eqref{appmom}. 
\end{rem}

\subsection{Estimate of the remainder}\label{ssec37}

This section is devoted to the proof of Proposition~\ref{Remprop}. Our task is
to estimate the remainder \eqref{Remdef}, where $\eta_*$, $\phi_*$ are defined
in \eqref{etaapp1}, and for this we need bounds on the derivatives of the
stream function in terms of the vorticity. If $\phi = \BS^\epsilon[\eta]$, where
the Biot-Savart operator is defined in \eqref{BSeps}, we have the formulas
\begin{align}\nonumber
  \partial_Z\phi(R,Z) \,&=\, -\frac{1}{2\pi}\int_{\Omega_\epsilon}
  \sqrt{(1{+}\epsilon R)(1{+}\epsilon R')}
  ~\tilde F(s)\,\frac{(Z{-}Z')\,\eta(R',Z')}{(R{-}R')^2 + (Z{-}Z')^2}
  \dd R' \dd Z'\,, \\ \label{derphi}
  \partial_R\phi(R,Z) \,&=\, -\frac{1}{2\pi}\int_{\Omega_\epsilon}
  \sqrt{(1{+}\epsilon R)(1{+}\epsilon R')}
  ~\tilde F(s)\,\frac{(R{-}R')\,\eta(R',Z')}{(R{-}R')^2 + (Z{-}Z')^2}
  \dd R' \dd Z' \\ \nonumber
  &\quad~ + \frac{\epsilon}{4\pi}\int_{\Omega_\epsilon}
  \frac{\sqrt{1{+}\epsilon R'}}{\sqrt{1+\epsilon R}}
  \,\bigl(F(s) + \tilde F(s)\bigr)\,\eta(R',Z')\dd R' \dd Z'\,,
\end{align}
where $\tilde F(s) = -2 s F'(s)$, see \cite[Section~4.2]{GS2}. Here, as in
\eqref{sdef1}, we use the shorthand notation
\begin{equation}\label{sdef2}
  s \,=\, \frac{\epsilon^2 D^2}{(1{+}\epsilon R)(1{+}\epsilon R')} \,\equiv\, 
  \epsilon^2\,\frac{(R{-}R')^2 + (Z{-}Z')^2}{(1{+}\epsilon R)(1{+}\epsilon R')}\,.
\end{equation}
In view of \eqref{Fexpand}, we have $\tilde F(s) \to 1$ as $s \to 0$ and $\tilde 
F(s) = \cO(s^{-3/2})$ as $s \to +\infty$. 

Throughout the proof, we fix $t > 0$ and we assume that the parameters
$\epsilon = \sqrt{\nu t}/{\bar r}(t)$ and $\delta = \nu/\Gamma$ are small
enough. By construction the vorticity $\eta_*(R,Z,t)$ defined by
\eqref{etaapp1} vanishes identically when $\rho := (R^2{+}Z^2)^{1/2} \ge
2 \epsilon^{-\sigma_0}$, so we can assume henceforth that $\rho \le 2
\epsilon^{-\sigma_0}$. In that region, we have for any $\gamma \in (0,1)$
the a priori bounds
\begin{equation}\label{etaphibd}
  \sum_{|\alpha| \le 2} |\partial^\alpha \eta_*(R,Z,t)| \,\le\,
  C\,e^{-\gamma \rho^2/4}\,, \qquad 
  \sum_{|\alpha| = 1} |\partial^\alpha \phi_*(R,Z,t)| \,\le\,
  C\,, \qquad 
\end{equation}
where $\alpha = (\alpha_1,\alpha_2) \in \N^2$ and
$\partial^\alpha = \partial_R^{\alpha_1} \partial_Z^{\alpha_2}$. Indeed, the
first estimate in \eqref{etaphibd} holds because $\eta_*$ is obtained by
truncating the asymptotic approximation $\eta_\app(R,Z,t)$ which belongs to the
space $\cZ$ defined in \eqref{Zdef}. The second estimate can then be obtained
using the expressions \eqref{derphi} with $\phi = \phi_*$ and $\eta = \eta_*$.
To see this, we first observe that $1 + \epsilon R \approx 1$ and
$1 + \epsilon R' \approx 1$ in \eqref{derphi}, because both quantities $\rho$
and $\rho' := ({R'}^2{+}{Z'}^2)^{1/2}$ are smaller than
$2 \epsilon^{-\sigma_0} \ll \epsilon^{-1}$. If we use the estimates
$|\tilde F(s)| \le C$ in the first two lines of \eqref{derphi} and
$|F(s) + \tilde F(s)| \le Cs^{-1/2}$ in the third line, we thus obtain
\[
  |\partial_R\phi_*(R,Z,t)| + |\partial_Z\phi_*(R,Z,t)| \,\le\,
  C \int_{\R^2} \frac{|\eta_*(R',Z',t)|}{\sqrt{(R{-}R')^2 + (Z{-}Z')^2}}
  \dd R'\dd Z' \,\le\, C\,,
\]
which concludes the proof of \eqref{etaphibd}. Finally, since
\[
  t\partial_t\eta_*(R,Z,t) \,=\, \chi_0\bigl(\epsilon^{\sigma_0}\rho\bigr)\,
  t\partial_t \eta_\app(R,Z,t) + \sigma_0\,\epsilon^{\sigma_0}\rho \chi_0'\bigl(
  \epsilon^{\sigma_0}\rho\bigr)\,\eta_\app(R,Z,t) \,t\partial_t\log(\epsilon)\,,
\]
it follows from the expressions given in Sections~\ref{ssec33}--\ref{ssec36}
that $t\partial_t \eta_*$ satisfies the same bound as $\eta_*$ in \eqref{etaphibd}. 
Summarizing, in view of \eqref{etaphibd}, the remainder $\Rem(R,Z,t)$ satisfies
\begin{equation}\label{Remfirst}
  e^{\gamma_0 \rho^2/4}\,|\Rem(R,Z,t)| \,\le\, C\,\delta^{-1}(1+\rho)
  \,e^{-(\gamma-\gamma_0)\rho^2/4}\,, \qquad \hbox{when }\,\rho \le 2
  \epsilon^{-\sigma_0}\,,
\end{equation}
for any $\gamma_0 \in (0,1)$. If we assume that $\gamma \in (\gamma_0,1)$, we conclude
that the right-hand side of \eqref{Remfirst} is $\cO(\delta^{-1}\epsilon^\infty)$ if
$\rho \ge \epsilon^{-\sigma_0}$.  So from now on we may concentrate on the inner region
$\rho \le \epsilon^{-\sigma_0}$, where $\eta_* = \eta_\app$ is given by \eqref{etaapp2}. 

In that region we decompose the stream function as $\phi_* = \BS^\epsilon[\chi_0\,\eta_\app] =
\phi_*^0 - \phi_*^1 + \phi_*^2$, where
\[
  \phi_*^0 \,=\, \sum_{m=0}^4 \epsilon^m\,\BS_m[\eta_\app]\,, \quad
  \phi_*^1 \,=\, \sum_{m=0}^4 \epsilon^m\,\BS_m[(1{-}\chi_0)\,\eta_\app]\,, \quad
  \phi_*^2 \,=\, \sum_{m=5}^\infty \epsilon^m \,\BS_m[\chi_0\,\eta_\app]\,.
\]
Here $\chi_0$ is a shorthand notation for $\chi_0(\epsilon^{\sigma_0}\rho)$. 
The convergence of the series defining $\phi_*^2$ is easily justified
using Lemmas~\ref{Flem} and \ref{Kexpansion}, if we observe that both
inequalities in \eqref{sdef1} are satisfied since $\rho,\rho' \ll \epsilon^{-1}$. 
The principal term $\BS_5[\chi_0\,\eta_\app]$ can be estimated using the
explicit representation \eqref{BSmdef}, where $P_5, Q_5$ are homogeneous
polynomials of degree $5$, and this leads to a bound of the form 
\[
  |\partial_R  \phi_*^2(R,Z,t)| + |\partial_Z  \phi_*^2(R,Z,t)|  \,\le\,
  C \epsilon^5\beta_\epsilon \,(1+\rho)^5\,, \qquad
  \rho \le  \epsilon^{-\sigma_0}\,,
\]
where $\beta_\epsilon = \log(1/\epsilon)$. Moreover we have $ |\partial_R \phi_*^1|
+ |\partial_Z \phi_*^1| = \cO(\epsilon^\infty)$ because $(1-\chi_0)\eta_\app =
\cO(\epsilon^\infty)$. Finally, in view of \eqref{BScompat} and \eqref{etaapp2}, 
we have the identity
\[
  \phi_*^0 \,=\, \phi_\app \,+\, \sum_{m=5}^8 \,\epsilon^m \!\!\sum_{k=m-4}^4 \BS_k[\eta_{m-k}]\,.
\]
from which we easily deduce
\[
  |\partial_R\bigl(\phi_*^0-\phi_\app\bigr)| + |\partial_Z\bigl(\phi_*^0-\phi_\app
  \bigr)| \,\le\, C \epsilon^5\beta_\epsilon^3 \,(1+\rho)^5\,. 
\]
Collecting the estimates above, it is straightforward to verify that the remainder
\eqref{Remdef} satisfies
\begin{equation}\label{Remdiff}
  \big|\Rem(R,Z,t) - \widehat\Rem(R,Z,t)\big| \,\le\,  C \delta^{-1} \epsilon^5
  \beta_\epsilon^3 \,(1+\rho)^5\,e^{-\gamma \rho^2/4}\,, \qquad
  \rho \le  \epsilon^{-\sigma_0}\,, 
\end{equation}
where $\widehat\Rem(R,Z,t)$ is the quantity defined for all $(R,Z) \in \R^2$
by the formula
\begin{equation}\label{hatRemdef}
  \cL \eta_\app + \epsilon \partial_R\bigl(S_4\eta_\app\bigr) - t\partial_t
  \eta_\app - \frac{1}{\delta}\,\bigl\{\phi_\app\,,S_4\eta_\app\bigr\} +
  \frac{\epsilon \bar r}{\delta \Gamma}\Bigl(\dot{\bar r}\,\partial_R\eta_\app
  + \dot{\bar z}_* \,\partial_Z\eta_\app\Bigr)\,,
\end{equation}
with $S_4 = 1 - \epsilon R + (\epsilon R)^2 - (\epsilon R)^3 + (\epsilon R)^4$. 

Now the crucial observation is that the asymptotic approximation \eqref{etaapp},
\eqref{rzapp} was constructed precisely so as to make the quantity \eqref{hatRemdef}
small in the topology of $\cZ$. More precisely, the results of
Sections~\ref{ssec33}--\ref{ssec36} can be rephrased as follows:
\begin{equation}\label{hatRemest}
  \delta\,\widehat\Rem(R,Z,t) \,=\, \cO_\cZ\Bigl(\epsilon \delta^2
  + \epsilon^2\beta_\epsilon \delta^2 + \epsilon^3 \beta_\epsilon \delta +
  \epsilon^4\beta_\epsilon^2 \delta + \epsilon^5\beta_\epsilon^3\Bigr)\,.
\end{equation}
Inside the parenthesis in the right-hand side, the first four terms
represent what remains from the quantities $\epsilon^m \bigl(\Lambda
\eta_m + \delta\bigl[t\partial_t + \frac{m}{2}-\cL\bigr]\eta_m +
\cR_m)$ for $m = 1,2,3,4$ after the profiles $\eta_m$ have been
determined, and the last one corresponds to those terms in
\eqref{hatRemdef} which are of order $\cO(\epsilon^5)$ or higher and
were therefore not considered in the construction of $\eta_\app$. Combining
\eqref{Remdiff}, \eqref{hatRemest} and using Young's inequality, we obtain
\[
  \sup_{\rho \le \epsilon^{-\sigma_0}} e^{\gamma_0\rho^2/4}\,|\Rem(R,Z,t)|
  \,\le\, \frac{C}{\delta}\Bigl(\epsilon \delta^2 + \epsilon^3 \beta_\epsilon
  \delta + \epsilon^5 \beta_\epsilon^3\Bigr) \,\le\, C\bigl(\epsilon\delta
  + \epsilon^{\gamma_5}\delta^{-1}\bigr)\,,
\]
for any $\gamma_5 < 5$. This concludes the proof of \eqref{Remest}. \QED
  
\subsection{The Eulerian approximation}\label{ssec38}

As was already observed in Remark~\ref{remEuler}, if we set
$\delta = \dot{\bar r} = 0$ in the expansion \eqref{etaapp}, we obtain an
approximate solution $\eta_\app^E, \phi_\app^E, \dot{\bar z}_E$ of equation
\eqref{etaeqEuler}, which is nothing but the stationary Euler equation in a
frame moving with (constant) velocity $\dot{\bar z}_E\,e_z$.  As is well known
\cite{Arn}, steady states of the Euler system are often characterized by a
global functional relation between the vorticity and the stream function. In our
case, in view of \eqref{etaeqEuler}, we expect finding a function
$\Phi_\epsilon : \R_+ \to \R$ such that
\begin{equation}\label{Phirel}
  \phi_\app^E(R,Z) - \frac{r_0 \dot{\bar z}_E}{2\Gamma}\,(1+\epsilon R)^2
  \,=\, \Phi_\epsilon\biggl(\frac{\eta_\app^E(R,Z)}{1+\epsilon R}\biggr)
  + \cO\bigl(\epsilon^{M+1-}\bigr)\,,
\end{equation}
for all $(R,Z) \in \R^2$ such that $\rho := \sqrt{R^2+Z^2} \ll \epsilon^{-1}$. 

In this section, we first verify that a relation of the form \eqref{Phirel}
holds to second order, namely with $M = 2$. Using the expressions
\eqref{eta1exp}, \eqref{phi1exp}, \eqref{eta2exp}, \eqref{phi2exp} with
$\delta = 0$ and simplifying somehow the notation, we can write our
approximate solution in the form
\begin{equation}\label{etaEulerapp}
\begin{split}
  \eta_\app^E(R,Z) \,&=\, \eta_0 + \epsilon R \eta_1 \,+ \epsilon^2 (R^2{-}Z^2)
   \eta_2 \,+ \epsilon^2\eta_3\,, \\[1mm]
  \phi_\app^E(R,Z) \,&=\, \phi_0 + \epsilon R \phi_1 + \epsilon^2 (R^2{-}Z^2)
  \phi_2 + \epsilon^2 \phi_3\,, 
\end{split}
\end{equation}
where $\eta_0,\phi_0$ are given by \eqref{phi0def}, and the profiles
$\eta_1,\eta_2,\eta_3 \in \cZ$ and $\phi_1,\phi_2,\phi_3 \in \cS_*(\R^2)$ are
all radially symmetric. Note that $\eta_m, \phi_m$ may include factors of
$\beta_\epsilon = \log(1/\epsilon)$ when $m \ge 1$, but this dependence is not explicitly
indicated. We also expand the unknown function $\Phi_\epsilon$ in \eqref{Phirel}
in powers of $\epsilon$\:
\begin{equation}\label{Phiexpand}
  \Phi_\epsilon(s) \,=\, \Phi_0(s) + \epsilon \Phi_1(s) + \epsilon^2 \Phi_2(s)\,.
\end{equation}
Finally, to simplify the writing, we denote
\begin{equation}\label{omegadef}
  \omega \,=\, \frac{1}{4\pi}\bigl(\beta_\epsilon - 1 + 2v\bigr) \,=\,
  \frac{r_0 \dot{\bar z}_E}{\Gamma}  + \cO(\epsilon^2\beta_\epsilon)\,,
\end{equation}
where the last equality follows from \eqref{dotz0}, \eqref{dotr1z1},
\eqref{dotr2z2}.

If we consider equality \eqref{Phirel} to leading order in $\epsilon$, thus
neglecting all terms that are $\cO(\epsilon)$ or $\cO(\epsilon\beta_\epsilon)$,  
we obtain the relation $\phi_0 - \omega/2 = \Phi_0(\eta_0)$, which determines the 
principal term $\Phi_0$ in the expansion \eqref{Phiexpand}. In view of 
\eqref{phi0def}, \eqref{phi0exp} we thus have
\begin{equation}\label{Phi0def}
  \Phi_0(s) \,=\, \phi_0(0) - \frac{\omega}{2} - \frac{1}{4\pi}\,\Ein
  \Bigl(\log\frac{1}{4\pi s}\Bigr)\,, \qquad s > 0\,.
\end{equation}
The constant in \eqref{Phi0def} has no relevance, but it is important to
note that $\Phi_0(s) \sim -\frac{1}{4\pi}\log\log \frac{1}{s}$ as $s \to 0$.
For later use we define
\begin{equation}\label{Adef}
  A(\rho) \,=\, \Phi_0'\bigl(\eta_0(\rho)\bigr) \,=\, \frac{\partial_R \phi_0}{\partial_R \eta_0}
  \,=\, \frac{\partial_Z \phi_0}{\partial_Z \eta_0} \,=\, \frac{4}{\rho^2}\Bigl(
  e^{\rho^2/4} - 1\Bigr)\,, \qquad \rho > 0\,.
\end{equation}
Incidentally we observe that $A(\rho) = 1/h(\rho)$ where $h$ is defined in 
\eqref{phihdef}. 

To the next order in $\epsilon$, we deduce from \eqref{Phirel} the relation
\begin{equation}\label{Phi1def}
  (\phi_1 - \omega)R \,=\, \Phi_0'(\eta_0)(\eta_1-\eta_0)R + \Phi_1(\eta_0)\,,
\end{equation}
which can be satisfied only if $\Phi_1= 0$, because $\Phi_1(\eta_0)$ is
the only radially symmetric term in \eqref{Phi1def}. Dividing by $R$, 
we obtain the equality $\phi_1 - \omega = A(\eta_1 - \eta_0)$, which happens to 
be satisfied in view of our definitions of the profiles $\eta_1, \phi_1$. This 
fact can be verified by following carefully the calculations in Section~\ref{ssec33}. 

Finally we exploit \eqref{Phirel} to order $\epsilon^2$, keeping in mind that
$\Phi_1 = 0$. In this calculation, we neglect the $\cO(\epsilon^2\beta_\epsilon)$
correction in \eqref{omegadef}, because this term would only add an irrelevant
constant to the function $\Phi_2$. We thus obtain the relation
\begin{align*}
  (R^2{-}Z^2)\phi_2 + \phi_3 - \frac{\omega}{2}\,R^2 \,=\, &\,\Phi_0'(\eta_0)\Bigl(
  (R^2{-}Z^2)\eta_2 + \eta_3 + (\eta_0-\eta_1)R^2\Bigr) \\ 
  &+ \frac12 \Phi_0''(\eta_0)(\eta_0 - \eta_1)^2 R^2 + \Phi_2(\eta_0)\,,
\end{align*}
where it is useful to substitute $R^2 = \frac12(R^2{+}Z^2) + \frac12(R^2{-}Z^2)$.
The terms containing $R^2{-}Z^2$ cancel exactly due to the identity
\[
  \phi_2 - \frac12 \Psi - A \eta_2 \,=\, 0\,, \qquad \hbox{where}\quad
  \Psi \,=\, \frac{\omega}{2} + \Phi_0'(\eta_0)(\eta_0-\eta_1) + \frac12
  \Phi_0''(\eta_0)(\eta_0-\eta_1)^2\,,
\]
which is satisfied by definition of the profiles $\phi_2, \eta_2$, as can
be verified by following the calculations in Section~\ref{ssec34}. We
are thus left with a relation involving only radially symmetric terms
\begin{equation}\label{Phi2def}
  \phi_3 - \frac12 (R^2{+}Z^2)\Psi - A \eta_3 \,=\, \Phi_2(\eta_0)\,,
\end{equation}
which provides the definition of the second order correction $\Phi_2$
in \eqref{Phiexpand}. Summarizing, if $\Phi_\epsilon$ is defined by 
\eqref{Phiexpand} with $\Phi_1 = 0$, $\Phi_0$ given by \eqref{Phi0def}
and $\Phi_2$ by \eqref{Phi2def}, we have shown that \eqref{Phirel} holds 
with $M = 2$. 

We now come back to the approximate solution $\eta_*, \phi_*$ of \eqref{etaeq2}
constructed in Sections~\ref{ssec33}--\ref{ssec36}, and we show that it
also satisfies a relation of the form \eqref{Phirel}, in a sufficiently 
small region near the origin. To formulate that result, we denote
\begin{equation}\label{Thetadef}
  \Theta(R,Z,t) \,=\, \phi_*(R,Z,t) - \frac{{\bar r}
  \dot{\bar z}_*}{2\Gamma}\,(1+\epsilon R)^2 - \Phi_\epsilon
  \biggl(\frac{\eta_*(R,Z,t)}{1+\epsilon R}\biggr)\,, \qquad
  (R,Z) \in \Omega_\epsilon\,.
\end{equation}
  
\begin{prop}\label{Phiprop}
There exist $\sigma_1 \in (0,\sigma_0)$ and $N \in \N$ such that, 
for any $\gamma_3 < 3$, the quantity $\Theta$ defined by \eqref{Thetadef} 
satisfies, for some $C > 0$,
\begin{equation}\label{Thetaest}
  |\partial_R \Theta(R,Z,t)| +  |\partial_Z \Theta(R,Z,t)| \,\le\,
  C(\epsilon\delta + \epsilon^{\gamma_3})(1+\rho)^N\,, \qquad \rho \le
  \epsilon^{-\sigma_1}\,,
\end{equation}
whenever $\epsilon$ and $\delta$ are small enough. 
\end{prop}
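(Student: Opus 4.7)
The plan is to compare $(\eta_*,\phi_*,\dot{\bar z}_*)$ with the Eulerian approximation $(\eta_\app^E,\phi_\app^E,\dot{\bar z}_E)$ introduced in Section~\ref{ssec38}, which by construction satisfies the functional identity~\eqref{Phirel} exactly at orders $\epsilon^0$, $\epsilon$, and $\epsilon^2$. With $\Phi_\epsilon=\Phi_0+\epsilon^2\Phi_2$ taken as in~\eqref{Phi0def} and~\eqref{Phi2def}, I introduce $\zeta_*=\eta_*/(1+\epsilon R)$ and $\zeta_\app^E=\eta_\app^E/(1+\epsilon R)$ and decompose
\begin{equation*}
  \Theta \,=\, \bigl(\phi_*-\phi_\app^E\bigr) - \bigl(\Phi_\epsilon(\zeta_*)-\Phi_\epsilon(\zeta_\app^E)\bigr) + \Theta_E + \frac{r_0\dot{\bar z}_E-\bar r\,\dot{\bar z}_*}{2\Gamma}(1+\epsilon R)^2,
\end{equation*}
where $\Theta_E$ is the Eulerian residual. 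Inspection of the order-by-order matching performed in Section~\ref{ssec38} -- including the $\cO(\epsilon^2\beta_\epsilon)$ discrepancy between $\omega$ and $r_0\dot{\bar z}_E/\Gamma$ that enters through the constant of $\Phi_0$ -- shows that $\Theta_E$ is a finite sum of polynomial-times-Gaussian terms of size $\cO(\epsilon^3\beta_\epsilon^k)$, so $|\nabla\Theta_E|\le C\epsilon^{\gamma_3}(1+\rho)^N$ for any $\gamma_3<3$. Using \eqref{omegadef}, \eqref{rzapp2}, and the explicit form of $\dot{\bar z}_2$ in~\eqref{dotr2z2}, the last term of the decomposition has gradient of size $\cO(\epsilon^3\beta_\epsilon)$.

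For $\phi_*-\phi_\app^E$ and $\eta_*-\eta_\app^E$ I would read them directly off the expansions of Sections~\ref{ssec33}--\ref{ssec36}: up to the asymptotic remainder $\cO(\epsilon^5\beta_\epsilon^3)$ controlled as in Section~\ref{ssec37}, they collect only (i) the $\delta$-dependent corrections in $\eta_1,\phi_1,\eta_2,\phi_2$, of respective sizes $\cO(\epsilon\delta)$ and $\cO(\epsilon^2\beta_\epsilon\delta)$ with polynomial-times-Gaussian structure for the vorticities, and (ii) the higher-order contributions $\epsilon^3\eta_3+\epsilon^4\eta_4$ and $\epsilon^3\phi_3+\epsilon^4\phi_4$, of size $\cO(\epsilon^3\beta_\epsilon^2)$. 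In the inner region $\rho\le\epsilon^{-\sigma_1}$ this yields
\begin{equation*}
  |\nabla(\phi_*-\phi_\app^E)| + e^{\rho^2/4}\bigl(|\zeta_*-\zeta_\app^E|+|\nabla(\zeta_*-\zeta_\app^E)|\bigr) \,\le\, C(\epsilon\delta+\epsilon^{\gamma_3})(1+\rho)^N.
\end{equation*}

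The hard part is the bracket $\Phi_\epsilon(\zeta_*)-\Phi_\epsilon(\zeta_\app^E)$, because by \eqref{Adef} and \eqref{Phi0def}, $\Phi_\epsilon'$ inherits from $A(\rho)$ an exponential growth $\sim e^{\rho^2/4}/\rho^2$ and $\Phi_\epsilon''$ grows like $e^{\rho^2/2}/\rho^2$ at large $\rho$, both of which blow up in the inner region as $\epsilon\to 0$. I would handle this via the mean-value representation
\begin{equation*}
  \Phi_\epsilon(\zeta_*) - \Phi_\epsilon(\zeta_\app^E) \,=\, \int_0^1 \Phi_\epsilon'(\zeta_s)(\zeta_*-\zeta_\app^E)\dd s, \qquad \zeta_s:=\zeta_\app^E+s(\zeta_*-\zeta_\app^E),
\end{equation*}
and first choose $\sigma_1\in(0,\sigma_0)$ small enough -- depending on $\gamma_3$ and on the polynomial degrees appearing in $\eta_*-\eta_\app^E$ -- so that for $\rho\le\epsilon^{-\sigma_1}$ and $s\in[0,1]$, $\zeta_s$ stays in a neighborhood of $\eta_0(\rho)$ where $\Phi_\epsilon'(\zeta_s)=A(\rho)(1+o(1))$ and $\Phi_\epsilon''(\zeta_s)=\cO(e^{\rho^2/2}/\rho^2)$ with constants independent of $\epsilon$. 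Differentiating under the integral produces two kinds of contributions, $\Phi_\epsilon'(\zeta_s)\nabla(\zeta_*-\zeta_\app^E)$ and $\Phi_\epsilon''(\zeta_s)\,\nabla\zeta_s\,(\zeta_*-\zeta_\app^E)$, and in each one the exponentially large prefactor is precisely absorbed by the $e^{-\rho^2/4}$ Gaussian weight that $\nabla\zeta_s$, $\nabla(\zeta_*-\zeta_\app^E)$, and $\zeta_*-\zeta_\app^E$ all carry as descendants of profiles in $\cZ$. This yields $|\nabla[\Phi_\epsilon(\zeta_*)-\Phi_\epsilon(\zeta_\app^E)]|\le C(\epsilon\delta+\epsilon^{\gamma_3})(1+\rho)^N$, which combined with the preceding bounds produces \eqref{Thetaest}. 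The main delicacy is tracking the polynomial degree $N$ through every estimate and picking $\sigma_1$ small enough that the residual growth $(1+\rho)^N\le\epsilon^{-N\sigma_1}$ remains compatible with the announced bound.
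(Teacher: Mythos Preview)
Your proposal is correct and follows essentially the same approach as the paper: decompose $\Theta$ as $\Theta_\app^E$ plus the differences $\phi_*-\phi_\app^E$, $\Phi_\epsilon(\zeta_*)-\Phi_\epsilon(\zeta_\app^E)$, and the velocity mismatch, bound each piece separately, and exploit the fact that the exponential growth of $\Phi_\epsilon'\sim A(\rho)$ and $\Phi_\epsilon''\sim B(\rho)$ is exactly absorbed by the Gaussian decay of the $\cZ$-profiles. The only cosmetic difference is that the paper handles $\partial^\alpha[\Phi_\epsilon(\zeta_*)-\Phi_\epsilon(\zeta_\app^E)]$ by first applying $\partial^\alpha$ and then telescoping, whereas you write the mean-value integral $\int_0^1\Phi_\epsilon'(\zeta_s)(\zeta_*-\zeta_\app^E)\,\D s$ and differentiate under the integral; both routes require the same uniform control of $\Phi_\epsilon'(\lambda\eta_0)$ and $\Phi_\epsilon''(\lambda\eta_0)$ for $\lambda\in[\tfrac12,2]$, which the paper packages as Lemma~\ref{fglem}. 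One small point: $\sigma_1$ need not depend on $\gamma_3$ --- the only constraint is $N\sigma_1<1$ where $N$ comes from $|\zeta_*-\eta_0|\le C\epsilon(1+\rho)^N\eta_0$, and this is independent of $\gamma_3$, consistent with the order of quantifiers in the statement.
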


\begin{proof}
The idea is to compare $\Theta$ with the second order Eulerian approximation
\begin{equation}\label{ThetaE}
  \Theta_\app^E(R,Z,t) \,=\, \phi_\app^E(R,Z,t) - \frac{r_0\dot{\bar z}_E}{2\Gamma}\,
  (1+\epsilon R)^2 - \Phi_\epsilon \biggl(\frac{\eta_\app^E(R,Z,t)}{1+\epsilon R}
  \biggr)\,,
\end{equation}
which is of size $\cO(\epsilon^{3-})$ in view of \eqref{Phirel}. Here we consider
both quantities $\eta_\app^E,\phi_\app^E$ as time-dependent, because we deal with
the viscous case where $\epsilon = \sqrt{\nu t}/{\bar r}(t)$. We already
estimated the difference $\phi_* - \phi_\app$ in the proof of
Proposition~\ref{Remprop}, and by construction we know that $\phi_\app = \phi_\app^E
+ \cO(\epsilon\delta + \epsilon^3\beta_\epsilon)$. These arguments lead to
the bound
\begin{equation}\label{phicomp}
  |\partial_R\bigl(\phi_*-\phi_\app^E\bigr)| + |\partial_Z\bigl(\phi_*-\phi_\app^E
  \bigr)| \,\le\, C \bigl(\epsilon\delta + \epsilon^3\beta_\epsilon\bigr)\,(1+\rho)^5\,, 
  \qquad \rho \le \epsilon^{-\sigma_0}\,.
\end{equation}
On the other hand, we know that that ${\bar r}(t) = r_0(1+\cO(\epsilon^2))$,
and in view of \eqref{dotr1z1}, \eqref{dotr2z2} the difference between the vertical
speed $\dot{\bar z}_*$ and its second order Eulerian approximation $\dot{\bar z}_E$
is of size $(\Gamma/r_0)\cdot\cO(\epsilon^2\beta_\epsilon)$. We thus find
\begin{equation}\label{speedcomp}
  \Bigl|\frac{{\bar r}\dot{\bar z}_*}{2\Gamma} - \frac{r_0\dot{\bar z}_E}{2\Gamma}\Bigr|
  \,\bigl|\partial_R (1+\epsilon R)^2\bigr| \,\le\, C\epsilon^3\beta_\epsilon\,,
  \qquad \rho \le \epsilon^{-\sigma_0}\,.
\end{equation}
Finally $\eta_*$ is just a truncation of $\eta_\app$ and by definition $\eta_\app -
\eta_\app^E = \cO(\epsilon\delta + \epsilon^3\beta_\epsilon)$ in the topology of $\cZ$.
This gives the following bound
\begin{equation}\label{etacomp}
  \sum_{|\alpha| \le 1}\bigl|\partial^\alpha\bigl(\eta_* - \eta_\app^E\bigr)(R,Z,t)\bigr|
  \,\le\, C \bigl(\epsilon\delta + \epsilon^3\beta_\epsilon\bigr)\,(1+\rho)^N
  e^{-\rho^2/4}\,, \qquad \rho \le \epsilon^{-\sigma_0}\,,
\end{equation}
for some $N \in \N$. 

At this point we observe that $\eta_* - \eta_0 = \cO(\epsilon)$ in the topology of
$\cZ$ when $\rho \le \epsilon^{-\sigma_0}$. In particular, there exists $N \in \N$
such that $|\eta_* - \eta_0| \le C\epsilon (1+\rho)^N \eta_0$ in that
region, and one can verify that $N = 3$ is in fact sufficient. If we choose
$\sigma_1 > 0$ small enough so that $N \sigma_1 < 1$, it follows that 
\begin{equation}\label{etastarest}
  \frac{1}{2}\,\eta_0(\rho) \,\le\, \frac{\eta_*(R,Z,t)}{1+\epsilon R}
  \,\le\, 2\,\eta_0(\rho)\,, \qquad \rho \le \epsilon^{-\sigma_1}\,,
\end{equation}
whenever $\epsilon > 0$ is small enough. The same estimate holds for the
Eulerian approximation $\eta_\app^E$.

To conclude the proof of Proposition~\ref{Phiprop}, we need bounds on the
derivatives of the function $\Phi_\epsilon$ defined in \eqref{Phiexpand}. We begin
with the leading order term $\Phi_0$ which is given by the explicit formula
\eqref{Phi0def}. We have
\[
  \Phi_0'\Bigl(\frac{s}{4\pi}\Bigr) \,=\, \frac{1-s}{s\log(1/s)}\,, \qquad  
  \frac{1}{4\pi}\,\Phi_0''\Bigl(\frac{s}{4\pi}\Bigr) \,=\, -\frac{s-1+\log(1/s)}{
    s^2\bigl(\log(1/s)\bigr)^2}\,, \qquad s > 0\,. 
\]
Thanks to \eqref{etastarest} we only need to evaluate these expressions 
when the argument $s/(4\pi)$ takes its values in the interval  
$\bigl[\frac12 \eta_0(\rho),2 \eta_0(\rho)\bigr]$. In view of 
Lemma~\ref{fglem} below, there exists $C > 1$ such that, for all 
$\lambda \in [1/2,2]$ and all $\rho > 0$,
\begin{equation}\label{Phi0derest}
  \frac{A(\rho)}{C} \,\le\, \Phi_0'\bigl(\lambda \eta_0(\rho)\bigr)
  \,\le\, C A(\rho)\,, \qquad 
  \bigl|\Phi_0''\bigl(\lambda \eta_0(\rho)\bigr)\bigr| \,\le\,
  C B(\rho)\,, \qquad 
\end{equation}
where $A(\rho)$ is defined in \eqref{Adef} and
\begin{equation}\label{Bdef}
  B(\rho) \,=\, -\Phi_0''(\eta_0(\rho)) \,=\, \frac{16\pi}{\rho^4}
  \,\Bigl((\rho^2-4)e^{\rho^2/2} + 4 e^{\rho^2/4}\Bigr)\,, \qquad \rho > 0\,.
\end{equation}
The second order contribution $\Phi_2$ is not known explicitly, but
from the definition \eqref{Phi2def}, where the left-hand side belongs
to $\cS_*(\R^2)$, we deduce that there exist $C > 0$ and $N \in \N$
such that
\begin{equation}\label{Phi2derest}
  \bigl|\Phi_2'\bigl(\lambda\eta_0(\rho)\bigr)\bigr| \,\le\, C A(\rho)(1+\rho)^N\,, \qquad
  \bigl|\Phi_2''\bigl(\lambda\eta_0(\rho)\bigr)\bigr| \,\le\, C B(\rho)(1+\rho)^N\,, \qquad
\end{equation}
for all $\rho > 0$ and all $\lambda \in [1/2,2]$. 

Now, if $\partial^\alpha = \partial_R$ or $\partial_Z$, we decompose
\begin{align*}
  \partial^\alpha \Phi_\epsilon\Bigl(\frac{\eta_*}{1{+}\epsilon R}\Bigr) -
  \partial^\alpha \Phi_\epsilon\Bigl(\frac{\eta_\app^E}{1{+}\epsilon R}\Bigr)
  \,=\, &\,\Phi_\epsilon'\Bigl(\frac{\eta_*}{1{+}\epsilon R}\Bigr)
  \Bigl(\partial^\alpha \Bigl(\frac{\eta_*}{1{+}\epsilon R}\Bigr) - \partial^\alpha
   \Bigl(\frac{\eta_\app^E}{1{+}\epsilon R}\Bigr)\Bigr) \\
   &+ \Bigl(\Phi_\epsilon'\Bigl(\frac{\eta_*}{1{+}\epsilon R}\Bigr) -
   \Phi_\epsilon'\Bigl(\frac{\eta_\app^E}{1{+}\epsilon R}\Bigr)\Bigr)
   \,\partial^\alpha \Bigl(\frac{\eta_\app^E}{1{+}\epsilon R}\Bigr)\,,
\end{align*}
and we estimate the right-hand side using \eqref{etacomp}, \eqref{Phi0derest}, 
and \eqref{Phi2derest}. Taking into account the preliminary bounds
\eqref{phicomp}, \eqref{speedcomp}, we arrive at an estimate of the
form
\[
  \sum_{|\alpha| = 1} \bigl|\partial^\alpha \bigl(\Theta(R,Z,t) -
  \Theta_\app^E(R,Z,t)\bigr)\bigr| \,\le\, C(\epsilon\delta +
  \epsilon^3\beta_\epsilon)(1+\rho)^N\,, \qquad \rho \le \epsilon^{-\sigma_1}\,.
\]
As was already mentioned, the approximation $\Theta_\app^E(R,Z,t)$ is 
$\cO(\epsilon^{3-})$ in the topology of $\cS_*(\R^2)$, so altogether we arrive 
at \eqref{Thetaest}. 
\end{proof}

In the argument above we used the following elementary result, whose 
proof can be omitted. 

\begin{lem}\label{fglem}
Let $f,g : (0,+\infty) \to (0,+\infty)$ be defined by
\[
   f(s) \,=\, \frac{1-s}{s\log(1/s)}\,, \qquad g(s) \,=\, 
   \frac{s-1+\log(1/s)}{s^2\bigl(\log(1/s)\bigr)^2} \,=\, -f'(s)\,, \qquad
   s > 0\,.
\]
Then given any $\Lambda > 1$ there exists $C > 1$ such that, for any
$\lambda \in [\Lambda^{-1},\Lambda]$ and any $s > 0$,
\[
  \frac{1}{C} \,\le\, \frac{f(\lambda s)}{f(s)} \,\le\, C\,, \qquad
  \frac{1}{C} \,\le\, \frac{g(\lambda s)}{g(s)} \,\le\, C\,.
\] 
\end{lem}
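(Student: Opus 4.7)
The plan is to prove the doubling-type inequalities for $f$ and $g$ by a direct elementary analysis in three stages: verifying that $f$ and $g$ are continuous positive functions on $(0,\infty)$, computing their asymptotic behavior at $0$ and $\infty$, and concluding by compactness.

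First I would check continuity and positivity. For $f$, the only issue is at $s=1$ where both the numerator $1-s$ and the denominator $s\log(1/s)$ vanish; L'H\^opital gives $f(1)=1$, and sign considerations yield $f(s)>0$ on $(0,1)$ and on $(1,\infty)$. For $g$, the function $h(s):=s-1-\log s$ satisfies $h'(s)=1-1/s$, so it attains its minimum value $0$ at $s=1$ and is strictly positive elsewhere; hence $g>0$ on $(0,\infty)\setminus\{1\}$, and a Taylor expansion at $s=1$ gives $g(1)=1/2$. Thus both $f$ and $g$ extend to continuous, strictly positive functions on $(0,\infty)$, so the ratios $f(\lambda s)/f(s)$ and $g(\lambda s)/g(s)$ are continuous and strictly positive on $(0,\infty)\times[\Lambda^{-1},\Lambda]$.

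Next I would work out the asymptotics: as $s\to 0^+$ one has $f(s)\sim 1/(s\log(1/s))$ and $g(s)\sim 1/(s^2\log(1/s))$, while as $s\to+\infty$ one has $f(s)\sim 1/\log s$ and $g(s)\sim 1/(s\log^2 s)$. Using $\log(1/(\lambda s))=\log(1/s)-\log\lambda$, direct substitution then yields
\[
\lim_{s\to 0^+}\frac{f(\lambda s)}{f(s)}=\frac{1}{\lambda},\quad \lim_{s\to\infty}\frac{f(\lambda s)}{f(s)}=1,\quad \lim_{s\to 0^+}\frac{g(\lambda s)}{g(s)}=\frac{1}{\lambda^2},\quad \lim_{s\to\infty}\frac{g(\lambda s)}{g(s)}=\frac{1}{\lambda},
\]
and the convergence is uniform for $\lambda\in[\Lambda^{-1},\Lambda]$ because the $\lambda$-dependence enters only as an additive $\log\lambda$ correction inside a large logarithm.

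Finally, I would conclude by compactness. Choose $0<\epsilon_0<1<M_0$ so that, by the uniform limits above, both ratios lie in a fixed compact subinterval of $(0,\infty)$ for every $(s,\lambda)\in\bigl((0,\epsilon_0]\cup[M_0,\infty)\bigr)\times[\Lambda^{-1},\Lambda]$. On the compact set $[\Lambda^{-1}\epsilon_0,\Lambda M_0]\times[\Lambda^{-1},\Lambda]$ the two ratios are continuous and strictly positive, hence bounded above and below by positive constants, and combining the bounds from the three regions yields the desired $C$. The only minor obstacle is ensuring uniformity in $\lambda$ in the asymptotic step, which is automatic since $\lambda$ ranges in a fixed compact interval and appears only through $\log\lambda$; no deeper idea is required.
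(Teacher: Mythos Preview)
Your proof is correct. The paper does not supply a proof of this lemma at all; it simply states that the result is elementary and that the proof ``can be omitted.'' Your three-step argument (positivity and continuity, including the removable singularities at $s=1$; explicit asymptotics at $0$ and $\infty$; compactness) is exactly the kind of routine verification the authors had in mind, and all the limits you compute are correct.

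One cosmetic remark: in the final compactness step, the ratio $f(\lambda s)/f(s)$ is naturally a continuous positive function of $(s,\lambda)$ on the compact set $[\epsilon_0,M_0]\times[\Lambda^{-1},\Lambda]$, not on $[\Lambda^{-1}\epsilon_0,\Lambda M_0]\times[\Lambda^{-1},\Lambda]$ as you wrote; the larger interval $[\Lambda^{-1}\epsilon_0,\Lambda M_0]$ is where the \emph{argument} $\lambda s$ ranges, which is what guarantees continuity of the numerator. This is only a matter of phrasing and does not affect the validity of the argument.
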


\section{Energy estimates and stability proof}\label{sec4}

In the previous section we constructed an approximate solution $\eta_*$ of the
rescaled vorticity equation \eqref{etaeq} which corresponds, in the original
variables, to a sharply concentrated vortex ring of radius $\bar r(t)$ located
at the approximate vertical position $\bar z_*(t)$.  Our goal is now to control
the difference between this approximation and the actual solution $\eta$ of
\eqref{etaeq} with initial data $\eta_0$, which is located at the
modified vertical position $\bar z(t) = {\bar z}_*(t) + \delta\tilde z(t)$
given by \eqref{barzpert}. This will conclude the proof of our main results,
Theorems~\ref{main1} and \ref{main2}.

Our starting point is the evolution equation \eqref{tildeq} for the perturbation
$\tilde \eta$ defined in \eqref{etapert}, which can be written in the form
\begin{equation}\label{tildeq2}
\begin{split}
  t\partial_t \tilde\eta &+ \frac{1}{\delta}\bigl\{\phi_*\,,\tilde\zeta\bigr\} +
  \frac{1}{\delta}\bigl\{\tilde\phi\,,\zeta_*\bigr\} + \bigl\{\tilde\phi\,,
  \tilde\zeta\bigr\} -\frac{\epsilon \bar r}{\delta\Gamma}\Bigl(\dot{\bar r}\,
  \partial_R \tilde\eta + \dot{\bar z}_* \,\partial_Z \tilde\eta\Bigr)\\ \,&=\,
  \cL \tilde\eta + \epsilon\partial_R \tilde\zeta + \frac{1}{\delta}
  \,\Rem(R,Z,t) + \frac{\epsilon \bar r \dot{\tilde z}}{\delta\Gamma}
  \,\Bigl(\partial_Z\eta_* + \delta\partial_Z\tilde\eta\Bigr)\,,
\end{split}
\end{equation}
where to simplify the notation we use the letter $\zeta$ to denote the
potential vorticity: 
\begin{equation}\label{zetadef}
  \tilde \zeta(R,Z,t) \,=\, \frac{\tilde\eta(R,Z,t)}{1+\epsilon R}\,, \qquad
  \zeta_*(R,Z,t) \,=\, \frac{\eta_*(R,Z,t)}{1+\epsilon R}\,.
\end{equation}
From our previous work \cite{GS2} we know that Eq.~\eqref{tildeq2} has a unique
solution $\tilde\eta$, in an appropriate weighted $L^2$ space, with zero initial
data. Our goal is to control the evolution of that solution on a large time
interval, uniformly with respect to the viscosity in the limit $\nu \to 0$. This
is not an easy task, because several terms in \eqref{tildeq2} are multiplied by
the Reynolds number $\delta^{-1} = \Gamma/\nu$, which becomes arbitrarily large
in the regime we consider. As was explained in the introduction, we shall use
energy estimates to control the solution of \eqref{tildeq2}, but a few
preliminary steps are necessary before starting the actual calculations.

\subsection{Control of the lowest order moments}\label{ssec41}

To implement our strategy based on energy estimates, we need a precise information
on the lowest order moments of the solution of \eqref{tildeq2}. We first
define, for all $t > 0$, 
\begin{equation}\label{mudef}
  \mu_0(t) \,=\, \int_{\Omega_\epsilon} \tilde \eta(R,Z,t)\dd X\,, \qquad
  \mu_1(t) \,=\, \int_{\Omega_\epsilon} \bigl(R + \epsilon R^2/2\bigr)
  \,\tilde \eta(R,Z,t)\dd X\,,
\end{equation}
where $\D X = \D R\dd Z$ denotes the Lebesgue measure in $\R^2$. 

\begin{lem}\label{lem:mu0mu1}
The moments defined in \eqref{mudef} satisfy $\mu_0(t) = \cO(\epsilon^\infty\delta^{-1})$ 
and $\mu_1(t) = \cO(\epsilon + \epsilon^{\gamma_5}\delta^{-2})$ for any $\gamma_5 < 1$,
whenever $\epsilon$ and $\delta$ are small enough.
\end{lem}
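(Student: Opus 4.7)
The plan is to exploit conservation laws for the axisymmetric Navier--Stokes flow together with the fact that our approximate solution $\eta_*$ inherits mass and impulse conservation up to exponentially small truncation errors, a direct consequence of Remark~\ref{Remapp}. The $\mu_0$ bound will come from integrating~\eqref{tildeq2} directly, while the $\mu_1$ bound will rely on the exact conservation of hydrodynamic impulse $I := \int r^2 \omega_\theta\,dr\,dz = \Gamma r_0^2$ for the 3D axisymmetric flow.

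For $\mu_0$, integrating~\eqref{tildeq2} over $\Omega_\epsilon$ kills every term but $\delta^{-1}\int\Rem\,dX$: the Poisson brackets are pure Jacobians, the $\partial_R$ and $\partial_Z$ contributions are divergences of compactly supported fields, and $\int\cL\tilde\eta\,dX = 0$ follows from $\cL = \Delta + \tfrac12 X\cdot\nabla + 1$ together with $\nabla\cdot X = 2$. An analogous term-by-term inspection of~\eqref{Remdef} yields $\int\Rem\,dX = -t\partial_t\int\eta_*\,dX$. Since Remark~\ref{Remapp} gives $\int\eta_\app\,dX = 1$ exactly, since all profiles $\eta_m$ with $m \ge 1$ have vanishing mean (and so do their $\beta_\epsilon$-derivatives), and since the truncation $\chi_0(\epsilon^{\sigma_0}\rho)$ only removes regions where $\eta_\app$ has already decayed like $\exp(-\epsilon^{-2\sigma_0}/4)$, the time derivative $t\partial_t\int\eta_*\,dX$ is itself $\cO(\epsilon^\infty)$. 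Integrating from $\mu_0(0) = 0$ then gives $\mu_0(t) = \cO(\epsilon^\infty/\delta)$.

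For $\mu_1$, I would use the exact conservation $I(t) \equiv \Gamma r_0^2$ (verified by integration by parts using $\partial_r(ru_r) + r\partial_z u_z = 0$, which kills both the Laplacian and the transport contributions to $\int r^2\partial_t\omega_\theta\,dr\,dz$). In rescaled coordinates $r = \bar r(1+\epsilon R)$, the algebraic identity $(1+\epsilon R)^2 = 1 + 2\epsilon(R+\epsilon R^2/2)$ yields the central relation
\[
  \mu_0 + 2\epsilon\mu_1 \,=\, \int_{\Omega_\epsilon}(1+\epsilon R)^2 \tilde\eta\,dX \,=\, \frac{I_*(0) - I_*(t)}{\delta\Gamma\bar r^2}\,,
\]
where I used $I = I_*(0) = \Gamma r_0^2$. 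The evolution of $I_*$ is driven only by the residual: the identity $\int r^2[\nu\Delta_*\omega - \{\psi,\omega/r\}]\,dr\,dz = 0$ holds for any axisymmetric vorticity with Gaussian decay, and the chain rule converting $\Rem$ from rescaled to original variables (where the cancellation $\sqrt{t/\nu} = \epsilon\bar r/(\delta\Gamma)$ is crucial) gives $\dot I_* = -(\Gamma\bar r^2/t)\int(1+\epsilon R)^2\Rem\,dR\,dZ$.

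The main obstacle is to extract an extra power of $\epsilon$ from this integral. Expanding $(1+\epsilon R)^2 = 1 + 2\epsilon R + \epsilon^2 R^2$, the leading piece $\int\Rem\,dX$ is $\cO(\epsilon^\infty)$ by the $\mu_0$ computation, while Proposition~\ref{Remprop} controls $\int R\,\Rem$ and $\int R^2\,\Rem$ by $C(\epsilon\delta + \epsilon^{\gamma_5}/\delta)$, so the full integral is bounded by $C(\epsilon^2\delta + \epsilon^{\gamma_5+1}/\delta)$. Using the elementary fact that $\int_0^t \epsilon(s)^n/s\,ds = \cO(\epsilon(t)^n)$ for $n > 0$, this integrates to $|I_*(0) - I_*(t)| \le C\Gamma\bar r^2(\epsilon^2\delta + \epsilon^{\gamma_5+1}/\delta)$, and dividing by $2\epsilon\delta\Gamma\bar r^2$ together with the $\mu_0$ bound yields $\mu_1 = \cO(\epsilon + \epsilon^{\gamma_5}/\delta^2)$. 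Without the observation that $\int\Rem\,dX = \cO(\epsilon^\infty)$, the naive bound would give only $\mu_1 = \cO(1 + \epsilon^{\gamma_5-1}/\delta^2)$, which is too weak; recovering the announced estimate hinges entirely on the exact mass-preservation property built into the Section~\ref{sec3} construction.
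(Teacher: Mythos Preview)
Your argument is correct and, for $\mu_1$, follows the paper's proof almost verbatim: both use exact conservation of the impulse $I$, derive the identity \eqref{Istarder} for $I_*$, and recover the crucial extra factor of $\epsilon$ from the observation $\int_{\Omega_\epsilon}\Rem\dd X = \cO(\epsilon^\infty)$ (the paper states this fact in passing; you obtain it as a byproduct of your $\mu_0$ computation).

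For $\mu_0$ you take the direct route of integrating \eqref{tildeq2} over $\Omega_\epsilon$, whereas the paper invokes the near-conservation of the total circulation $M(t)$ established in \cite{GS2}; the paper explicitly notes that your route is available (``the conclusion can be obtained by direct calculations''). Your version is more self-contained, while the paper's is shorter because the boundary analysis is outsourced. One small imprecision: $\tilde\eta$ and $\tilde\zeta$ are not compactly supported (only $\eta_*$ is), so several of the boundary terms you dismiss actually rely on the Dirichlet condition $\tilde\eta|_{\partial\Omega_\epsilon}=0$ together with the Gaussian decay of the full solution from \cite{GS2}; in particular the boundary contribution from $\Delta\tilde\eta$ in $\int_{\Omega_\epsilon}\cL\tilde\eta\dd X$ is $\cO(\epsilon^\infty)$ rather than exactly zero. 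This does not affect the conclusion $\mu_0 = \cO(\epsilon^\infty\delta^{-1})$.
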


\begin{proof}
The conclusion can be obtained by direct calculations, but we find it more
illuminating to use the conserved quantities of the original equation \eqref{omeq}.
The first one is the total circulation
\begin{equation}\label{Mdef}
  M(t) \,=\, \int_\Omega \omega_\theta(r,z,t)\dd r\dd z \,=\, \Gamma
  \int_{\Omega_\epsilon} \bigl(\eta_* + \delta \tilde \eta\bigr)(R,Z,t)\dd X
  \,=\, \Gamma \int_{\Omega_\epsilon} \eta_* \dd X + \Gamma \delta \mu_0(t)\,,
\end{equation}
which satisfies $M(0) = \Gamma$ and is almost constant in time. In fact it is
proved in \cite[Section~4.4]{GS2} that $0 \le 1 - M(t)/\Gamma \le C\exp(-c/\epsilon^2)$
for some positive constants $C$ and $c$. Moreover, since the approximate solution
$\eta_\app$ lies in the space $\cZ$ defined by \eqref{Zdef}, it follows from
\eqref{etaapp1} and \eqref{appint} that $\int_{\Omega_\epsilon}\eta_* \dd X = 1 + 
\cO(\exp(-c/\epsilon^{2\sigma_0}))$. Therefore $\mu_0(t) = \cO(\exp(-c/\epsilon^{2\sigma_0})
\,\delta^{-1})$ by \eqref{Mdef}.

We next consider the total impulse in the vertical direction
\begin{equation}\label{Idef}
  I \,=\, \int_\Omega r^2 \omega_\theta(r,z,t)\dd r\dd z \,=\, \Gamma \bar r(t)^2
  \int_{\Omega_\epsilon} (1+\epsilon R)^2 \bigl(\eta_* + \delta \tilde \eta\bigr)(R,Z,t)
  \dd X\,,
\end{equation}
which is known to be exactly conserved \cite{MB,GS1}, so that $I = \Gamma r_0^2$ for 
all times. Equality \eqref{Idef} can be rephrased as $I/\Gamma = I_*(t) + \delta\bar r(t)^2
\mu(t)$, where 
\begin{equation}\label{Istardef}
  I_*(t) \,=\, \bar r(t)^2 \int_{\Omega_\epsilon} (1+\epsilon R)^2 \eta_*(R,Z,t)\dd X\,,
  \qquad \mu(t) \,=\, \mu_0(t) + 2\epsilon \mu_1(t)\,.
\end{equation}
It is not difficult to show that
\begin{equation}\label{Istarder}
  t I_*'(t) \,=\, -\bar r(t)^2 \int_{\Omega_\epsilon} (1+\epsilon R)^2\,\Rem(R,Z,t)
  \dd X\,.
\end{equation}
The easiest way to establish \eqref{Istarder} is to observe that the impulse
$I_*(t)$ would be conserved if $\eta_*$ was an exact solution of \eqref{etaeq},
so that the remainder $\Rem(R,Z,t)$ defined in \eqref{Remdef} is the only term
that contributes to the evolution of $I_*(t)$. However equality \eqref{Istarder}
can also be verified by a direct calculation. In any case, since $\Rem(R,Z,t)$
satisfies estimate \eqref{Remest} and $\int_{\Omega_\epsilon} \Rem(R,Z,t)\dd x =
\cO(\epsilon^\infty)$, we deduce from \eqref{Istarder} that $|tI_*'(t)| \le C
r_0^2\bigl(\epsilon^2\delta + \epsilon^{\gamma_5+1}\delta^{-1}\bigr)$, hence
\[
  |I_*(t)-r_0^2| \,\le\, \int_0^t |I_*'(s)|\dd s \,\le\, C r_0^2 \int_0^t
  \frac{\epsilon(s)^2\delta + \epsilon(s)^{\gamma_5+1}\delta^{-1}}{s}\dd s
  \,\le\, C r_0^2\bigl(\epsilon^2\delta + \epsilon^{\gamma_5+1}\delta^{-1}\bigr)\,.
\]
As $r_0^2 - I_*(t) = \delta\bar r(t)^2\mu(t)$, we conclude that $\mu(t) =
\cO\bigl(\epsilon^2 + \epsilon^{\gamma_5+1}\delta^{-2}\bigr)$, which gives
the desired estimate for $\mu_1(t)$. 
\end{proof}

It is not clear if the strategy above can be applied to control the first order
moment of the perturbation $\tilde\eta$ with respect to the vertical variable
$Z$. In particular, we are not aware of any (approximately) conserved quantity
that we could use for this purpose. Instead we choose the modulation parameter
$\tilde z(t)$ in \eqref{barzpert} so that the vertical moment vanishes identically\: 
\begin{equation}\label{mu2def}
  \mu_2(t) \,:=\, \int_{\Omega_\epsilon} Z \,\tilde\eta(R,Z,t)\dd X \,=\, 0\,.
\end{equation}
Differentiating \eqref{mu2def} with respect to time and using \eqref{tildeq2}, we
obtain the relation
\begin{equation}\label{tildezeq}
  \dot{\tilde z}(t) \int_{\Omega_\epsilon}Z \bigl(\partial_Z\eta_*
  + \delta \partial_Z \tilde\eta\bigr)\dd X \,=\, \frac{\delta \Gamma}{\epsilon \bar r}
  \int_{\Omega_\epsilon} Z\,\cR(R,Z,t) \dd X\,,
\end{equation}
where
\begin{equation}\label{cRdef}
\begin{split}
  \cR \,=\, &\frac{1}{\delta}\bigl\{\phi_*\,,\tilde\zeta\bigr\} + \frac{1}{\delta}
  \bigl\{\tilde\phi\,,\zeta_*\bigr\} + \bigl\{\tilde\phi\,,\tilde\zeta\bigr\}
  -\frac{\epsilon \bar r}{\delta\Gamma}\Bigl(\dot{\bar r}\,\partial_R \tilde\eta + 
  \dot{\bar z}_* \,\partial_Z \tilde\eta\Bigr)\\  &- \cL \tilde\eta - \epsilon\partial_R
  \tilde\zeta - \frac{1}{\delta}\,\Rem(R,Z,t)\,.
\end{split}
\end{equation}
In view of Lemma~\ref{lem:mu0mu1} the integral in the left-hand side of \eqref{tildezeq}
is equal to $-1 + \cO(\epsilon^\infty)$, and is therefore bounded away from zero
if $\epsilon$ is small enough. The integral in the right-hand side is a priori
of size $\cO(\delta^{-1})$, but we observe that $\cR = \delta^{-1}\Lambda\tilde\eta
+ \cO(\epsilon \delta^{-1})$, where $\Lambda$ is the linear operator defined in
\eqref{Lamdef}. Using the properties established in Proposition~\ref{LLamprop},
we see that the leading term gives no contribution\:
\[
  \frac{1}{4\pi}\int_{\R^2} Z\,\Lambda \tilde \eta \dd X \,=\, \bigl(Z\eta_0\,,
  \Lambda\tilde\eta\bigr)_{\cY} \,=\, -\bigl(\Lambda(Z\eta_0)\,,
  \tilde\eta\bigr)_{\cY} \,=\, 0\,,
\]
since $Z\eta_0 = -2\partial_Z\eta_0$ is in the kernel of $\Lambda$. These
considerations, which will be made rigorous in Section~\ref{ssec48} below,
show that the modulation speed $\dot{\tilde z}$ is uniquely determined by
\eqref{tildezeq}, and suggest that $\dot{\tilde z}(t) =
\cO(\|\tilde \eta\|_{\cX_\epsilon})$ as long as $\|\tilde \eta\|_{\cX_\epsilon}$
remains of size $\cO(1)$. In particular $\delta\tilde z(t)$ is indeed
a small correction to the vertical position of the vortex ring.

\subsection{Definition and properties of the weight function}\label{ssec42}

We now provide the precise definition of the weight function $W_\epsilon : \Omega_\epsilon
\to (0,+\infty)$ which appears in the energy functional \eqref{Edef}. We give
ourselves three positive numbers $\sigma_1, \sigma_2, \gamma$ such that
\begin{equation}\label{sigmarange}
  0 \,<\, \sigma_1 \,<\, \sigma_0 \,<\, 1 \,<\, \sigma_2\,, \qquad
  \gamma \,=\, \sigma_1/\sigma_2\,,
\end{equation}
where $\sigma_0 \in (0,1)$ is the cut-off exponent already introduced in \eqref{etaapp1}.
As we shall see $\sigma_2 > 1$ can be chosen arbitrarily, but $\sigma_1 > 0$ has
to be taken sufficiently small. In particular $\sigma_1$ should be small enough
so that Proposition~\ref{Phiprop} holds. 

As in \eqref{zetadef}, if $\epsilon > 0$ and $\delta > 0$ are sufficiently
small, we denote $\zeta_* = \eta_*/(1+\epsilon R)$, where $\eta_*$ is the
approximate solution of \eqref{etaeq} given by \eqref{etaapp1}. We recall that
$\zeta_*$ and $\phi_* := \BS^\epsilon[\eta_*]$ satisfy the relation
\eqref{Phieps}, where $\Phi_\epsilon : \R_+ \to \R$ is the function constructed
in Section~\ref{ssec38}.  We decompose the domain
$\Omega_\epsilon = \bigl\{(R,Z) \,;\, 1 + \epsilon R > 0 \bigr\}$ into a
disjoint union
$\Omega_\epsilon' \cup \Omega_\epsilon'' \cup \Omega_\epsilon'''$, where
\begin{equation}
\begin{split}
  \Omega_\epsilon' \,&=\, \Bigl\{(R,Z) \in \Omega_\epsilon\,;\, \Phi_\epsilon'(\zeta_*(R,Z))
  < \exp\bigl(\epsilon^{-2\sigma_1}/4\bigr)\Bigr\}\,, \\ \label{Ompdef}
  \Omega_\epsilon'' \,&=\, \Bigl\{(R,Z) \in \Omega_\epsilon\setminus \Omega_\epsilon'\,;\,
  \rho \le \epsilon^{-\sigma_2}\Bigr\}\,, \\
  \Omega_\epsilon''' \,&=\, \Bigl\{(R,Z) \in \Omega_\epsilon\,;\, \rho >
  \epsilon^{-\sigma_2}\Bigr\}\,.
\end{split}
\end{equation}
Here and in what follows, if $(R,Z) \in \R^2$, we denote $\rho = (R^2{+}Z^2)^{1/2}$. 
The domains $\Omega_\epsilon', \Omega_\epsilon''$ also depend (mildly) on $\delta$, 
but for simplicity this dependence is not indicated explicitly. 

\begin{lem}\label{innerlem}
If $\epsilon > 0$ is small enough, the inner region $\Omega_\epsilon'$ defined in
\eqref{Ompdef} is diffeomorphic to a open disk, and there exists $\kappa > 0$
such that 
\begin{equation}\label{Ompinc}
  \bigl\{(R,Z)\,;\, \rho \le \epsilon^{-\sigma_1}\bigl\} \,\subset\, \Omega_\epsilon'
  \,\subset\, \Bigl\{(R,Z)\,;\, \rho^2 \le \epsilon^{-2\sigma_1} + \kappa\log
  \frac{1}{\epsilon}\Bigl\}\,.
\end{equation}
\end{lem}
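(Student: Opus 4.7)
The plan is to compare $\Phi_\epsilon'(\zeta_*(R,Z))$ with the radially symmetric, strictly increasing function $A(\rho)$ from \eqref{Adef}, and then to exploit the explicit monotonicity of $A$. The key intermediate bound I would establish is a multiplicative approximation
$\Phi_\epsilon'(\zeta_*(R,Z)) = A(\rho)\bigl(1 + O(\epsilon^{1-N\sigma_1})\bigr)$,
valid on $\{\rho \le 2\epsilon^{-\sigma_1}\}$ provided $\sigma_1$ is chosen small enough that $N\sigma_1 < 1$, where $N$ is the polynomial degree arising in the bounds on $\eta_\app - \eta_0$ from Sections~\ref{ssec33}--\ref{ssec36}. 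To obtain it I would combine: the decomposition $\Phi_\epsilon = \Phi_0 + \epsilon^2 \Phi_2$ from Section~\ref{ssec38} (recall $\Phi_1 = 0$); the identity $\Phi_0'(\eta_0(\rho)) = A(\rho)$; the sandwich bound \eqref{etastarest}; a first-order Taylor expansion of $\Phi_0'$ around $\eta_0$, with error controlled via \eqref{Phi0derest}; the elementary inequality $B(\rho)\eta_0(\rho) \lesssim A(\rho)$ obtained from the explicit formula \eqref{Bdef}; and the analogous bound \eqref{Phi2derest} on $\Phi_2'$.

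Granted this approximation, the first inclusion $\{\rho \le \epsilon^{-\sigma_1}\} \subset \Omega_\epsilon'$ follows at once, since $A(\rho) \sim (4/\rho^2)e^{\rho^2/4}$ as $\rho \to \infty$ gives $A(\epsilon^{-\sigma_1}) \le 4\epsilon^{2\sigma_1}\exp(\epsilon^{-2\sigma_1}/4)$, so that $\Phi_\epsilon'(\zeta_*) \le 2A(\rho) \ll \exp(\epsilon^{-2\sigma_1}/4)$ on that disk when $\epsilon$ is small. For the reverse inclusion, any point $(R,Z)\in \Omega_\epsilon'$ with $\rho > \epsilon^{-\sigma_1}$ must a priori satisfy $\rho \le 2\epsilon^{-\sigma_1}$ (a short bootstrap: if the approximation holds, then $A(\rho) \le 2\exp(\epsilon^{-2\sigma_1}/4)$ already forces $\rho$ into this range). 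Then taking logarithms in $A(\rho) \le 2\exp(\epsilon^{-2\sigma_1}/4)$ and using $\rho \le 1/\epsilon$ yields $\rho^2 \le \epsilon^{-2\sigma_1} + \kappa\log(1/\epsilon)$ for an appropriate $\kappa > 0$.

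For the topological claim, I would differentiate the multiplicative approximation once more, using the same Taylor-type argument applied to $\partial_R[\Phi_\epsilon'(\zeta_*)]$ and $\partial_Z[\Phi_\epsilon'(\zeta_*)]$, to obtain $\nabla_{(R,Z)}\Phi_\epsilon'(\zeta_*) = \nabla_{(R,Z)} A(\rho) \,\bigl(1 + O(\epsilon^{1-N\sigma_1})\bigr)$ on the thin annulus where $\Phi_\epsilon'(\zeta_*)$ takes values near $\exp(\epsilon^{-2\sigma_1}/4)$. Since $A'(\rho) > 0$ there, the radial derivative of $F_\epsilon := \Phi_\epsilon'(\zeta_*)$ is strictly positive on that annulus, so the implicit function theorem realizes $\partial\Omega_\epsilon'$ as a smooth embedded Jordan curve (a graph over the circle $\{A(\rho) = \exp(\epsilon^{-2\sigma_1}/4)\}$), and $\Omega_\epsilon'$ as its interior disk.

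The main obstacle I anticipate is the careful bookkeeping at the first step: the multiplicative error must remain genuinely small relative to $A(\rho)$ not only in sup norm but also after one differentiation, on a region slightly larger than $\{\rho \le \epsilon^{-\sigma_1}\}$. This is what dictates the choice of $\sigma_1$ and its compatibility with the polynomial factors $(1+\rho)^N$ appearing in \eqref{Phi0derest}, \eqref{Phi2derest}, and in the estimate $\zeta_* - \eta_0 = O(\epsilon(1+\rho)^N\eta_0)$ used implicitly in the proof of Proposition~\ref{Phiprop}. Once the approximation is in place with both sup and gradient control, the inclusions and the topology follow routinely from the explicit behavior of $A$.
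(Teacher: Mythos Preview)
Your proposal is correct and follows essentially the same strategy as the paper's proof: both rest on the comparison $\Phi_\epsilon'(\zeta_*)\approx A(\rho)$ in the region $\rho\lesssim\epsilon^{-\sigma_1}$ (the paper states this as the two-sided bound \eqref{Phipest}, while you give the sharper multiplicative form $A(\rho)(1+O(\epsilon^{1-N\sigma_1}))$), and both then read off the inclusions from the explicit growth of $A$. Your treatment of the topological claim via the implicit function theorem on the thin boundary annulus is a more explicit version of the paper's one-line remark that $\Phi_\epsilon'(\zeta_*)$ is $C^2$-close to a strictly increasing radial function; the only point to make fully rigorous is your ``bootstrap'' excluding components of $\Omega_\epsilon'$ with $\rho>2\epsilon^{-\sigma_1}$, which follows since the sandwich \eqref{etastarest} (hence \eqref{Phipest}) extends to $\rho\le 2\epsilon^{-\sigma_1}$ and gives $\Phi_\epsilon'(\zeta_*)\ge\tfrac12 A(2\epsilon^{-\sigma_1})\gg\exp(\epsilon^{-2\sigma_1}/4)$ there.
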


\begin{proof}
The main properties of the function $\Phi_\epsilon$ are established in the proof of
Proposition~\ref{Phiprop}. In particular, using estimates \eqref{etastarest},
\eqref{Phi0derest}, \eqref{Phi2derest}, it is easy to verify that
\begin{equation}\label{Phipest}
  \frac12\,A(\rho) \,\le\, \Phi_\epsilon'\bigl(\zeta_*(R,Z)\bigr) \,\le\,
  2 A(\rho)\,, \qquad \hbox{when}~\,\rho \le 2\epsilon^{-\sigma_1}\,.
\end{equation}
Here $A(\rho) = (4/\rho^2)\bigl(e^{\rho^2/4}-1\bigr)$, see \eqref{Adef}. Since
$2A(\epsilon^{-\sigma_1}) < \exp(\epsilon^{-2\sigma_1}/4)$ as soon as 
$\epsilon^{-\sigma_1} \ge 3$, we deduce that $(R,Z) \in \Omega_\epsilon'$ if $\rho
\le \epsilon^{-\sigma_1}$. Similarly, using the lower bound in \eqref{Phipest}, it is
easy to verify that the inner region $\Omega_\epsilon'$ is contained in the disk
$\rho^2 \le \epsilon^{-2\sigma_1} + \kappa\log\frac{1}{\epsilon}$ if $\kappa > 4\sigma_1$
and $\epsilon > 0$ is small enough. Finally $\Omega_\epsilon'$ is diffeomorphic
to a disk because $\Phi_\epsilon'(\zeta_*)$ is $C^2$-close to a strictly increasing
radially symmetric function when $\epsilon > 0$ is small, see \eqref{Phiexpand}. 
\end{proof}

We next choose a smooth cut-off function $\chi_1 : \R \to [\frac12,3]$ such that
\begin{equation}\label{chi1def}
  \chi_1(x) \,=\, \frac{1}{1+x} \quad \hbox{for }\, |x| \le \frac12\,, \qquad
  \chi_1'(x) \,=\, 0 \quad \hbox{for }\, |x| \ge \frac34\,.
\end{equation}
The weight $W_\epsilon :  \Omega_\epsilon \to (0,+\infty)$ is defined by
\begin{equation}\label{Wdef}
  W_\epsilon(R,Z) \,=\, \chi_1(\epsilon R) \times \begin{cases}
  \Phi_\epsilon'\bigl(\zeta_*(R,Z)\bigr) & \hbox{in}~\,\Omega_\epsilon'\,, \\[1mm]
  \exp\bigl(\epsilon^{-2\sigma_1}/4\bigr) & \hbox{in}~\,\Omega_\epsilon''\,, \\[1mm]
  \exp\bigl(\rho^{2\gamma}/4\bigr) & \hbox{in}~\,\Omega_\epsilon'''\,,
  \end{cases}
\end{equation}
where $\gamma = \sigma_1/\sigma_2 < 1$ and $\Omega_\epsilon', \Omega_\epsilon'',
\Omega_\epsilon'''$ are the regions defined in \eqref{Ompdef}. In other words, 
we assume that $W_\epsilon = \Phi_\epsilon'(\zeta_*)/(1{+}\epsilon R)$ as long as
the numerator remains smaller than the threshold value $\exp(\epsilon^{-2\sigma_1}/4)$. 
Outside this inner region, the weight is radially symmetric except for the
geometric factor $\chi_1(\epsilon R)$, and the radial profile remains constant
as long as $\rho \le \epsilon^{-\sigma_2}$ before increasing again like
$\exp(\rho^{2\gamma}/4)$ when $\rho > \epsilon^{-\sigma_2}$. By construction the function
$W_\epsilon$ is locally Lipschitz continuous in $\Omega_\epsilon$, and smooth in the
interior of all three regions \eqref{Ompdef}. The (mild) dependence of $W_\epsilon$
upon the parameter $\delta > 0$ is not indicated explicitly. A schematic representation 
of the graph of $W_\epsilon$ is given in Figure~\ref{fig2}.

Further properties of the weight $W_\epsilon$ are collected in the following lemma.

\begin{lem}\label{Wlem}
There exist positive constants $C_1, C_2$ such that, if $\epsilon$, $\delta$, and $\sigma_1$
are small enough, the weight $W_\epsilon$ satisfies the uniform bounds 
\begin{equation}\label{Wunifbd}
  C_1\exp\bigl(\rho^{2\gamma}/4\bigr) \,\le\, W_\epsilon(R,Z) \,\le\, C_2 A(\rho)\,,
  \qquad (R,Z) \in \Omega_\epsilon\,,
\end{equation}
where $\rho = (R^2{+}Z^2)^{1/2}$ and $A(\rho)$ is defined in \eqref{Adef}.
Moreover, given any $\gamma_1 < 1$ there exists $C_3 > 0$ such that the
following estimates hold in the inner region
\begin{equation}\label{Wapprox}
  \bigl|W_\epsilon(R,Z) - A(\rho)\bigr| \,+\, \bigl|\nabla W_\epsilon(R,Z) -
  \nabla A(\rho)\bigr| \,\le\, C_3\,\epsilon^{\gamma_1} A(\rho)\,,
  \qquad (R,Z) \in \Omega_\epsilon'\,.
\end{equation}
\end{lem}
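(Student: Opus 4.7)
The plan is to prove the uniform bounds \eqref{Wunifbd} by case analysis across the three regions $\Omega_\epsilon'$, $\Omega_\epsilon''$, $\Omega_\epsilon'''$, and then to establish the sharper approximation \eqref{Wapprox} in the inner region by Taylor-expanding $\Phi_\epsilon'(\zeta_*)$ around $\Phi_0'(\eta_0) = A(\rho)$. The cut-off factor $\chi_1(\epsilon R) \in [\tfrac12,3]$ is essentially harmless: by Lemma~\ref{innerlem} we have $|\epsilon R| \le \epsilon\rho \le 2\epsilon^{1-\sigma_1}$ throughout $\Omega_\epsilon'$, so $\chi_1(\epsilon R) = 1/(1+\epsilon R)$ with $|\epsilon R| < 1/2$ as soon as $\epsilon$ is small enough.

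For the upper bound $W_\epsilon \le C_2 A(\rho)$, in $\Omega_\epsilon'$ the estimate \eqref{Phipest} yields $\Phi_\epsilon'(\zeta_*) \le 2A(\rho)$. In $\Omega_\epsilon''$, any point $(R,Z)$ satisfies $\Phi_\epsilon'(\zeta_*) \ge \exp(\epsilon^{-2\sigma_1}/4)$ by definition, and splitting according to whether $\rho \le 2\epsilon^{-\sigma_1}$ (in which case \eqref{Phipest} yields $A(\rho) \ge \tfrac12\exp(\epsilon^{-2\sigma_1}/4)$) or $\rho > 2\epsilon^{-\sigma_1}$ (in which case the explicit form of $A$ gives an even larger lower bound), we conclude that $A(\rho) \ge c\exp(\epsilon^{-2\sigma_1}/4)$, whence $W_\epsilon \le C A(\rho)$. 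In $\Omega_\epsilon'''$, where $\rho \ge \epsilon^{-\sigma_2}$, the inequality $W_\epsilon = \chi_1\exp(\rho^{2\gamma}/4) \le C A(\rho)$ follows from the asymptotics $A(\rho) \sim 4e^{\rho^2/4}/\rho^2$ and $\gamma < 1$. For the lower bound $W_\epsilon \ge C_1\exp(\rho^{2\gamma}/4)$, the case $\Omega_\epsilon'''$ is immediate; in $\Omega_\epsilon''$ it reduces to $\rho^{2\gamma} \le \epsilon^{-2\sigma_1}$, which holds because $\rho \le \epsilon^{-\sigma_2}$ and $\sigma_2\gamma = \sigma_1$; in $\Omega_\epsilon'$ the bound $\rho \le C\epsilon^{-\sigma_1}$ together with $\gamma < 1$ implies that $\exp(\rho^{2\gamma}/4)$ is dominated by $A(\rho)$ up to a fixed constant, since $A(\rho)/\exp(\rho^{2\gamma}/4)$ is a continuous positive function on $[0,\infty)$ that tends to infinity.

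For the approximation \eqref{Wapprox}, I decompose
\[
  W_\epsilon - A(\rho) \,=\, \bigl[\chi_1(\epsilon R)-1\bigr]\Phi_\epsilon'(\zeta_*) \,+\, \bigl[\Phi_\epsilon'(\zeta_*) - \Phi_0'(\eta_0)\bigr]\,.
\]
The first bracket contributes $\cO(\epsilon|R|)\,A(\rho) = \cO(\epsilon^{1-\sigma_1})A(\rho)$. For the second, I combine the expansion $\Phi_\epsilon = \Phi_0 + \epsilon^2\Phi_2 + \cO(\epsilon^{3-})$ (with $\Phi_1 = 0$, see \eqref{Phiexpand}) with a first-order Taylor expansion of $\Phi_0'$ around $\eta_0$:
\[
  \Phi_\epsilon'(\zeta_*) - \Phi_0'(\eta_0) \,=\, \Phi_0''(\eta_0)(\zeta_* - \eta_0) + \cO\bigl((\zeta_*{-}\eta_0)^2 \sup|\Phi_0'''|\bigr) + \cO\bigl(\epsilon^2\sup|\Phi_2'|\bigr)\,.
\]
From the construction of $\eta_* = \eta_\app$ in $\Omega_\epsilon'$ with all profiles $\eta_m \in \cZ$, I obtain pointwise $\zeta_* - \eta_0 = \cO(\epsilon(1+\rho)^N e^{-\rho^2/4})$ for some fixed integer $N$. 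Combining with the exact identity $\Phi_0''(\eta_0) = -B(\rho)$ and the uniform estimate $B(\rho)e^{-\rho^2/4} \asymp A(\rho)$ (which follows by checking the asymptotics of $B/A$ at $0$ and at infinity) yields
\[
  |\Phi_0''(\eta_0)(\zeta_* - \eta_0)| \,\le\, C\epsilon (1+\rho)^N A(\rho) \,\le\, C\epsilon^{1-N\sigma_1}A(\rho)\,,
\]
where the last step uses $\rho \le C\epsilon^{-\sigma_1}$ in $\Omega_\epsilon'$. Choosing $\sigma_1$ small enough that $1 - N\sigma_1 \ge \gamma_1$ gives the pointwise bound; the quadratic remainder and the $\epsilon^2\Phi_2'$ term are handled identically and turn out to be smaller. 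The gradient bound proceeds by the same Taylor strategy applied to $\partial_R W_\epsilon = -\epsilon(1+\epsilon R)^{-2}\Phi_\epsilon'(\zeta_*) + (1+\epsilon R)^{-1}\Phi_\epsilon''(\zeta_*)\partial_R\zeta_*$ and its $\partial_Z$ analogue, where the additional $\Phi_0'''$-type terms produced by the chain rule are still tamed by the Gaussian decay of $\zeta_* - \eta_0$ and its derivatives.

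The main technical hurdle is controlling \eqref{Wapprox} near the outer boundary of $\Omega_\epsilon'$: as $\rho$ approaches $\epsilon^{-\sigma_1}$, the factor $|\Phi_0''(\eta_0)| = B(\rho) \sim 16\pi e^{\rho^2/2}/\rho^2$ explodes to size $\sim \exp(\epsilon^{-2\sigma_1}/2)$, so the bound rests entirely on the tight cancellation with the Gaussian suppression $e^{-\rho^2/4}$ of $\zeta_* - \eta_0$. This is precisely why $\sigma_1$ must be taken quantitatively small in terms of both the integer $N$ governing the polynomial growth of the profiles $\eta_m$ in $\cZ$ and the target exponent $\gamma_1$, and why Lemma~\ref{innerlem} was stated so that $\Omega_\epsilon'$ is contained in a ball of radius only mildly larger than $\epsilon^{-\sigma_1}$.
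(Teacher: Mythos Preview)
Your proof is correct and follows essentially the same route as the paper: the same three-region case analysis for \eqref{Wunifbd}, and the same decomposition $W_\epsilon - A = [\chi_1(\epsilon R)-1]\Phi_\epsilon'(\zeta_*) + [\Phi_\epsilon'(\zeta_*) - \Phi_0'(\eta_0)]$ for \eqref{Wapprox}, with the second bracket controlled via the key estimate $B(\rho)\eta_0 \asymp A(\rho)$ and the polynomial-times-Gaussian bound on $\zeta_* - \eta_0$. The only cosmetic difference is that the paper splits the second bracket as $[\Phi_\epsilon'(\zeta_*) - \Phi_\epsilon'(\eta_0)] + [\Phi_\epsilon'(\eta_0) - \Phi_0'(\eta_0)]$ and handles the first piece by a single application of the mean value theorem using the uniform bound $|\Phi_\epsilon''(\lambda\eta_0)| \le CB(\rho)$ for $\lambda \in [\tfrac12,2]$ (via Lemma~\ref{fglem}), thereby avoiding your second-order Taylor remainder involving $\Phi_0'''$; this is a slight simplification but not a different idea.
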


\begin{proof}
Since $\frac12 \le \chi_1(\epsilon R) \le 3$ and $\exp(\rho^{2\gamma}/4) \le C
A(\rho)$, we deduce from \eqref{Phipest} that the bounds \eqref{Wunifbd} hold 
in the inner region $\Omega_\epsilon'$, as well as in the far field region
$\Omega_\epsilon'''$. In the intermediate region $\Omega_\epsilon''$
we know that $\rho \le \epsilon^{-\sigma_2}$, so that $\exp(\rho^{2\gamma}/4) \le
\exp(\epsilon^{-2\sigma_1}/4)$ since $\gamma = \sigma_1/\sigma_2$, and this 
gives the lower bound in \eqref{Wunifbd}. If $\rho \ge 2\epsilon^{-\sigma_1}$, it
is clear that $\exp(\epsilon^{-2\sigma_1}/4) \le A(\rho)$, which is the desired
upper bound. Finally if $(R,Z) \in \Omega_\epsilon''$ and $\rho \le 2\epsilon^{-\sigma_1}$,
we deduce from \eqref{Phipest} that $\exp(\epsilon^{-2\sigma_1}/4) \le
\Phi_\epsilon'\bigl(\zeta_*(R,Z)\bigr) \le 2 A(\rho)$, which concludes the proof
of the upper bound in \eqref{Wunifbd}. 

To prove \eqref{Wapprox}, we start from the expression \eqref{Wdef} of the
weight $W_\epsilon$ in the inner region $\Omega_\epsilon'$. We know from
\eqref{Adef} that $A(\rho) = \Phi_0'(\eta_0)$, where $\eta_0$ is defined in
\eqref{phi0def}. We thus find
\begin{equation}\label{Wapp2}
  |W_\epsilon(R,Z) - A(\rho)\bigr| \,\le\, \bigl|\chi_1(\epsilon R) - 1\bigr|
  \Phi_\epsilon'(\zeta_*) + \bigl|\Phi_\epsilon'(\zeta_*) - \Phi_\epsilon'(\eta_0)\bigr|
  + \bigl|\Phi_\epsilon'(\eta_0) - \Phi_0'(\eta_0)\bigr|\,.
\end{equation}
Since $\chi_1(\epsilon R) = (1+\epsilon R)^{-1}$ when $(R,Z) \in \Omega_\epsilon'$, 
the first term in the right-hand of \eqref{Wapp2} is smaller than $C \epsilon |R|
\,\Phi_\epsilon'(\zeta_*) \le C \epsilon^{1-\sigma_1}A(\rho)$. For the second term, we use
the bounds \eqref{etastarest}, \eqref{Phi0derest}, and \eqref{Phi2derest} to obtain
\[
  \bigl|\Phi_\epsilon'(\zeta_*) - \Phi_\epsilon'(\eta_0)\bigr| \,\le\,
  \sup_{\frac12\le\lambda\le2}\bigl|\Phi_\epsilon''(\lambda\eta_0)\bigr|\,|\zeta_* -
  \eta_0| \,\le\, CB(\rho)(1+\rho)^N \epsilon \eta_0 \,\le\, C \epsilon^{\gamma_1}
  A(\rho)\,,
\]
where in the last inequality we assumed that $\sigma_1 > 0$ is small enough
so that $N\sigma_1 \le 1 - \gamma_1$. The last term in \eqref{Wapp2} is bounded
by $\epsilon^2 |\Phi_2'(\eta_0)| \le C \epsilon^{\gamma_1} A(\rho)$ in view of
\eqref{Phi2derest}. Altogether we arrive at the estimate $|W_\epsilon(R,Z) - A(\rho)\bigr|
\le C \epsilon^{\gamma_1} A(\rho)$. The corresponding inequality for the first order
derivatives can be obtained in a similar way, and we omit the details
\end{proof}

\subsection{Coercivity of the energy functional}\label{ssec43}

For $\epsilon \ge 0$ small enough, we introduce the weighted $L^2$ space
$\cX_\epsilon = \bigl\{\eta \in L^2(\Omega_\epsilon)\,;\,\|\eta\|_{\cX_\epsilon}
< \infty\bigr\}$ defined by the norm \eqref{Xepsdef}, namely 
\begin{equation}\label{cXdef}
  \|\eta\|_{\cX_\epsilon}^2 \,=\, \int_{\Omega_\epsilon} W_\epsilon(R,Z)
  \,|\eta(R,Z)|^2 \dd R\dd Z\,. 
\end{equation}
In the limiting case $\epsilon = 0$, it is understood that $\Omega_0 =
\R^2$ and $W_0(R,Z) = A(\rho)$, in agreement with \eqref{Wapprox}.
Assuming that $\epsilon > 0$, we consider the energy functional
\eqref{Edef}, namely
\begin{equation}\label{Edef2}
  E_\epsilon[\eta] \,=\, \frac12\,\|\eta\|_{\cX_\epsilon}^2 - E_\epsilon^\kin[\eta]\,,
  \qquad \eta \in \cX_\epsilon\,,
\end{equation}
where $E_\epsilon^\kin$ is the kinetic energy defined by
\begin{equation}\label{Ekindef}
  E_\epsilon^\kin[\eta] \,=\, \frac12 \int_{\Omega_\epsilon} \phi\,
  \eta\dd R\dd Z \,=\, \frac12 \int_{\Omega_\epsilon} \frac{|\nabla
  \phi|^2}{1+\epsilon R}\dd R\dd Z\,,\qquad \phi \,=\, \BS^\epsilon[\eta]\,.
\end{equation}

Since we are interested in the regime where $\epsilon$ is small, it is
important to observe that $E_\epsilon^\kin[\eta]$ becomes {\em singular} in the limit
$\epsilon \to 0$, if the vorticity $\eta$ has nonzero mean. This divergence is
related to the well-known fact that any (nontrivial) nonnegative vorticity distribution
in $\R^2$ has infinite kinetic energy. The regular part of $E_\epsilon^\kin[\eta]$ is
given, to leading order, by the two-dimensional energy
\begin{equation}\label{Ekin0def}
  E_0^\kin[\eta] \,=\, \frac{1}{4\pi} \int_{\R^2} \bigl(L\eta)\eta \dd X \,=\,
  \frac{1}{4\pi} \int_{\R^2}\int_{\R^2} \log\Bigl(\frac{8}{D}\Bigr)\,\eta(R,Z)
  \eta(R',Z')\dd X \dd X'\,,
\end{equation}
where $L$ is the integral operator \eqref{Lopdef} and $D^2 = (R{-}R')^2 + (Z{-}Z')^2$.
More precisely, we have the following statement, whose proof is postponed to
Section~\ref{ssecB1}. 

\begin{lem}\label{Ekinlem}
If $\epsilon > 0$ is small and $\eta \in \cX_\epsilon$ satisfies $\supp(\eta) \subset
B_\epsilon := \{(R,Z) \in \Omega_\epsilon\,;\, \rho \le \epsilon^{-\sigma_1}\}$, we 
have the expansion 
\begin{equation}\label{Ekinexp}
  E_\epsilon^\kin[\eta]  \,=\, \frac{\beta_\epsilon-2}{4\pi}\,\mu_0^2 +
  E_0^\kin[\eta] + \cO\bigl(\epsilon\beta_\epsilon\|\eta\|_{\cX_\epsilon}^2\bigr)\,,
 \qquad \hbox{as}~\epsilon \to 0\,,
\end{equation}
where $\beta_\epsilon = \log(1/\epsilon)$ and $\mu_0 = \int_{\Omega_\epsilon} \eta\dd R\dd Z$. 
\end{lem}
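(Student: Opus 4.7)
The starting point of the proof is the identity
\[
  E_\epsilon^\kin[\eta] \,=\, \frac{1}{4\pi}\int_{\Omega_\epsilon}\!\int_{\Omega_\epsilon}
  K_\epsilon(R,Z;R',Z')\,\eta(R,Z)\,\eta(R',Z')\dd X\dd X'\,,
\]
obtained by substituting the Biot-Savart representation \eqref{BSeps2} into \eqref{Ekindef}.
Since $\supp(\eta) \subset B_\epsilon$, on the relevant domain we have $\epsilon|R|, \epsilon|R'| \le \epsilon^{1-\sigma_1}$ and $\epsilon D \le 2\epsilon^{1-\sigma_1}$, so $\sigma_1 < 1$ guarantees that the kernel expansion of Lemma~\ref{Kexpansion} is valid and convergent. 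Writing $K_\epsilon = (\beta_\epsilon + L - 2) + \widetilde K_\epsilon$, the contribution of the leading piece equals exactly $\frac{\beta_\epsilon - 2}{4\pi}\mu_0^2 + E_0^\kin[\eta]$, in view of the definitions \eqref{Ekin0def} of $E_0^\kin$ and of $\mu_0$. The whole point of the proof is to show that the remainder $\frac{1}{4\pi}\iint \widetilde K_\epsilon\,\eta(X)\eta(X')\dd X\dd X'$ is of size $\cO(\epsilon\beta_\epsilon\|\eta\|_{\cX_\epsilon}^2)$.

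To this end, I will establish two uniform moment estimates that exploit the weight. First, by Cauchy-Schwarz with $W_\epsilon^{-1/2}$, and using the lower bound $W_\epsilon \gtrsim \exp(\rho^{2\gamma}/4)$ from Lemma~\ref{Wlem}, one obtains
\[
  \Bigl|\int_{\Omega_\epsilon} R^a Z^b\,\eta \dd X\Bigr| \,\le\,
  \Bigl(\int_{\Omega_\epsilon} \frac{R^{2a} Z^{2b}}{W_\epsilon}\dd X\Bigr)^{1/2}\|\eta\|_{\cX_\epsilon}
  \,\le\, C_{a,b}\,\|\eta\|_{\cX_\epsilon}\,,
\]
for every $a,b \in \N$, since the relevant integral is finite independently of $\epsilon$ (the bulk contribution sits on a bounded region where $W_\epsilon \gtrsim 1$, and the Gaussian-like tail of $1/W_\epsilon$ controls polynomial growth). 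Second, I will show the pointwise bound $\|L\eta\|_{L^\infty(B_\epsilon)} \le C\beta_\epsilon\,\|\eta\|_{\cX_\epsilon}$ by splitting the defining integral \eqref{Lopdef} at $|X - X'| = 1$: on the far piece one uses $|\log(8/D)| \le C\beta_\epsilon$ (valid since $D \le 2\epsilon^{-\sigma_1}$) together with the $\mu_0$-bound, while on the near piece one invokes the local $L^2$ integrability of the logarithmic kernel and the uniform lower bound $W_\epsilon \ge C_1$.

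With these two ingredients in hand, it suffices to bound each individual term in the expansion of $\widetilde K_\epsilon$. The $m=1$ contribution contains two pieces: a ``$\beta_\epsilon$-part'' $\epsilon\beta_\epsilon(R+R')/2$ giving $\epsilon\beta_\epsilon\,\mu_0\mu_1$, directly controlled by the moment estimate, and a ``$L$-part'' $\epsilon(R+R')(L - 1)/2$ giving a multiple of $\epsilon\int R\,\eta\,(L\eta)\dd X$, bounded by $\epsilon\|L\eta\|_\infty\int |R||\eta|\dd X \le C\epsilon\beta_\epsilon\|\eta\|_{\cX_\epsilon}^2$. Higher-order terms ($m \ge 2$) are smaller by an extra factor of $\epsilon^{1-\sigma_1}$ per order, since the polynomials $P_m, Q_m$ have degree $m$ but the moment bounds are uniform; geometric summation then yields the total remainder estimate.

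The main technical obstacle is controlling the logarithmic piece $L\eta$ pointwise on the entire support $B_\epsilon$, whose size $\epsilon^{-\sigma_1}$ grows as $\epsilon \to 0$. Crude bounds that use $|R| \le \epsilon^{-\sigma_1}$ lose too much (I checked this gives an unacceptable factor $\epsilon^{1-3\sigma_1}$), so it is essential to route every polynomial moment through the weighted Cauchy-Schwarz inequality rather than through $L^\infty$ bounds on the support, and to combine this with the logarithmic potential estimate above, which costs only a single $\beta_\epsilon$ rather than a negative power of $\epsilon$.
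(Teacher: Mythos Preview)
Your proof is correct and follows essentially the same route as the paper: both isolate the leading contribution $\beta_\epsilon - 2 + \log(8/D)$ of the kernel $K_\epsilon$ and bound the remainder $\tilde K_\epsilon$, with the $m=1$ piece $\epsilon(R+R')(\beta_\epsilon + L)$ being the critical term requiring the weighted moment estimates you describe. The paper packages this slightly differently by giving the pointwise bound $|\tilde K_\epsilon| \le C\epsilon(|R|+|R'|)(\beta_\epsilon+1+\log(8/D)) + \cO(\beta_\epsilon\epsilon^{2-2\sigma_1})$ and then integrating, but the underlying estimates---polynomial moments controlled via $W_\epsilon^{-1} \in L^1$ and the logarithmic potential costing at most a factor $\beta_\epsilon$---are the same as yours.
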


We now consider the (formal) limit of the functional $E_\epsilon[\eta]$ as
$\epsilon \to 0$, assuming that $\eta$ has zero mean to avoid the logarithmic
divergence in the right-hand side of \eqref{Ekinexp}. In view of \eqref{Wapprox}
and Lemma~\ref{Ekinlem}, we obtain the limiting functional
\begin{equation}\label{E0def}
  E_0[\eta] \,=\, \frac12 \int_{\R^2} A(\rho)\,\eta(R,Z)^2\dd R\dd Z -
  E^\kin_0[\eta] \,=\, \frac12\,\|\eta\|_{\cX_0}^2 - E^\kin_0[\eta]\,,
\end{equation}
which is studied in detail in our previous work \cite{GS3}. In particular,
we have the following property\:

\begin{prop}\label{E0prop}
There exists constants $C_4 > 2$ and $C_5 > 0$ such that, for all $\eta\in \cX_0$,
\begin{equation}\label{E0ineq}
  \|\eta\|_{\cX_0}^2 \,\le\, C_4 E_0[\eta] + C_5\bigl(\mu_0^2 + \mu_1^2 + \mu_2^2\bigr)\,,
\end{equation}
where $\mu_0 = \int_{\R^2} \eta\dd X$, $\mu_1 = \int_{\R^2} R\eta\dd X$, $\mu_2 =
\int_{\R^2} Z\eta\dd X$.  
\end{prop}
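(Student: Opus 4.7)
The strategy is to reduce Proposition~\ref{E0prop} to a mode-by-mode coercivity estimate by exploiting the full rotational invariance of the functional. Both the weight $A(\rho)$ and the logarithmic kernel of $L$ from \eqref{Lopdef} commute with rotations about the origin, so the orthogonal decomposition $\cY = \oplus_{n\ge 0}\cY_n$ from \eqref{Yndef} extends to $\cX_0$. Writing $\eta = \sum_{n\ge 0}\eta^{(n)}$, I obtain $\|\eta\|_{\cX_0}^2 = \sum_n \|\eta^{(n)}\|_{\cX_0}^2$, $E_0^\kin[\eta] = \sum_n E_0^\kin[\eta^{(n)}]$, and hence $E_0[\eta] = \sum_n E_0[\eta^{(n)}]$. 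Moreover, the moments decouple: $\mu_0$ depends only on $\eta^{(0)}$, while $(\mu_1,\mu_2)$ depends only on $\eta^{(1)}$. So it suffices to prove \eqref{E0ineq} in each sector separately, with uniform constants.

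For $n\ge 2$, I would work in polar coordinates and, via the separation of variables already used in Proposition~\ref{Laminv}, reduce the restriction of the quadratic form $2E_0$ to $\cY_n$ to a family of one-dimensional problems involving the radial profile $a(\rho)$. The key step is a pointwise comparison, on the radial side, between the multiplication weight $A(\rho)$ and the operator representing $\eta \mapsto (2\pi)^{-1}L\eta$ in the $n$-th angular mode, with the aim of showing that their difference is bounded below by a strictly positive multiple of $A(\rho)$, uniformly in $n\ge 2$. Concretely, the Sturm-Liouville operator appearing in \eqref{Omexp} becomes more coercive as $n$ grows (the term $n^2/\rho^2$ strengthens), which gives monotonicity of the gap in $n$ and allows the whole regime $n\ge 2$ to be controlled once the base case $n=2$ is settled.

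For $n=1$, relation \eqref{KerLam} identifies the kernel of $\Lambda$ on $\cY_1$ with $\mathrm{span}(\partial_R\eta_0,\partial_Z\eta_0)$, and these two directions are precisely those carrying the moments $\mu_1,\mu_2$. I would decompose $\eta^{(1)} = \alpha\,\partial_R\eta_0 + \beta\,\partial_Z\eta_0 + \eta^{(1)}_\perp$ with $\eta^{(1)}_\perp\in\cY_1'$, observe that $|\alpha|^2+|\beta|^2 \lesssim \mu_1^2+\mu_2^2$, estimate $E_0$ on the neutral plane by brute force (this contributes the $C_5(\mu_1^2+\mu_2^2)$ term), and prove strict coercivity on $\cY_1'$ by the same radial Sturm-Liouville argument as for $n\ge 2$. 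For $n=0$, the Gaussian $\eta_0$ spans the kernel of $\cL$ in $\cY_0$ (Proposition~\ref{LLamprop}); splitting $\eta^{(0)} = c\,\eta_0 + \eta^{(0)}_\perp$ with $\eta^{(0)}_\perp \perp \eta_0$ in $\cY$ gives $|c|\lesssim|\mu_0|$, absorbing the neutral direction into the $C_5\mu_0^2$ term, and on the orthogonal complement coercivity again reduces to a one-dimensional radial inequality.

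The main obstacle I expect is securing the strict coercivity on the transverse subspaces in the marginal cases $n\in\{0,1\}$, where the integral operator $\eta\mapsto(2\pi)^{-1}L\eta$ is only just dominated by multiplication by $A(\rho)$. This is the point at which one needs more than soft Fourier/spectral arguments: an explicit computation using the functions $\vf,h$ from \eqref{phihdef} and the Green's function of the radial operator in \eqref{Omexp}, together with the fact that $\Phi_0'(\eta_0(\rho)) = A(\rho)$, establishes the needed quantitative gap. As the authors note, precisely this analysis is carried out in detail in \cite{GS3} for the rectilinear vortex (where the relevant modes coincide with the $n\ge 1$ modes here), so the plan is to invoke or directly adapt those estimates and combine them across sectors to recover \eqref{E0ineq}.
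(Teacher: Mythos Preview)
Your proposal is correct and ultimately rests on the same input as the paper's proof: the coercivity of $E_0$ on the moment-free subspace, established in \cite[Section~2]{GS3}. The organizational strategies differ, however. The paper treats \cite{GS3} as a black box---it subtracts a single explicit combination $\mu_0\eta_0 - \mu_1\partial_R\eta_0 - \mu_2\partial_Z\eta_0$ to kill all three moments at once, applies the cited bound to the resulting $\hat\eta$, and then recovers \eqref{E0ineq} by expanding the quadratic forms and absorbing cross terms with H\"older and Young. Your route instead decomposes $\eta$ into angular sectors and performs the subtraction within each sector, which is closer to how the coercivity is actually proved in \cite{GS3}; you then reassemble across modes. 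Your approach gives more mechanistic insight into why the three moments are exactly the obstructions, while the paper's is shorter and avoids reopening the Sturm--Liouville analysis. One small caution: in your $n=0$ step you invoke $\eta_0\in\Ker\cL$, but $\cL$ is not the operator underlying $E_0$; the subtraction of $c\,\eta_0$ is still valid because it kills $\mu_0$, not because $\eta_0$ is a neutral direction of $E_0$.
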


\begin{proof}
The results of \cite[Section~2]{GS3} show that \eqref{E0ineq} holds when
$\mu_0 = \mu_1 = \mu_2 = 0$, and the general case is easily deduced by the
following argument. Given $\eta \in \cX_0$ we define
\[
  \hat \eta \,=\, \eta - \mu_0\eta_0 + \mu_1 \partial_R\eta_0 + \mu_2
  \partial_Z\eta_0\,, \qquad
  \hat \phi \,=\, \phi - \mu_0\phi_0 + \mu_1 \partial_R\phi_0 + \mu_2
  \partial_Z\phi_0\,, 
\]
where $\phi = (2\pi)^{-1}L\eta$ and $\eta_0,\phi_0$ are as in \eqref{phi0def}.
By construction the integral and the first order moments of the new function
$\hat\eta \in \cX_0$ vanish, so that we can apply the results of \cite{GS3}
which give the bound $\|\hat\eta\|_{\cX_0}^2 \,\le\, C_4 E_0[\hat\eta]$. On
the other hand, expanding the quadratic expressions $\|\hat\eta\|_{\cX_0}^2$
and $E_0[\hat\eta]$ and using H\"older's inequality, it is straightforward
to verify that
\[
  \|\hat\eta\|_{\cX_0}^2 \,\ge\, \frac12\,\|\eta\|_{\cX_0}^2 - C\bigl(\mu_0^2
  + \mu_1^2 + \mu_2^2)\,, \quad  E_0[\hat\eta] \,\le\, E_0[\eta] +
  \frac{1}{4C_4}\,\|\eta\|_{\cX_0}^2 + C\bigl(\mu_0^2+ \mu_1^2 + \mu_2^2)\,,
\]
for some $C > 0$. If we combine these estimates, we arrive at the
bound \eqref{E0ineq} with a deteriorated constant $C_4$.   
\end{proof}

Using Proposition~\ref{E0prop}, we now establish a similar coercivity property for
the functional $E_\epsilon$ when $\epsilon > 0$ is small. The proof of the
following proposition is again postponed to Section~\ref{ssecB1}. 

\begin{prop}\label{Eepsprop}
If the weight $W_\epsilon$ satisfies \eqref{Wunifbd} and \eqref{Wapprox}, there
exist constants $C_6 > 0$ and $C_7 > 0$ such that, for all sufficiently small
$\epsilon > 0$ and all $\eta\in \cX_\epsilon$, we have the estimate
\begin{equation}\label{Eepsineq}
  \|\eta\|_{\cX_\epsilon}^2 \,\le\, C_6 E_\epsilon[\eta] + C_7\bigl(\beta_\epsilon
  \mu_0^2 + \mu_1^2 + \mu_2^2\bigr)\,,
\end{equation}
where $\beta_\epsilon = \log(1/\epsilon)$ and $\mu_0 = \int_{\Omega_\epsilon}\eta\dd X$,
$\mu_1 = \int_{\Omega_\epsilon} R\eta\dd X$, $\mu_2 = \int_{\Omega_\epsilon}Z\eta\dd X$.
\end{prop}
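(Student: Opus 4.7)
The plan is to reduce the proposition to its already established $\epsilon=0$ counterpart, Proposition~\ref{E0prop}, by a perturbative argument that exploits the three-region structure of the weight described in Section~\ref{ssec42}. Given $\eta \in \cX_\epsilon$, I would split $\eta = \eta^{\rm in} + \eta^{\rm out}$ with a smooth cutoff $\chi$ at scale $\epsilon^{-\sigma_1}$, so that $\eta^{\rm in} := \chi\eta$ is supported in the ball $B_\epsilon = \{\rho \le \epsilon^{-\sigma_1}\} \subset \Omega_\epsilon'$ where Lemma~\ref{Ekinlem} and the pointwise comparison \eqref{Wapprox} both apply, while $\eta^{\rm out}$ is supported where the weight is either frozen at the threshold $\exp(\epsilon^{-2\sigma_1}/4)$ or grows as $\exp(\rho^{2\gamma}/4)$.

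For the inner part I would combine Lemma~\ref{Ekinlem} with the pointwise bound \eqref{Wapprox} to deduce
\[
  E_\epsilon[\eta^{\rm in}] + \frac{\beta_\epsilon-2}{4\pi}(\mu_0^{\rm in})^2
  \,=\, E_0[\eta^{\rm in}] + r_\epsilon\,,
\]
with a remainder satisfying $|r_\epsilon| \le C(\epsilon^{\gamma_1} + \epsilon\beta_\epsilon)\|\eta^{\rm in}\|_{\cX_0}^2$. Since Lemma~\ref{Wlem} shows that $\|\eta^{\rm in}\|_{\cX_0}$ and $\|\eta^{\rm in}\|_{\cX_\epsilon}$ are equivalent up to $1 + o(1)$ factors, the remainder can be absorbed on the left once Proposition~\ref{E0prop} is applied to $\eta^{\rm in}$, yielding
\[
  \|\eta^{\rm in}\|_{\cX_\epsilon}^2 \,\le\, C_6 E_\epsilon[\eta^{\rm in}] + C_7'\bigl(\beta_\epsilon (\mu_0^{\rm in})^2
  + (\mu_1^{\rm in})^2 + (\mu_2^{\rm in})^2\bigr)
\]
for $\epsilon$ sufficiently small. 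For the outer part, the lower bound \eqref{Wunifbd} implies that $\int_{\supp \eta^{\rm out}} W_\epsilon^{-1}\dd X$ is bounded and in fact quite small, so Cauchy–Schwarz gives $\|\eta^{\rm out}\|_{L^1} + |\mu_j^{\rm out}| \ll \|\eta^{\rm out}\|_{\cX_\epsilon}$. The self-energy $E_\epsilon^\kin[\eta^{\rm out}]$ and the cross term $\int \phi^{\rm in}\,\eta^{\rm out}\dd X = \int \phi^{\rm out}\,\eta^{\rm in}\dd X$ can then be controlled via standard Biot--Savart $L^\infty$ bounds from \eqref{BSeps}, \eqref{Fexpand} together with $\|\phi^{\rm in}\|_{L^\infty} \le C$ (using that $\eta^{\rm in}$ is compactly supported in $B_\epsilon$), and therefore turn out to be $o(\|\eta\|_{\cX_\epsilon}^2)$; moreover the moments $\mu_j$ differ from $\mu_j^{\rm in}$ by negligible quantities.

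Collecting the inner bound, the crude control on $\|\eta^{\rm out}\|_{\cX_\epsilon}^2$ (obtained by discarding the kinetic energy from $E_\epsilon$ entirely on this part, at the cost of controllable cross-term errors), and the moment comparisons, one arrives at \eqref{Eepsineq}. The principal obstacle is the bookkeeping around the logarithmically divergent contribution $\frac{\beta_\epsilon}{4\pi}\mu_0^2$ arising from Lemma~\ref{Ekinlem}: it is essential that this singular term appear with the \emph{correct sign} so that, combined with the term $\beta_\epsilon \mu_0^2$ allowed on the right-hand side of \eqref{Eepsineq}, all remainders produced by the cutoff (Lipschitz gluing near $\partial B_\epsilon$), by replacing $W_\epsilon$ with $A(\rho)$ via \eqref{Wapprox}, and by the $\cO(\epsilon\beta_\epsilon)$ error in Lemma~\ref{Ekinlem}, can be uniformly absorbed; verifying that the implicit constant $C_4$ from Proposition~\ref{E0prop} is stable under these perturbations (rather than blowing up with $\epsilon$) is the delicate point.
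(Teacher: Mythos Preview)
Your approach is essentially the paper's: split $\eta$ into an inner piece supported in $B_\epsilon$ and an outer piece, compare $E_\epsilon$ with $E_0$ on the inner piece via Lemma~\ref{Ekinlem} and \eqref{Wapprox}, invoke Proposition~\ref{E0prop}, and show that the outer piece contributes negligibly to the kinetic energy while its moments differ from those of $\eta$ by negligible amounts. Two small points worth tightening: the paper uses the \emph{sharp} cutoff $\eta_1 = \eta\,\1_{B_\epsilon}$, which makes $\|\eta\|_{\cX_\epsilon}^2 = \|\eta_1\|_{\cX_\epsilon}^2 + \|\eta_2\|_{\cX_\epsilon}^2$ exact and avoids any gluing error; and your heuristic $\|\phi^{\rm in}\|_{L^\infty} \le C$ is not literally true --- the correct crude bound carries a factor $(\beta_\epsilon{+}1)(1{+}\rho)^b\|\eta\|_{\cX_\epsilon}$ --- but this is harmless because $\int_{B_\epsilon^c}(1{+}\rho)^{2b}W_\epsilon^{-1}\dd X = \cO(\epsilon^\infty)$, so all outer and cross kinetic terms are in fact $\cO(\epsilon^\infty\|\eta\|_{\cX_\epsilon}^2)$, much stronger than the $o(1)$ you need.
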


In what follows we use the bound \eqref{Eepsineq} to estimate the vorticity
perturbation $\tilde\eta$ introduced in \eqref{etapert}. The corresponding
moments $\mu_0, \mu_1$ are under control thanks to Lemma~\ref{lem:mu0mu1}, and
$\mu_2 = 0$ according to \eqref{mu2def}. So it remains to bound the energy
functional $E_\epsilon[\tilde\eta]$, which is the purpose of the remaining
sections.

\subsection{Time evolution of the energy}\label{ssec44}

Let $\tilde\eta$ be the solution of \eqref{tildeq2} with zero initial data.
Assuming that $\delta > 0$ and $\sigma > 0$ are sufficiently small, we consider
for $t \in (0,T_\adv \delta^{-\sigma})$ the energy function
\begin{equation}\label{Edef3}
  E_\epsilon(t) \,=\, \frac12 \int_{\Omega_\epsilon} W_\epsilon(R,Z)\,\tilde
  \eta(R,Z,t)^2 \dd X \,-\, \frac12 \int_{\Omega_\epsilon} \tilde \phi(R,Z,t)
  \,\tilde \eta(R,Z,t)\dd X\,,
\end{equation}
where $\epsilon = \sqrt{\nu t}/{\bar r}(t)$ and $W_\epsilon$ is the weight
function defined by \eqref{Wdef}. The first term in the right-hand side
of \eqref{Edef3} is equal to $\frac12 \|\tilde\eta\|_{\cX_\epsilon}^2$, and
the second one is the kinetic energy $E^\kin_\epsilon[\tilde\eta]$, which satisfies
\eqref{Ekindef} and involves the stream function $\tilde\phi = \BS^\epsilon[\tilde\eta]$
defined by the Biot-Savart formula \eqref{BSeps}. Differentiating \eqref{Edef3} with
respect to time and using the relations \eqref{dereps}, \eqref{Ekindef} together
with the evolution equation \eqref{tildeq2}, we obtain by a direct calculation
\begin{align*}
  t\partial_t E_\epsilon \,&=\,  \int_{\Omega_\epsilon} \Bigl(W_\epsilon \tilde\eta
  \,t\partial_t\tilde\eta +  \frac12 t(\partial_t W_\epsilon) \tilde\eta^2\Bigr)\dd X
  \,-\, \int_{\Omega_\epsilon} \Bigl(\tilde\phi\,t\partial_t \tilde\eta
  + \frac{t\dot \epsilon}{2}\frac{R|\nabla \tilde \phi|^2}{(
  1+\epsilon R)^2}\Bigr)\dd X \\ \,&=\, I_1 + I_2 + I_3 + I_4 + I_5 + I_6\,,
\end{align*}
where the quantities $I_1,\dots,I_6$ collect the following terms. 

\noindent {\bf 1.} {\em  Local advection terms\:}
\begin{equation}\label{I1def}
\begin{split}
  I_1 \,&=\, -\frac{1}{\delta}\int_{\Omega_\epsilon} W_\epsilon \tilde\eta \bigl\{\phi_*\,,
  \,\tilde \zeta\bigr\}\dd X + \frac{\epsilon \bar r \dot{\bar z}_*}{\delta\Gamma}
  \int_{\Omega_\epsilon} W_\epsilon \tilde\eta\,\partial_Z \tilde\eta \dd X \\ \,&=\,
  -\frac{1}{\delta}\int_{\Omega_\epsilon} W_\epsilon\tilde\eta \Bigl\{\phi_* - 
  \frac{\bar r \dot{\bar z}_*}{2\Gamma}\,(1+\epsilon R)^2\,,\,\tilde \zeta\Bigr\}
  \dd X\\ \,&=\, -\frac{1}{2\delta}\int_{\Omega_\epsilon} \Bigl\{W_\epsilon(1+\epsilon R)\,,\,
  \phi_* - \frac{\bar r \dot{\bar z}_*}{2\Gamma}\,(1+\epsilon R)^2\Bigr\}
  \tilde \zeta^2\dd X\,.
\end{split}
\end{equation}

\noindent {\bf 2.} {\em  Nonlocal advection terms\:}
\begin{equation}\label{I2def}
\begin{split}
  I_2 \,&=\, \frac{1}{\delta}\int_{\Omega_\epsilon}\tilde\phi\bigl\{\phi_*\,,
  \,\tilde \zeta\bigr\}\dd X - \frac{\epsilon \bar r \dot{\bar z}_*}{\delta\Gamma}
  \int_{\Omega_\epsilon} \tilde\phi\,\partial_Z \tilde \eta \dd X
  - \frac{1}{\delta}\int_{\Omega_\epsilon} \bigl(W_\epsilon \tilde\eta -\tilde\phi\bigr)
  \bigl\{\tilde\phi\,,\,\zeta_*\bigr\}\dd X \\ \,&=\,
  \frac{1}{\delta}\int_{\Omega_\epsilon}\tilde\phi\Bigl\{\phi_* - 
  \frac{\bar r \dot{\bar z}_*}{2\Gamma}\,(1+\epsilon R)^2\,,\,\tilde \zeta\Bigr\}
  \dd X -\frac{1}{\delta}\int_{\Omega_\epsilon} W_\epsilon \tilde\eta \bigl\{\tilde\phi\,,
  \,\zeta_*\bigr\}\dd X \\ \,&=\, 
  \frac{1}{\delta}\int_{\Omega_\epsilon} \Bigl\{\tilde\phi\,,\,
  \phi_* - \frac{\bar r \dot{\bar z}_*}{2\Gamma}\,(1+\epsilon R)^2\Bigr\}
  \tilde \zeta\dd X -\frac{1}{\delta}\int_{\Omega_\epsilon} W_\epsilon(1+\epsilon R)
  \bigl\{\tilde\phi\,,\,\zeta_*\bigr\}\tilde\zeta\dd X\,.
\end{split}
\end{equation}

\noindent {\bf 3.} {\em  Nonlinear terms\:}
\begin{equation}\label{I3def}
  I_3 \,=\, -\int_{\Omega_\epsilon} \bigl(W_\epsilon \tilde \eta -\tilde\phi\bigr)
  \bigl\{\tilde\phi\,,\tilde\zeta\bigr\}\dd X \,=\, -\int_{\Omega_\epsilon}
  \bigl\{W_\epsilon\tilde\eta\,,\tilde\phi\bigr\}\tilde\zeta \dd X\,.
\end{equation}

\noindent {\bf 4.} {\em  Diffusive terms\:}
\[
  I_4 \,=\, \int_{\Omega_\epsilon} \bigl(W_\epsilon \tilde \eta -\tilde\phi\bigr)
  \Bigl(\cL \tilde\eta + \epsilon \partial_R \tilde\zeta\Bigr)\dd X\,.
\]
Integrating by parts as explained in Section~\ref{ssecB2}, we obtain the
equivalent expression
\begin{equation}\label{I4def}
\begin{split}
  I_4 \,&=\, -\int_{\Omega_\epsilon} W_\epsilon |\nabla\tilde\eta|^2\dd X -
  \int_{\Omega_\epsilon} (\nabla W_\epsilon\cdot\nabla\tilde\eta)\tilde\eta \dd X
   - \int_{\Omega_\epsilon} V_\epsilon \tilde\eta^2\dd X\\
  &\quad\, -\frac{\epsilon}{2}\int_{\Omega_\epsilon} \partial_R\bigl(W_\epsilon(1+\epsilon R)
  \bigr) \tilde\zeta^2\dd X  + \frac{\epsilon}{4}
  \int_{\Omega_\epsilon}\frac{R|\nabla\tilde\phi|^2}{(1+\epsilon R)^2}\dd X\,,   
\end{split}
\end{equation}
\quad where 
\begin{equation}\label{Vdef}
  V_\epsilon \,=\, \frac14(R\partial_R + Z\partial_Z)W_\epsilon - \frac12 W_\epsilon
  -(1+\epsilon R)\,.
\end{equation}

\smallskip\noindent {\bf 5.} {\em  Remainder term\:}
\begin{equation}\label{I5def}
  I_5 \,=\, \frac{1}{\delta}\int_{\Omega_\epsilon} \bigl(W_\epsilon \tilde \eta - 
  \tilde\phi\bigr)\,\Rem(R,Z,t)\dd X\,.
\end{equation}

\noindent {\bf 6.} {\em  Additional terms\:}
\begin{equation}\label{I6def}
\begin{split}
  I_6 \,&=\, \frac12 \int_{\Omega_\epsilon} t(\partial_t W_\epsilon)\tilde \eta^2
  \dd X + \frac{\epsilon \bar r \dot{\bar r}}{\delta\Gamma}
  \int_{\Omega_\epsilon} \bigl(W_\epsilon \tilde \eta - \tilde\phi\bigr)\,\partial_R
  \tilde \eta\dd X \\
  &\quad\, - \frac{t\dot \epsilon}{2}\int_{\Omega_\epsilon}  \frac{R|\nabla
  \tilde \phi|^2}{(1+\epsilon R)^2}\dd X + \frac{\epsilon \bar r \dot{\tilde z}}
  {\delta\Gamma}\int_{\Omega_\epsilon}\bigl(W_\epsilon \tilde \eta -\tilde\phi\bigr)
  \,\bigl(\partial_Z\eta_* + \delta\partial_Z\tilde\eta\bigr)\dd X\,.
\end{split}
\end{equation}

For the purposes of our analysis, it is useful to reorganize some terms appearing
in the quantities $I_4$ and $I_6$. First, using \eqref{phidef} and integrating by parts,
it is easy to verify that
\begin{equation}\label{Iaux}
  -\int_{\Omega_\epsilon}\tilde\phi\,\partial_R\tilde\eta \dd X \,=\,
  \int_{\Omega_\epsilon}\tilde\eta \,\partial_R \tilde\phi\dd X \,=\,
  \frac{\epsilon}{2}\int_{\Omega_\epsilon}  \frac{|\nabla \tilde \phi|^2}{
  (1+\epsilon R)^2}\dd X\,.
\end{equation}
So, if we collect all terms involving $|\nabla\tilde\phi|^2$ in \eqref{I4def},
\eqref{I6def}, and \eqref{Iaux}, we obtain the quantity
\[
  \Bigl(\frac{\epsilon}{4} - \frac{t\dot \epsilon}{2}\Bigr) \int_{\Omega_\epsilon}
  \frac{R|\nabla \tilde \phi|^2}{(1+\epsilon R)^2}\dd X + \frac{\epsilon^2 \bar r
  \dot{\bar r}}{2\delta\Gamma} \int_{\Omega_\epsilon}\frac{|\nabla \tilde \phi|^2}{
  (1+\epsilon R)^2}\dd X \,=\, \frac{t\dot{\bar r}}{2\bar r}\int_{\Omega_\epsilon}
  \frac{|\nabla \tilde \phi|^2}{1+\epsilon R}\dd X\,,
\]
where we used the expression \eqref{dereps} of $t\dot\epsilon$. Next, we prefer
including the term involving $t\partial_t W_\epsilon$ in $I_4$ rather than $I_6$,
because it will be combined with the diffusive terms in $I_4$ to obtain negative quantities
that will allow us to control the evolution of the energy. Summarizing, if we define
\begin{equation}\label{hatI4def}
\begin{split}
  \hat I_4 \,&=\, -\int_{\Omega_\epsilon} W_\epsilon |\nabla\tilde\eta|^2\dd X -
  \int_{\Omega_\epsilon} (\nabla W_\epsilon\cdot\nabla\tilde\eta)\tilde\eta \dd X
   - \int_{\Omega_\epsilon} V_\epsilon \tilde\eta^2\dd X\\
  &\quad\, -\frac{\epsilon}{2}\int_{\Omega_\epsilon} \partial_R\bigl(W_\epsilon(1+\epsilon R)
  \bigr) \tilde\zeta^2\dd X  + \frac12 \int_{\Omega_\epsilon} t(\partial_t W_\epsilon)
  \tilde \eta^2\dd X\,,
\end{split}
\end{equation}
and
\begin{equation}\label{hatI6def}
  \hat I_6 \,=\, \frac{\epsilon \bar r \dot{\bar r}}{\delta\Gamma}
  \int_{\Omega_\epsilon} W_\epsilon \tilde \eta\partial_R \tilde \eta\dd X  +
  \frac{t\dot{\bar r}}{\bar r}\,E^\kin_\epsilon[\tilde\eta] + 
  \frac{\epsilon \bar r \dot{\tilde z}} {\delta\Gamma}\int_{\Omega_\epsilon}
  \bigl(W_\epsilon \tilde \eta -\tilde\phi\bigr)\,\bigl(\partial_Z\eta_* +
  \delta\partial_Z\tilde\eta\bigr)\dd X\,,
\end{equation}
we obtain the identity $t\partial_t E_\epsilon = I_1 + I_2 + I_3 + \hat I_4
+ I_5 + \hat I_6$, which we exploit in Sections~\ref{ssec46}--\ref{ssec49}. 

\subsection{Bounds on the stream function}\label{ssec45}

In this section we collect a few estimates on the stream function
$\phi = \BS^\epsilon[\eta]$, where $\BS^\epsilon$ is the $\epsilon$-dependent
Biot-Savart operator \eqref{BSeps}. We are especially interested in
bounds on the velocity field $U \,=\, (U_R,U_Z)$ defined by \eqref{Udef}.

\begin{lem}\label{lemphi1}
There exists a constant $C > 0$ such that, for all $\epsilon \in (0,1)$,
\begin{equation}\label{phibd1}
  \Bigl|\frac{\partial_R\phi}{1+\epsilon R}\Bigr| + \Bigl|\frac{\partial_Z\phi}{1+
  \epsilon R}\Bigr| \,\le\, \int_{\Omega_\epsilon} \frac{C}{\sqrt{(R{-}R')^2 + (Z{-}Z')^2}}
  \,|\eta(R',Z')|\dd X'\,.
\end{equation}
In particular, for any $q > 2$, we have $\|U\|_{L^q} \le C_q \|\eta\|_{\cX_\epsilon}$
where $U$ is the velocity field \eqref{Udef}. 
\end{lem}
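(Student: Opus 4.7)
The plan is to start from the explicit integral representations \eqref{derphi} for the derivatives of $\phi$. After dividing by $1+\epsilon R$, the pointwise bound \eqref{phibd1} reduces to two uniform estimates on the integral kernels, and the $L^q$ bound on $U$ then follows from the Hardy--Littlewood--Sobolev (HLS) inequality together with a weighted H\"older argument.

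For the contribution from $\partial_Z\phi/(1+\epsilon R)$, using $|Z{-}Z'| \le D$ it suffices to establish the uniform bound
\[
  \sqrt{\frac{1+\epsilon R'}{1+\epsilon R}}\,\bigl|\tilde F(s)\bigr| \,\le\, C\,.
\]
Setting $a = 1+\epsilon R$, $b = 1+\epsilon R'$, $\tau = \epsilon D$, so that $s = \tau^2/(ab)$, the crucial geometric fact is $|a-b| = \epsilon|R-R'| \le \tau$. From Lemma~\ref{Flem} one reads off that $\tilde F$ is bounded on $(0,\infty)$ with $\tilde F(s) \to 1$ at $0$ and $\tilde F(s) = \cO(s^{-3/2})$ at infinity. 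I would then split into two cases. If $a \ge \tau$, then $b \le a+\tau \le 2a$, so $\sqrt{b/a} \le \sqrt{2}$ and the uniform bound $|\tilde F(s)| \le C$ suffices. If $a < \tau$, then $b \le 2\tau$ and $s \ge \tau/(2a) \ge 1/2$, so $|\tilde F(s)| \le Cs^{-3/2}$ is in force, and a short computation using $b \le 2\tau$ yields $\sqrt{b/a}\,|\tilde F(s)| \le C(a/\tau) \le C$. The first term of $\partial_R\phi/(1+\epsilon R)$ is handled identically with $|R{-}R'|\le D$.

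For the second term of $\partial_R\phi/(1+\epsilon R)$, which carries a prefactor $\epsilon$ but no factor $1/D^2$, the required bound is
\[
  \epsilon D\,\frac{\sqrt{b}}{a^{3/2}}\,\bigl|F(s)+\tilde F(s)\bigr| \,\le\, C\,,
\]
and after using $\tau = \sqrt{s\,ab}$ this simplifies to $\sqrt{s}\,(b/a)\,|F(s)+\tilde F(s)| \le C$. For $s \le 1$, the logarithmic singularity $F(s) \sim \log(8/\sqrt{s})$ from Lemma~\ref{Flem} is absorbed by $\sqrt{s}$, since $\sqrt{s}(1+|\log s|)$ is bounded on $(0,1]$, while $b/a$ is bounded because $\tau \le \sqrt{ab}$ forces $(b-a)^2 \le ab$. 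For $s \ge 1$, using $|F(s)+\tilde F(s)| \le Cs^{-3/2}$ reduces the claim to $b^2 \le C\tau^2$, which follows from $b \le a+\tau$ and $b \le \tau^2/a$ by the same case split as above.

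Finally, combining the three kernel bounds above yields $|U(R,Z)| \le C\int_{\Omega_\epsilon} |\eta(R',Z')|/D\,\dd R'\dd Z'$, which is a convolution of $|\eta|$ with $|X|^{-1}$ in $\R^2$ (after extending $\eta$ by zero). By HLS, this convolution maps $L^p(\R^2) \to L^q(\R^2)$ whenever $1/q = 1/p - 1/2$; for $q > 2$ we take $p = 2q/(q{+}2) \in (1,2)$. A weighted H\"older inequality with exponents $2/p$ and $2/(2{-}p)$ combined with the lower bound $W_\epsilon \ge C_1 \exp(\rho^{2\gamma}/4)$ from \eqref{Wunifbd} gives $\|\eta\|_{L^p(\Omega_\epsilon)} \le C\|\eta\|_{\cX_\epsilon}$, so $\|U\|_{L^q} \le C_q\|\eta\|_{\cX_\epsilon}$. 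The main delicate point is the uniform kernel bound: one must exploit the geometric constraint $|a-b|\le \tau$ together with the sharp decay of $\tilde F$ at infinity in order to control the a priori unbounded ratio $\sqrt{b/a}$.
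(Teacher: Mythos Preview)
Your argument is correct. The paper's own proof simply cites \cite[Lemma~4.1]{GS2} and \cite[Proposition~2.3]{GS1} for the pointwise kernel estimate \eqref{phibd1} and then applies Hardy--Littlewood--Sobolev together with the embedding $\cX_\epsilon \hookrightarrow L^p$ from \eqref{Wunifbd}, exactly as you do; so the $L^q$ conclusion is identical, and your case-splitting computation based on $|a-b|\le\tau$ and the sharp decay $\tilde F(s)=\cO(s^{-3/2})$ supplies precisely the self-contained proof of \eqref{phibd1} that the paper outsources to the earlier references.
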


\begin{proof}
Estimate \eqref{phibd1} is established in the proof of \cite[Lemma~4.1]{GS2}, which in
turn relies on \cite[Proposition~2.3]{GS1}. Using the Hardy-Littlewood-Sobolev inequality,
we deduce from \eqref{phibd1} that $\|U\|_{L^q} \le C_q \|\eta\|_{L^p}$ if $q > 2$ and
$p \in (1,2)$ satisfy the relation $1/p = 1/q + 1/2$. Finally, the lower bound on
$W_\epsilon$ in \eqref{Wunifbd} implies that $\|\eta\|_{L^p} \le C \|\eta\|_{\cX_\epsilon}$
for any $p \in [1,2]$. 
\end{proof}

The particular case where $\eta = \eta_*$ is the approximate solution \eqref{etaapp1} plays
an important role. 

\begin{lem}\label{lemphi2}
The following estimates hold for the stream function $\phi_* = \BS^\epsilon[\eta_*]$\:
\begin{equation}\label{phibd2}
  \Bigl|\frac{\partial_R\phi_*}{1+\epsilon R}\Bigr| + \Bigl|\frac{\partial_Z\phi_*}{1+
  \epsilon R}\Bigr| \,\le\, \frac{C}{1+\rho+\epsilon^2\rho^3}\,, \qquad
  \Bigl|\frac{\partial_Z\phi_*}{(1+\epsilon R)^2}\Bigr| \,\le\, \frac{C}{1+\rho
  +\epsilon^3\rho^4}\,,
\end{equation}
where $\rho = (R^2{+}Z^2)^{1/2}$. 
\end{lem}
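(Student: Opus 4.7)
The plan is to work directly with the integral representations \eqref{derphi} for $\partial_R \phi_*$ and $\partial_Z \phi_*$. Differentiating the expansions of Lemma~\ref{Flem} and using \eqref{Fexpand}, the auxiliary function $\tilde F(s) = -2sF'(s)$ satisfies $\tilde F(s) = 1 + \cO(s\log(1/s))$ as $s \to 0^+$ and $\tilde F(s) = \cO(s^{-3/2})$ as $s \to \infty$, giving the uniform bound $\tilde F(s) \le C\min(1,s^{-3/2})$, and similarly $F(s)+\tilde F(s) \le C\min(1+|\log s|, s^{-3/2})$. The entire analysis then reduces to kernel-integration estimates against $|\eta_*|$, which enjoys the Gaussian pointwise bound $|\eta_*(R',Z')| \le C e^{-\gamma\rho'^2/4}$ from \eqref{etaphibd} and is supported in the disk $\{\rho' \le 2\epsilon^{-\sigma_0}\}$.

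For the first estimate, I would split $\Omega_\epsilon$ into three regions according to the size of $\rho = (R^2{+}Z^2)^{1/2}$. When $\rho \le 1$, we need only pointwise boundedness of $|\partial_{R,Z}\phi_*|/(1+\epsilon R)$, which follows from Lemma~\ref{lemphi1} combined with the estimate $\int |\eta_*(R',Z')|/D\,\dd R'\dd Z' \le C$ provided by the Gaussian decay. For $1 \le \rho \le 1/\epsilon$, the effective support $\{\rho' \lesssim 1\}$ of $\eta_*$ yields $D \ge \rho/2$ and $s \le C\epsilon^2\rho^2 \le C$; using $\tilde F(s) \lesssim 1$ together with $|Z{-}Z'|/D^2 \le 1/D \le 2/\rho$, each integral in \eqref{derphi} is at most $C\sqrt{1+\epsilon R}/\rho$, and division by $1+\epsilon R$ gives the desired $C/\rho$ (the residual $1/\sqrt{1+\epsilon R}$ is absorbed using $1+\epsilon R \gtrsim \epsilon\rho$ whenever it is small, which forces $\rho \gtrsim 1/\epsilon$). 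For $\rho \ge 1/\epsilon$, we have $s \gtrsim 1$ on the bulk of the support and $\tilde F(s) \lesssim [(1+\epsilon R)(1+\epsilon R')]^{3/2}/(\epsilon^3 D^3)$; inserting this into \eqref{derphi} and using $D \ge \rho/2$ yields $|\partial_Z\phi_*| \lesssim (1+\epsilon R)^2/(\epsilon^3\rho^4)$, hence $|\partial_Z\phi_*|/(1+\epsilon R) \lesssim 1/(\epsilon^2\rho^3)$ via $1+\epsilon R \le 2\epsilon\rho$. The radial derivative is handled analogously, with the third integral in the formula for $\partial_R\phi_*$ controlled through the bound on $F(s)+\tilde F(s)$.

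The second estimate, stronger by a factor $1/(1+\epsilon R)$, encodes the fact that $\partial_Z\phi_*$ vanishes to second order on the symmetry axis $R = -1/\epsilon$. The rescaled stream function $\phi_*$ corresponds, in the original cylindrical variables, to the axisymmetric Stokes stream function $\psi$, which is forced by smoothness of the associated 3D velocity field to admit the factorization $\psi(r,z) = r^2\Psi(r,z)$ with $\Psi$ smooth and even in $r$ (see the discussion after \eqref{omeq}). In the rescaled variables this reads $\phi_*(R,Z) = (1+\epsilon R)^2\,\tilde\phi_*(R,Z)$ with $\tilde\phi_*$ smooth up to $R = -1/\epsilon$. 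Direct inspection of \eqref{derphi} verifies the factorization: $\phi_*(-1/\epsilon, Z) = 0$ is immediate from the prefactor $\sqrt{1+\epsilon R}$, and $\partial_R\phi_*(-1/\epsilon, Z) = 0$ because the apparent singularity $1/\sqrt{1+\epsilon R}$ in the second integral of \eqref{derphi} is overcompensated by $F(s)+\tilde F(s) = \cO(s^{-3/2})$ in the limit $s \to \infty$. Once the factorization is in force, the required bound on $|\partial_Z\tilde\phi_*| = |\partial_Z\phi_*|/(1+\epsilon R)^2$ is obtained by revisiting the three-region analysis above with the axis-vanishing factor absorbed into the kernel, effectively replacing the far-field decay $1/(\epsilon^2\rho^3)$ by $1/(\epsilon^3\rho^4)$, while the $1/\rho$ decay in the intermediate region is preserved (the crossover still occurs at $\rho \sim 1/\epsilon$).

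The main technical obstacle will be making the axis factorization quantitatively effective \emph{uniformly} in $(R,Z)$: smoothness of $\tilde\phi_*$ up to the boundary $R=-1/\epsilon$ is automatic for any fixed $\epsilon$, but we need sharp control as $\rho \to 1/\epsilon$ and $1+\epsilon R \to 0$ simultaneously. This requires tracking the precise interplay between the decay of $\tilde F(s)$ for large $s$ and the prefactors carrying fractional powers of $1+\epsilon R$ and $1+\epsilon R'$, so that the improvement coming from the axis vanishing composes correctly with the dipole-type far-field decay. This composition is what ultimately produces the precise form $C/(1+\rho+\epsilon^3\rho^4)$ of the second bound in \eqref{phibd2}.
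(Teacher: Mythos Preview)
Your far-field computation is exactly the paper's argument, but you have organized the proof in a way that obscures its simplicity and creates a self-imposed obstacle for the second estimate.

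The paper proceeds as follows. It splits at $\rho = 1/(2\epsilon)$ rather than $1/\epsilon$, so that $1/2 \le 1+\epsilon R \le 3/2$ throughout the near region; there both inequalities in \eqref{phibd2} collapse to $|U|\le C/(1+\rho)$, which follows from \eqref{phibd1} and the Gaussian decay of $\eta_*$. In the far region $\rho\ge 1/(2\epsilon)$ the paper proves the \emph{second} (stronger) estimate first, directly from \eqref{derphi}: inserting $|\tilde F(s)|\le Cs^{-3/2}$ into the formula for $\partial_Z\phi_*$ produces exactly a factor $(1+\epsilon R)^2$ in the numerator, giving
\[
\Bigl|\frac{\partial_Z\phi_*}{(1+\epsilon R)^2}\Bigr|\le \frac{C}{\epsilon^3}\int\frac{(1+\epsilon R')^2|\eta_*|}{D^4}\,\dd X'\le \frac{C}{\epsilon^3\rho^4}\,.
\]
The first estimate for $\partial_Z\phi_*/(1+\epsilon R)$ then follows by multiplying by $1+\epsilon R\le 1+\epsilon\rho$, and $\partial_R\phi_*$ is handled with the extra term bounded via $|F+\tilde F|\le Cs^{-3/2}$.

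You actually carry out this same computation in your far-field paragraph --- you write $|\partial_Z\phi_*|\lesssim (1+\epsilon R)^2/(\epsilon^3\rho^4)$ --- which is already the second estimate in that region. The factorization $\phi_*=(1+\epsilon R)^2\tilde\phi_*$ you invoke afterwards is therefore redundant: the axis vanishing is encoded automatically in the kernel once you use $\tilde F(s)\le Cs^{-3/2}$, and no separate smoothness argument for $\tilde\phi_*$ is needed. The ``main technical obstacle'' you identify disappears if you (i) move the cutoff from $\rho=1/\epsilon$ to $\rho=1/(2\epsilon)$ so the near region has $1+\epsilon R\approx 1$, and (ii) recognize that your far-field bound already delivers the second inequality.
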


\begin{proof}
In the region where $\rho \le 1/(2\epsilon)$, we can use estimate \eqref{phibd1}
with $\eta = \eta_*$. Since $\eta_*$ satisfies the Gaussian bound
\eqref{etaphibd}, we easily deduce that $|U| \le C(1+\rho)^{-1}$, which gives
estimate \eqref{phibd2} in that case. We now concentrate on the region
$\rho \ge 1/(2\epsilon)$, where a more careful analysis is needed. We start from
the formulas \eqref{derphi} with $\eta = \eta_*$, and we first estimate the
vertical derivative $\partial_Z\phi_*$. Since $|\tilde F(s)| \le Cs^{-3/2}$ for
all $s > 0$, we see that
\begin{equation}\label{phibdaux1}
  \Bigl|\frac{\partial_Z\phi_*}{(1+\epsilon R)^2}\Bigr| \,\le\, \frac{C}{\epsilon^3}
  \int_{\Omega_\epsilon} \frac{(1{+}\epsilon R')^2\,|\eta_*(R',Z')|}{\bigl((R{-}R')^2 +
  (Z{-}Z')^2\bigr)^2}\dd R'\dd Z'\,.
\end{equation}
Note that the integral is, in fact, taken over the support of $\eta_*$, which is
included in the ball $\rho' := ({R'}^2{+}{Z'}^2)^{1/2} \le 2 \epsilon^{-\sigma_0}$
where $\sigma_0 < 1$. In particular we can disregard the factor $(1{+}\epsilon R')^2$
in the numerator, and the denominator is always larger that $\rho^4/2$ if $\epsilon$
is sufficiently small. So the right-hand side of \eqref{phibdaux1} is bounded by
$C \epsilon^{-3}\rho^{-4}$ when $\rho \ge 1/(2\epsilon)$, which concludes the
proof of the second inequality in \eqref{phibd2}. Since $1 + \epsilon R \le 1 +
\epsilon \rho$, the estimate on $\partial_Z \phi_*/(1{+}\epsilon R)$ in \eqref{phibd2}
follows immediately. 

To conclude the proof of the first inequality in \eqref{phibd2}, we must estimate the
quantity $\partial_R \phi_*$ which contains an additional term given by the last line
in \eqref{derphi}. In the region where $\rho \ge 1/(2\epsilon)$, using the fact that
$|F(s)| + |\tilde F(s)| \le Cs^{-3/2}$, we see that the contribution of that term to
the vertical speed $U_Z = \partial_R \phi_*/(1{+}\epsilon R)$ is bounded by
\[
  \frac{C}{\epsilon^2}\int_{\Omega_\epsilon} \frac{(1{+}\epsilon R')^2\,|\eta_*(R',Z')|}{\bigl(
  (R{-}R')^2 + (Z{-}Z')^2\bigr)^{3/2}}\dd R'\dd Z' \,\le\, \frac{C}{\epsilon^2\rho^3}\,.
\]
The proof of \eqref{phibd2} is thus complete. 
\end{proof}

\subsection{Control of the advection terms}\label{ssec46}

In what follows we always assume that $\delta > 0$ is sufficiently small and
that $\epsilon^2 \lesssim \delta^{1-\sigma}$ for some small $\sigma > 0$, see
Remark~\ref{crossrem}. As in Lemma~\ref{Wlem}, we also suppose that the exponent
$\sigma_1 > 0$ is small enough. We first estimate the advection terms $I_1$,
$I_2$ defined in \eqref{I1def}, \eqref{I2def}. These terms are potentially
dangerous because they include a factor $1/\delta$ which is very large in the
vanishing viscosity limit, but the energy functional \eqref{Edef} was designed
precisely so that these contributions can be controlled.

\begin{lem}\label{I1lem}
There exist $\gamma_1 > 0$ and $C > 0$ such that
\begin{equation}\label{I1est}
  |I_1| \,\le\, C\epsilon^{\gamma_1}\,\|\tilde\eta\|_{\cX_\epsilon}^2 +
  \frac{C\epsilon^2}{\delta}\int_{\Omega_\epsilon''} W_\epsilon \tilde\eta^2\dd X\,.
\end{equation}
\end{lem}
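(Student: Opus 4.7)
The plan is to decompose $\Omega_\epsilon$ into the three regions $\Omega_\epsilon'$, $\Omega_\epsilon''$, $\Omega_\epsilon'''$ of \eqref{Ompdef} and estimate the integrand of $I_1$ separately on each. The key geometric fact about the weight is that, by \eqref{chi1def}, the product $\chi_1(\epsilon R)(1+\epsilon R) \equiv 1$ on the slab $\{|\epsilon R|\le 1/2\}$; in particular, since $\rho\le \epsilon^{-\sigma_1}$ on $\Omega_\epsilon'$ forces $|\epsilon R|\le \epsilon^{1-\sigma_1}<1/2$ for small $\epsilon$, one has $W_\epsilon(1+\epsilon R)=\Phi_\epsilon'(\zeta_*)$ identically throughout the inner region.

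In $\Omega_\epsilon'$ I would invoke Proposition~\ref{Phiprop} to write $g:=\phi_*-\frac{\bar r\dot{\bar z}_*}{2\Gamma}(1+\epsilon R)^2 = \Phi_\epsilon(\zeta_*)+\Theta$, and use the one-line identity
\[
  \{\Phi_\epsilon'(\zeta_*),\Phi_\epsilon(\zeta_*)\} \,=\, \Phi_\epsilon''(\zeta_*)\Phi_\epsilon'(\zeta_*)\{\zeta_*,\zeta_*\} \,=\, 0
\]
to reduce the bracket to $\{\Phi_\epsilon'(\zeta_*),\Theta\}$. Combining the bound $|\nabla\Theta|\le C(\epsilon\delta+\epsilon^{\gamma_3})(1+\rho)^N$ from Proposition~\ref{Phiprop} with the pointwise estimate $|\nabla\Phi_\epsilon'(\zeta_*)|\le C(1+\rho)^N W_\epsilon$ (from Lemmas~\ref{fglem}, \ref{Wlem} and the Gaussian decay of $\nabla\zeta_*$), the integrand is bounded pointwise by $C\delta^{-1}(\epsilon\delta+\epsilon^{\gamma_3})(1+\rho)^{2N}W_\epsilon\tilde\eta^2$. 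Since $\rho\le \epsilon^{-\sigma_1}$ gives $(1+\rho)^{2N}\le \epsilon^{-2N\sigma_1}$, and Remark~\ref{crossrem} yields $\epsilon^{\gamma_3}/\delta\lesssim \epsilon^{\gamma_3-2/(1-\sigma)}$, taking $\sigma_1$ and $\sigma$ small enough (with $\gamma_3$ close to $3$) produces a bound of the form $C\epsilon^{\gamma_1}\|\tilde\eta\|_{\cX_\epsilon}^2$.

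In $\Omega_\epsilon''$ the weight $W_\epsilon=\chi_1(\epsilon R)\exp(\epsilon^{-2\sigma_1}/4)$ is independent of $Z$, so the Poisson bracket collapses to $\partial_R(W_\epsilon(1+\epsilon R))\,\partial_Z\phi_*$, the $(1+\epsilon R)^2$ contribution being killed by $\partial_Z(1+\epsilon R)^2=0$. On the subregion $\{|\epsilon R|\le 1/2\}$ the identity $\chi_1(\epsilon R)(1+\epsilon R)\equiv 1$ gives $\partial_R(W_\epsilon(1+\epsilon R))=0$; on the complementary subregion $\{|\epsilon R|>1/2\}$ one has $\rho>1/(2\epsilon)$, so the second inequality in \eqref{phibd2} provides $|\partial_Z\phi_*|\le C\epsilon(1+\epsilon R)^2$, while a direct differentiation gives $|\partial_R(W_\epsilon(1+\epsilon R))|\le C\epsilon W_\epsilon$. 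Multiplying these two estimates and converting $\tilde\zeta^2=\tilde\eta^2/(1+\epsilon R)^2$ produces exactly the pointwise bound $C\delta^{-1}\epsilon^2 W_\epsilon\tilde\eta^2$ that yields the second term in \eqref{I1est}.

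Finally, in $\Omega_\epsilon'''$ the very fast decay of $|\partial_R\phi_*|$ and $|\partial_Z\phi_*|$ from Lemma~\ref{lemphi2} (of order $\epsilon^{3\sigma_2-1}$ or better since $\rho>\epsilon^{-\sigma_2}$ and $\sigma_2>1$) dominates the sub-Gaussian growth $|\nabla(W_\epsilon(1+\epsilon R))|\lesssim (1+\rho)^{2\gamma-1}W_\epsilon$, and the residual polynomial factor is absorbed by the Gaussian-type lower bound on $W_\epsilon$ in \eqref{Wunifbd}, again yielding a contribution of order $C\epsilon^{\gamma_1}\|\tilde\eta\|_{\cX_\epsilon}^2$. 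The main obstacle is the inner-region analysis: the cancellation $\{\Phi_\epsilon',\Phi_\epsilon\}=0$ is the heart of the argument, but converting Proposition~\ref{Phiprop} into a usable pointwise bound on the residual bracket $\{\Phi_\epsilon'(\zeta_*),\Theta\}$ and then reconciling the polynomial loss $(1+\rho)^{2N}$ with the smallness of $\epsilon\delta+\epsilon^{\gamma_3}$ forces a simultaneous smallness condition on $\sigma_1$, $\sigma$, and $3-\gamma_3$ of the rough form $2N\sigma_1<\min(1,\gamma_3-2/(1-\sigma))$, which has to be checked to be compatible with the choices made elsewhere in the stability analysis.
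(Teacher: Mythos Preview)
Your approach is the same three-region decomposition as the paper's, and your treatment of $\Omega_\epsilon'$ and $\Omega_\epsilon''$ matches the paper essentially line for line: the cancellation $\{\Phi_\epsilon'(\zeta_*),\Phi_\epsilon(\zeta_*)\}=0$ reducing to $\{\Phi_\epsilon'(\zeta_*),\Theta\}$ in the inner region, and the observation that $\chi_1(\epsilon R)(1{+}\epsilon R)\equiv 1$ for $|\epsilon R|\le 1/2$ collapses the intermediate-region contribution to the slab $\rho>1/(2\epsilon)$ where the far-field bound on $\partial_Z\phi_*$ kicks in. (Minor point: $\Omega_\epsilon'$ extends slightly past $\rho=\epsilon^{-\sigma_1}$ by Lemma~\ref{innerlem}, so your ``$\rho\le\epsilon^{-\sigma_1}$ on $\Omega_\epsilon'$'' is off by a logarithm; this is harmless.)

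There is, however, a gap in your treatment of $\Omega_\epsilon'''$. You write that the fast decay of $|\partial_R\phi_*|$ and $|\partial_Z\phi_*|$ dominates $|\nabla(W_\epsilon(1{+}\epsilon R))|$, but the Poisson bracket in \eqref{I1def} involves $\phi_*-\frac{\bar r\dot{\bar z}_*}{2\Gamma}(1{+}\epsilon R)^2$, not just $\phi_*$. In $\Omega_\epsilon''$ the extra piece drops out because $W_\epsilon(1{+}\epsilon R)$ is a function of $R$ alone, as you correctly noted; but in $\Omega_\epsilon'''$ the weight has the radial factor $\hat W_\epsilon=\exp(\rho^{2\gamma}/4)$, so $\partial_Z(W_\epsilon(1{+}\epsilon R))\neq 0$ and the bracket produces the additional term
\[
  \frac{\epsilon\bar r\dot{\bar z}_*}{\Gamma}\,\chi_1(\epsilon R)(1{+}\epsilon R)^2\,\partial_Z\hat W_\epsilon\,,
\]
which does \emph{not} contain $\nabla\phi_*$ at all. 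This is exactly the middle term in the paper's decomposition of $J_\epsilon$ in \eqref{I1aux4}--\eqref{I1aux5}. It is handled not by decay of $\phi_*$ but by the bound $|\partial_Z\hat W_\epsilon|\le\gamma\rho^{2\gamma-1}\hat W_\epsilon\le\gamma\epsilon^{\sigma_2-2\sigma_1}\hat W_\epsilon$ (valid since $\rho\ge\epsilon^{-\sigma_2}$ and $2\gamma<1$), combined with $\bar r|\dot{\bar z}_*|/\Gamma=\cO(\beta_\epsilon)$; after dividing by $(1{+}\epsilon R)^2$ from $\tilde\zeta^2$ and by $\delta$, one gets a contribution of order $\delta^{-1}\epsilon^{1+\sigma_2-2\sigma_1}\beta_\epsilon$, which is $\le C\epsilon^{\gamma_1}$ provided $\sigma,\sigma_1$ are small enough (using $\sigma_2>1$). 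Your writeup needs to include this term explicitly. Note also that your blanket bound $|\nabla(W_\epsilon(1{+}\epsilon R))|\lesssim(1{+}\rho)^{2\gamma-1}W_\epsilon$ misses the $\cO(\epsilon)$ contribution from $\epsilon\chi_2'(\epsilon R)\hat W_\epsilon$, which the paper treats separately.
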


\begin{proof}
To exploit the properties of the weight $W_\epsilon$, we decompose the integral
\eqref{I1def} defining $I_1$ in three pieces, which correspond to the 
subdomains \eqref{Ompdef}. If $(R,Z) \in \Omega_\epsilon'$, we know from \eqref{Wdef},
\eqref{Thetadef} that 
\begin{equation}\label{I1aux0}
  W_\epsilon \,=\, \frac{\Phi_\epsilon'(\zeta_*)}{1+\epsilon R}\,, \qquad
  \phi_* - \frac{\bar r \dot{\bar z}_*}{2\Gamma}\,(1+\epsilon R)^2 \,=\,
  \Phi_\epsilon(\zeta_*) + \Theta\,, 
\end{equation}
where $\Theta$ is a remainder term that is studied in Proposition~\ref{Phiprop}.
It follows that
\[
  \Bigl\{W_\epsilon(1+\epsilon R)\,,\,\phi_* - \frac{\bar r \dot{\bar z}_*}{2\Gamma}\,
  (1+\epsilon R)^2\Bigr\} \,=\, \bigl\{\Phi_\epsilon'(\zeta_*)\,,\,\Phi_\epsilon(\zeta_*)
  + \Theta\bigr\} \,=\, \bigl\{\Phi_\epsilon'(\zeta_*)\,,\,\Theta\bigr\}\,,
\]
where the right-hand side can be controlled using the bounds \eqref{Thetaest} on
$\Theta$ and the estimates \eqref{Phipest}, \eqref{Wapprox} on the weight $W_\epsilon$
in $\Omega_\epsilon'$. This gives, for some integer $N$ and any $\gamma_3 \in (2,3)$,  
\begin{equation}\label{I1aux1}
  \bigl|\bigl\{\Phi_\epsilon'(\zeta_*)\,,\,\Theta\bigr\}\bigr| \,\le\,
  C\bigl(\epsilon\delta + \epsilon^{\gamma_3}\bigr)(1+\rho)^N \,W_\epsilon
  \,\le\, C\bigl(\epsilon\delta + \epsilon^{\gamma_3}\bigr)\,\epsilon^{-N\sigma_1}
  \,W_\epsilon\,,
\end{equation}
where we used the fact that $1+\rho \le 2 \epsilon^{-\sigma_1}$ when $(R,Z) \in
\Omega_\epsilon'$. Since $\tilde\zeta \approx \tilde \eta$ in $\Omega_\epsilon'$ 
and since $\delta^{-1} \lesssim \epsilon^{-2/(1-\sigma)}$ in the parameter regime
we consider, it follows from \eqref{I1aux1} that
\begin{equation}\label{I1aux2}
  \frac{1}{\delta}\int_{\Omega_\epsilon'}\bigl|\bigl\{\Phi_\epsilon'(\zeta_*)\,,\,\Theta
  \bigr\}\bigr|\,\tilde\zeta^2 \dd X \,\le\, C\Bigl(\epsilon + \frac{\epsilon^{\gamma_3}}{
  \delta}\Bigr)\,\epsilon^{-N\sigma_1}\int_{\Omega_\epsilon'}W_\epsilon \tilde\eta^2 \dd X
  \,\le\, C\epsilon^{\gamma_1}\,\|\tilde\eta\|_{\cX_\epsilon}^2\,,
\end{equation}
where $\gamma_1$ is taken so that $0 < \gamma_1 < \gamma_3 - 2/(1{-}\sigma) - N\sigma_1$.
As $\gamma_3 < 3$ is arbitrary, such a choice is always possible if we assume that
$\sigma > 0$ and $\sigma_1 > 0$ are small enough. 

We next consider the intermediate region $\Omega_\epsilon''$ in which $W_\epsilon(1+\epsilon R) =
\chi_2(\epsilon R)\,\exp\bigl(\epsilon^{-2\sigma_1}/4\bigr)$, where $\chi_2(x) =
(1+x)\chi_1(x)$. In that region, we thus have
\[
  J_\epsilon \,:=\,\Bigl\{W_\epsilon(1+\epsilon R)\,,\,\phi_* - \frac{\bar r \dot{\bar z}_*}{
  2\Gamma}\,(1+\epsilon R)^2\Bigr\} \,=\, \epsilon \chi_2'(\epsilon R) \,\exp\bigl(
  \epsilon^{-2\sigma_1}/4\bigr)\partial_Z \phi_*\,.
\]
Since $\chi_2(x) = 1$ when $|x| \le \frac12$, the quantity $J_\epsilon$ vanishes when
$\rho := (R^2{+}Z^2)^{1/2} \le 1/(2\epsilon)$. In the region where $1/(2\epsilon) \le \rho \le
\epsilon^{-\sigma_2}$, we know from \eqref{phibd2} that $|\partial_Z\phi_*/(1{+}\epsilon R)^2|
\le C\epsilon^{-3}\rho^{-4} \le C \epsilon$, and that $W_\epsilon \approx \exp\bigl(
\epsilon^{-2\sigma_1}/4\bigr)$. Since $\chi_2'$ is a bounded function, we deduce
\begin{equation}\label{I1aux3}
  \frac{1}{\delta}\int_{\Omega_\epsilon''} |J_\epsilon|\,\tilde \zeta^2\dd X \,=\,
  \frac{1}{\delta}\int_{\Omega_\epsilon''} \frac{|J_\epsilon|\,\tilde\eta^2}{
  (1{+}\epsilon R)^2}\dd X \,\le\, \frac{C\epsilon^2}{\delta}\int_{\Omega_\epsilon''}
  W_\epsilon \tilde\eta^2\dd X\,.
\end{equation}

Finally, in $\Omega_\epsilon'''$ we have $W_\epsilon(1+\epsilon R) \,=\,
\chi_2(\epsilon R)\hat W_\epsilon$ where $\hat W_\epsilon = \exp(\rho^{2\gamma}/4)$, so 
that 
\[
  J_\epsilon \,=\, \epsilon \chi_2'(\epsilon R) \hat W_\epsilon\, \partial_Z \phi_*
  + \frac{\epsilon \bar r \dot{\bar z}_*}{\Gamma}\, \chi_1(\epsilon R)(1+\epsilon R)^2
  \partial_Z \hat W_\epsilon + \chi_2(\epsilon R) \bigl\{\hat W_\epsilon\,,\,
  \phi_*\bigr\}\,.
\]
The first term in the right-hand side is estimated as above, with the difference
that we now have the improved bound $|\partial_Z\phi_*/(1{+}\epsilon R)^2| \le
C\epsilon^{-3}\rho^{-4} \le C \epsilon^{4\sigma_2-3}$. For the second one we observe that
\begin{equation}\label{I1aux4}
  \bigl|\partial_R \hat W_\epsilon\bigr| + \bigl|\partial_Z \hat W_\epsilon\bigr| \,\le\, 
  \gamma \,\rho^{2\gamma-1}\,\hat W_\epsilon \,\le\, \gamma \,\epsilon^{\sigma_2-2\sigma_1}
  \,\hat W_\epsilon\,, \quad \hbox{since}~\, \rho \ge \epsilon^{-\sigma_2}\,,
\end{equation}
and the last term is estimated using \eqref{I1aux4} and the first bound in \eqref{phibd2}.
Altogether we find
\begin{equation}\label{I1aux5}
  \frac{1}{\delta}\int_{\Omega_\epsilon'''} |J_\epsilon|\,\tilde \zeta^2\dd X \,\le\, 
  \frac{C}{\delta}\int_{\Omega_\epsilon'''}\Bigl(\frac{1}{\epsilon^2\rho^4} +
  \frac{\bar r |\dot{\bar z}_*|}{\Gamma} \frac{\epsilon}{\rho^{1-2\gamma}} + 
  \frac{1}{\epsilon^2\rho^{4-2\gamma}}\Bigr)W_\epsilon  \tilde\eta^2 \dd X \,\le\, 
  C\epsilon^{\gamma_1}\,\|\tilde\eta\|_{\cX_\epsilon}^2\,,
\end{equation}
provided $0 < \gamma_1 < \sigma_2 + 1 - 2 \sigma_1 -  2/(1{-}\sigma)$. Since $\sigma_2 > 1$,
such a choice is again possible if $\sigma > 0$ and $\sigma_1 > 0$ are small enough. 
Combining \eqref{I1aux2}, \eqref{I1aux3}, \eqref{I1aux5}, we arrive at \eqref{I1est}. 
\end{proof}

\begin{lem}\label{I2lem}
There exist $\gamma_1 > 0$ and $C > 0$ such that
\begin{equation}\label{I2est}
  |I_2| \,\le\, C\epsilon^{\gamma_1}\,\|\tilde\eta\|_{\cX_\epsilon}^2\,.
\end{equation}
\end{lem}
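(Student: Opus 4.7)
The plan is to combine the two displayed integrals for $I_2$ and split the domain as $\Omega_\epsilon = \Omega_\epsilon'\cup\Omega_\epsilon''\cup\Omega_\epsilon'''$, exactly in the spirit of Lemma~\ref{I1lem}. Writing $f := \phi_* - \frac{\bar r\dot{\bar z}_*}{2\Gamma}(1{+}\epsilon R)^2$ and using \eqref{Udef}, the identities $\{\tilde\phi,g\} = (1{+}\epsilon R)\,\tilde U\cdot\nabla g$ and $(1{+}\epsilon R)\tilde\zeta = \tilde\eta$ recast each integrand pointwise as $\tilde U\cdot\nabla g\,\tilde\eta$, where $\tilde U$ is the velocity associated with $\tilde\phi = \BS^\epsilon[\tilde\eta]$. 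The design of $W_\epsilon$ is then meant to produce an algebraic cancellation of the leading part in the inner region, while the outer contributions are controlled through the exponential largeness of the weight.

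On $\Omega_\epsilon'$, Proposition~\ref{Phiprop} gives $f = \Phi_\epsilon(\zeta_*) + \Theta$ and \eqref{Wdef} gives $W_\epsilon(1{+}\epsilon R) = \Phi_\epsilon'(\zeta_*)$. The chain rule $\{\tilde\phi,\Phi_\epsilon(\zeta_*)\} = \Phi_\epsilon'(\zeta_*)\{\tilde\phi,\zeta_*\}$ then forces the two Poisson brackets to cancel exactly, leaving the reduced inner contribution $\frac{1}{\delta}\int_{\Omega_\epsilon'}\tilde U\cdot\nabla\Theta\,\tilde\eta\dd X$. I estimate it by Cauchy--Schwarz with weight $W_\epsilon$ followed by H\"older with exponents $(4,4)$: Lemma~\ref{lemphi1} yields $\|\tilde U\|_{L^4}\le C\|\tilde\eta\|_{\cX_\epsilon}$, while the bound $|\nabla\Theta|\le C(\epsilon\delta+\epsilon^{\gamma_3})(1{+}\rho)^N$ from Proposition~\ref{Phiprop}, combined with the volume bound $|\Omega_\epsilon'|\lesssim\epsilon^{-2\sigma_1}$ from Lemma~\ref{innerlem}, controls $\|\nabla\Theta\,W_\epsilon^{-1/2}\|_{L^4(\Omega_\epsilon')}\le C(\epsilon\delta+\epsilon^{\gamma_3})\epsilon^{-(N+1/2)\sigma_1}$. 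Using $\delta^{-1}\lesssim \epsilon^{-2/(1-\sigma)}$, the inner piece is $\le C\epsilon^{\gamma_1}\|\tilde\eta\|_{\cX_\epsilon}^2$ for any $\gamma_1 < \gamma_3 - \tfrac{2}{1-\sigma} - (N{+}\tfrac12)\sigma_1$, which is positive under the very same conditions on $\sigma,\sigma_1,\gamma_3$ already imposed in Lemma~\ref{I1lem}.

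On the outer regions both summands are estimated separately. The bracket $\{\tilde\phi,\zeta_*\}$ vanishes identically on $\Omega_\epsilon'''$ because $\eta_*$ is supported in $\{\rho\le 2\epsilon^{-\sigma_0}\}\subset\Omega_\epsilon'\cup\Omega_\epsilon''$ (since $\sigma_0<\sigma_2$); on $\Omega_\epsilon''$ the Gaussian decay of $\nabla\zeta_*$ gives $W_\epsilon|\nabla\zeta_*|^2\lesssim\exp\bigl(\tfrac14\epsilon^{-2\sigma_1}-\tfrac12\rho^2\bigr)$, which is exponentially small for $\rho\ge\epsilon^{-\sigma_1}$, and then Cauchy--Schwarz with H\"older finishes the job. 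For the remaining summand $\frac{1}{\delta}\int\tilde U\cdot\nabla f\,\tilde\eta\dd X$, the same Cauchy--Schwarz plus H\"older strategy yields a prefactor $\|\nabla f\,W_\epsilon^{-1/2}\|_{L^4}$; Lemma~\ref{lemphi2} makes $|\nabla f|$ grow at most polynomially in $\rho$ (with an $\cO(\epsilon\beta_\epsilon(1{+}\epsilon\rho))$ contribution from the translation term), while Lemma~\ref{Wlem} gives $W_\epsilon^{-1/2}\lesssim\exp(-\epsilon^{-2\sigma_1}/8)$ on $\Omega_\epsilon''$ and $W_\epsilon^{-1/2}\lesssim\exp(-\rho^{2\gamma}/8)$ on $\Omega_\epsilon'''$. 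The resulting $L^4$ norms are exponentially small in $1/\epsilon$ and absorb the factor $\delta^{-1}$ with room to spare.

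The main obstacle, inherited from Lemma~\ref{I1lem}, is balancing the algebraic loss $\epsilon^{-N\sigma_1}$ that comes from the polynomial weight in Proposition~\ref{Phiprop} and the large prefactor $\delta^{-1}\lesssim\epsilon^{-2/(1-\sigma)}$ against the gain $\epsilon^{\gamma_3}$ from the approximate Arnold identity \eqref{Phieps}. This is what forces $\gamma_3$ close to $3$ and is precisely the reason the approximate solution in Section~\ref{sec3} is pushed to fourth order; the outer-region estimates, by contrast, are comfortable thanks to the exponential largeness of $W_\epsilon$ and involve no new idea beyond those of Lemma~\ref{I1lem}.
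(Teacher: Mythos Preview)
Your proof is correct and follows essentially the same approach as the paper: the key algebraic cancellation in $\Omega_\epsilon'$ reducing the integrand to $\{\tilde\phi,\Theta\}\tilde\zeta$, followed by separate treatment of the two summands on $\Omega_\epsilon\setminus\Omega_\epsilon'$ where the exponentially large weight supplies the smallness. The only differences are cosmetic H\"older pairings---the paper uses exponents $(3,3/2)$ throughout and places the outer smallness in $\|\tilde\eta\|_{L^{3/2}(\cD_\epsilon)}$ rather than in $\|W_\epsilon^{-1/2}\nabla f\|_{L^4}$---which shifts the admissible $\gamma_1$ by an irrelevant $\sigma_1/2$.
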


\begin{proof}
In $\Omega_\epsilon'$ we have $W_\epsilon(1+\epsilon R) = \Phi_\epsilon'(\zeta_*)$
by \eqref{I1aux0}, hence $W_\epsilon(1+\epsilon R)\bigl\{\tilde\phi\,,\,\zeta_*\bigr\}
= \bigl\{\tilde\phi\,,\,\Phi_\epsilon(\zeta_*)\bigr\}$. Using the second relation
in \eqref{I1aux0}, we deduce that
\begin{equation}\label{I2aux0}
  \Bigl\{\tilde\phi\,,\,\phi_* - \frac{\bar r \dot{\bar z}_*}{2\Gamma}\,(1+\epsilon R)^2
  \Bigr\} - W_\epsilon(1+\epsilon R) \bigl\{\tilde\phi\,,\,\zeta_*\bigr\} \,=\,
  \bigl\{\tilde\phi\,,\,\Theta\bigr\}\,.
\end{equation}
The first-order derivatives of $\Theta$ are estimated in Proposition~\ref{Phiprop}.
Proceeding as in the previous lemma, we thus obtain
\begin{equation}\label{I2aux1}
  \frac{1}{\delta}\int_{\Omega_\epsilon'}\bigl|\bigl\{\tilde\phi\,,\,\Theta\bigr\}
  \bigr|\,|\tilde\zeta| \dd X \,\le\, C\Bigl(\epsilon + \frac{\epsilon^{\gamma_3}}{
  \delta}\Bigr)\,\epsilon^{-N\sigma_1}\int_{\Omega_\epsilon'}
  \frac{|\nabla\tilde\phi|}{1{+}\epsilon R}\,|\tilde\eta| \dd X \,\le\, 
  C\epsilon^{\gamma_1}\,\|\tilde\eta\|_{\cX_\epsilon}^2\,,
\end{equation}
where $0 < \gamma_1 < \gamma_3 - 2/(1{-}\sigma) - N\sigma_1$. In the last step, we
used H\"older's inequality with exponents $3$ and $3/2$, and we invoked
Lemma~\ref{lemphi1} to control the $L^3$ norm of $\nabla\tilde\phi/(1+\epsilon R)$. 

In $\cD_\epsilon := \Omega_\epsilon \setminus \Omega_\epsilon'$, we consider both terms in
the left-hand side of \eqref{I2aux0} separately. The contribution of the first one
to $I_2$ is estimated by
\begin{equation}\label{I2aux2}
  \frac{1}{\delta}\int_{\cD_\epsilon} \frac{|\nabla\tilde \phi|\,|\nabla\phi_*|}{1+\epsilon R}|
  \,\tilde\eta|\dd X + \frac{\epsilon \bar r|\dot{\bar z}_*|}{\delta\Gamma}
  \int_{\cD_\epsilon} |\partial_Z \tilde\phi|\,|\tilde\eta|\dd X \,=\, \cO\bigl(\epsilon^\infty
  \|\tilde\eta\|_{\cX_\epsilon}^2\bigr)\,,
\end{equation}
because $|\nabla\phi_*| \le C$ by \eqref{phibd2}, $\|\nabla\tilde\phi/(1{+}\epsilon R)
\|_{L^3} \le C\|\tilde\eta\|_{\cX_\epsilon}$ by Lemma~\ref{lemphi1}, and 
\[
  \|\tilde \eta\|_{L^{3/2}(\cD_\epsilon)} \,\le\, \biggl(\int_{\cD_\epsilon} W_\epsilon \tilde
  \eta^2\dd X\biggr)^{1/2} \biggl(\int_{\cD_\epsilon} W_\epsilon^{-3}\dd X\biggr)^{1/6}
  \,=\, \cO\bigl(\epsilon^\infty\|\tilde\eta\|_{\cX_\epsilon}\bigr)\,.
\]
The second term in the left-hand side of \eqref{I2aux0} is nonzero only
if $\rho \le 2\epsilon^{-\sigma_0}$, in view of \eqref{etaapp1}. In that region,
we know that $W_\epsilon|\nabla \zeta_*| \le C(1+\rho)^N$ for some integer $N$,
because $W_\epsilon$ satisfies the upper bound in \eqref{Wunifbd} and $\eta_*$
belongs to the space $\cZ$ defined in \eqref{Zdef}. The contribution of that term
to $I_2$ can therefore be estimated in the same way as above: 
\begin{equation}\label{I2aux3}
  \frac{1}{\delta}\int_{\cD_\epsilon} W_\epsilon\,|\bigl\{\tilde\phi,\zeta_*\bigr\}|
  \,|\tilde\zeta|\dd X \,\le\, \frac{C}{\delta} \int_{\cD_\epsilon}
  \frac{|\nabla\tilde\phi|\,|\tilde\eta|}{1+\epsilon R}
  \,(1+\rho)^N\dd X \,=\, \cO\bigl(\epsilon^\infty\|\tilde\eta\|_{\cX_\epsilon}^2\bigr)\,.
\end{equation}
Combining \eqref{I2aux1}, \eqref{I2aux2}, \eqref{I2aux3}, we obtain \eqref{I2est}. 
\end{proof}

\subsection{Control of the diffusive terms}\label{ssec47}

Our next task is to estimate the diffusive terms collected in \eqref{hatI4def}. 
To formulate the result, we introduce the continuous function $\rho_\gamma : \R^2\times
\R_+ \to \R_+$ defined by
\begin{equation}\label{rhogamdef}
  \rho_\gamma(R,Z,\epsilon) \,=\, \begin{cases} \,\rho & \hbox{if} \quad\rho \le
  \epsilon^{-\sigma_1}\,,\\ \,\epsilon^{-\sigma_1} & \hbox{if}\quad \epsilon^{-\sigma_1} <
  \rho < \epsilon^{-\sigma_2}\,,\\ \,\rho^\gamma & \hbox{if} \quad \rho \ge
  \epsilon^{-\sigma_2}\,,\end{cases}
\end{equation}
where as usual $\rho = (R^2+Z^2)^{1/2}$. Our goal in this section is: 

\begin{prop}\label{I4prop}
There exist $\kappa > 0$ and $C > 0$ such that
\begin{equation}\label{I4est}
  \hat I_4 \,\le\, -\kappa \int_{\Omega_\epsilon} W_\epsilon\Bigl(|\nabla\tilde\eta|^2 +
  \rho_\gamma^2 \tilde\eta^2 + \tilde \eta^2\Bigr)\dd X + C\bigl(\mu_0^2 + \mu_1^2 +
  \mu_2^2\bigr)\,,
\end{equation}
where $\mu_0,\mu_1,\mu_2$ are defined in \eqref{mudef}, \eqref{mu2def}. 
\end{prop}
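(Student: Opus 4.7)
The plan is to convert $\hat I_4$ into a Schr\"odinger-type quadratic form via the substitution $g = W_\epsilon^{1/2}\tilde\eta$, then exploit the spectral gap of a two-dimensional harmonic oscillator. A direct pointwise calculation gives $W_\epsilon|\nabla\tilde\eta|^2 + (\nabla W_\epsilon\cdot\nabla\tilde\eta)\tilde\eta = |\nabla g|^2 - \tfrac14 W_\epsilon|\nabla\log W_\epsilon|^2\tilde\eta^2$ wherever $W_\epsilon$ is $C^1$; the global Lipschitz continuity of $W_\epsilon$ ensures that no jump terms arise across the interfaces between $\Omega_\epsilon',\Omega_\epsilon'',\Omega_\epsilon'''$ upon integration by parts. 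Inserting this into \eqref{hatI4def} yields
\[
  \hat I_4 \,=\, -\int_{\Omega_\epsilon}|\nabla g|^2\dd X \,+\, \int_{\Omega_\epsilon}\cU_\epsilon\,\tilde\eta^2\dd X \,+\, \cR_\epsilon,
\]
where $\cU_\epsilon := \tfrac14 W_\epsilon|\nabla\log W_\epsilon|^2 - V_\epsilon + \tfrac12 t\,\partial_t W_\epsilon$ bundles the Schr\"odinger potential with the time-derivative term and $\cR_\epsilon$ is the small $\epsilon$-prefactored remainder $-\tfrac{\epsilon}{2}\int\partial_R(W_\epsilon(1+\epsilon R))\tilde\zeta^2\dd X$.

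Next I would analyze $\cU_\epsilon/W_\epsilon$ in each of the three regions using the explicit formulas of Section~\ref{ssec42}. In $\Omega_\epsilon'$, Lemma~\ref{Wlem} combined with the identity $e^{\rho^2/4}=1+(\rho^2/4)A(\rho)$ yields $V_\epsilon/A\approx\rho^2/8-1$ and $|\nabla\log A|^2/4\approx\rho^2/16-1/2+\cO(\rho^{-2})$, while the time derivative $t\partial_t W_\epsilon/W_\epsilon$ stays bounded thanks to Proposition~\ref{Phiprop} and the explicit form of $\Phi_\epsilon'(\zeta_*)/(1+\epsilon R)$; this gives $\cU_\epsilon/W_\epsilon=-\rho^2/16+1/2+\cO(\rho^{-2}+\epsilon^{\gamma_1}\rho^2)$. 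In $\Omega_\epsilon''$ the spatial derivatives of $W_\epsilon$ are negligible and $V_\epsilon/W_\epsilon\approx-1/2$, but---decisively---the time derivative is large and favorable: using \eqref{dereps} and $W_\epsilon=\chi_1(\epsilon R)\exp(\epsilon^{-2\sigma_1}/4)$ one computes $t\partial_t W_\epsilon\approx-(\sigma_1/4)\epsilon^{-2\sigma_1}W_\epsilon$, which overwhelms the positive bump $\tfrac12 W_\epsilon$ and yields $\cU_\epsilon/W_\epsilon\le-(\sigma_1/8)\epsilon^{-2\sigma_1}$. In $\Omega_\epsilon'''$, $|\nabla\log W_\epsilon|^2/4=\gamma^2\rho^{4\gamma-2}/16$ is much smaller than $V_\epsilon/W_\epsilon\approx\gamma\rho^{2\gamma}/8$ since $\gamma<1$, so $\cU_\epsilon/W_\epsilon\le-\gamma\rho^{2\gamma}/16$. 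Together these give $\cU_\epsilon/W_\epsilon\le -c_1\rho_\gamma^2/16 + c_0$ globally, with $c_0$ bounded and $\rho_\gamma$ as in \eqref{rhogamdef}.

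The decisive step is a spectral argument. The form $\int|\nabla g|^2+\int\rho^2 g^2/16$ corresponds to the 2D harmonic oscillator $-\Delta+\rho^2/16$ on $\R^2$, whose eigenvalues are $(n{+}1)/2$: ground state $e^{-\rho^2/8}$ at level $1/2$, two-dimensional first excited eigenspace $\mathrm{span}\{Re^{-\rho^2/8},Ze^{-\rho^2/8}\}$ at level $1$, etc. Mirroring the proof of Proposition~\ref{E0prop}, one first performs the rank-three correction $\hat{\tilde\eta}=\tilde\eta - \mu_0\eta_0 + \mu_1\partial_R\eta_0 + \mu_2\partial_Z\eta_0$ so that $\int\hat{\tilde\eta}=\int R\,\hat{\tilde\eta}=\int Z\,\hat{\tilde\eta}=0$; the corresponding $\hat g = W_\epsilon^{1/2}\hat{\tilde\eta}$ then lies in the $L^2(\R^2)$-orthogonal complement of the three low-lying eigenfunctions (up to controllable $\epsilon$-corrections). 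The spectral gap there gives $\int|\nabla\hat g|^2 + \int\rho^2\hat g^2/16 \ge (3/2)\int \hat g^2$, and a convex combination yields
\[
  \int\!\bigl(|\nabla\hat g|^2 + (\rho_\gamma^2/16)\hat g^2 - \hat g^2/2\bigr)\dd X \,\ge\, \kappa'\!\int\!\bigl(|\nabla\hat g|^2+\rho_\gamma^2\hat g^2+\hat g^2\bigr)\dd X.
\]
Translating this bound back to $\hat{\tilde\eta}$ through $\hat g = W_\epsilon^{1/2}\hat{\tilde\eta}$, then unfolding the three-parameter correction (which contributes $\le C(\mu_0^2+\mu_1^2+\mu_2^2)$), gives the coercive bound for $\tilde\eta$. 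The remainder $\cR_\epsilon$ carries an explicit $\epsilon$ together with $|\partial_R\log W_\epsilon|$ bounded by $\rho$, $\epsilon$, and $\rho^{2\gamma-1}$ in the three regions respectively, so is absorbable into the main coercive terms. The main obstacle is verifying that the spectral gap of the full $\epsilon$-dependent Schr\"odinger operator $-\Delta + \cU_\epsilon/W_\epsilon$ on $\Omega_\epsilon$ is robust under the piecewise structure of $W_\epsilon$ and the intermediate-region potential bump, so that the three near-kernel directions remain quantitatively close to $\eta_0,R\eta_0,Z\eta_0$ uniformly in $\epsilon$; this uniformity is what makes the projection onto them controllable by $\mu_0,\mu_1,\mu_2$ and is where the careful construction of $W_\epsilon$ in Section~\ref{ssec42}, together with the Arnold-type identification of Proposition~\ref{Phiprop}, becomes essential.
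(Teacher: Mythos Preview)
Your Schr\"odinger substitution $g = W_\epsilon^{1/2}\tilde\eta$ and the region-by-region potential analysis are sound, and your treatment of the time-derivative term in $\Omega_\epsilon''$ correctly captures the mechanism that the paper isolates in Lemma~\ref{timederlem}. But the decisive spectral step has a real gap at the $\epsilon=0$ level, not merely an $\epsilon$-perturbation issue.

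After the substitution, the moment conditions $\mu_j=0$ become $L^2$-orthogonality of $g$ to $A^{-1/2}$, $RA^{-1/2}$, $ZA^{-1/2}$. Since $A(\rho)^{-1/2}\sim(\rho/2)\,e^{-\rho^2/8}$ as $\rho\to\infty$, these are \emph{not} the harmonic-oscillator eigenfunctions $e^{-\rho^2/8}$, $Re^{-\rho^2/8}$, $Ze^{-\rho^2/8}$; they differ by a full power of $\rho$. Equivalently, the inner potential is only \emph{asymptotically} $\rho^2/16-1/2$; the $\cO(\rho^{-2})$ corrections are order-one on the range of $\rho$ that matters and shift the low-lying eigenfunctions away from the Hermite profiles. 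So your claim that $\hat g$ is orthogonal to the three bottom eigenfunctions ``up to controllable $\epsilon$-corrections'' is not right: the mismatch is already present at $\epsilon=0$ and cannot be absorbed. Without an independent argument that the true low modes of $-\Delta + P(\rho)$ coincide with (or are controlled by) the span of $A^{-1/2}$, $RA^{-1/2}$, $ZA^{-1/2}$, the harmonic-oscillator gap $3/2$ versus $1/2$ does not transfer to the constraint set you actually have.

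The paper avoids this by importing the coercivity of the limiting form $Q_0$ on the zero-moment subspace from \cite{GS3} as a black box (Proposition~\ref{Q0prop}), then perturbing to $Q_\epsilon$ via a partition of unity localized to $\rho\lesssim\epsilon^{-\sigma_1}$ (Proposition~\ref{Qepsprop}, proved in Section~\ref{ssecB3}); the outer piece is handled by a direct Young-inequality bound, the time-derivative and $\tilde\zeta$ terms by the elementary Lemmas~\ref{timederlem} and~\ref{tildezetalem}. What your sketch is effectively attempting is a self-contained reproof of the \cite{GS3} spectral gap via harmonic-oscillator comparison, and that is precisely the step that does not go through without substantially more work identifying the actual kernel of $Q_0$.
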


The proof of Proposition~\ref{I4prop} requires several steps. We first control
the term in $\hat I_4$ that involves the time derivative of the weight function
$W_\epsilon$.

\begin{lem}\label{timederlem}
There exist $C > 0$ and $\gamma_1 > 0$ such that
\begin{equation}\label{timederest}
  \int_{\Omega_\epsilon} t(\partial_t W_\epsilon)\tilde \eta^2\dd X \,\le\,
  -\frac{\sigma_1}{5}\int_{\Omega_\epsilon''} W_\epsilon \rho_\gamma^2
  \tilde \eta^2\dd X + C \int_{\Omega_\epsilon'''}W_\epsilon \tilde\eta^2\dd X
  + C\epsilon^{\gamma_1} \|\tilde\eta\|_{\cX_\epsilon}^2\,.
\end{equation}
\end{lem}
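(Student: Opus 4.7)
\textbf{Proof plan for Lemma~\ref{timederlem}.} The strategy is to split the integral $\int_{\Omega_\epsilon} t(\partial_t W_\epsilon)\tilde\eta^2\dd X$ according to the three regions $\Omega_\epsilon', \Omega_\epsilon'', \Omega_\epsilon'''$ appearing in the construction \eqref{Wdef} of the weight and to compute $t\partial_t W_\epsilon$ piecewise, using the key fact (from \eqref{dereps} and the estimates on $\dot{\bar r}$ established in Sections~\ref{ssec33}--\ref{ssec36}) that $t\dot\epsilon = \epsilon/2 + \cO(\epsilon^3)$. Since $W_\epsilon$ is only Lipschitz across the interfaces but continuous in $t$ at every fixed point $(R,Z)$, the pointwise time derivative is well defined almost everywhere and the interfaces between regions contribute nothing to the integral. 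The main positive contribution to the desired negative term comes from the intermediate region; the other two regions will only yield the error terms on the right-hand side of \eqref{timederest}.

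In the inner region $\Omega_\epsilon'$, the weight is $W_\epsilon = \chi_1(\epsilon R)\Phi_\epsilon'(\zeta_*)$. Differentiating in time produces (i) a contribution from $\chi_1(\epsilon R)$ proportional to $\chi_1'(\epsilon R) R\,t\dot\epsilon$, which is bounded by $C\epsilon^{1-\sigma_1} W_\epsilon$ since $|R| \le \epsilon^{-\sigma_1}$ there; (ii) a contribution from $\partial_t\Phi_\epsilon$ at fixed argument, which is $\cO(\epsilon^2)\Phi_2'(\zeta_*)$ because $\Phi_\epsilon = \Phi_0 + \epsilon^2\Phi_2$ (cf.~\eqref{Phiexpand} with $\Phi_1=0$); and (iii) a contribution from $\Phi_\epsilon''(\zeta_*)\,t\partial_t\zeta_*$, estimated using the bounds \eqref{Phi2derest}, \eqref{Bdef} on $\Phi_\epsilon''$, and the fact that $t\partial_t\eta_* = \cO(\epsilon\,(1+\rho)^N)\eta_0$ in $\cZ$, which follows from the structure of the asymptotic expansion. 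Each of these, after using $\rho \le \epsilon^{-\sigma_1}$ to convert polynomial factors $(1+\rho)^N$ into $\epsilon^{-N\sigma_1}$, is at most $C\epsilon^{\gamma_1}W_\epsilon$ for some $\gamma_1 > 0$ provided $\sigma_1$ is small. This yields a contribution bounded by $C\epsilon^{\gamma_1}\|\tilde\eta\|_{\cX_\epsilon}^2$.

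In the intermediate region $\Omega_\epsilon''$, the weight factorizes as $W_\epsilon = \chi_1(\epsilon R)\exp(\epsilon^{-2\sigma_1}/4)$, so
\[
  t\partial_t W_\epsilon \,=\, W_\epsilon\,t\partial_t\bigl(\epsilon^{-2\sigma_1}/4\bigr)
  + \chi_1'(\epsilon R)R\,t\dot\epsilon\,\exp\bigl(\epsilon^{-2\sigma_1}/4\bigr)\,.
\]
A direct computation with $t\dot\epsilon = \epsilon/2 + \cO(\epsilon^3)$ gives $t\partial_t(\epsilon^{-2\sigma_1}/4) = -\tfrac{\sigma_1}{4}\epsilon^{-2\sigma_1} + \cO(\epsilon^{2-2\sigma_1})$, and recalling that $\rho_\gamma^2 = \epsilon^{-2\sigma_1}$ throughout $\Omega_\epsilon''$, the leading piece contributes exactly $-\tfrac{\sigma_1}{4}\rho_\gamma^2 W_\epsilon\tilde\eta^2$ after integration. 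The $\chi_1'$ piece is supported in $|\epsilon R| \in [1/2,3/4]$ and is therefore bounded by a constant multiple of $W_\epsilon$, contributing at most $C\epsilon^{\gamma_1}\|\tilde\eta\|_{\cX_\epsilon}^2$ after using $\epsilon^2$ as slack. Choosing $\sigma_1/5$ instead of $\sigma_1/4$ absorbs both the $\cO(\epsilon^{2-2\sigma_1})$ correction and the $\chi_1'$ contribution.

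In the far-field region $\Omega_\epsilon'''$, the weight $W_\epsilon = \chi_1(\epsilon R)\exp(\rho^{2\gamma}/4)$ depends on $t$ only through $\chi_1(\epsilon R)$. Since $\chi_1' \equiv 0$ when $|\epsilon R| \ge 3/4$ and $\chi_1 \ge 1/2$ everywhere, the ratio $|t\partial_t W_\epsilon|/W_\epsilon = |\chi_1'(\epsilon R) R\,t\dot\epsilon|/\chi_1(\epsilon R)$ is uniformly bounded by a constant $C$ (because $|\epsilon R| \le 3/4$ on the support of $\chi_1'$ and $t\dot\epsilon \approx \epsilon/2$). This yields the third term on the right-hand side of \eqref{timederest}. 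Summing the three contributions finishes the proof. The main technical point is item (iii) in the inner-region analysis: ensuring that the large factor $B(\rho)$ coming from $\Phi_\epsilon''$ is tamed by the Gaussian smallness of $t\partial_t\eta_*$ in the topology of $\cZ$, for which one must track carefully the expansions of Sections~\ref{ssec33}--\ref{ssec36} and use $\rho \le \epsilon^{-\sigma_1}$ to convert polynomial factors into a small power of $\epsilon$.
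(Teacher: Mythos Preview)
Your proposal is correct and follows essentially the same route as the paper: split according to the three regions in \eqref{Wdef}, use $t\dot\epsilon = \epsilon/2 + \cO(\epsilon^3)$, show $|t\partial_t W_\epsilon|\le C\epsilon^{\gamma_1}W_\epsilon$ on $\Omega_\epsilon'$ via the bounds on $\Phi_\epsilon'',\Phi_2'$ and $t\partial_t\zeta_*$, extract the negative term $-\tfrac{\sigma_1}{4}\rho_\gamma^2 W_\epsilon$ on $\Omega_\epsilon''$, and bound $|t\partial_t W_\epsilon|\le CW_\epsilon$ on $\Omega_\epsilon'''$. One small clarification: on $\Omega_\epsilon''$ the $\chi_1'$ piece is only $\cO(W_\epsilon)$, not $\cO(\epsilon^{\gamma_1}W_\epsilon)$, but since $\rho_\gamma^2=\epsilon^{-2\sigma_1}\gg 1$ there it is indeed absorbed by passing from $\sigma_1/4$ to $\sigma_1/5$, exactly as you say in your next sentence.
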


\begin{proof}
Following \eqref{Wdef} we decompose $W_\epsilon(R,Z) = \chi_1(\epsilon R)\,\hat W_\epsilon(R,Z)$,
so that
\begin{equation}\label{timeder1}
  t\partial_t W_\epsilon \,=\, \chi_1(\epsilon R) \,t\partial_t\hat W_\epsilon(R,Z) +
  t\dot \epsilon R \chi_1'(\epsilon R)\,\hat W_\epsilon(R,Z)\,.
\end{equation}
We first estimate the right-hand side in the region $\Omega_\epsilon'$ defined by \eqref{Ompdef},
where $\hat W_\epsilon = \Phi_\epsilon'(\zeta_*)$. As $\Phi_\epsilon = \Phi_0 + \epsilon^2 \Phi_2$
according to \eqref{Phiexpand}, we have $t\partial_t \hat W_\epsilon = \Phi_\epsilon''(\zeta_*)\,
t\partial_t \zeta_* + 2t\epsilon \dot \epsilon\,\Phi_2'(\zeta_*)$ in that region. We recall 
that $2t\dot\epsilon = \epsilon(1 + \cO(\epsilon^2))$ by \eqref{dereps}, and that the functions
$\Phi_0, \Phi_2$ satisfy the estimates \eqref{Phi0derest}, \eqref{Phi2derest}. It follows
immediately that $|t\epsilon \dot \epsilon\,\Phi_2'(\zeta_*)| \le C \epsilon^{2-N\sigma_1}
\hat W_\epsilon\le C \epsilon W_\epsilon$. Moreover, since $\zeta_* = \eta_*/(1{+}\epsilon R)$
with $\eta_* = \eta_\app$ in $\Omega_\epsilon'$, we also have $|\Phi_\epsilon''(\zeta_*)
t\partial_t \zeta_*| \le C\epsilon(1+\rho)^N\hat W_\epsilon \le C\epsilon^{\gamma_1}W_\epsilon$,
provided $0 < \gamma_1 < 1 -N\sigma_1$. Finally, the last term in \eqref{timeder1} is bounded
by $C\epsilon\rho W_\epsilon \le C\epsilon^{1-\sigma_1}W_\epsilon$. Altogether
we have shown that $|t\partial_t W_\epsilon| \le C\epsilon^{\gamma_1}W_\epsilon$
in $\Omega_\epsilon'$.

In the intermediate region $\Omega_\epsilon''$ we have $\hat W_\epsilon = \exp\bigl(
\epsilon^{-2\sigma_1}/4\bigr)$ and $\rho_\gamma = \epsilon^{-\sigma_1}$, so that
\[
  \,t\partial_t\hat W_\epsilon \,=\, -\frac{\sigma_1}{2}\,\exp\bigl(\epsilon^{-2\sigma_1}/4\bigr)
  \,\frac{t\dot\epsilon}{\epsilon^{2\sigma_1 + 1}} \,=\, -\frac{\sigma_1}{2}\,\hat W_\epsilon\, 
  \rho_\gamma^2\,\frac{t\dot\epsilon}{\epsilon} \,\approx\, -\frac{\sigma_1}{4}\,\hat W_\epsilon\, 
  \rho_\gamma^2\,.
\]
Since $|t\dot \epsilon R \chi_1'(\epsilon R)| \le |\epsilon R \chi_1'(\epsilon R)| \le C$, it
follows that $t\partial_t W_\epsilon\le -(\sigma_1/5)W_\epsilon\rho_\gamma^2$ in $\Omega_\epsilon''$.
Finally, in the exterior region $\Omega_\epsilon'''$, the function $\hat W_\epsilon = 
\exp(\rho^{2\gamma}/4)$ does not depend on time, and we deduce from \eqref{timeder1} that 
$|t\partial_t W_\epsilon| \le C W_\epsilon$. Collecting all these estimates, we arrive at 
\eqref{timederest}. 
\end{proof}

We next consider the term involving $\tilde\zeta$ in \eqref{hatI4def}. 

\begin{lem}\label{tildezetalem}
There exist $C > 0$ and $\gamma_1 > 0$ such that
\begin{equation}\label{tildezetaest}
 -\frac{\epsilon}{2}\int_{\Omega_\epsilon} \partial_R\bigl(W_\epsilon(1+\epsilon R)
  \bigr)\tilde\zeta^2\dd X \,\le\, -\frac{\epsilon^2}{4}\int_{\Omega_\epsilon} 
  W_\epsilon\tilde\zeta^2\dd X + C\epsilon^{\gamma_1} \|\tilde\eta\|_{\cX_\epsilon}^2\,.
\end{equation}
\end{lem}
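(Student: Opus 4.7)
The natural starting point is the elementary identity
\[
  \partial_R\bigl(W_\epsilon(1+\epsilon R)\bigr) \,=\, (1+\epsilon R)\,\partial_R W_\epsilon \,+\, \epsilon W_\epsilon\,,
\]
which directly extracts the main negative contribution $-\frac{\epsilon^2}{2}\int_{\Omega_\epsilon} W_\epsilon\tilde\zeta^2 \dd X$ from the left-hand side. The proof therefore reduces to showing the auxiliary bound
\[
  \frac{\epsilon}{2}\int_{\Omega_\epsilon}\bigl|(1+\epsilon R)\partial_R W_\epsilon\bigr|\,\tilde\zeta^2 \dd X \,\le\, \frac{\epsilon^2}{4}\int_{\Omega_\epsilon} W_\epsilon \tilde\zeta^2 \dd X + C\epsilon^{\gamma_1}\|\tilde\eta\|_{\cX_\epsilon}^2\,,
\]
which I would verify region by region using the decomposition $\Omega_\epsilon = \Omega_\epsilon' \cup \Omega_\epsilon'' \cup \Omega_\epsilon'''$ from \eqref{Ompdef} and the bounds in Lemma~\ref{Wlem}.

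In the inner region $\Omega_\epsilon'$ the argument is immediate: Lemma~\ref{innerlem} gives $\rho \le C\epsilon^{-\sigma_1}$, hence $1+\epsilon R \in [1-C\epsilon^{1-\sigma_1},1+C\epsilon^{1-\sigma_1}]$ and $\tilde\zeta \approx \tilde\eta$, while \eqref{Wapprox} combined with the explicit expression of $A(\rho)$ yields $|\partial_R W_\epsilon| \le C(1+\rho)W_\epsilon \le C\epsilon^{-\sigma_1} W_\epsilon$. This produces a contribution of size $C\epsilon^{1-\sigma_1}\|\tilde\eta\|_{\cX_\epsilon}^2$, acceptable for any $\gamma_1 < 1-\sigma_1$.

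For the remaining regions, I would apply Young's inequality in the form
\[
  \frac{\epsilon}{2}\bigl|(1+\epsilon R)\partial_R W_\epsilon\bigr|\,\tilde\zeta^2 \,\le\, \frac{\epsilon^2}{8}W_\epsilon\tilde\zeta^2 \,+\, \frac{(\partial_R W_\epsilon)^2}{2W_\epsilon}\,\tilde\eta^2\,,
\]
where the crucial cancellation $(1+\epsilon R)^2 \tilde\zeta^2 = \tilde\eta^2$ is used in the second term. The whole problem then reduces to establishing the pointwise estimate $|\partial_R W_\epsilon| \le C\epsilon^{\gamma_1/2} W_\epsilon$ throughout $\Omega_\epsilon'' \cup \Omega_\epsilon'''$. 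In the intermediate region $\Omega_\epsilon''$ the function $\hat W_\epsilon$ is spatially constant, so only $\chi_1'$ contributes and $|\partial_R W_\epsilon| \le C\epsilon W_\epsilon$, which is more than sufficient.

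\emph{The main obstacle} lies in the far field $\Omega_\epsilon'''$, where $\hat W_\epsilon = \exp(\rho^{2\gamma}/4)$ genuinely grows and the relation $\chi_1(\epsilon R)\partial_R\hat W_\epsilon = \tfrac{\gamma R}{2}\rho^{2\gamma-2}W_\epsilon$ produces $|\partial_R W_\epsilon|/W_\epsilon \le C(|R|\rho^{2\gamma-2}+\epsilon)$. The key observation is that the definition $\gamma = \sigma_1/\sigma_2$ together with the freedom to take $\sigma_1$ small (specifically $\sigma_1 < \sigma_2/2$ so that $\gamma < 1/2$) makes the exponent $2\gamma-1$ negative; combined with $|R| \le \rho$ and the defining condition $\rho \ge \epsilon^{-\sigma_2}$ of $\Omega_\epsilon'''$, this yields
\[
  |R|\rho^{2\gamma-2} \,\le\, \rho^{2\gamma-1} \,\le\, \epsilon^{\sigma_2(1-2\gamma)} \,=\, \epsilon^{\sigma_2-2\sigma_1}\,,
\]
which is the sought positive power of $\epsilon$. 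This allows $\gamma_1$ to be chosen as any positive number less than $\min\bigl(1-\sigma_1,\,2(\sigma_2-2\sigma_1)\bigr)$, and completes the proof.
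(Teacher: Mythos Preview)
Your proposal is correct and follows essentially the same approach as the paper: the same product-rule decomposition to extract the $-\frac{\epsilon^2}{2}\int W_\epsilon\tilde\zeta^2$ term, the same region-by-region analysis with direct estimation on $\Omega_\epsilon'$ (using $\tilde\zeta\approx\tilde\eta$) and Young's inequality on $\Omega_\epsilon''\cup\Omega_\epsilon'''$ via $(1+\epsilon R)\tilde\zeta=\tilde\eta$, and the same far-field bound $|\partial_R W_\epsilon|\le C\epsilon^{\sigma_2-2\sigma_1}W_\epsilon$ obtained from $\gamma<1/2$. The only differences are cosmetic (your Young's constants are $\tfrac18,\tfrac12$ versus the paper's $\tfrac14,\tfrac14$).
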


\begin{proof}
If $\cD_\epsilon$ denotes any of the three regions defined in \eqref{Ompdef}, we have
\begin{align}\label{tildezeta1}
  -\frac{\epsilon}{2}\int_{\cD_\epsilon} \partial_R\bigl(W_\epsilon(1+\epsilon R)
  \bigr)\tilde\zeta^2\dd X \,&=\, -\frac{\epsilon^2}{2}\int_{\cD_\epsilon} W_\epsilon
  \tilde\zeta^2\dd X - \frac{\epsilon}{2}\int_{\cD_\epsilon} \bigl(\partial_R W_\epsilon
  \bigr)\tilde \zeta \tilde \eta\dd X \\ \label{tildezeta2}
  \,&\le\, -\frac{\epsilon^2}{4}\int_{\cD_\epsilon} W_\epsilon\tilde\zeta^2\dd X
  + \frac{1}{4} \int_{\cD_\epsilon}\frac{(\partial_R W_\epsilon)^2}{W_\epsilon}\,
  \tilde\eta^2 \dd X\,,    
\end{align}
where in the second line we used Young's inequality. In the inner region $\Omega_\epsilon'$
we observe that $\tilde\zeta \approx \tilde\eta$, because $|\epsilon R| \le 2\epsilon^{1-\sigma_1}
\ll 1$. Moreover we have $\epsilon|\partial_R W_\epsilon| \le C\epsilon^{\gamma_1}W_\epsilon$ for
some $\gamma_1 > 0$, so taking $\cD_\epsilon = \Omega_\epsilon'$ and using \eqref{tildezeta1} we obtain
the analogue of \eqref{tildezetaest} in that region. Outside $\Omega_\epsilon'$, we cannot
directly compare $\tilde\zeta$ and $\tilde\eta$, so we prefer using inequality \eqref{tildezeta2}. 
In the intermediate region $\Omega_\epsilon''$, we have $|\partial_R W_\epsilon| \le C\epsilon
W_\epsilon$ by \eqref{Wdef}, and \eqref{tildezetaest} easily follows. Finally, in the exterior 
region $\Omega_\epsilon'''$, we observe that
\[
  \partial_R W_\epsilon \,=\, \biggl(\frac{\epsilon \chi_1'(\epsilon R)}{\chi_1(\epsilon R)}
  + \frac{\gamma R}{2}\,\rho^{2\gamma - 2}\biggr)W_\epsilon\,.
\]
Taking $\sigma_1$ small enough so that $\gamma \equiv \sigma_1/\sigma_2 < 1/2$, and using the
fact that $\rho \ge \epsilon^{-\sigma_2}$ in $\Omega_\epsilon'''$, we deduce that
$|\partial_R W_\epsilon| \le C\epsilon^{\gamma_1} W_\epsilon$ for some $\gamma_1 > 0$,
and this leads to \eqref{tildezetaest}. The proof is thus complete. 
\end{proof}

To conclude the proof of Proposition~\ref{I4prop}, we consider the quadratic form
given by the first line of \eqref{hatI4def}, namely
\begin{equation}\label{Qepsdef}
  Q_\epsilon[\eta] \,=\, \int_{\Omega_\epsilon} W_\epsilon |\nabla\eta|^2\dd X +
  \int_{\Omega_\epsilon} (\nabla W_\epsilon\cdot\nabla\eta)\eta \dd X
   + \int_{\Omega_\epsilon} V_\epsilon \eta^2\dd X\,,
\end{equation}
where $V_\epsilon$ is defined in \eqref{Vdef}. Taking formally the limit $\epsilon \to 0$
in \eqref{Qepsdef}, we obtain using \eqref{Wapprox}
\begin{equation}\label{Q0def}
  Q_0[\eta] \,=\, \int_{\R^2} A |\nabla\eta|^2\dd X + \int_{\R^2} (\nabla A \cdot\nabla\eta)
  \eta \dd X + \int_{\R^2} V \eta^2\dd X\,,
\end{equation}
where $A$ is defined by \eqref{Adef} and $V = \frac14(R\partial_R + Z\partial_Z)A - \frac12 A -1$. 
The limiting quadratic form \eqref{Q0def} is carefully studied in our previous work \cite{GS3},
and we have the following result\:

\begin{prop}\label{Q0prop}
There exists constants $C_8 > 2$ and $C_9 > 0$ such that, for all $\eta\in \cX_0$ with
$\rho \eta \in \cX_0$ and $\nabla\eta \in \cX_0^2$, we have
\begin{equation}\label{Q0ineq}
  \|\nabla\eta\|_{\cX_0}^2 + \|\rho\eta\|_{\cX_0}^2 + \|\eta\|_{\cX_0}^2
  \,\le\, C_8 Q_0[\eta] + C_9\bigl(\mu_0^2 + \mu_1^2 + \mu_2^2\bigr)\,,
\end{equation}
where $\mu_0 = \int_{\R^2} \eta\dd X$, $\mu_1 = \int_{\R^2} R\eta\dd X$, $\mu_2 =
\int_{\R^2} Z\eta\dd X$.  
\end{prop}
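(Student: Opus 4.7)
The plan is to reduce Proposition~\ref{Q0prop} to a coercivity result on a codimension-three subspace, paralleling the strategy used for Proposition~\ref{E0prop}, and then invoke the analysis of the associated self-adjoint operator carried out in \cite{GS3}.

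First I would symmetrize the quadratic form $Q_0$ by integrating by parts. Under the decay hypotheses $\eta, \rho\eta, \nabla\eta \in \cX_0$, we have $\int_{\R^2}(\nabla A \cdot \nabla \eta)\eta\dd X = -\tfrac12 \int_{\R^2}(\Delta A) \eta^2\dd X$, so
\[
  Q_0[\eta] \,=\, \int_{\R^2} A\,|\nabla\eta|^2 \dd X + \int_{\R^2} U(\rho)\,\eta^2\dd X\,,
  \qquad U \,=\, V - \tfrac12 \Delta A\,.
\]
A direct computation using $A(\rho)=4\rho^{-2}(e^{\rho^2/4}-1)$ shows that $V(\rho) = \tfrac12 e^{\rho^2/4} - A(\rho) - 1$, so that $U(\rho) \sim \tfrac12 e^{\rho^2/4} \sim \tfrac18 A(\rho)\rho^2$ at infinity while $U$ remains bounded from below on compact sets. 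In particular $Q_0$ is now the Dirichlet form of a self-adjoint Schrödinger-type operator $\cM := -A^{-1}\div(A\nabla\cdot) + A^{-1}U$ on $L^2(\R^2;A\dd X) = \cX_0$, and the potential term already provides the correct growth to control $\|\rho\eta\|_{\cX_0}^2$ at infinity.

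Next, following the argument used for Proposition~\ref{E0prop}, I would reduce to the case of vanishing moments. Set
\[
  \hat\eta \,=\, \eta - \alpha_0\,\eta_0 - \alpha_1\,\partial_R \eta_0 - \alpha_2\,\partial_Z \eta_0\,,
\]
choosing the three scalars $\alpha_0, \alpha_1, \alpha_2$ so that $\int \hat\eta\dd X = \int R\hat\eta\dd X = \int Z\hat\eta\dd X = 0$. A standard linear-algebra computation gives $|\alpha_j| \le C |\mu_j|$. Expanding $Q_0[\eta]$ and using Young's inequality then yields
\[
  Q_0[\eta] \,\ge\, \tfrac12\,Q_0[\hat\eta] - C\bigl(\mu_0^2 + \mu_1^2 + \mu_2^2\bigr)\,,
\]
together with the analogous lower bound on $\|\nabla\eta\|_{\cX_0}^2 + \|\rho\eta\|_{\cX_0}^2 + \|\eta\|_{\cX_0}^2$, so it suffices to prove the estimate when $\mu_0 = \mu_1 = \mu_2 = 0$.

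The heart of the matter, and the step I expect to be the main obstacle, is the spectral claim that on the subspace of $\cX_0$ orthogonal (in the $\cX_0$ inner product, weighted by $A$) to $\{\eta_0, \partial_R\eta_0, \partial_Z\eta_0\}$, the operator $\cM$ has a strictly positive lower bound, together with the coercive bound
\[
  \int_{\R^2} A\bigl(|\nabla\hat\eta|^2 + \rho^2 \hat\eta^2 + \hat\eta^2\bigr)\dd X
  \,\le\, C\,Q_0[\hat\eta]\,.
\]
This is the analogue in the diffusive setting of the inviscid coercivity established in \cite[Section~2]{GS3}, where the second variation of the kinetic energy on the isovortical surface is shown to be coercive on the appropriate constrained subspace. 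One verifies directly from the definitions that $\eta_0, \partial_R\eta_0, \partial_Z\eta_0$ lie in the kernel of $\cM$ (corresponding respectively to the scaling of the vortex strength and the two translational degrees of freedom of the Gaussian). To obtain the spectral gap and the full coercive bound, I would combine a Hardy-type inequality for the weight $A$ at infinity (exploiting the fact that $U \sim \tfrac18 A\rho^2$ together with the gradient term dominates the Gaussian weight) with a compactness argument on bounded sets to rule out a minimizing sequence accumulating in the kernel. The main technical difficulty is to quantify this spectral gap uniformly and to exclude approximate null directions by confirming that the kernel of $\cM$ is indeed exactly three-dimensional, which requires a careful Sturm-Liouville analysis in each angular Fourier mode—mode $n=0$ contributes $\eta_0$, mode $n=1$ contributes the two Cartesian derivatives, and modes $n \ge 2$ are handled by a direct positivity estimate analogous to \eqref{Omexp}.
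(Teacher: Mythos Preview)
Your proposal contains two concrete errors that break the argument.

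\textbf{First}, your asymptotic for the symmetrized potential is wrong. After integrating the cross term by parts you get $U = V - \tfrac12\Delta A$, and you claim $U\sim \tfrac12 e^{\rho^2/4}$ at infinity. But a direct computation of $\Delta A = A'' + \rho^{-1}A'$ from $A(\rho)=4\rho^{-2}(e^{\rho^2/4}-1)$ gives $\Delta A \sim e^{\rho^2/4}$ as $\rho\to\infty$, so the leading term of $V$ is exactly cancelled and in fact $U = \rho^{-1}A' - A - 1 \sim -\tfrac12 A$ at infinity. The symmetrized potential is \emph{negative} and of the same size as the weight, so it does not by itself give control of $\|\rho\eta\|_{\cX_0}$. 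Your full integration by parts is counterproductive here; the paper instead applies Young's inequality to the cross term,
\[
  Q_0[\eta] \,\ge\, \tfrac14\int_{\R^2} A|\nabla\eta|^2\dd X + \int_{\R^2}\Bigl(V-\tfrac{|\nabla A|^2}{3A}\Bigr)\eta^2\dd X\,,
\]
and checks that $V/A - |\nabla A|^2/(3A^2) \ge \rho^2/24 - C$, which keeps a positive $\rho^2$-weighted term.

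\textbf{Second}, your claim that $\eta_0$ lies in the kernel of $\cM$ is false. Since $Q_0[\eta] = -\int_{\R^2} A\eta\,\cL\eta\dd X - \int_{\R^2}\eta^2\dd X$ and $\cL\eta_0=0$, one has $Q_0[\eta_0] = -\|\eta_0\|_{L^2}^2 = -1/(8\pi) < 0$: the direction $\eta_0$ is a genuine negative direction of $\cM$, not a kernel element. Consequently a ``compactness plus three-dimensional kernel'' argument cannot yield positivity on the zero-moment subspace; one must show that this subspace avoids the negative eigenspace, which is precisely the nontrivial content of \cite[Theorem~4.2]{GS3}.

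The paper's proof separates the two ingredients cleanly: it quotes \cite{GS3} for $Q_0[\eta]\ge\delta_0\|\eta\|_{\cX_0}^2$ on the zero-moment subspace, uses the Young-inequality bound above to get $Q_0[\eta]\ge \tfrac14\|\nabla\eta\|_{\cX_0}^2+\tfrac{1}{24}\|\rho\eta\|_{\cX_0}^2 - C\|\eta\|_{\cX_0}^2$ for all $\eta$, takes a convex combination to obtain \eqref{Q0ineq} when $\mu_0=\mu_1=\mu_2=0$, and then passes to general moments exactly as in the proof of Proposition~\ref{E0prop}. Your reduction to zero moments is fine and matches this last step, but the core coercivity estimate requires the input from \cite{GS3} rather than the spectral picture you sketched.
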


\begin{proof}
In \cite[Theorem~4.2]{GS3} we prove that there exists $\delta_0 > 0$ such that
$Q_0[\eta] \ge \delta_0 \|\eta\|_{\cX_0}^2$ for any $\eta \in \cX_0$ such that
$\mu_0 = \mu_1 = \mu_2 = 0$. On the other hand, if we apply Young's inequality
to the middle term in the right-hand side of \eqref{Q0def}, we obtain the
lower bound
\[
  Q_0[\eta] \,\ge\, \frac14\int_{\R^2} A |\nabla\eta|^2\dd X + \int_{\R^2}
  \Bigl(V - \frac{|\nabla A|^2}{3A}\Bigr)\eta^2\dd X \,\ge\, \frac14\|\nabla\eta\|_{\cX_0}^2
  + \frac{1}{24}\|\rho\eta\|_{\cX_0}^2 - C\|\eta\|_{\cX_0}^2\,, 
\]
because a direct calculation reveals that $V/A - |\nabla A|^2/(3A^2) \ge \rho^2/(24) - C$ for
some constant $C > 0$. Taking a convex combination of both estimates, we see
that there exists $C_8 > 0$ such that 
\begin{equation}\label{Q0ineqbis}
  \|\nabla\eta\|_{\cX_0}^2 + \|\rho\eta\|_{\cX_0}^2 + \|\eta\|_{\cX_0}^2 \,\le\,
  C_8 Q_0[\eta]\,,
\end{equation}
whenever $\eta \in \cX_0$ satisfies $\mu_0 = \mu_1 = \mu_2 = 0$. It remains to
deduce \eqref{Q0ineq} from \eqref{Q0ineqbis}, which is easily done using
exactly the same arguments as in the proof of Proposition~\ref{E0prop}.
\end{proof}

The analogue of Proposition~\ref{Q0prop} for the full quadratic form
\eqref{Qepsdef} is the following statement, whose proof is postponed
to Section~\ref{ssecB3}. 

\begin{prop}\label{Qepsprop}
There exists constants $C_{10} > 2$ and $C_{11} > 0$ such that, for all
sufficiently small $\epsilon > 0$ and all $\eta\in\cX_\epsilon$ with
$\rho_\gamma \eta \in \cX_\epsilon$ and $\nabla\eta \in \cX_\epsilon^2$, we
have
\begin{equation}\label{Qepsineq}
  \|\nabla\eta\|_{\cX_\epsilon}^2 + \|\eta\|_{\cX_\epsilon}^2 + \int_{\Omega_\epsilon' \cup
  \Omega_\epsilon'''}W_\epsilon \rho_\gamma^2 \eta^2\dd X \,\le\, C_{10} Q_\epsilon[\eta] + C_{11}
  \Bigl(\mu^2 + \int_{\Omega_\epsilon''} W_\epsilon \eta^2\dd X\Bigr)\,,
\end{equation}
where $\mu^2 = \mu_0^2 + \mu_1^2 + \mu_2^2$ and $\mu_0 = \int_{\Omega_\epsilon} \eta\dd X$,
$\mu_1 = \int_{\Omega_\epsilon} R\eta\dd X$, $\mu_2 = \int_{\Omega_\epsilon} Z\eta\dd X$.  
\end{prop}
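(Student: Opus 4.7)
The plan is to reduce the inner portion of the estimate to Proposition~\ref{Q0prop} via a localization argument, and to obtain coercivity on the far-field region by directly exploiting the growth of $W_\epsilon$ there; the intermediate region will be responsible for the slack term $\int_{\Omega_\epsilon''}W_\epsilon\eta^2\dd X$ on the right of \eqref{Qepsineq}, since $W_\epsilon$ is nearly constant there and cannot yield a $\rho_\gamma^2$-weighted mass control. First I would introduce a smooth cutoff $\chi_\star$ equal to $1$ on $\{\rho \le \tfrac12\epsilon^{-\sigma_1}\}$ and vanishing outside a slightly larger set still contained in $\Omega_\epsilon'$, and split $\eta = \eta_1 + \eta_2$ with $\eta_1 = \chi_\star\eta$. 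Young's inequality applied to the cross terms produced by $\nabla\chi_\star$ (supported where $\rho \asymp \epsilon^{-\sigma_1}$ with $|\nabla\chi_\star| \lesssim \epsilon^{\sigma_1}$) would yield $Q_\epsilon[\eta] \ge \tfrac12(Q_\epsilon[\eta_1] + Q_\epsilon[\eta_2]) - C\epsilon^{\gamma_1}\|\eta\|_{\cX_\epsilon}^2$ for some $\gamma_1 > 0$.

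For $\eta_1$, extended by zero to $\R^2$, Lemma~\ref{Wlem} would give $|W_\epsilon - A| + |\nabla W_\epsilon - \nabla A| \le C\epsilon^{\gamma_1}A$ on $\supp(\eta_1)$, and a parallel computation would produce the analogous bound $|V_\epsilon - V| \le C\epsilon^{\gamma_1}(1+\rho^2)A$. Hence $Q_\epsilon[\eta_1] \ge Q_0[\eta_1] - C\epsilon^{\gamma_1}(\|\nabla\eta_1\|_{\cX_0}^2 + \|\rho\eta_1\|_{\cX_0}^2 + \|\eta_1\|_{\cX_0}^2)$, and Proposition~\ref{Q0prop} applied to $\eta_1$, with moments transferred via $\mu_j(\eta_1) = \mu_j(\eta) + \cO(\epsilon^\infty)\|\eta\|_{\cX_\epsilon}$ (from Cauchy--Schwarz and the rapid decay of $W_\epsilon^{-1}$ off $\supp(\chi_\star)$), would yield the inner-region part of \eqref{Qepsineq}. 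For $\eta_2$ on $\Omega_\epsilon'''$, where $W_\epsilon = \chi_1(\epsilon R)\exp(\rho^{2\gamma}/4)$, a direct computation gives $V_\epsilon \ge \tfrac{\gamma}{16}\rho^{2\gamma}W_\epsilon$ for $\rho \ge \epsilon^{-\sigma_2}$ and $\epsilon$ small, because the growth $\tfrac{\gamma}{8}\rho^{2\gamma}W_\epsilon$ of $\tfrac14(R\partial_R+Z\partial_Z)W_\epsilon$ dominates the correction $-\tfrac12 W_\epsilon - (1+\epsilon R)$; the middle term of $Q_\epsilon$ is absorbed by Young's inequality using $|\nabla W_\epsilon|^2/W_\epsilon \lesssim \gamma^2\rho^{4\gamma-2}W_\epsilon \ll \rho^{2\gamma}W_\epsilon$ since $\gamma < 1$ and $\rho \ge \epsilon^{-\sigma_2} \gg 1$. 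On $\Omega_\epsilon''$, where $|\nabla W_\epsilon| \lesssim \epsilon W_\epsilon$ and $V_\epsilon \ge -\tfrac12 W_\epsilon - C$, Young's inequality yields only $\tfrac12\int_{\Omega_\epsilon''}W_\epsilon|\nabla\eta|^2\dd X \le Q_\epsilon[\eta]|_{\Omega_\epsilon''} + C\int_{\Omega_\epsilon''}W_\epsilon\eta^2\dd X$, producing the slack term.

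The main obstacle will be handling the transition layer just inside $\partial\Omega_\epsilon'$, where $\rho \asymp \epsilon^{-\sigma_1}$ and $W_\epsilon \asymp \exp(\epsilon^{-2\sigma_1}/4)$: this thin strip lies in $\Omega_\epsilon'$ but outside $\supp(\chi_\star)$, hence is not directly covered by the Proposition~\ref{Q0prop} step. The required contribution to $\int_{\Omega_\epsilon'}W_\epsilon\rho^2\eta^2\dd X$ there must come from a weighted Poincar\'e inequality on the annulus, combining the gradient control already provided by $Q_\epsilon$ with the fact that $W_\epsilon$ is essentially constant across the annulus of width comparable to $\epsilon^{-\sigma_1}$. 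Throughout, $\sigma_1>0$ must be kept small enough that the various $\epsilon^{\gamma_1}$ errors, as well as the integer exponents $N$ implicit in Lemma~\ref{Wlem} and in the comparison $V_\epsilon \leftrightarrow V$, can all be absorbed into the leading constant from Proposition~\ref{Q0prop}, so that the constants $C_{10}, C_{11}$ in \eqref{Qepsineq} remain uniform as $\epsilon \to 0$.
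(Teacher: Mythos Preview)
Your overall plan matches the paper's: localize with a cutoff at scale $\rho\sim\epsilon^{-\sigma_1}$, feed the inner piece into Proposition~\ref{Q0prop}, and handle the outer piece by a direct Young's-inequality lower bound on $Q_\epsilon$. Two points deserve comment.

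First, the paper uses an IMS-type partition $1=\chi_3^2+\chi_4^2$ with $\eta_3=\chi_3\eta$, $\eta_4=\chi_4\eta$, which yields the exact identity
\[
  Q_\epsilon[\eta] \,=\, Q_\epsilon[\eta_3]+Q_\epsilon[\eta_4]-\int_{\Omega_\epsilon}W_\epsilon\bigl(|\nabla\chi_3|^2+|\nabla\chi_4|^2\bigr)\eta^2\dd X
\]
(since $\eta^2=\eta_3^2+\eta_4^2$, $\eta\nabla\eta=\eta_3\nabla\eta_3+\eta_4\nabla\eta_4$, and $|\nabla\eta|^2=|\nabla\eta_3|^2+|\nabla\eta_4|^2-(|\nabla\chi_3|^2+|\nabla\chi_4|^2)\eta^2$). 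Your additive split $\eta=\chi_\star\eta+(1-\chi_\star)\eta$ does not give such clean formulas, and the inequality $Q_\epsilon[\eta]\ge\tfrac12(Q_\epsilon[\eta_1]+Q_\epsilon[\eta_2])-\text{error}$ would require separate justification of the cross terms. Switching to the squared partition removes this nuisance.

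Second, the ``main obstacle'' you flag is not actually an obstacle, and your proposed remedy is flawed. The transition annulus $\tfrac12\epsilon^{-\sigma_1}\le\rho\lesssim\epsilon^{-\sigma_1}$ lies in $\Omega_\epsilon'$, where by Lemma~\ref{Wlem} one has $W_\epsilon\approx A(\rho)$ and $\nabla W_\epsilon\approx\nabla A$. The paper simply applies the \emph{same} Young's-inequality lower bound to $\eta_4$ uniformly over its entire support, computing
\[
  \frac{V_\epsilon}{W_\epsilon}-\frac{|\nabla W_\epsilon|^2}{3W_\epsilon^2}\,\ge\,
  \begin{cases} C\rho^2-\tilde C & \hbox{in }\Omega_\epsilon'\,,\\ -\tilde C & \hbox{in }\Omega_\epsilon''\,,\\ C\rho^{2\gamma} & \hbox{in }\Omega_\epsilon'''\,.\end{cases}
\]
The inner-region bound comes from the corresponding computation for the limiting weight $A$ (namely $V/A-|\nabla A|^2/(3A^2)\ge\rho^2/24-C$, already used in the proof of Proposition~\ref{Q0prop}) together with \eqref{Wapprox}. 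This directly delivers the $\rho^2$-weighted mass on the annulus, with no Poincar\'e inequality needed. Your alternative justification---that ``$W_\epsilon$ is essentially constant across the annulus of width comparable to $\epsilon^{-\sigma_1}$''---is incorrect: on $[\tfrac12\epsilon^{-\sigma_1},\epsilon^{-\sigma_1}]$ the weight $W_\epsilon\asymp A(\rho)\asymp\rho^{-2}e^{\rho^2/4}$ varies by a factor of order $\exp(c\,\epsilon^{-2\sigma_1})$, so a Poincar\'e argument based on near-constancy of $W_\epsilon$ cannot work as stated.
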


\begin{proof}[\bf End of the proof of Proposition~\ref{I4prop}] In view of
\eqref{hatI4def} and \eqref{Qepsdef} we have
\[
  \hat I_4 \,=\, -Q_\epsilon[\tilde\eta] -\frac{\epsilon}{2}\int_{\Omega_\epsilon} \partial_R
  \bigl(W_\epsilon(1+\epsilon R)\bigr) \tilde\zeta^2\dd X  + \frac12 \int_{\Omega_\epsilon}
  t(\partial_t W_\epsilon)\tilde \eta^2\dd X\,. 
\]
The three terms in the right-hand side are estimated using \eqref{Qepsineq},
\eqref{tildezetaest}, and \eqref{timederest}, respectively. Taking $\epsilon > 0$
sufficiently small and recalling that $\rho_\gamma \ge \epsilon^{-\sigma_1} \gg 1$
outside the inner region $\Omega_\epsilon'$, we arrive at \eqref{I4est}. The slight
discrepancy between the definitions of $\mu_1$ in \eqref{mudef} and in 
Proposition~\ref{Qepsprop} is completely harmless. 
\end{proof}

\subsection{Control of the remaining terms}\label{ssec48}

In this section, we estimate the remaining terms $I_3$, $I_5$, and $\hat I_6$ defined
in \eqref{I3def}, \eqref{I5def}, and \eqref{hatI6def}, respectively. 

\medskip\noindent{\bf Control of $I_3$.} We deduce from \eqref{I3def} that
\begin{equation}\label{I3bd1}
  |I_3| \,\le\, \int_{\Omega_\epsilon} \frac{|\nabla\tilde\phi|}{1{+}\epsilon R}
  \,|\tilde\eta| \bigl|\nabla(W_\epsilon \tilde\eta)\bigr|\dd X \,\le\,
  \int_{\Omega_\epsilon} \frac{|\nabla\tilde\phi|}{1{+}\epsilon R}
  \,|\tilde\eta|\Bigl(|\tilde\eta||\nabla W_\epsilon| + W_\epsilon|\nabla\tilde\eta|\Bigr)\dd X\,.
\end{equation}
To estimate the right-hand side, we use \eqref{phibd1} and \cite[Lemma~2.1]{GW1} to obtain
the uniform bound
\[
  \Bigl\|\frac{|\nabla\tilde\phi|}{1{+}\epsilon R}\Bigr\|_{L^\infty} \,\le\,
  C \|\tilde \eta\|_{L^{4/3}}^{1/2}\|\tilde \eta\|_{L^4}^{1/2} \,\le\,
  C \|\tilde \eta\|_{\cX_\epsilon}^{1/2}\bigl(\|\tilde \eta\|_{\cX_\epsilon}^{1/2} +
  \|\nabla\tilde \eta\|_{\cX_\epsilon}^{1/2}\bigr)\,.
\]
On the other hand it is easy to verify that $|\nabla W_\epsilon| \le C(1+\rho_\gamma)W_\epsilon$
where $\rho_\gamma$ is defined in \eqref{rhogamdef}. It follows that
\begin{equation}\label{I3bd2}
  |I_3| \,\le\, C \|\tilde \eta\|_{\cX_\epsilon}^{3/2}\bigl(\|\tilde \eta\|_{\cX_\epsilon}^{1/2} +
  \|\nabla\tilde \eta\|_{\cX_\epsilon}^{1/2}\bigr)\bigl(\|\tilde \eta\|_{\cX_\epsilon} +
  \|\rho_\gamma\tilde \eta\|_{\cX_\epsilon} + \|\nabla\tilde \eta\|_{\cX_\epsilon}\bigr)
  \,\le\, C \|\tilde \eta\|_{\cX_\epsilon} D_\epsilon[\tilde\eta]\,,
\end{equation}
where for convenience we denote
\begin{equation}\label{Depsdef}
  D_\epsilon[\tilde\eta] \,=\, \|\nabla\tilde \eta\|_{\cX_\epsilon}^2 + \|\rho_\gamma\tilde
  \eta\|_{\cX_\epsilon}^2 + \|\tilde \eta\|_{\cX_\epsilon}^2\,.
\end{equation}

\medskip\noindent{\bf Control of $I_5$.} Proposition~\ref{Remprop} asserts that the 
remainder $\Rem(R,Z,t)$ satisfies the pointwise estimate \eqref{Remest}, which implies
in particular that $\Rem \in \cX_\epsilon$. In view of \eqref{I5def}, we thus find
\[
  |I_5| \,\le\, \frac{1}{\delta}\,\|\Rem\|_{\cX_\epsilon}\Bigl(\|\tilde \eta\|_{\cX_\epsilon}
  + \|W_\epsilon^{-1} \tilde\phi\|_{\cX_\epsilon}\Bigr) \,\le\, C\Bigl(\epsilon +
  \frac{\epsilon^{\gamma_5}}{\delta^2}\Bigr)\Bigl(\|\tilde \eta\|_{\cX_\epsilon} + \|W_\epsilon^{-1}
  \tilde\phi\|_{\cX_\epsilon}\Bigr)\,.
\]
It remains to estimate the norm of $W_\epsilon^{-1} \tilde\phi$ in the space $\cX_\epsilon$.
This can be done by decomposing the Biot-Savart kernel as in the proof of
Lemma~\ref{Ekinlem}, see in particular Eq.~\eqref{Gdecomp} below. Neglecting contributions
of order $\cO(\epsilon^\infty)$, we can restrict the integrals to the region where
$R^2 + Z^2 \le \epsilon^{-2\sigma_1}$ and ${R'}^2 + {Z'}^2 \le \epsilon^{-2\sigma_1}$.
Invoking \eqref{Gest} and recalling that $\mu_0(t) = \cO(\epsilon^\infty)$ by
Lemma~\ref{lem:mu0mu1}, we find that $\|W_\epsilon^{-1}\tilde\phi\|_{\cX_\epsilon} =
\|W_\epsilon^{-1/2}\tilde\phi\|_{L^2(\Omega_\epsilon)} \le C \|\tilde \eta\|_{\cX_\epsilon}$. 
We conclude that
\begin{equation}\label{I5est}
  |I_5| \,\le\, C\Bigl(\epsilon + \frac{\epsilon^{\gamma_5}}{\delta^2}\Bigr)\,\|\tilde
  \eta\|_{\cX_\epsilon}\,.
\end{equation}

\medskip\noindent{\bf Control of $\hat I_6$.} The first two terms in \eqref{hatI6def}
are easily estimated, because $\dot{\bar r} = \cO(\delta)$ by \eqref{rODE}.
Proceeding as in Lemma~\ref{Ekinlem} to control the kinetic energy, and recalling
that $\mu_0(t) = \cO(\epsilon^\infty)$, we find
\[
  \cI_0 \,:=\,
  \biggl|\frac{\epsilon \bar r \dot{\bar r}}{\delta\Gamma} \int_{\Omega_\epsilon} W_\epsilon
  \tilde \eta \partial_R \tilde \eta\dd X  + \frac{t\dot{\bar r}}{\bar r}\,E^\kin_\epsilon
  [\tilde\eta]\biggr| \,\le\, C\epsilon \|\tilde\eta\|_{\cX_\epsilon} \|\nabla\tilde\eta\|_{\cX_\epsilon} +
  C \epsilon^2 \|\tilde\eta\|_{\cX_\epsilon}^2\,.
\]
So it remains to estimate the last term in \eqref{hatI6def}, which involves the
correction $\dot{\tilde z}(t)$ to the vertical speed introduced in \eqref{tildezeq}.
Using \eqref{phidef} and integrating by parts we first observe that
\begin{equation}\label{cI1def}
\begin{split}
  \cI_1 \,:&=\, \int_{\Omega_\epsilon} \bigl(W_\epsilon \tilde \eta -\tilde\phi\bigr)\partial_Z\eta_*\dd X
  \,=\, \int_{\Omega_\epsilon}\bigl(W_\epsilon\partial_Z\eta_* - \partial_Z \phi_*\bigr)\tilde\eta \dd X \\  
  \,&=\, -\int_{\Omega_\epsilon'}(\partial_Z\Theta)\tilde\eta\dd X + \int_{\Omega_\epsilon''\cup
   \Omega_\epsilon'''}\bigl(W_\epsilon\partial_Z\eta_* - \partial_Z \phi_*\bigr)\tilde\eta \dd X\,,
\end{split}
\end{equation}
where $\Theta$ is defined in \eqref{Thetadef}. In the second line, we used the expression
\eqref{Wdef} of $W_\epsilon$ in the inner region $\Omega_\epsilon'$ to obtain the identity
$W_\epsilon\partial_Z\eta_* - \partial_Z \phi_* = \Phi_\epsilon'(\zeta_*)\partial_Z\zeta_* -
\partial_Z\phi_* = -\partial_Z\Theta$. The last integral in \eqref{cI1def} is of order
$\cO(\epsilon^\infty\|\tilde\eta\|_{\cX_\epsilon})$, and the integral over $\Omega_\epsilon'$ can
be controlled using Proposition~\ref{Phiprop}. We thus obtain $|\cI_1| \le C(\epsilon\delta +
\epsilon^{\gamma_3})\|\tilde\eta\|_{\cX_\epsilon}$. Moreover, we obviously have
\[
  \cI_2 \,:=\, \biggl|\int_{\Omega_\epsilon} \bigl(W_\epsilon \tilde \eta -\tilde\phi\bigr)
  \partial_Z\tilde\eta\dd X\biggr| \,\le\, C\|\tilde\eta\|_{\cX_\epsilon} \|\nabla\tilde
  \eta\|_{\cX_\epsilon}\,.
\]

Finally, to control the velocity $\dot{\tilde z}(t)$, we need the following lemma: 

\begin{lem}\label{Jlem}
Let $J(t) = \int_{\Omega_\epsilon}Z \cR(R,Z,t)\dd X$ where $\cR$ is defined in \eqref{cRdef}.
Then there exists a constant $C > 0$ such that
\begin{equation}\label{Jest}
  |J| \,\le\, \frac{C\epsilon\beta_\epsilon}{\delta}\,\Bigl(\|\tilde\eta\|_{\cX_\epsilon} +
  \delta \|\tilde\eta\|_{\cX_\epsilon}^2\Bigr) + C\Bigl(\epsilon + \frac{\epsilon^{\gamma_5}}{\delta^2}
  \Bigr)\,.
\end{equation}
\end{lem}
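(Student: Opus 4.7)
The plan is to isolate the leading $\delta^{-1}$ contribution to $\cR$, show that it integrates to zero against $Z$ by skew-adjointness, and then estimate each of the correction pieces individually. Write
\[
  \cR \,=\, \frac{1}{\delta}\Lambda\tilde\eta + \frac{1}{\delta}\cE_1 + \bigl\{\tilde\phi,\tilde\zeta\bigr\} - \frac{\epsilon\bar r}{\delta\Gamma}\bigl(\dot{\bar r}\partial_R\tilde\eta + \dot{\bar z}_*\partial_Z\tilde\eta\bigr) - \cL\tilde\eta - \epsilon\partial_R\tilde\zeta - \frac{1}{\delta}\Rem,
\]
where $\Lambda\tilde\eta = \frac{1}{2\pi}\bigl(\{L\eta_0,\tilde\eta\} + \{L\tilde\eta,\eta_0\}\bigr)$ as in \eqref{Lamdef} and
\[
  \cE_1 \,=\, \bigl\{\phi_*-\phi_0\,,\tilde\zeta\bigr\} + \bigl\{\phi_0\,,\tilde\zeta-\tilde\eta\bigr\} + \bigl\{\tilde\phi - {\TS\frac{1}{2\pi}}L\tilde\eta\,,\eta_0\bigr\} + \bigl\{\tilde\phi\,,\zeta_*-\eta_0\bigr\}
\]
collects everything else in the two $\delta^{-1}$-brackets.

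The cancellation of $\int Z\Lambda\tilde\eta\,\D X$ is immediate from Proposition~\ref{LLamprop}: since $Z\eta_0 = -2\partial_Z\eta_0 \in \ker\Lambda$ and $e^{|X|^2/4}\eta_0 = (4\pi)^{-1}$, skew-adjointness of $\Lambda$ in $\cY$ gives $\int_{\R^2}Z\Lambda\tilde\eta\,\D X = 4\pi(Z\eta_0,\Lambda\tilde\eta)_\cY = -4\pi(\Lambda(Z\eta_0),\tilde\eta)_\cY = 0$, with the cut-off from $\Omega_\epsilon$ producing only $\cO(\epsilon^\infty)$ boundary error thanks to Gaussian decay. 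For $\cE_1$, the expansions of Section~\ref{sec3} give $\phi_*-\phi_0 = \cO(\epsilon\beta_\epsilon)$ and $\zeta_*-\eta_0=\cO(\epsilon)$ in the topology of $\cZ$; the Biot-Savart correction $\tilde\phi - (2\pi)^{-1}L\tilde\eta$ is controlled by Lemma~\ref{Kexpansion} and Lemma~\ref{lemphi1}, yielding $\cO(\epsilon\beta_\epsilon)$ weighted bounds; and $\tilde\zeta-\tilde\eta=-\epsilon R\tilde\zeta$ is trivially $\cO(\epsilon)$. After pairing with $Z$ and Cauchy-Schwarz against $W_\epsilon^{-1/2}$, this gives $|\int Z\cE_1\,\D X|\le C\epsilon\beta_\epsilon\|\tilde\eta\|_{\cX_\epsilon}$, hence a contribution $\frac{C\epsilon\beta_\epsilon}{\delta}\|\tilde\eta\|_{\cX_\epsilon}$ to $|J|$.

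The nonlinear term is handled by writing $\{\tilde\phi,\tilde\zeta\} = \partial_R(\tilde U_R\tilde\eta) + \partial_Z(\tilde U_Z\tilde\eta)$ and integrating by parts, so that $\int Z\{\tilde\phi,\tilde\zeta\}\,\D X = -\int \tilde U_Z\tilde\eta\,\D X$. Inserting \eqref{derphi} and Lemma~\ref{Kexpansion} shows the leading-order kernel is the antisymmetric $(R-R')/D^2$, whose double integral against the symmetric tensor $\tilde\eta(R,Z)\tilde\eta(R',Z')$ vanishes; all remaining pieces carry at least one factor $\epsilon$ or $\epsilon\beta_\epsilon$, giving $|\int Z\{\tilde\phi,\tilde\zeta\}\,\D X| \le C\epsilon\beta_\epsilon\|\tilde\eta\|_{\cX_\epsilon}^2$, which is exactly the $\epsilon\beta_\epsilon\|\tilde\eta\|^2$ piece in \eqref{Jest}. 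The drift terms are handled by integration by parts: $\int Z\partial_R\tilde\eta\,\D X = 0$ up to $\cO(\epsilon^\infty)$ boundary terms, while $\int Z\partial_Z\tilde\eta\,\D X = -\mu_0 = \cO(\epsilon^\infty\delta^{-1})$ by Lemma~\ref{lem:mu0mu1}, and the $\dot{\bar z}_*=\cO(\Gamma\beta_\epsilon/r_0)$ prefactor makes these contributions negligible. For the $\cL$ term, $L^2$-adjoint computation gives $\cL^* Z = -\tfrac12 Z$, so $\int Z\cL\tilde\eta\,\D X = -\tfrac12\mu_2 = 0$ by the modulation constraint \eqref{mu2def}, and $\int Z\epsilon\partial_R\tilde\zeta\,\D X = 0$ by the same argument. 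Finally, the remainder piece is handled directly by Proposition~\ref{Remprop}: multiplying \eqref{Remest} by $|Z|$ and integrating against the Gaussian weight yields $\frac{1}{\delta}|\int Z\Rem\,\D X| \le C(\epsilon + \epsilon^{\gamma_5}\delta^{-2})$.

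The main obstacle is Step~1: the bookkeeping of $\cE_1$ requires propagating the precise logarithmic factors from the expansions of Section~\ref{sec3} (in particular the $\beta_\epsilon$ in $\phi_1$) and combining them with the Biot-Savart kernel expansion of Lemma~\ref{Kexpansion}; this is what pins down the $\epsilon\beta_\epsilon$ loss and prevents a naive $\cO(\beta_\epsilon^2)$ bound. Step~3 also requires care because the antisymmetry argument for the nonlinearity is only valid at $\epsilon=0$, so one must expand $\sqrt{(1{+}\epsilon R)(1{+}\epsilon R')}\tilde F(s)$ and track which subleading pieces remain after symmetrization.
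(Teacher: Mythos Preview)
Your proposal is correct and follows essentially the same approach as the paper: isolate the leading $\delta^{-1}$ piece, exploit the skew-adjointness/symmetry of the Biot-Savart operator to make it vanish against $Z$, and bound each remaining term (Biot-Savart corrections, nonlinearity, drift, $\cL$, $\Rem$) exactly as you outline. The paper organizes the computation slightly differently---it integrates by parts first via $\int Z\{f,g\}\,\D X=-\int(\partial_R f)g\,\D X$ to reduce $J_1$ and $J_2$ to integrals of the form $\int(\partial_R\phi)\eta/(1{+}\epsilon R)\,\D X$ and \emph{then} expands Biot-Savart, which collapses your four $\cE_1$ terms into a single expansion step and sidesteps the need to define $L\tilde\eta$ on $\R^2$; but the underlying cancellation (symmetry of $L$, equivalently skew-adjointness of $\Lambda$) and the $\cO(\epsilon\beta_\epsilon)$ accounting are identical.
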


\begin{proof}
We consider separately the various terms in the right-hand side of \eqref{cRdef}.
Integrating by parts, we find
\[
  J_1 \,:=\, \frac{1}{\delta}\int_{\Omega_\epsilon}Z \Bigl(\bigl\{\phi_*\,,\tilde\zeta\bigr\}
  + \bigl\{\tilde\phi\,,\zeta_*\bigr\}\Bigr) \dd X \,=\, -\frac{1}{\delta}\int_{\Omega_\epsilon}
  \biggl(\frac{\tilde\eta \partial_R\phi_*}{1{+}\epsilon R} +
  \frac{\eta_* \partial_R\tilde\phi}{1{+}\epsilon R}\biggr)\dd X\,.  
\]
In the right-hand side, we can restrict the integration to the region where $\rho \le
\epsilon^{-\sigma_1}$, because the integral on the complement is of order
$\cO\bigl(\epsilon^\infty \|\tilde\eta\|_{\cX_\epsilon}\bigr)$. Thus, expanding the
Biot-Savart formula as in Section~\ref{ssec31}, we obtain
\begin{equation}\label{Jest1}
  -\delta J_1 \,=\,\frac{1}{2\pi}\int_{\R^2} \Bigl(\tilde\eta \partial_R(L\eta_*) +
  \eta_* \partial_R (L\tilde\eta)\Bigr)\dd X + \cO\bigl(\epsilon \beta_\epsilon
  \|\tilde\eta\|_{\cX_\epsilon}\bigr)\,,
\end{equation}
where $L$ is the convolution operator \eqref{Lopdef}. Since $L$ is symmetric
in $L^2(\R^2)$ and commutes with $\partial_R$, the integral in \eqref{Jest1}
vanishes and we conclude that $|J_1| \le \delta^{-1}\epsilon\beta_\epsilon\|\tilde
\eta\|_{\cX_\epsilon}$. 

Similarly, we have
\[
  J_2 \,:=\, \int_{\Omega_\epsilon}Z \bigl\{\tilde\phi\,,\tilde\zeta\bigr\}\dd X \,=\,
  \int_{\Omega_\epsilon}\bigl\{Z\,,\tilde\phi\bigr\}\tilde\zeta\dd X \,=\,
  -\int_{\Omega_\epsilon}\frac{\tilde\eta \partial_R\tilde\phi}{1{+}\epsilon R}\dd X\,.  
\]
Here again, up to a negligible error, we can assume that $\tilde \eta$ is supported in the
ball $\rho \le \epsilon^{-\sigma_1}$. Proceeding as before, we thus find
\begin{equation}\label{Jest2}
  J_2 \,=\, -\frac{1}{2\pi}\int_{\R^2}\tilde\eta \partial_R(L\tilde\eta)\dd X + 
  \cO\bigl(\epsilon \beta_\epsilon\|\tilde\eta\|_{\cX_\epsilon}^2\bigr) \,=\,
  \cO\bigl(\epsilon \beta_\epsilon\|\tilde\eta\|_{\cX_\epsilon}^2\bigr)\,.
\end{equation}

The remaining terms in \eqref{cRdef} are easier to treat. In view of \eqref{mu2def}
we have
\[
  \int_{\Omega_\epsilon} Z\Bigl(\cL \tilde\eta + \epsilon\partial_R\tilde\zeta
  \Bigr)\dd x \,=\, 0\,, \qquad \hbox{and} \quad
  \int_{\Omega_\epsilon} Z\Bigl(\dot{\bar r}\,\partial_R \tilde\eta + 
  \dot{\bar z}_* \,\partial_Z \tilde\eta\Bigr)\dd X \,=\, -\dot{\bar z}_* \mu_0\,,
\]
where $\mu_0(t) = \cO(\epsilon^\infty)$ by Lemma~\ref{lem:mu0mu1}. Finally, using estimate
\eqref{Remest}, we obtain
\begin{equation}\label{Jest3}
  \frac{1}{\delta}\int_{\Omega_\epsilon}|Z| \,|\Rem(R,Z,t)|\dd X  \,\le\,
  C\Bigl(\epsilon + \frac{\epsilon^{\gamma_5}}{\delta^2}\Bigr)\,.  
\end{equation}
Combining \eqref{Jest1}, \eqref{Jest2}, and \eqref{Jest3}, we arrive at \eqref{Jest}. 
\end{proof}

\begin{cor}\label{zcor}
There exists a constant $C > 0$ such that the velocity $\dot{\tilde z}$ defined by
\eqref{tildezeq} satisfies
\begin{equation}\label{tildezest}
  \frac{\bar r |\dot{\tilde z}|}{\Gamma} \,\le\, C\beta_\epsilon\Bigl(\|\tilde\eta\|_{\cX_\epsilon} +
  \delta\|\tilde\eta\|_{\cX_\epsilon}^2\Bigr) + C\Bigl(\delta +
   \frac{\epsilon^{\gamma_5-1}}{\delta}\Bigr)\,.
\end{equation}
\end{cor}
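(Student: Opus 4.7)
The plan is to solve the scalar equation \eqref{tildezeq} for $\dot{\tilde z}(t)$ and then substitute the bound on the right-hand side provided by Lemma~\ref{Jlem}. The essential observation is that the prefactor of $\dot{\tilde z}(t)$ in \eqref{tildezeq}, namely
\[
  \cI(t) \,:=\, \int_{\Omega_\epsilon} Z\bigl(\partial_Z\eta_* + \delta\,\partial_Z\tilde\eta\bigr)\dd X\,,
\]
is bounded away from zero for small $\epsilon$, so that \eqref{tildezeq} really does determine $\dot{\tilde z}(t)$ and the stated pointwise estimate follows directly.

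First I would evaluate $\cI(t)$. Integrating by parts in the vertical direction (the boundary contributions at $R = -1/\epsilon$ vanish since $\eta_*$ and $\tilde\eta$ decay rapidly in $Z$, and there are no boundaries in $Z$), I obtain
\[
  \cI(t) \,=\, -\int_{\Omega_\epsilon}\eta_*(R,Z,t)\dd X - \delta\int_{\Omega_\epsilon}\tilde\eta(R,Z,t)\dd X \,=\, -\int_{\Omega_\epsilon}\eta_*\dd X - \delta\,\mu_0(t)\,.
\]
The first integral equals $1 + \cO(e^{-c/\epsilon^{2\sigma_0}})$ by \eqref{appint} combined with the truncation \eqref{etaapp1}, while $\delta\mu_0(t) = \cO(\epsilon^\infty)$ by Lemma~\ref{lem:mu0mu1}. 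Hence $\cI(t) = -1 + \cO(\epsilon^\infty)$ whenever $\epsilon$ is small enough, and in particular $|\cI(t)| \ge 1/2$.

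Having $\cI(t)$ under control, I solve \eqref{tildezeq} to get
\[
  \dot{\tilde z}(t) \,=\, -\frac{\delta\,\Gamma}{\epsilon\,\bar r}\,J(t)\,\bigl(1 + \cO(\epsilon^\infty)\bigr)\,,
\]
where $J(t) = \int_{\Omega_\epsilon} Z\,\cR(R,Z,t)\dd X$ is the quantity estimated in Lemma~\ref{Jlem}. Multiplying by $\bar r/\Gamma$ and invoking \eqref{Jest} yields
\[
  \frac{\bar r\,|\dot{\tilde z}(t)|}{\Gamma} \,\le\, \frac{\delta}{\epsilon}\,|J(t)|\,\bigl(1 + \cO(\epsilon^\infty)\bigr) \,\le\, C\beta_\epsilon\bigl(\|\tilde\eta\|_{\cX_\epsilon} + \delta\|\tilde\eta\|_{\cX_\epsilon}^2\bigr) + C\Bigl(\delta + \frac{\epsilon^{\gamma_5-1}}{\delta}\Bigr)\,,
\]
which is exactly \eqref{tildezest}.

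There is no real obstacle here: the corollary is essentially an algebraic consequence of Lemma~\ref{Jlem}. The only point that requires any care is the invertibility of the coefficient $\cI(t)$, and this in turn rests on the fact that our approximate vorticity $\eta_*$ has total mass asymptotically equal to $1$ (Remark~\ref{Remapp}) together with the almost-exact conservation of circulation that was already used in the proof of Lemma~\ref{lem:mu0mu1}. Once $|\cI(t)| \ge 1/2$ is known, the bound on $\dot{\tilde z}$ is immediate from the bound on $J$.
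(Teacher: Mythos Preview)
Your proof is correct and follows exactly the approach the paper intends: the corollary is stated immediately after Lemma~\ref{Jlem} without a separate proof, since it follows by dividing \eqref{tildezeq} by the coefficient $\cI(t) = -1 + \cO(\epsilon^\infty)$ (this identity was already noted by the authors just after \eqref{cRdef}) and then inserting the bound \eqref{Jest} multiplied by $\delta/\epsilon$. Your justification of $\cI(t) = -1 + \cO(\epsilon^\infty)$ via integration by parts, \eqref{appint}, and Lemma~\ref{lem:mu0mu1} is precisely what the paper has in mind.
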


We now conclude the estimate of the term $\hat I_6$. To simplify the writing, we assume that
$\|\tilde\eta\|_{\cX_\epsilon} \le 1$ and we use the shorthand notation \eqref{Depsdef}.
Also, since $\epsilon^2 \lesssim \delta^{1-\sigma}$ we observe that
\begin{equation}\label{Restdef}
  \epsilon + \frac{\epsilon^{\gamma_5}}{\delta^2} \,\lesssim\, \Rest_\epsilon(t)\,, \qquad
  \hbox{where} \quad \Rest_\epsilon(t) \,:=\, \epsilon + \frac{\epsilon^{\gamma_3}}{\delta}\,.
\end{equation}
Here $\gamma_3 = \gamma_5 - 2/(1{-}\sigma) < 3$, so that $\gamma_3$ can be chosen arbitrary
close to $\gamma_5 - 2$ if $\sigma > 0$ is small enough. In view of \eqref{tildezeq}
and \eqref{hatI6def} we have $|\hat I_6| \le \cI_0 + |J| \bigl(|\cI_1| + \delta \cI_2\bigr)$,
so that
\begin{equation}\label{hatI6est}
\begin{split}
  |\hat I_6| \,&\le\, C\epsilon\|\tilde\eta\|_{\cX_\epsilon} D_\epsilon^{1/2} + 
  C\Bigl(\frac{\epsilon\beta_\epsilon}{\delta}\,\|\tilde\eta\|_{\cX_\epsilon} + \Rest_\epsilon\Bigr)
  \Bigl(\delta \Rest_\epsilon \|\tilde\eta\|_{\cX_\epsilon} + \delta \|\tilde\eta\|_{\cX_\epsilon}
  D_\epsilon^{1/2}\Bigr)\\
  \,&\le\, C\|\tilde\eta\|_{\cX_\epsilon}\bigl(D_\epsilon^{1/2} + \Rest_\epsilon\bigr)
  \bigl(\epsilon\beta_\epsilon + \delta \Rest_\epsilon\bigr) \,\le\, C\epsilon\beta_\epsilon
  \|\tilde\eta\|_{\cX_\epsilon}\bigl(D_\epsilon^{1/2} + \Rest_\epsilon\bigr)\,.
\end{split}
\end{equation}

\subsection{Conclusion of the proof}\label{ssec49}

We are now in position to conclude the proof of Theorem~\ref{main2}, hence also of
Theorem~\ref{main1}. Let $\tilde\eta$ be the unique solution of \eqref{tildeq} with zero
initial data. The associated energy \eqref{Edef} satisfies the evolution equation
\begin{equation}\label{Edot}
  t\partial_t E_\epsilon(t) \,=\,  I_1 + I_2 + I_3 + \hat I_4 + I_5 + \hat I_6\,,
\end{equation}
where the various terms in the right-hand side are defined in Section~\ref{ssec44}
and estimated in Sections~\ref{ssec46}--\ref{ssec48}. Using \eqref{I1est}, \eqref{I2est},
\eqref{I3bd2}, \eqref{I4est}, \eqref{I5est}, and \eqref{hatI6est}, we find that, as
long as $t \le T_\adv\delta^{-\sigma}$ and $\|\tilde\eta\|_{\cX_\epsilon} \le 1$, there
exist positive constants $C,C_*,\kappa$ such that
\[
  t\partial_t E_\epsilon(t) \,\le\, -\kappa D_\epsilon + C_*\|\tilde\eta\|_{\cX_\epsilon}
  D_\epsilon + C \|\tilde\eta\|_{\cX_\epsilon}\bigl(\Rest_\epsilon + \epsilon\beta_\epsilon
  D_\epsilon^{1/2}\bigr) + \frac{C\epsilon^2}{\delta}\int_{\Omega_\epsilon''} W_\epsilon
  \tilde\eta^2\dd X + C \mu^2\,,
\]
where $D_\epsilon$ is defined in \eqref{Depsdef}, $\Rest_\epsilon$ in \eqref{Restdef}, and
$\mu^2 := \mu_0^2 + \mu_1^2 + \mu_2^2 \le C\,\Rest_\epsilon^2$ by Lemma~\ref{lem:mu0mu1}.
Since $\rho_\gamma \ge \epsilon^{-\sigma_1}$ in the region $\Omega_\epsilon''$, the integral
term can be estimated as follows
\[
  \frac{\epsilon^2}{\delta }\int_{\Omega_\epsilon''}W_\epsilon \tilde\eta^2\dd X \,\le\,
  \frac{\epsilon^{2+2\sigma_1}}{\delta}\int_{\Omega_\epsilon''}W_\epsilon \rho_\gamma^2 \tilde\eta^2
  \dd X \,\lesssim\, \epsilon^{\gamma_*} D_\epsilon\,,
\]
where $\gamma_* = 2 + 2\sigma_1 - 2/(1{-}\sigma) > 0$ if $\sigma > 0$ is small enough. 
So, if we assume that $C_*\|\tilde\eta\|_{\cX_\epsilon} \le \kappa/4$ and that $\epsilon$
is sufficiently small, we obtain by Young's inequality
\[
  t\partial_t E_\epsilon(t) \,\le\, -\frac{\kappa}{2}\,D_\epsilon + C\,\Rest_\epsilon
  \|\tilde\eta\|_{\cX_\epsilon} + C\mu^2 \,\le\, -\frac{\kappa}{4}\,D_\epsilon + 
  C\,\Rest_\epsilon^2\,.
\]
Integrating that differential inequality over the time interval $(0,t)$ and recalling
that $E_\epsilon(0) = 0$, we arrive at
\[
  E_\epsilon(t) + \frac{\kappa}{4} \int_0^t \frac{D_\epsilon(s)}{s}\dd s
  \,\le\, C \int_0^t \frac{\Rest_\epsilon(s)^2}{s}\dd s \,\le\, C\,
  \Rest_\epsilon(t)^2\,.
\]
Finally, in view of \eqref{Eepsineq}, \eqref{mu2def}, and Lemma~\ref{lem:mu0mu1},
we infer that
\begin{equation}\label{etabound}
  \|\tilde\eta(t)\|_{\cX_\epsilon}^2 \,\le\, C_6 E_\epsilon(t) + C_7
  \bigl(\beta_\epsilon\mu_0(t)^2 + \mu_1(t)^2\bigr) \,\le\,
  C\,\Rest_\epsilon(t)^2\,.
\end{equation}
Inequality \eqref{etabound} holds as long as $\|\tilde\eta(t) \|_{\cX_\epsilon} \le
\min\bigl(1,\kappa/(4C_*)\bigr)$ and $t < T_\adv\delta^{-\sigma}$. But on that time interval we
know that $\Rest_\epsilon \lesssim \epsilon^{\gamma_3 - 2/(1-\sigma)} \ll 1$, so \eqref{etabound}
is actually valid for all $t \in (0,T_\adv\delta^{-\sigma})$ if $\epsilon > 0$ is
small enough. Returning to the solution of \eqref{etaeq} with initial data \eqref{etazero},
we obtain in view of \eqref{etapert}, \eqref{etabound}
\[
  \|\eta(t) - \eta_*(t)\|_{\cX_\epsilon} \,=\, \delta \|\tilde\eta(t)\|_{\cX_\epsilon} \,\le\,
  C \delta\,\Rest_\epsilon(t) \,=\, C\bigl(\epsilon \delta + \epsilon^{\gamma_3}\bigr)\,,
  \qquad t \in (0,T_\adv\delta^{-\sigma})\,,
\]
which gives \eqref{main2est}. This concludes the proof of Theorem~\ref{main2}. \QED

\begin{rem}\label{velrem}
The correction $\tilde z(t)$ to the vertical position of the vortex is small, and
produces negligible effects in our calculations. Indeed, it follows from \eqref{tildezest}
and \eqref{etabound} that
\begin{equation}\label{tzbdd2}
  \frac{\bar r |\dot{\tilde z}(t)|}{\Gamma} \,\lesssim\, \bigl(\beta_\epsilon
  \,\Rest_\epsilon + \delta\bigr)\,, \qquad \hbox{hence} \qquad
  \delta |\tilde z(t)|  \,\lesssim\, \epsilon^2 \bar r(t)\bigl(\delta  + \beta_\epsilon
  \,\Rest_\epsilon\bigr)\,.
\end{equation}
This gives in particular \eqref{tzbdd}.
\end{rem}

\begin{proof}[\bf Proof of Theorem~\ref{main1}.]
Let $\omega_\lin(r,z,t)$ be the solution of the (axisymmetric) heat equation in
$\Omega$ with initial data $\Gamma\,\delta_{(r_0,z_0)}$. Using the same self-similar
variables as in the proof of Theorem~\ref{main2}, we define the rescaled vorticity
$\eta_\lin$ by the relation
\begin{equation}\label{omlin}
  \omega_\lin\bigl(r,z-a_3(t),t\bigr) \,=\, \frac{\Gamma}{\nu t}\,\eta_\lin
  \Bigl(\frac{r-\bar r(t)}{\sqrt{\nu t}}\,,\,\frac{z-\bar z_*(t)-\delta\tilde z(t)}{\sqrt{\nu t}}
  \,,\,t\Bigr)\,,
\end{equation}
where $a_3(t) = \int_0^t V(s)\dd s$ and $V$ is given by \eqref{i4}. A direct
calculation then shows that $\eta_\lin$ satisfies the linear equation
\begin{equation}\label{etalineq}
  t\partial_t \eta_\lin - \frac{\epsilon \bar r}{\delta\Gamma}\Bigl(\dot{\bar r}
  \,\partial_R \eta_\lin + \dot s\,\partial_Z \eta_\lin\Bigr) \,=\, \cL \eta_\lin
  + \partial_R \Bigl(\frac{\epsilon\eta_\lin}{1+\epsilon R}\Bigr)\,,
\end{equation}
with initial data $\eta_0$, where the shift $s(t) = \bar z_*(t) -a_3(t) +\delta\tilde z(t)$ measures
the difference between the vertical position of the vortex as computed in Theorem~\ref{main2}
and the approximation given by the Kelvin-Saffman formula \eqref{i4} without correction terms.
Since $\dot a_3 = \dot{\bar z}_0$, it follows from \eqref{rzapp2} that $\dot s =
\epsilon^2 \dot{\bar z}_2 + \delta \dot{\tilde z}$. Using \eqref{dotr2z2} and \eqref{tzbdd2}, 
we thus obtain
\begin{equation}\label{shift}
  \frac{\epsilon\bar r |\dot s(t)|}{\delta\Gamma} \,\le\, C\Bigl(
  \frac{\beta_\epsilon\epsilon^3}{\delta} + \beta_\epsilon \epsilon^2 + \epsilon\delta\Bigr)
  \,\le\, C \epsilon^{1-3\sigma}\,,
\end{equation}
because $\epsilon^2 \lesssim \delta^{1-\sigma}$ so that $\beta_\epsilon \epsilon^3 \delta^{-1}
\le \epsilon^{1-3\sigma}$ if $0 < \sigma < 1/3$ and $\epsilon > 0$ is small enough.

The solution of \eqref{etalineq} with initial data $\eta_0$ can be estimated as in
\cite[Section~4.4]{GS2}, with substantial simplifications. We use the approximate
solution $\hat\eta_0(R,Z,t) := \chi_0(4\epsilon\rho)\eta_0(R,Z)$, where $\chi_0$
is the cut-off function in \eqref{etaapp1}. Decomposing $\eta_\lin = \hat\eta_0
+ \hat \eta$, we see that the correction $\hat\eta$ satisfies
\begin{equation}\label{etahateq}
  t\partial_t \hat\eta - \frac{\epsilon \bar r}{\delta\Gamma}\Bigl(\dot{\bar r}
  \,\partial_R \hat\eta + \dot s\,\partial_Z \hat\eta\Bigr) \,=\, \cL \hat\eta
  + \partial_R \Bigl(\frac{\epsilon\hat\eta}{1+\epsilon R}\Bigr) + \cR_0\,,
\end{equation}
where
\[
  \cR_0 \,=\, \cL \hat\eta_0 + \partial_R \Bigl(\frac{\epsilon\hat\eta_0}{1+\epsilon R}
  \Bigr) + \frac{\epsilon \bar r}{\delta\Gamma}\Bigl(\dot{\bar r}
  \,\partial_R \hat\eta_0 + \dot s\,\partial_Z \hat\eta_0\Bigr) - t\partial_t \hat\eta_0\,.
\]
To control the solution of \eqref{etahateq}, we introduce the space $\hat\cX_\epsilon$
defined by the norm
\[
  \|\hat\eta\|_{\hat\cX_\epsilon}^2 \,=\, \int_{\Omega_\epsilon} e^{(R^2+Z^2)/4}
  \,\hat\eta(R,Z)^2\dd R\dd Z\,.
\]
In view of \eqref{shift} we have $\|\cR_0\|_{\hat\cX_\epsilon} \le C \epsilon^{1-3\sigma}$,
and using energy estimates as in \cite{GS2} we deduce that the solution of \eqref{etahateq}
with zero initial data satisfies $\|\hat\eta\|_{\hat\cX_\epsilon} \le C \epsilon^{1-3\sigma}$
for $t \in (0,T_\adv\delta^{-\sigma})$. Since $\hat\cX_\epsilon \hookrightarrow \cX_\epsilon$
by \eqref{Wunifbd}, \eqref{cXdef}, we conclude that $\|\eta_\lin - \eta_0\|_{\cX_\epsilon} =
\cO\bigl(\epsilon^{1-3\sigma})$ as $\epsilon \to 0$.

Now the solution of \eqref{omeq} with initial data $\Gamma\,\delta_{(r_0,z_0)}$ satisfies,
instead of \eqref{etadef}, 
\begin{equation}\label{omnonlin}
  \omega_\theta(r,z,t) \,=\, \frac{\Gamma}{\nu t}\,\eta
  \Bigl(\frac{r-\bar r(t)}{\sqrt{\nu t}}\,,\,\frac{z-\bar z_*(t)-\delta\tilde z(t)}{\sqrt{\nu t}}
  \,,\,t\Bigr)\,,
\end{equation}
so combining \eqref{omlin}, \eqref{omnonlin} we obtain
\begin{equation}\label{nseapp2}
\begin{split}
  \frac{1}{\Gamma}\int_{\Omega} \Bigl|\omega_\theta\bigl(r,z,t\bigr) -
  \omega_\lin\bigl(r,z-a_3(t),t\bigr)\Bigr|\dd r\dd z \,&=\,
  \|\eta(t) - \eta_\lin(t)\|_{L^1(\Omega_\epsilon)} \\
  \,&\le\, C \|\eta(t) - \eta_\lin(t)\|_{\cX_\epsilon} \,\le\, C \epsilon^{1-3\sigma}\,,
\end{split}
\end{equation}
because $\|\eta(t) - \eta_0\|_{\cX_\epsilon} \le C\epsilon$ and $\|\eta_0 - \eta_\lin
\|_{\cX_\epsilon} \le C\epsilon^{1-3\sigma}$. Using the notations of \eqref{nseapp},
inequality \eqref{nseapp2} exactly means that $\|\omega_\corr(\cdot\,,t)\| \le
C\Gamma \epsilon^{1-3\sigma}$. This concludes the proof of Theorem~\ref{main1}.
\end{proof}

\appendix
\section{Appendix to Section~\ref{sec3}}\label{secA}

\subsection{Inverting the operator $\Lambda$}\label{ssecA1}

Following \cite{Ga}, we give here a short proof of Proposition~\ref{Laminv}. 
Assume that $n \ge 2$ and $f \in \cY_n \cap \cZ$, or that $n = 1$ and $f \in \cY'_1 
\cap \cZ$. In both cases, we have $f \in \Ker(\Lambda)^\perp$. We want to show that 
there exists a unique $\eta \in \cY_n \cap \cZ$ (respectively, $\eta \in \cY_1'
\cap \cZ$ if $n = 1$) such that $\Lambda \eta = f$. 

To make things concrete, we suppose without loss of generality that $f =
a(\rho)\sin(n\vt)$, for some function $a : \R_+ \to \R$. Our hypotheses
imply that $a$ is smooth, that $a(\rho) = \cO(\rho^n)$ as $\rho \to 0$, and
that $e^{\rho^2/4}a(\rho)$ grows at most polynomially as $\rho \to \infty$. We
look for a solution of the form $\eta = \omega(\rho) \cos(n\vt)$, where
$\omega : \R_+ \to \R$ has to be determined. By \eqref{Lamdef}, we have
\begin{equation}\label{Lameta}
  \Lambda \eta \,=\, \bigl\{\phi_0\,,\eta\bigr\} + \{\Psi\,,\eta_0\bigr\}\,,
  \qquad \hbox{where}\quad \phi_0 \,=\, \frac{1}{2\pi}\,L\eta_0\,, \quad
  \Psi \,=\, \frac{1}{2\pi}\,L\eta\,.
\end{equation}
The function $\phi_0$ is radially symmetric and satisfies $\partial_\rho \phi_0 = 
-\rho\vf(\rho)$, see \eqref{phihdef} and \eqref{psi0rep} below. It follows 
that
\begin{equation}\label{aux1}
  \bigl\{\phi_0\,,\eta\bigr\} \,=\, \partial_\rho \phi_0\, \frac{1}{\rho}
  \,\partial_\vt \eta \,=\, n \vf(\rho)\omega(\rho)\sin(n\vt)\,.
\end{equation}
On the other hand, as $-\Delta \Psi = \eta$, we have $\Psi = \Omega(\rho)
\cos(n\vt)$, where $\Omega$ is the unique regular solution of the differential 
equation
\begin{equation}\label{Omeq}
  -\Omega''(\rho) -\frac{1}{\rho}\,\Omega'(\rho) + \frac{n^2}{\rho^2}\,
  \Omega(\rho) \,=\, \omega(\rho)~, \qquad \rho > 0~.  
\end{equation}
Since $\eta_0$ is radially symmetric and $\partial_\rho \eta_0 = -(\rho/2)\eta_0
= -\rho \vf(\rho) h(\rho)$, see \eqref{phihdef}, we deduce 
\begin{equation}\label{aux2}
  \{\Psi\,,\eta_0\bigr\} \,=\, -\partial_\rho \eta_0\, \frac{1}{\rho}
  \,\partial_\vt \Psi \,=\, -n\vf(\rho)h(\rho)\Omega(\rho)\sin(n\vt)\,.
\end{equation}
In view of \eqref{Lameta}, \eqref{aux1}, \eqref{aux2}, the equation 
$\Lambda \eta = f$ is equivalent to the relation \eqref{omexp}, 
and using in addition \eqref{Omeq} we obtain the differential 
equation \eqref{Omexp} for the stream function $\Omega$. 

The main step in the proof is to show that \eqref{Omexp} has a unique
solution that is regular at the origin and decays to zero at
infinity. Here we distinguish two cases according to the value of
the angular Fourier mode $n$.

\smallskip\noindent 
{\bf 1.} If $n \ge 2$, the homogeneous equation \eqref{Omexp} with 
$a \equiv 0$ has two linearly independent solutions $\psi_+$, $\psi_-$ 
which satisfy
\begin{equation}\label{psipm}
  \psi_-(\rho) \,\sim\, \begin{cases} \rho^n \hspace{6mm}
  \hbox{as }\rho \to 0\,, \\
  \kappa \rho^n \quad \hbox{as }\rho \to \infty\,,\end{cases}
  \qquad
  \psi_+(\rho) \,\sim\, \begin{cases} \kappa \rho^{-n} \quad
  \hbox{as }\rho \to 0\,, \\
  \rho^{-n} \hspace{6mm} \hbox{as }\rho \to \infty\,,\end{cases}
\end{equation}
for some $\kappa > 0$, see \cite{Ga}. Here we use the crucial observation 
that $(n^2/\rho^2) - h(\rho) > 0$ when $n \ge 2$, so that the differential
operator in the left-hand side of \eqref{Omexp} satisfies the Maximum Principle. 
We deduce the following representation formula for the solution of the 
inhomogeneous equation\:
\begin{equation}\label{Omsol}
  \Omega(\rho) \,=\, \psi_+(\rho)\int_0^\rho \frac{r}{w_0}\,\psi_-(r) 
  \frac{a(r)}{n\vf(r)}\dd r + \psi_-(\rho)\int_\rho^\infty \frac{r}{w_0}
  \,\psi_+(r)\frac{a(r)}{n\vf(r)}\dd r\,,
\end{equation}
where $w_0 = 2n\kappa$. It is then straightforward to verify that 
$\Omega(\rho) = \cO(\rho^n)$ as $\rho \to 0$ and $\Omega(\rho) = 
\cO(\rho^{-n})$ as $\rho \to \infty$. Moreover, if $\omega$ is 
defined by \eqref{omexp}, the function $\eta = \omega(\rho)\cos(n\vt)$ 
lies in $\cY_n \cap \cZ$ and satisfies $\Lambda \eta = f$ by construction. 
The details can be found in \cite[Lemma 4]{Ga}. 

\smallskip\noindent 
{\bf 2.} The situation is quite different when $n = 1$, because the lower order 
term $1/\rho^2 - h(\rho)$ in \eqref{Omexp} is no longer positive. In that case, 
it happens that the homogeneous equation \eqref{Omexp} with $a \equiv 0$ has 
a solution $\psi(\rho) = \rho \vf(\rho)$ which satisfies $\psi(\rho) \sim 
\rho/(8\pi)$ as $\rho \to 0$ and $\psi(\rho) \sim 1/(2\pi\rho)$ as $\rho \to 
\infty$. In other words, the linear operator in the left-hand side of 
\eqref{Omexp} has a one-dimensional kernel, and for that reason 
we have to impose the solvability condition 
\begin{equation}\label{solvcond}
  f \,\in\, \cY_1' \,\subset\, \Ker(\Lambda)^\perp\,, \quad \hbox{or equivalently}
  \quad \int_0^\infty a(\rho)\rho^2\dd \rho \,=\, 0\,. 
\end{equation}
To solve \eqref{Omexp} for $n = 1$, we look for a solution of the form $\Omega(\rho) 
= b(\rho)\psi(\rho)$, which leads to a first-order differential equation for $b(\rho)$.
In view of \eqref{solvcond}, we thus find
\begin{equation}\label{bfirst}
  b'(\rho) \,=\, -\frac{1}{\rho\psi(\rho)^2}\int_0^\rho a(r)r^2\dd r \,=\, 
  \frac{1}{\rho\psi(\rho)^2}\int_\rho^\infty a(r)r^2\dd r\,. 
\end{equation}
Integrating \eqref{bfirst} gives the representation formula
\[
  b(\rho) \,=\, b_0  - \int_0^\rho a(r) r^2 \Bigl(\cF(\rho) - \cF(r)\Bigr)\dd r\,,
  \qquad \hbox{for some } b_0 \in \R\,,
\]
where
\[
  \cF(\rho) \,=\, 8\pi^2 \left(\log\bigl(e^{\rho^2/4}-1\bigr) - \frac{1}{
  e^{\rho^2/4}-1}\right)\,, \qquad \cF'(\rho) \,=\, \frac{1}{\rho\psi(\rho)^2}\,.
\]
We now substitute $\Omega(\rho) = b(\rho)\psi(\rho)$ into \eqref{omexp} 
with $n = 1$, and we choose the constant $b_0$ so that $\int_0^\infty \omega(\rho) 
\rho^2 \dd\rho = 0$. This is always possible in a unique way, since
\[
  \int_0^\infty h(\rho) \psi(\rho) \rho^2 \dd\rho \,=\, 
  \int_0^\infty h(\rho) \vf(\rho) \rho^3 \dd\rho \,=\,
  \frac{1}{8\pi}\int_0^\infty e^{-\rho^2/4}\rho^3\dd \rho \,=\, \frac{1}{\pi}
  \,\neq\, 0\,.
\]
To conclude the proof, it remains to verify that the function $\eta = \omega(\rho)
\cos(\vt)$ constructed above belongs to $\cY_1' \cap \cZ$ and satisfies 
$\Lambda \eta = f$. These are straightforward calculations, which can be 
omitted. \QED

\subsection{First order calculations}\label{ssecA2}

We first establish the relations \eqref{LP1}. As $\eta_0 \in \cY_0$ has unit mass we 
find, using \eqref{PQexp},
\begin{equation}\label{P1eta0}
  \bigl(P_1 \eta_0\bigr)(R,Z) \,=\, \int_{\R^2}\frac{R{+}R'}{2}\,\eta_0(R',Z')
  \dd R' \dd Z' \,=\, \frac{R}{2}\,,  
\end{equation}
hence $\{P_1 \eta_0\,,\eta_0\} = \frac12\, \partial_Z \eta_0$. On the other hand, 
since $\partial_R \eta_0 = -(R/2)\eta_0$ and $L$ is a convolution operator, which 
therefore commutes with derivatives, we have 
\[
  \bigl(L P_1 \eta_0\bigr)(R,Z) \,=\, \frac{R}{2}\,(L\eta_0)(R,Z) +   
  L\Bigl(\frac{R}{2}\eta_0\Bigr)(R,Z) \,=\, \frac{R}{2}\,(L\eta_0)(R,Z) -  
  \partial_R\bigl(L\eta_0\bigr)(R,Z)\,. 
\]
Recalling that $L\eta_0 = 2\pi \phi_0$, and that $\{\phi_0,\eta_0\} = 0$ 
because both $\phi_0$, $\eta_0$ are radially symmetric, we thus obtain
\begin{align*}
  \frac{1}{2\pi}\bigl\{L P_1 \eta_0\,,\eta_0\bigr\} \,&=\, 
  \Bigl\{\frac{R}{2}\,\phi_0-  \partial_R\phi_0\,,\eta_0\Bigr\}
  \,=\, \frac{1}{2}\,\phi_0\,\partial_Z \eta_0 + \bigl\{\phi_0\,,
  \partial_R \eta_0 \bigr\} \\
  \,&=\,  \frac{1}{2}\,\phi_0\,\partial_Z \eta_0 - \Bigl\{\phi_0\,,
  \frac{R}{2}\,\eta_0 \Bigr\} \,=\, \frac{1}{2}\,\phi_0\,\partial_Z \eta_0 + 
  \frac{1}{2}\,(\partial_Z\phi_0)\eta_0\,,
\end{align*}
which concludes the proof of \eqref{LP1}.

We next prove formula \eqref{dotz0} for the vertical velocity. Assuming that
$\dot{\bar z}_0$  is given by \eqref{dotz0} for some $v \in \R$, we see that
the right-hand side of \eqref{eta1eq} belongs to $\cY_1' = \cY \cap \Ker(\Lambda)^\perp$
if and only if
\begin{equation}\label{vdef1}
  \int_{\R^2} \Bigl(\frac{v}{2\pi}\,\partial_Z \eta_0 -\frac{3}{2}
  (\partial_Z \phi_0) \eta_0 -\frac{1}{2}\phi_0 \partial_Z \eta_0\Bigr)
  Z\, \dd R \dd Z \,=\, 0\,.
\end{equation}
Since  $\partial_Z \eta_0 = -(Z/2)\eta_0$ and $\int_{\R^2} Z^2 \eta_0 \dd R\dd Z 
= 2$, it is straightforward to verify that \eqref{vdef1} is equivalent to 
\begin{equation}\label{vdef2}
  v \,=\, \pi\int_{\R^2} \phi_0 \eta_0 \bigl(3 - Z^2\bigr)\dd R \dd Z
  \,=\, \frac{\pi}{2}\int_{\R^2} \phi_0\eta_0 \bigl(6 - |X|^2\bigr)\dd X\,,
\end{equation}
where $X = (R,Z)$ and $|X|^2 = R^2 + Z^2$. 

To evaluate the right-hand side of \eqref{vdef2}, we temporarily denote 
$\psi_0 = 2\pi \phi_0 = L\eta_0$, namely
\[
  \psi_0(X) \,=\, \frac{1}{4\pi} \int_{\R^2} \log\Bigl(\frac{8}{|X-Y|}\Bigr)
  \,e^{-|Y|^2/4}\dd Y\,, \qquad X \in \R^2\,.
\]
This function satisfies $-\Delta \psi_0 = 2\pi\eta_0 = \frac12\,e^{-|X|^2/4}$, so
that
\begin{equation}\label{psi0rep}
  \psi_0(X) \,=\, \psi_0(0) \,-\, \int_0^{|X|} \frac{1 - e^{-\rho^2/4}}{\rho}
  \dd \rho \,=:\, \tilde\psi_0(|X|)\,, \qquad X \in \R^2\,,
\end{equation}
where
\begin{equation}\label{psi00}
  \psi_0(0) \,=\, \log(8) - \frac{1}{4\pi} \int_{\R^2} \log(|Y|) 
  \,e^{-|Y|^2/4}\dd Y \,=\, 2\log(2) + \frac{\gamma_E}{2}\,.
\end{equation}
Using \eqref{psi0rep}, \eqref{psi00} and integrating by parts, we easily find
\[
  \int_{\R^2} \psi_0 \eta_0 \dd X \,=\, \frac{1}{2} \int_0^\infty
  \tilde \psi_0(\rho) e^{-\rho^2/4} \rho \dd \rho \,=\, 
  \psi_0(0) + \int_0^\infty \tilde \psi_0'(\rho) e^{-\rho^2/4}\dd \rho \,=\, 
  \frac{3}{2}\log(2) + \frac{\gamma_E}{2}\,,
\]
and similarly
\[
  \int_{\R^2} \psi_0 \eta_0 |X|^2 \dd X \,=\, 4\psi_0(0) + \int_0^\infty
  \tilde \psi_0'(\rho) e^{-\rho^2/4}(\rho^2 + 4)\dd \rho \,=\, 
  6\log(2) + 2\gamma_E -1\,.
\]
Returning to \eqref{vdef2}, we conclude that
\begin{equation}\label{vdef3}
  v \,=\, \frac{1}{4}\int_{\R^2} \psi_0\eta_0 \bigl(6 - |X|^2\bigr)\dd X
  \,=\, \frac{3}{4}\log(2) + \frac{1}{4}\gamma_E + \frac{1}{4}\,. 
\end{equation}

\subsection{Second order calculations}\label{ssecA3}

Our goal here is to prove Lemma~\ref{R2lem}. To establish \eqref{R2exp}, we consider 
separately the various terms in \eqref{R2def}. As $\eta_1 \in \cY_1$ has zero mean, 
we find as in \eqref{P1eta0} that $P_1 \eta_1$ is a constant, 
which can be disregarded. Moreover $L P_1 \eta_1 = \frac{R}{2}L\eta_1 +    
L\bigl(\frac{R}{2}\eta_1\bigr)$, hence using the expression \eqref{eta1exp}
of $\eta_1$ we find that
\[
  L P_1 \eta_1 \,=\, (R^2-Z^2) \chi_1(\rho) + \delta RZ \chi_2(\rho) + \chi_3(\rho)\,,
\]
where $\chi_1, \chi_2, \dots$ are functions of the radial variable $\rho =
(R^2 + Z^2)^{1/2}$. As $\eta_0$ itself is radially symmetric, we deduce that
\begin{equation}\label{2nd1}
  \bigl\{(\beta_\epsilon - 1)P_1 \eta_1 + LP_1 \eta_1\,,\eta_0\bigr\} \,=\, 
   RZ \chi_4(\rho) + \delta (R^2-Z^2)\chi_5(\rho)\,.
\end{equation}

Next, using the expression \eqref{PQexp} of $P_2$, we see that 
\[
  \bigl(P_2 \eta_0\bigr)(R,Z) \,=\, \frac{1}{16}\int_{\R^2}\Bigl(
  (R{-}R')^2 + 3 (Z{-}Z')^2\Bigr)\,\eta_0(R',Z')\dd R' \dd Z' \,=\, 
  \frac{R^2}{16} + \frac{3 Z^2}{16} + \frac12\,,   
\]
and a similar calculation gives $Q_2 \eta_0 = \frac{3R^2}{16} - \frac{Z^2}{16}
+ \frac14$. Moreover,
\[
  \bigl(L P_2 \eta_0\bigr)(R,Z) \,=\, \frac{1}{16} \int_{\R^2}
  \log\Bigl(\frac{8}{D}\Bigr)\Bigl(2D^2 + (Z{-}Z')^2 - (R{-}R')^2\Bigr)
  \,\eta_0(R',Z')\dd R' \dd Z'\,,
\]
where $D^2 = (R{-}R')^2 + (Z{-}Z')^2$. Using the fact that $\eta_0$
given by \eqref{phi0def} is radially symmetric, we easily obtain
\[
  \frac{1}{2\pi}\bigl(L P_2 \eta_0\bigr)(R,Z) \,=\, 
  \chi_6(\rho) + (R^2 - Z^2)\chi_7(\rho)\,.  
\]
Altogether, we arrive at
\begin{equation}\label{2nd2}
  \frac{1}{2\pi}\bigl\{\beta_\epsilon P_2\eta_0 + LP_2 \eta_0 
  + Q_2 \eta_0\,, \eta_0\bigr\} \,=\, \frac{\beta_\epsilon}{16\pi}\,
  RZ \eta_0 + RZ \chi_8(\rho)\,.  
\end{equation}

The remaining terms in \eqref{R2def} are easier to treat. In view of 
\eqref{dotz0}, \eqref{eta1exp}, \eqref{phi1exp}, we have
\begin{equation}\label{2nd3}
\begin{split}
  \bigl\{\phi_1\,,\eta_1\bigr\} &- \frac{r_0 \dot{\bar z}_0}{\Gamma}
  \,\partial_Z \eta_1 \,=\, \Bigl\{\phi_1 - \frac{\beta_\epsilon -1}{4\pi}\,R
  \,,\eta_1\Bigr\} - \frac{v}{2\pi}\,\partial_Z \eta_1  \\
  \,&=\, \Bigl\{\frac{R}{2}\,\phi_0 - \partial_R \phi_0 + R\,\phi_{10}(\rho) + 
  \delta Z\,\phi_{11}(\rho)\,, R\,\eta_{10}(\rho) + \delta Z\,\eta_{11}(\rho)\Bigr\}
  - \frac{v}{2\pi}\,\partial_Z \eta_1 \\
  \,&=\, RZ\,\chi_9(\rho) + \delta\Bigl(\chi_{10}(\rho) + (R^2-Z^2)
  \chi_{11}(\rho)\Bigr) + \delta^2RZ\,\chi_{12}(\rho)\,.
\end{split}
\end{equation}
It is also easy to verify that the terms $(\partial_Z \phi_1)\eta_0 + 
(\partial_Z \phi_0)\eta_1 - 2R(\partial_Z\phi_0)\eta_0 + \delta\partial_R(R\eta_0)$ 
are exactly of the same form. Finally, using again \eqref{eta1exp}, \eqref{phi1exp},
we obtain
\begin{equation}\label{2nd4}
  R\Bigl(\bigl\{\phi_1\,,\eta_0\bigr\} + \bigl\{\phi_0\,,\eta_1\bigr\}\Bigr) 
  \,=\, R\Bigl(\frac{\beta_\epsilon-1}{4\pi}\,\partial_Z \eta_0 + 
  Z \chi_{13}(\rho) + \delta R \chi_{14}(\rho)\Bigr)\,.
\end{equation}
If we now combine \eqref{2nd1}, \eqref{2nd2}, \eqref{2nd3}, \eqref{2nd4}, 
we arrive at \eqref{R2exp}. \QED

\subsection{Higher order order calculations}\label{ssecA4}

The calculations carried out in Sections~\ref{ssec35} and \ref{ssec36} do not
require new ideas, but a more compact notation is often helpful. To prove
Lemma~\ref{R3lem} and similar statements, it is important to understand how the
decomposition \eqref{Yndef} of the function space $\cY$ behaves under the
Poisson bracket. If we use polar coordinates $R = \rho\cos\vt$, $Z = \rho\sin\vt$, we
recall that $\cY_n$ is the subspace of $\cY$ spanned by functions of the form
$a(\rho)\cos(n\vt)$ and $b(\rho) \sin(n\vt)$. Since
\[
  \bigl\{f\,,g\bigr\} \,=\, \partial_R f \partial_Z g - \partial_Z f \partial_R g
  \,=\, \frac{1}{\rho}\Bigl(\partial_\rho f \partial_\vt g - \partial_\vt f
  \partial_\rho g\Bigr)\,,
\]
we easily obtain the following result\:

\begin{lem}\label{Poissonlem}
If $a,b : \R_+ \to \R$ are smooth functions and $n,m \in \N$, then
\begin{align*}
  \bigl\{a(\rho)\cos(n\vt)\,,\,b(\rho)\cos(m\vt)\bigr\} \,&=\,
  c_{11}(\rho)\sin((n{-}m)\vt) + c_{12}(\rho)\sin((n{+}m)\vt)\,, \\
  \bigl\{a(\rho)\sin(n\vt)\,,\,b(\rho)\sin(m\vt)\bigr\} \,&=\,
  c_{21}(\rho)\sin((n{-}m)\vt) + c_{22}(\rho)\sin((n{+}m)\vt)\,, \\
  \bigl\{a(\rho)\sin(n\vt)\,,\,b(\rho)\cos(m\vt)\bigr\} \,&=\,
  c_{31}(\rho)\cos((n{-}m)\vt) + c_{32}(\rho)\cos((n{+}m)\vt)\,,
\end{align*}
where $c_{ij} : \R_+ \to \R$ are smooth functions. In particular
$\{\cY_n,\cY_m\} \subset \cY_{n-m} + \cY_{n+m}$ if $m \le n$. 
\end{lem}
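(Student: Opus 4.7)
The plan is a direct verification from the polar representation of the Poisson bracket displayed immediately above the statement,
\begin{equation*}
  \{f,g\} \,=\, \frac{1}{\rho}\bigl(\partial_\rho f\,\partial_\vt g - \partial_\vt f\,\partial_\rho g\bigr),
\end{equation*}
combined with the standard product-to-sum trigonometric identities. First I would substitute $f = a(\rho)\cos(n\vt)$ and $g = b(\rho)\cos(m\vt)$ and evaluate the four one-variable derivatives; the radial ones produce factors $a'$ and $b'$ with the trigonometric part unchanged, while the angular ones produce factors $-n$ and $-m$ together with a swap $\cos \leftrightarrow \sin$. Substituting back gives a linear combination of $\cos(n\vt)\sin(m\vt)$ and $\sin(n\vt)\cos(m\vt)$, each multiplied by a smooth radial function.

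At this point the identities
\begin{equation*}
  \cos(n\vt)\sin(m\vt) \,=\, \tfrac{1}{2}\bigl(\sin((n{+}m)\vt) - \sin((n{-}m)\vt)\bigr),
\end{equation*}
\begin{equation*}
  \sin(n\vt)\cos(m\vt) \,=\, \tfrac{1}{2}\bigl(\sin((n{+}m)\vt) + \sin((n{-}m)\vt)\bigr)
\end{equation*}
reorganize the result into the claimed form, yielding the explicit radial coefficients
\begin{equation*}
  c_{11}(\rho) \,=\, \frac{ma'(\rho)b(\rho)+na(\rho)b'(\rho)}{2\rho}, \qquad
  c_{12}(\rho) \,=\, \frac{na(\rho)b'(\rho)-ma'(\rho)b(\rho)}{2\rho},
\end{equation*}
both manifestly smooth away from $\rho = 0$; smoothness at the origin then follows from the parity/vanishing conditions at $\rho=0$ imposed on $a,b$ by the requirement that $a(\rho)\cos(n\vt)$ and $b(\rho)\cos(m\vt)$ extend smoothly to $\R^2$, since $\rho$ divides the numerator to the appropriate order. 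The sine-sine case is identical, with the overall sign of the bracket of the two angular factors reversed, while the sine-cosine case produces only cosines because now both summands contain a product of two sines or two cosines, which expand into sums of $\cos((n{\pm}m)\vt)$.

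The final containment $\{\cY_n,\cY_m\} \subset \cY_{n-m} + \cY_{n+m}$ for $m \le n$ is then immediate by linearity, since a general element of $\cY_n$ is a superposition of $a(\rho)\cos(n\vt)$ and $\tilde a(\rho)\sin(n\vt)$ with the appropriate smoothness, and each bracket with a corresponding element of $\cY_m$ falls into one of the three cases above (the fourth, cosine-sine, being obtained by antisymmetry from the sine-cosine case). When $n = m$ the $\sin((n{-}m)\vt)$ terms vanish identically, while the $\cos((n{-}m)\vt)$ terms reduce to purely radial functions, consistently with $\cY_0$ being the radially symmetric subspace. There is no genuine obstacle here: the statement is a purely algebraic consequence of the polar formula for $\{\cdot,\cdot\}$ together with product-to-sum identities, and the mild verification of smoothness of the $c_{ij}$ at $\rho = 0$ is the only point requiring attention.
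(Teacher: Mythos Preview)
Your proposal is correct and follows exactly the approach the paper indicates: the paper simply displays the polar formula $\{f,g\} = \rho^{-1}(\partial_\rho f\,\partial_\vt g - \partial_\vt f\,\partial_\rho g)$ and then states that ``we easily obtain the following result,'' giving no further details. You have filled in precisely the computation the paper omits, including the explicit coefficients $c_{11}, c_{12}$ and the product-to-sum identities, so there is nothing to add.
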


It is also necessary to compute the homogeneous polynomials $P_j, Q_j$ in
\eqref{Kexp} for higher values of $j$ than in Lemma~\ref{Kexpansion}. This is a
cumbersome calculation that can be done for instance using computer algebra. For
$j = 3$ we find
\begin{equation}\label{P3Q3}
\begin{split}
  P_3 \,&=\, -\frac{1}{32}(R+R')\Bigl((R-R')^2 + 3 (Z-Z')^2\Bigr)\,,\\
  Q_3 \,&=\, -\frac{1}{48}(R+R')\Bigl((R+R')^2 - 6 (Z-Z')^2\Bigr)\,,
\end{split}
\end{equation}
and the calculation for $j = 4$ yields the more complicated
expressions
\begin{equation}\label{P4Q4}
\begin{split}
  P_4 \,=\, &-\frac{15}{1024}\,(Z{-}Z')^4 + \frac{21}{512}\,(R{-}R')^2 
  (Z{-}Z')^2 + \frac{3}{16}\,R R'\,(Z{-}Z')^2 \\ & + \frac{17}{1024}\,
  (R^2{-}R'^2)^2 - \frac{1}{256}\,R R'\,(R{-}R')^2\,, \\
  Q_4 \,=\, &\frac{31}{2048}\,(Z{-}Z')^4 - \frac{89}{1024}\,(R{+}R')^2(Z{-}Z')^2
  + \frac{1}{256}\,R R'\,(Z{-}Z')^2 \\ &- \frac{19}{6144}\,(R^2{-}R'^2)^2
   + \frac{35}{1536}\,R R'(R{+}R')^2 - \frac{1}{128}\,R^2 R'^2\,.
\end{split}
\end{equation}

The proof of Lemma~\ref{R3lem} is similar to that of Lemma~\ref{R2lem}, and the
details can be omitted. We use the expressions \eqref{eta1exp}, \eqref{eta2exp}
of the vorticities $\eta_1, \eta_2$, the formulas \eqref{phi1exp}, \eqref{phi2exp}
for the stream functions $\phi_1, \phi_2$, and the definition \eqref{BSmdef}
of the Biot-Savart operators, which involve the polynomials \eqref{PQexp} and
\eqref{P3Q3}. Using Lemma~\ref{Poissonlem}, it is straightforward to verify that
the quantity defined in \eqref{R3def} satisfies $\cR_3 \in \cY_1 + \cY_3$ and
takes the form
\[
  \cR_3 \,=\, \chi_1(\rho)\sin(\vt) + \chi_2(\rho)\sin(3\vt) +
  \delta\Bigl(\chi_3(\rho)\cos(\vt) + \chi_4(\rho)\cos(3\vt)\Bigr) + 
  \cO(\delta^2)\,,
\]
where $\chi_1,\chi_2,\chi_3,\chi_4$ are radially symmetric functions which may depend
linearly on $\beta_\epsilon$. To arrive at \eqref{R3exp}, it remains to verify that $\cR_3$
does not contain any term involving $\beta_\epsilon^2$. Indeed, according to
\eqref{PQexp}, \eqref{eta2exp}, we have
\[
  \frac{\beta_\epsilon}{2\pi}\,P_1 \eta_2 \,=\, \frac{\beta_\epsilon}{4\pi}
  \int_{\R^2}(R+R')\,\eta_2(R',Z')\dd R'\dd Z' \,=\, \frac{\beta_\epsilon R}{4\pi}
  \int_{\R^2}\,\eta_{24}(R',Z')\dd R'\dd Z'\,,
\]
so that the first term in \eqref{R3def} does not contain $\beta_\epsilon^2$. 
The only other terms that we have to check are
\[
  \bigl\{\phi_1\,,\eta_2\bigr\} - \frac{r_0}{\Gamma}\,\dot{\bar z}_0
  \partial_Z \eta_2 \,=\, \Bigl\{\phi_1 - \frac{\beta_\epsilon -1 + 2v}{4\pi}\,R
  \,,\eta_2\Bigr\}\,,
\]
but using the expressions \eqref{phi1exp}, \eqref{eta2exp} we immediately see that
the right-hand side does not contain any factor $\beta_\epsilon^2$. Altogether we
arrive at \eqref{R3exp}. \QED 

\section{Appendix to Section~\ref{sec4}}\label{secB}

\subsection{Properties of the energy functional}\label{ssecB1}

\begin{proof}[\bf Proof of Lemma~\ref{Ekinlem}] 
We use the first expression of $E_\epsilon^\kin[\eta]$ in \eqref{Ekindef} and the
representation formula \eqref{BSeps} for the stream function $\phi$. Since
$\supp(\eta) \subset B_\epsilon$ by assumption, we have
\begin{equation}\label{EkinG}
  E_\epsilon^\kin[\eta] \,=\, \frac{1}{4\pi} \int_{B_\epsilon}\int_{B_\epsilon}
  K_\epsilon(R,Z;R',Z')\,\eta(R,Z)\,\eta(R',Z')\dd X\dd X'\,,
\end{equation}
where the integral kernel $K_\epsilon$ is defined in \eqref{Kepsdef}. 
As $R^2+Z^2 \le \epsilon^{-2\sigma_1}$ and ${R'}^2+{Z'}^2 \le \epsilon^{-2\sigma_1}$,
the argument of $F$ in \eqref{Kepsdef} is not larger than $C \epsilon^{2-2\sigma_1}$
for some $C > 0$. Using the asymptotic expansion of $F(s)$ as $s \to 0$ and
proceeding as in Section~\ref{ssec31}, we easily obtain the decomposition
\begin{equation}\label{Gdecomp}
  K_\epsilon(R,Z;R',Z') \,=\, \beta_\epsilon - 2 + \log\frac{8}{D} + \tilde
  K_\epsilon(R,Z;R',Z')\,,
\end{equation}
where $\beta_\epsilon = \log(1/\epsilon)$ and $D^2 = (R{-}R')^2 + (Z{-}Z')^2$. The
remainder $\tilde K_\epsilon$ satisfies the estimate
\begin{equation}\label{Gest}
  |\tilde K_\epsilon(R,Z;R',Z')| \,\le\, C\epsilon\bigl(|R| + |R'|\bigr)
  \Bigl(\beta_\epsilon + 1 + \log\frac{8}{D}\Bigr) + \cO\bigl(\beta_\epsilon
  \epsilon^{2-2\sigma_1}\bigr)\,.
\end{equation}
If we insert the decomposition \eqref{Gdecomp} into \eqref{EkinG}, the contributions
of $\beta_\epsilon - 2$ and $\log(8/D)$ give exactly the first two terms in the
right-hand side of \eqref{Ekinexp}, in view of \eqref{Ekin0def}. Moreover,
taking into account estimate \eqref{Gest} where $\epsilon^{2-2\sigma_1} \le \epsilon$, 
we see that the contributions of $\tilde K_\epsilon$ to the kinetic energy
\eqref{EkinG} are of order $\cO\bigl(\epsilon\beta_\epsilon\|\eta\|_{\cX_\epsilon}^2
\bigr)$, as stated in \eqref{Ekinexp}. 
\end{proof}

\begin{proof}[\bf Proof of Proposition~\ref{Eepsprop}]
Given $\eta \in \cX_\epsilon$, we decompose $\eta = \eta_1 + \eta_2$ where
$\eta_1 = \eta \1_{B_\epsilon}$ and $\1_{B_\epsilon}$ is the indicator
function of the ball $B_\epsilon = \{(R,Z) \in \Omega_\epsilon\,;\, R^2 + Z^2 \le
\epsilon^{-2\sigma_1}\}$. We thus have
\begin{equation}\label{Eepsdec}
  E_\epsilon[\eta] \,=\, \frac12 \int_{\Omega_\epsilon} W_\epsilon\,\eta_1^2\dd X +
  \frac12 \int_{\Omega_\epsilon} W_\epsilon\,\eta_2^2\dd X - \frac12 \int_{\Omega_\epsilon}
  \bigl(\phi_1 + \phi_2\bigr)\bigl(\eta_1 + \eta_2\bigr)\dd X\,,
\end{equation}
where $\phi_j = \BS^\epsilon[\eta_j]$ for $j = 1,2$. We claim that
\begin{equation}\label{Ekinreduc}
  \frac12 \int_{\Omega_\epsilon}\bigl(\phi_1 + \phi_2\bigr)\bigl(\eta_1 + \eta_2
  \bigr)\dd X \,=\, E^\kin_\epsilon[\eta_1] + \cO\bigl(\epsilon^\infty
  \|\eta\|_{\cX_\epsilon}^2\bigr)\,,
\end{equation}
so that
\begin{equation}\label{Eepsreduc}
  E_\epsilon[\eta] \,=\, E_\epsilon[\eta_1] + \frac12 \|\eta_2\|_{\cX_\epsilon}^2 +
  \cO\bigl(\epsilon^\infty\|\eta\|_{\cX_\epsilon}^2\bigr)\,. 
\end{equation}

To prove \eqref{Ekinreduc}, we recall that $\phi_j(R,Z) = \frac{1}{2\pi}\int_{\Omega_\epsilon}
K_\epsilon(R,Z;R',Z')\eta_j(R',Z')\dd X'$, where the kernel $K_\epsilon$ is given
by \eqref{Kepsdef}. Using the crude estimate $|F(s)| \le C\bigl(|\log s| + 1\bigr)$, we
easily obtain
\begin{equation}\label{Gest1}
  \bigl|K_\epsilon(R,Z;R',Z')\bigr| \,\le\, C\bigl(1{+}\epsilon|R|\bigr)^a 
  \bigl(1{+}\epsilon|R'|\bigr)^a \bigl(\beta_\epsilon + \bigl|\log D\bigr| + 1\bigr)\,, 
\end{equation}
for some $a > 1/2$. It follows in particular that
\[
  |\phi(R,Z)| \,\le\, C\bigl(\beta_\epsilon + 1\bigr)(1 + \rho)^b\|\eta\|_{\cX_\epsilon}\,,
  \qquad \rho \,=\, \sqrt{R^2+Z^2}\,,
\]
for some $b > 1/2$, and using H\"older's inequality we deduce
\[
  \int_{\Omega_\epsilon} |\phi(R,Z)|\,|\eta_2(R,Z)|\dd X \,\le\, C\bigl(\beta_\epsilon + 1\bigr)
  \|\eta\|_{\cX_\epsilon}^2 \biggl(\int_{B_\epsilon^c}(1+\rho)^{2b}
  \,W_\epsilon(R,Z)^{-1}\dd X\biggr)^{1/2}\,,
\]
where the last integral is $\cO(\epsilon^\infty)$ in view of \eqref{Wunifbd}. In a 
similar way we have 
\[
  |\phi_2(R,Z)| \,\le\, C\bigl(\beta_\epsilon + 1\bigr)(1+\rho)^b
  \biggl(\int_{B_\epsilon^c}(1+\rho')^{2b} |\eta(R',Z')|^2\dd X'\biggr)^{1/2}
  \!=\, \cO\bigl(\epsilon^\infty\|\eta\|_{\cX_\epsilon}\bigr)(1+\rho)^b\,, 
\]
so that $\int_{\Omega_\epsilon}\!\phi_2 \eta_1\dd x = \cO\bigl(\epsilon^\infty
\|\eta\|_{\cX_\epsilon}^2\bigr)$. Altogether we arrive at \eqref{Ekinreduc}. 

Now, since $\eta_1$ is supported in the ball $B_\epsilon$, it follows from
\eqref{Wapprox} and Lemma~\ref{Ekinlem} that
\begin{equation}\label{eta1first}
  \|\eta_1\|_{\cX_\epsilon}^2 \,=\, \|\eta_1\|_{\cX_0}^2 + \cO\bigl(\epsilon^{\gamma_1}
  \|\eta\|_{\cX_\epsilon}^2\bigr)\,, \qquad E_\epsilon^\kin[\eta_1]  \,=\,
  \frac{\beta_\epsilon{-}2}{4\pi}\,\tilde\mu_0^2 + E_0^\kin[\eta_1] + \cO\bigl(\epsilon
  \beta_\epsilon\|\eta\|_{\cX_\epsilon}^2\bigr)\,. 
\end{equation}
Moreover we know from Proposition~\ref{E0prop} that
\begin{equation}\label{eta1second}
  \|\eta_1\|_{\cX_0}^2 \,\le\, C_4 E_0[\eta_1] + C_5\bigl(\tilde\mu_0^2 + \tilde\mu_1^2
  + \tilde\mu_2^2\bigr)\,,
\end{equation}
where $\tilde\mu_0, \tilde\mu_1, \tilde\mu_2$ are the moments of $\eta_1$, which
satisfy $\tilde\mu_j = \mu_j + \cO\bigl(\epsilon^\infty\|\eta\|_{\cX_\epsilon}\bigr)$.
Combining both estimates in \eqref{eta1first} we obtain
\[
  E_0[\eta_1] \,=\, \frac12 \|\eta_1\|_{\cX_0}^2 - E^\kin_0[\eta_1] \,\le\,
  \frac12 \|\eta_1\|_{\cX_\epsilon}^2 - E^\kin_\epsilon[\eta_1] + \frac{\beta_\epsilon{-}2}{4\pi}
  \,\tilde\mu_0^2 + \cO\bigl(\epsilon^{\gamma_1}\|\eta\|_{\cX_\epsilon}^2\bigr)\,,
\]
namely $E_0[\eta_1] \le E_\epsilon[\eta_1] + \frac{\beta_\epsilon-2}{4\pi}\tilde\mu_0^2 +
\cO\bigl(\epsilon^{\gamma_1}\|\eta\|_{\cX_\epsilon}^2\bigr)$. Using in addition
\eqref{eta1second} we deduce
\[
  \|\eta_1\|_{\cX_\epsilon}^2 \,\le\, \|\eta_1\|_{\cX_0}^2 + \cO\bigl(\epsilon^{\gamma_1}
  \|\eta\|_{\cX_\epsilon}^2\bigr) \,\le\, 
  C_4 E_\epsilon[\eta_1] + C\bigl(\beta_\epsilon\tilde\mu_0^2
  + \tilde\mu_1^2+ \tilde\mu_2^2\bigr) + \cO\bigl(\epsilon^{\gamma_1}\|\eta\|_{\cX_\epsilon}^2
  \bigr)\,.
\]
Finally, invoking \eqref{Eepsreduc} and recalling that $C_4 > 2$, we find
\[
  \|\eta\|_{\cX_\epsilon}^2 \,\le\, \|\eta_1\|_{\cX_\epsilon}^2 + \frac{C_4}{2} \|\eta_2
  \|_{\cX_\epsilon}^2 \,\le\, C_4 E_\epsilon[\eta] + C\bigl(\beta_\epsilon\tilde\mu_0^2
  + \tilde\mu_1^2+ \tilde\mu_2^2\bigr) + \cO\bigl(\epsilon^{\gamma_1}\|\eta\|_{\cX_\epsilon}^2
  \bigr)\,,
\]
and estimate \eqref{Eepsineq} follows, since  $\tilde\mu_j = \mu_j + \cO\bigl(\epsilon^\infty
\|\eta\|_{\cX_\epsilon}\bigr)$ for $j = 0,1,2$. 
\end{proof}

\subsection{Diffusive terms in the energy functional}\label{ssecB2}

We justify here the expression \eqref{I4def} of the quantity $I_4$. Integrating by parts
as in \cite{GS3}, we find
\[
  \int_{\Omega_\epsilon} W_\epsilon \tilde\eta\,\cL\tilde \eta\dd X \,=\, -\int_{\Omega_\epsilon} 
  W_\epsilon |\nabla\tilde\eta|^2\dd X - \int_{\Omega_\epsilon} (\nabla W_\epsilon \cdot
  \nabla \tilde\eta)\tilde\eta\dd X - \int_{\Omega_\epsilon} \tilde V_\epsilon
  \tilde\eta^2\dd X\,,
\]
where $\tilde V_\epsilon = \frac14(R\partial_R + Z\partial_Z)W_\epsilon -
\frac12 W_\epsilon$. Similarly, 
\[
  \epsilon\int_{\Omega_\epsilon} W_\epsilon\tilde\eta\,\partial_R\tilde \zeta \dd X \,=\,
  \epsilon\int_{\Omega_\epsilon} W_\epsilon(1+\epsilon R)\tilde\zeta\,\partial_R\tilde
  \zeta \dd X \,=\, -\frac{\epsilon}{2}\int_{\Omega_\epsilon} \partial_R
  \bigl(W_\epsilon(1+\epsilon R)\bigr)\tilde\zeta^2\dd X\,.
\]
On the other hand, integrating by parts and using the relation \eqref{phidef}
between $\tilde\phi$ and $\tilde\eta$, we obtain
\begin{align*}
  \int_{\Omega_\epsilon} \tilde\phi\Bigl(\cL\tilde\eta + \epsilon\partial_R
  \tilde\zeta\Bigr)\dd X \,&=\,
  \int_{\Omega_\epsilon} \tilde\eta\Bigl(\Delta\tilde\phi -\frac{\epsilon \partial_R
  \tilde\phi}{1+\epsilon R}\Bigr)\dd X -\frac12\int_{\Omega_\epsilon} \tilde \eta
  \bigl(R\partial_R + Z\partial_Z\bigr)\tilde\phi\dd X \\
  \,&=\, -\int_{\Omega_\epsilon} \tilde\eta^2(1+\epsilon R)\dd X -\frac12\int_{\Omega_\epsilon}
  \tilde\eta\bigl(R\partial_R + Z\partial_Z\bigr)\tilde\phi\dd X\,.
\end{align*}
It remains to treat the last term in the right-hand side. Here again, we use the relation
\eqref{phidef} and integrate by parts to obtain
\[
  \frac12\int_{\Omega_\epsilon}\tilde\eta\bigl(R\partial_R + Z\partial_Z\bigr)
  \tilde\phi\dd X \,=\, \frac{\epsilon}{4}\int_{\Omega_\epsilon}\frac{R|\nabla
  \tilde\phi|^2}{(1+\epsilon R)^2}\dd X\,. 
\]
Altogether we arrive at \eqref{I4def}, with $V_\epsilon = \tilde V_\epsilon
- (1+\epsilon R)$. 

\subsection{Coercivity of the diffusive quadratic form}\label{ssecB3}

This section is devoted to the proof of Proposition~\ref{Qepsprop}. Given $\epsilon > 0$
sufficiently small, we take a smooth partition of unity of the form $1 = \chi_3^2 + \chi_4^2$,
where $\chi_3, \chi_4$ are radially symmetric and $\chi_3 = 1$ when $\rho \le \frac12
\epsilon^{-\sigma_1}$, $\chi_3 = 0$ when $\rho \ge \epsilon^{-\sigma_1}$. We can also
assume that $|\nabla \chi_3| + |\nabla \chi_4| \le C \epsilon^{\sigma_1}$. Given $\eta$
as in the statement of Proposition~\ref{Qepsprop}, we define $\eta_3 = \chi_3\eta$,  
$\eta_4 = \chi_4\eta$. We thus have the decompositions $\eta^2 = \eta_3^2 + \eta_4^2$,
$\eta\nabla\eta = \eta_3\nabla\eta_3 + \eta_4\nabla\eta_4$, and 
\begin{equation}\label{Qeps0}
  |\nabla\eta|^2 \,=\, |\nabla\eta_3|^2 + |\nabla\eta_4|^2 - \bigl(|\nabla\chi_3|^2 + 
  |\nabla\chi_4|^2 \bigr)\eta^2\,. \qquad 
\end{equation}
As a consequence, the quadratic form $Q_\epsilon[\eta]$ can be decomposed as
\begin{equation}\label{Qepsdecomp}
  Q_\epsilon[\eta] \,=\, Q_\epsilon[\eta_3] + Q_\epsilon[\eta_4] - \int_{\Omega_\epsilon}
  W_\epsilon \bigl(|\nabla\chi_3|^2 + |\nabla\chi_4|^2 \bigr)\eta^2\dd X\,.
\end{equation}
The last term in \eqref{Qepsdecomp} is bounded by $C \epsilon^{2\sigma_1}\|\eta\|_{\cX_\epsilon}^2$
and is thus negligible when $\epsilon \ll 1$. So our main task is to estimate from
below the terms $Q_\epsilon[\eta_3]$ and $Q_\epsilon[\eta_4]$.

We first consider the function $\eta_3$ which is supported in the region where $\rho \le
\epsilon^{-\sigma_1}$. We recall that the weight $W_\epsilon$ in \eqref{Wdef} satisfies the
estimates \eqref{Wapprox}, which read
\begin{equation}\label{Westim}
  |\nabla W_\epsilon(R,Z) - \nabla A(\rho)| + |W_\epsilon(R,Z) - A(\rho)| \,\le\,
  C \epsilon^{\gamma_1} A(\rho)\,, \qquad \hbox{when}~\,\rho \le \epsilon^{-\sigma_1}\,,
\end{equation}
where $\gamma_1 > 0$. We easily deduce that 
\begin{equation}\label{Qeps1}
  Q_\epsilon[\eta_3] \,\ge\, Q_0[\eta_3] - C \epsilon^{\gamma_1} \bigl(\|\nabla\eta_3\|_{\cX_0}^2 +
  \|\rho\eta_3\|_{\cX_0}^2 + \|\eta_3\|_{\cX_0}^2\bigr)\,,
\end{equation}
where $Q_0$ is the limiting quadratic form \eqref{Q0def}. On the other hand, we know from
Proposition~\ref{Q0prop} that
\begin{equation}\label{Qeps2}
  C_8 Q_0[\eta_3] \,\ge\, \|\nabla\eta_3\|_{\cX_0}^2 + \|\rho\eta_3\|_{\cX_0}^2 + \|\eta_3\|_{\cX_0}^2
  - C_9\bigl(\tilde\mu_0^2 + \tilde\mu_1^2 + \tilde\mu_2^2\bigr)\,,
\end{equation}
where $\tilde\mu_0, \tilde\mu_1, \tilde\mu_2$ are the moments of $\eta_3$, which
satisfy $\tilde\mu_j = \mu_j + \cO\bigl(\epsilon^\infty\|\eta\|_{\cX_\epsilon}\bigr)$.
Combining \eqref{Qeps1}, \eqref{Qeps2} and using \eqref{Westim} once again, we
arrive at
\begin{equation}\label{Qeps3}
  \|\nabla\eta_3\|_{\cX_\epsilon}^2 + \|\rho\eta_3\|_{\cX_\epsilon}^2 + \|\eta_3\|_{\cX_\epsilon}^2
  \,\le\, 2C_8 Q_\epsilon[\eta_3] + C\bigl(\tilde\mu_0^2 + \tilde\mu_1^2 + \tilde\mu_2^2\bigr)\,. 
\end{equation}

We next consider the function $\eta_4$, which is nonzero only if $\rho \ge \frac12 \epsilon^{-\sigma_1}$.
Our starting point is the lower bound
\[
  Q_\epsilon[\eta_4] \,\ge\, \frac14\int_{\Omega_\epsilon} W_\epsilon |\nabla\eta_4|^2\dd X +
  \int_{\Omega_\epsilon} \Bigl(V_\epsilon - \frac{|\nabla W_\epsilon|^2}{3W_\epsilon}\Bigr)\eta_4^2\dd X\,,
\]
which is obtained from \eqref{Qepsdef} by applying Young's inequality to the middle term in
the right-hand side. Using the expression \eqref{Wdef} of the weight function, as well
as the estimates \eqref{Westim} in the inner region $\Omega_\epsilon'$, it is not difficult
to verify that
\[
  \frac{V_\epsilon}{W_\epsilon} - \frac{|\nabla W_\epsilon|^2}{3W_\epsilon^2} ~\ge~ \begin{cases}
  C \rho^2 - \tilde C & \hbox{in}~\,\Omega_\epsilon'\,, \\
  - \tilde C & \hbox{in}~\,\Omega_{\epsilon}''\,, \\
  C \rho^{2\gamma} & \hbox{in}~\,\Omega_\epsilon'''\,, \end{cases}
\]
for some positive constants $C,\tilde C$. It follows that
\begin{equation}\label{Qeps4}
  Q_\epsilon[\eta_4] \,\ge\, \frac14 \|\nabla\eta_4\|_{\cX_\epsilon}^2 + C \int_{\Omega_\epsilon' \cup
  \Omega_\epsilon'''} W_\epsilon\,\rho_\gamma^2\eta_4^2\dd X - \tilde C \int_{\Omega_\epsilon''}
  W_\epsilon\,\eta_4^2\dd X\,.
\end{equation}
If we now combine \eqref{Qeps3} and \eqref{Qeps4}, we obtain
\begin{equation}\label{Qeps5}
\begin{split}
    \|\nabla\eta_3\|_{\cX_\epsilon}^2  + \|\nabla\eta_4\|_{\cX_\epsilon}^2 + \|\eta\|_{\cX_\epsilon}^2
  & + \int_{\Omega_\epsilon'\cup \Omega_\epsilon'''} W_\epsilon\,\rho_\gamma^2 \eta^2\dd X \\ \,&\le\, 
  C_{10}\bigl(Q_\epsilon[\eta_3] + Q_\epsilon[\eta_4]\bigr) + C_{11}\Bigl(\tilde\mu^2 +
  \int_{\Omega_\epsilon''} W_\epsilon \eta^2\dd X\Bigr)\,,
\end{split}
\end{equation}
for some positive constants $C_{10}, C_{11}$, where $\tilde\mu^2 = \tilde\mu_0^2 + \tilde\mu_1^2
+ \tilde\mu_2^2$. Finally, using again \eqref{Qeps0} as well as \eqref{Qepsdecomp}, and 
recalling that $\tilde\mu_j = \mu_j + \cO\bigl(\epsilon^\infty\|\eta\|_{\cX_\epsilon}\bigr)$, 
we deduce \eqref{Qepsineq} from \eqref{Qeps5}. \QED 

\medskip\noindent{\bf Acknowledgments.} ThG is partially supported by the grant
SingFlows ANR-18-CE40-0027 of the French National Research Agency (ANR). The
research of VS is supported in part by grant DMS 1956092 from the National
Science Foundation.

\bigskip\noindent
{\bf Thierry Gallay}\\
Institut Fourier, Universit\'e Grenoble Alpes, CNRS, Institut Universitaire de France\\
100 rue des Maths, 38610 Gi\`eres, France\\
Email\: {\tt Thierry.Gallay@univ-grenoble-alpes.fr}

\bigskip\noindent
{\bf Vladim\'ir \v{S}ver\'ak}\\
School of Mathematics, University of Minnesota\\
127 Vincent Hall, 206 Church St.\thinspace SE, Minneapolis, MN 55455, USA\\
Email\: {\tt sverak@math.umn.edu}

\end{document}